\title[Cohomological $\chi$-independence]
{Cohomological $\chi$-independence for Higgs bundles and Gopakumar--Vafa invariants}
\date{}
\author{Tasuki Kinjo and Naoki Koseki}
\address{The University of Liverpool, Mathematical Sciences Building, Liverpool, L69 7ZL, UK.}
\email{koseki@liverpool.ac.uk}
\address{graduate school of mathematical science, the university of tokyo, 3-8-1 komaba,
meguroku, tokyo 153-8914, japan.}
\email{tasuki.kinjo@ipmu.jp}
\theoremstyle{plain}
\newtheorem{thm}{Theorem}[section]
\newtheorem{prop}[thm]{Proposition}
\newtheorem{def-prop}[thm]{Definition-Proposition}
\newtheorem{lem}[thm]{Lemma}
\newtheorem{cor}[thm]{Corollary}
\newtheorem*{thm*}{Theorem}
\theoremstyle{definition}
\newtheorem{defin}[thm]{Definition}
\newtheorem{conj}[thm]{Conjecture}
\newtheorem*{NaC}{Notation and Convention}
\newtheorem*{ACK}{Acknowledgement}
\newtheorem{rmk}[thm]{Remark}
\newtheorem{ex}[thm]{Example}
\DeclareMathOperator{\rk}{rk}
\DeclareMathOperator{\Spec}{Spec}
\DeclareMathOperator{\id}{id}
\newcommand{\dR}{\mathbf{R}}
\newcommand{\bA}{\mathbb{A}}
\newcommand{\bC}{\mathbb{C}}
\newcommand{\bD}{\mathbb{D}}
\newcommand{\bL}{\mathbb{L}}
\newcommand{\bN}{\mathbb{N}}
\newcommand{\bQ}{\mathbb{Q}}
\newcommand{\bR}{\mathbb{R}}
\newcommand{\bS}{\mathbb{S}}
\newcommand{\bZ}{\mathbb{Z}}
\newcommand{\mcA}{\mathcal{A}}
\newcommand{\mcC}{\mathcal{C}}
\newcommand{\mcE}{\mathcal{E}}
\newcommand{\mcF}{\mathcal{F}}
\newcommand{\mcH}{\mathcal{H}}
\newcommand{\mcO}{\mathcal{O}}
\newcommand{\mcS}{\mathcal{S}}
\newcommand{\mcW}{\mathcal{W}}
\newcommand{\mcX}{\mathcal{X}}
\DeclareMathOperator{\Hom}{Hom}
\DeclareMathOperator{\Tot}{Tot}
\DeclareMathOperator{\Coh}{Coh}
\DeclareMathOperator{\Supp}{Supp}
\DeclareMathOperator{\Ker}{Ker}
\DeclareMathOperator{\Ext}{Ext}
\DeclareMathOperator{\Sym}{Sym}
\DeclareMathOperator{\IC}{IC}
\DeclareMathOperator{\GW}{GW}
\DeclareMathOperator{\Dol}{Dol}
\DeclareMathOperator{\Betti}{B}
\DeclareMathOperator{\cl}{cl}
\DeclareMathOperator{\codim}{codim}
\DeclareMathOperator{\GL}{GL}
\DeclareMathOperator{\Quot}{Quot}
\DeclareMathOperator{\Chow}{Chow}
\DeclareMathOperator{\Perv}{Perv}
\DeclareMathOperator{\pt}{pt}
\DeclareMathOperator{\HN}{HN}
\DeclareMathOperator{\pr}{pr}
\DeclareMathOperator{\im}{im}
\DeclareMathOperator{\tr}{tr}
\DeclareMathOperator{\IH}{IH}
\DeclareMathOperator{\MMHM}{MMHM}
\DeclareMathOperator{\mhm}{mhm}
\DeclareMathOperator{\mmhm}{mmhm}
\DeclareMathOperator{\PG}{PG}
\DeclareMathOperator{\reg}{reg}
\DeclareMathOperator{\Jac}{Jac}
\DeclareMathOperator{\mH}{H}
\DeclareMathOperator{\RHom}{RHom}
\DeclareMathOperator{\Map}{Map}
\DeclareMathOperator{\rank}{rank}
\DeclareMathOperator{\rat}{rat}
\DeclareMathOperator{\vir}{vir}
\DeclareMathOperator{\Crit}{Crit}
\DeclareMathOperator{\red}{red}
\DeclareMathOperator{\vdim}{vdim}
\DeclareMathOperator{\MHM}{MHM}
\DeclareMathOperator{\BPS}{BPS}
\DeclareMathOperator{\Sch}{Sch}
\DeclareMathOperator{\sm}{sm}
\DeclareMathOperator{\semist}{ss}
\DeclareMathOperator{\sep}{sep}
\DeclareMathOperator{\mult}{mult}
\newcommand\DCrit{\mathop{\mathbf{Crit}}\nolimits}
\newcommand\DSpec{\mathop{\mathbf{Spec}}\nolimits}
\newcommand{\HBM}{\mH^{\mathrm{BM}}}
\newcommand{\sHom}{\mathop{\mathcal{H}\! \mathit{om}}\nolimits}
\newcommand{\sBPS}{\mathop{\mathcal{BPS}}\nolimits}
\newcommand{\sIC}{\mathop{\mathcal{IC}}\nolimits}
\renewcommand{\ss}{\mathrm{ss}}
\newcommand\ddr{d_{\mathrm{dR}}}
\newcommand\ddrcl{d_{\mathrm{dR}}^{\, \cl}}
\newcommand{\cA}{{\mathcal A}}
\newcommand{\cF}{{\mathcal F}}
\newcommand{\cH}{{\mathcal H}}
\newcommand{\cO}{{\mathcal O}}
\newcommand{\cP}{{\mathcal P}}
\newcommand{\cS}{{\mathcal S}}
\newcommand{\fM}{{\mathfrak M}}
\newcommand{\fS}{{\mathfrak S}}
\newcommand{\fU}{{\mathfrak U}}
\newcommand{\fW}{{\mathfrak W}}
\newcommand{\fX}{{\mathfrak X}}
\newcommand{\fY}{{\mathfrak Y}}
\newcommand{\fZ}{{\mathfrak Z}}
\newcommand{\sR}{{\mathscr{R}}}
\newcommand\bs{\boldsymbol}
\newcommand{\bfT}{\mathbf{T}}
\newcommand\freefootnote[1]{%
  \let\thefootnote\relax%
  \footnotetext{#1}%
  \let\thefootnote\svthefootnote%
}
\begin{document}

\begin{abstract}
The aim of this paper is two-fold:
Firstly, we prove Toda's $\chi$-independence conjecture for Gopakumar--Vafa invariants of arbitrary local curves. 
Secondly, following Davison's work, 
we introduce the BPS cohomology for moduli spaces of Higgs bundles of rank $r$ and Euler characteristic $\chi$ which are not necessary coprime, 
and show that it does not depend on $\chi$. 
This result extends the Hausel--Thaddeus conjecture on the $\chi$-independence of E-polynomials proved by Mellit,  Groechenig--Wyss--Ziegler and Yu in two ways: we obtain an isomorphism of mixed Hodge modules on the Hitchin base rather than an equality of E-polynomials, and we do not need the coprime assumption.

The proof of these results is based on a description of the moduli stack of one-dimensional coherent sheaves on a local curve as a global critical locus which is obtained in the companion paper by the first author and Naruki Masuda.

\end{abstract}

\maketitle

\setcounter{tocdepth}{1}
\tableofcontents

\section{Introduction}

We work over the complex number field $\bC$.

\subsection{Motivation and Results}

\subsubsection{Non-abelian Hodge theory}

Let $C$ be a smooth projective curve with genus $g \geq 2$.
By the non-abelian Hodge correspondence 
due to \cite{Cor88, Don87, Hit87, Sim92}, 
there is a homeomorphism 
\begin{equation} \label{eq:nonab}
M_{\Dol}(r, m) \simeq M_B(r, m), 
\end{equation}
where $M_{\Dol}(r, m)$ is the moduli space of 
slope semistable Higgs bundles $E$ on $C$ 
with $\rank(E)=r, \chi(E)=m$, 
and $M_B(r, m)$ is the twisted character variety, i.e., the quotient variety
\[
M_{\Betti}(r, m) \coloneqq 
\Bigl\{ A_i, B_i \in \GL_r, i = 1, \ldots, g \mid \prod_i [A_i, B_i] = e^{2 m \pi \sqrt{-1} / r} 
\Bigr\} \sslash \GL_r
\]
by the conjugate $\GL_r$ action. 

Assume for a while that 
$r$ and $m$ are coprime 
so that the moduli spaces in \eqref{eq:nonab}
are smooth. 
The homeomorphism \eqref{eq:nonab} induces an isomorphism 
\begin{equation} \label{eq:Hnonab}
    \mH^*(M_{\Dol}(r, m) ) \simeq
    \mH^*(M_B(r, m) ) 
\end{equation}
between the singular cohomology groups. 
However, since \eqref{eq:nonab} is only a diffeomorphism, 
the isomorphism \eqref{eq:Hnonab} is {\it not} an isomorphism of 
mixed Hodge structures. 
Indeed, the mixed Hodge structure on $\mH^*(M_{\Dol}(r, m) )$ is pure, 
while that on $\mH^*(M_B(r, m) )$ is not pure. 
Instead, the cohomology group 
$\mH^*(M_{\Dol}(r, m) )$ 
has the so-called {\it perverse filtration} 
induced by the Hitchin morphism 
\[
h \colon M_{\Dol}(r, m) \to B. 
\]
De Cataldo--Hausel--Migliorini \cite{CHM12}
conjectured that 
the perverse filtration on $\mH^*(M_{\Dol}(r, m) )$ 
matches with the weight filtration on $\mH^*(M_B(r, m) )$
via the isomorphism \eqref{eq:Hnonab} 
({\it P=W conjecture}). 
This conjecture was  recently proved by 
Hausel--Mellit--Minets--Schiffmann \cite{hmms22} and 
Maulik--Shen \cite{ms22} independently.


For character varieties, 
$M_B(r, m)$ and $M_B(r, m')$ are Galois conjugate to each other, 
for all $m, m' \in \bZ$ with $\gcd(r, m)=\gcd(r, m')=1$. 
In particular, we have an isomorphism 
\begin{equation}\label{eq:chiBetti}
\mH^*(M_{\Betti}(r, m) ) \cong
\mH^*(M_{\Betti}(r, m') ) 
\end{equation}
of mixed Hodge structures. 
According to the P=W conjecture, 
the perverse filtration on $\mH^*(M_{\Dol}(r, m))$ 
should be independent of $m \in \bZ$, 
as long as we have $\gcd(r, m)=1$. 
We prove this statement using the cohomological Donaldson--Thomas theory.

\begin{thm}[Example \ref{ex:ICvsBPS}]\label{thm:introchicoprime}
Let $r, m, m'$ be integers such that $r>0$ and $\gcd(r, m) = \gcd(r, m') = 1$ hold.
Then there exists an isomorphism
\[
\mH^*(M_{\Dol}(r, m) ) \cong
\mH^*(M_{\Dol}(r, m'))  
\]
preserving the Hodge structure and the perverse filtration.
\end{thm}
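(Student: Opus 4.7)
The plan is to deduce the statement from the paper's main $\chi$-independence theorem for BPS sheaves on the Hitchin base. That theorem produces, with no coprimality assumption, an isomorphism of mixed Hodge modules $\sBPS(r, m) \cong \sBPS(r, m')$ on the Hitchin base $B$. In the coprime case of Theorem \ref{thm:introchicoprime} it then remains only to (i) identify $\sBPS(r, m)$ with the Hitchin pushforward of a shifted constant mixed Hodge module on $M_{\Dol}(r, m)$, and (ii) translate the resulting isomorphism on $B$ into one of cohomology preserving both the Hodge structure and the perverse filtration.

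Step (i): when $\gcd(r, m) = 1$, semistability equals stability, $M_{\Dol}(r, m)$ is smooth, and it coincides with the good moduli space of the rigidified moduli stack. The general construction of the BPS sheaf then collapses to the shifted constant mixed Hodge module on $M_{\Dol}(r, m)$ pushed forward along the Hitchin map $h \colon M_{\Dol}(r, m) \to B$; concretely, $\sBPS(r, m) \simeq Rh_* \bQ_{M_{\Dol}(r, m)}[\dim M_{\Dol}(r, m)]$, which in this smooth case equals $Rh_* \IC(M_{\Dol}(r, m))$ up to standard Tate twists. This is the content of Example \ref{ex:ICvsBPS}.

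Step (ii): combining (i) for $m$ and $m'$ with the main theorem yields an isomorphism $Rh_* \bQ_{M_{\Dol}(r, m)} \simeq Rh_* \bQ_{M_{\Dol}(r, m')}$ in $D^b(\MHM(B))$, with shifts matching automatically because $M_{\Dol}(r, m)$ and $M_{\Dol}(r, m')$ have the same dimension for fixed $r$. Taking global sections over $B$ produces the claimed isomorphism $\mH^*(M_{\Dol}(r, m)) \cong \mH^*(M_{\Dol}(r, m'))$ of mixed Hodge structures. Since the perverse filtration on $\mH^*(M_{\Dol}(r, m))$ is by definition the filtration induced on hypercohomology by the perverse truncation of $Rh_* \bQ$ on $B$, any isomorphism already at the level of $D^b(\MHM(B))$ automatically preserves it.

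The bulk of the work is contained in the main theorem of the paper; this corollary is a translation between BPS cohomology and ordinary intersection cohomology in the smooth case. The only subtlety requiring care is to pin down the identification of $\sBPS(r, m)$ with $\IC(M_{\Dol}(r, m))$ as objects of $D^b(\MHM(B))$ with the correct degree shifts and Tate twists, which the referenced example handles.
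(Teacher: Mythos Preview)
Your approach is essentially the paper's: invoke the main $\chi$-independence result (Corollary \ref{cor:chiBPS}) to obtain an isomorphism $h_{S*}\sBPS_{r,m} \cong h_{S*}\sBPS_{r,m'}$ in $D^b(\MMHM(B_S))$, then use Example \ref{ex:ICvsBPS} to identify $\sBPS_{r,m} \cong \sIC_{M_{\Dol}(r,m)}$ in the coprime case and read off the cohomological statement. One notational slip to fix: the BPS sheaf $\sBPS_{r,m}$ lives on the moduli space $M_{\Dol}(r,m)$, not on the Hitchin base, so in the coprime case it is $\sIC_{M_{\Dol}(r,m)}$ itself rather than $Rh_*\bQ_{M_{\Dol}(r,m)}[\dim]$; the object on $B$ that the main theorem compares is the pushforward $h_{S*}\sBPS_{r,m}$, which after the coprime identification becomes $h_{S*}\sIC_{M_{\Dol}(r,m)}$ as you want.
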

This kind of statement is called a {\it$\chi$-independence phenomenon}, as an invariant of the moduli space of Higgs bundles depends only on the rank $r$ and independent of the choice of the Euler characteristic $\chi$. 
Note that the above result for the perverse filtration was obtained by \cite{CMSZ21} 
independently, via a completely different method. 

Now assume that $(r, m)$ is not coprime.
In this case, the moduli spaces 
$M_{\Dol}(r, m)$ and $M_{\Betti}(r, m)$ are singular. 
Hence it is not clear which cohomology theory is 
a right choice to obtain a P=W type statement.
There are two candidates for this:
\begin{enumerate}
    \item Intersection cohomology (\cite{CM18, FM20, Mau21, FSY21}).
    \item BPS cohomology (\cite{CDP14}).
\end{enumerate}
One advantage of using the intersection cohomology is that it is mathematically defined whereas the BPS cohomology is defined in the physical language. Instead of this, BPS cohomology has its own advantage: whereas the $\chi$-independence phenomena for the intersection cohomology is only expected when we have $\gcd(r, m) = \gcd(r, m')$, the $\chi$-independence for the BPS cohomology is expected to hold without any assumption. Further, the BPS cohomology groups in both sides are expected to carry a Lie algebra structure (see \cite{Dav20}) and the non-abelian Hodge correspondence \eqref{eq:nonab} is expected to induce an isomorphism of these Lie algebras \cite[Conjecture 1.5]{SS20}.
This suggests that we would have a representation theoretic approach to the original P=W conjecture.

Following Davison's idea \cite{Dav17b}, we propose a definition of the BPS cohomology for the Dolbeault moduli space $\mH_{\BPS}^*(M_{\Dol}(r, m))$ as a cohomology of a pure Hodge module $\sBPS_{r, m}$ on $M_{\Dol}(r, m)$ defined using the cohomological Donaldson--Thomas theory (or refined BPS state counting) for $\Tot_{C}(\cO_C \oplus \omega_C)$. 
We have a split injection $\sIC_{M_{\Dol}(r, m)} \hookrightarrow \sBPS_{r, m}$ which is an isomorphism when $\gcd(r, m) = 1$, but not necessarily so for general $(r, m)$.
We prove the following $\chi$-independence for the BPS cohomology, which is a non-coprime generalization of Theorem \ref{thm:introchicoprime}:

\begin{thm}[Corollary \ref{cor:chiBPS}]\label{thm:introchiBPS}
Let $r, m, m'$ be integers such that $r>0$.
Then there exists an isomorphism
\[
\mH_{\BPS}^*(M_{\Dol}(r, m)) \cong \mH_{\BPS}^*(M_{\Dol}(r, m'))
\]
preserving the Hodge structure and the perverse filtration.
\end{thm}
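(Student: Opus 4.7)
The plan is to deduce Theorem \ref{thm:introchiBPS} from the more general cohomological $\chi$-independence for BPS sheaves on the local Calabi--Yau threefold $X \coloneqq \Tot_C(\mcO_C \oplus \omega_C)$ (the first main result announced in the abstract) by transferring the statement across the spectral correspondence. Fix $r > 0$ and arbitrary $m, m' \in \bZ$.

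First, I would invoke the companion paper of the first author and Naruki Masuda, in which the moduli stack of compactly supported one-dimensional coherent sheaves on $X$ is shown to admit a global critical locus description. This structure produces a vanishing cycle Hodge module on each moduli space of Gieseker semistable sheaves on $X$, from which the Davison--Meinhardt cohomological integrality isomorphism extracts the BPS Hodge modules $\sBPS_{\beta, n}^X$ for each curve class $\beta$ and Euler characteristic $n$. Via the Beauville--Narasimhan--Ramanan spectral correspondence, combined with the standard matching of Higgs semistability with Gieseker semistability for an appropriate polarization of $X$, one identifies $M_{\Dol}(r, m)$ with the moduli of semistable sheaves on $X$ of curve class $\beta_r \coloneqq r \cdot [C_0]$ (where $C_0 \subset \Tot_C(\omega_C) \subset X$ is the zero section) and Euler characteristic $m$. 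The Hitchin morphism $h_m \colon M_{\Dol}(r, m) \to B$ is then, up to the canonical identification, the support morphism to the relevant component of $\Chow_{\beta_r}(X)$, and by the very definition proposed in the introduction $\sBPS_{r, m}$ matches $\sBPS_{\beta_r, m}^X$.

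Second, I would apply the main $\chi$-independence theorem on $X$ to produce a canonical isomorphism $(h_m)_* \sBPS_{r, m} \cong (h_{m'})_* \sBPS_{r, m'}$ of complexes of mixed Hodge modules on the common base $B = \bigoplus_{i=1}^{r} H^0(C, \omega_C^{\otimes i})$, which depends only on $r$. Taking hypercohomology then yields the isomorphism of mixed Hodge structures $\mH^*_{\BPS}(M_{\Dol}(r, m)) \cong \mH^*_{\BPS}(M_{\Dol}(r, m'))$. Since the perverse filtration is by definition the one induced from the perverse truncations of the respective pushforward complexes, the isomorphism above automatically respects it.

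The main obstacle is therefore not this transfer step but the proof of the underlying $\chi$-independence on the Calabi--Yau threefold side. Following Davison's strategy, one should combine the cohomological integrality isomorphism for the critical cohomological Hall algebra of $X$ with a dimensional reduction from the $(-1)$-shifted cotangent of the $2$-Calabi--Yau moduli of sheaves on $\Tot_C(\omega_C)$, and then implement the shift $m \mapsto m'$ at the level of vanishing cycle sheaves by exhibiting a suitable local symmetry over the Chow base; this is the technical heart of the paper, but once that theorem is established, the corollary above is essentially formal.
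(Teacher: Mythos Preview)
Your reduction step --- identifying $M_{\Dol}(r,m)$ with $M_X^{\semist}(r,m)$ for $X = \Tot_C(\cO_C \oplus \omega_C)$, defining $\sBPS_{r,m}$ through the vanishing cycle sheaf on $M_X^{\semist}(r,m)$, and deducing the corollary from an isomorphism $h_{X*}\varphi_{M_X^{\semist}(r,m)} \cong h_{X*}\varphi_{M_X^{\semist}(r,m')}$ on the Hitchin base --- matches the paper's logic (Corollary \ref{cor:chiHiggs} $\Rightarrow$ Corollary \ref{cor:chiBPS}). That part is fine.

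The gap is in your last paragraph, where you sketch how the underlying $\chi$-independence on $X$ is obtained. Neither dimensional reduction nor a ``local symmetry over the Chow base'' plays any role in that argument. Dimensional reduction relates the vanishing cycle cohomology of $\fM_X^{\semist}$ to the Borel--Moore homology of $\fM_S^{\semist}$ for $S = \Tot_C(\omega_C)$; it is used in the paper only \emph{after} $\chi$-independence is established, to translate the cohomological integrality theorem into a statement about $\HBM_*(\fM_{\Dol})$. It gives no mechanism for comparing different values of $m$.

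The actual mechanism is different and is the point of the companion paper: one chooses an auxiliary line bundle $L$ on $C$ with $\deg L > 2g-2$ and a surjection $\cO_C \oplus \omega_C \twoheadrightarrow L$, and realizes $\fM_X^{\semist}(r,m)$ as a global critical locus inside the \emph{smooth} moduli stack $\fM_Y^{\semist}(r,m)$ of $L$-twisted Higgs bundles, $Y = \Tot_C(L)$. This identifies $\varphi_{M_X^{\semist}(r,m)}$ with $\varphi_{g \circ h_Y}(\sIC_{M_Y^{\semist}(r,m)})$ for a linear function $g$ on the Hitchin base $B_Y$ (Propositions \ref{prop:dchart} and \ref{prop:BPSvsVan}). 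The $\chi$-independence then follows from Maulik--Shen's theorem (Theorem \ref{thm:MS}) that $h_{Y*}\sIC_{M_Y^{\semist}(r,m)}$ is independent of $m$, together with the compatibility of vanishing cycles with proper pushforward. A further injectivity lemma (Lemma preceding Corollary \ref{cor:chiHiggs}) descends the isomorphism from $B_Y$ to $B_X$. Without the passage through the auxiliary surface $Y$ and the appeal to Maulik--Shen, your outline does not produce the comparison between $m$ and $m'$.
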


\begin{rmk}
When we have $\gcd(r, m) = \gcd(r, m')$,
the Betti moduli spaces $M_{\Betti}(r, m)$ and $M_{\Betti}(r, m')$ are Galois conjugate.
Therefore we expect that there exists an isomorphism
\begin{equation}\label{eq:chiBetti2}
\mH_{\BPS}^*(M_{\Betti}(r, m)) \cong \mH_{\BPS}^*(M_{\Betti}(r, m'))
\end{equation}
preserving the mixed Hodge structure, though we do not discuss the definition of the BPS cohomology for the Betti moduli spaces in this paper.
Therefore Theorem \ref{thm:introchiBPS} gives an evidence of the P=W conjecture for the BPS cohomology.
Conversely, P=W conjecture and Theorem \ref{thm:introchiBPS} suggest that the isomorphism \eqref{eq:chiBetti2} holds without the assumption 
$\gcd(r, m) = \gcd(r, m')$, which is of independent interest.
\end{rmk}

\begin{rmk}
Recently, Davison--Hennecart--Schlegel-Mejia \cite{DHM22} established a theorem relating the BPS cohomology and the intersection cohomology for the moduli space of Higgs bundles and for the character varieties.
Their work imply the equivalence of two versions of the P=W conjectures via the BPS cohomology and via the intersection cohomology, and that the $\chi$-independence of the intersection cohomology of the Dolbeault moduli space  follows from Theorem \ref{thm:introchiBPS} as long as $\gcd(r, m) = \gcd(r, m')$ holds.
\end{rmk}

We also establish the cohomological integrality theorem for Higgs bundles,
which claims the decomposition of the 
Borel--Moore homology of the moduli stack of Higgs bundles $\HBM_*(\fM_{\Dol}(r, m))$ into tensor products of the BPS cohomology (see Theorem \ref{thm:intHiggs2} for the precise statement).
A similar statement was proved for quivers with potentials in
\cite[Theorem A]{DM20} 
 and for preprojective algebras in \cite[Theorem D]{Dav17b}. 
As explained in \cite[\S 6.7]{DM15}, a plethystic computation 
and the cohomological integrality theorem imply that the Euler characteristic of the BPS cohomology is equal to the genus zero BPS invariant introduced by  Joyce--Song \cite[\S 6.2]{JS12}. In particular, cohomological integrality theorem strengthens the integrality conjecture for the genus zero BPS invariants \cite[Conjecture 6.12]{JS12}. 

Combining the cohomological integrality theorem and the $\chi$-independence theorem (Theorem \ref{thm:introchiBPS}), we obtain the following $\chi$-independence result for the Borel--Moore homology of the moduli stack $\fM_{\Dol}(r, m)$ of Higgs bundles:

\begin{cor}[Corollary \ref{cor:chiBM}]\label{cor:introchistack}
Let $r, m, m'$ be integers such that $r>0$ and $\gcd(r, m) = \gcd(r, m')$ hold. Then there exists an isomorphism 
\[
\HBM_*(\fM_{\Dol}(r, m)) \cong \HBM_*(\fM_{\Dol}(r, m'))
\]
preserving the Hodge structure and the perverse filtration introduced in \cite[Proposition 7.24]{Dav21}.
\end{cor}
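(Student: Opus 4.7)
The plan is to combine the cohomological integrality theorem for Higgs bundles (Theorem \ref{thm:intHiggs2}) with the $\chi$-independence of the BPS cohomology (Theorem \ref{thm:introchiBPS}). Set $d := \gcd(r, m) = \gcd(r, m')$ and write $(r, m) = d \cdot (r_0, m_0)$ and $(r, m') = d \cdot (r_0, m_0')$, where $(r_0, m_0)$ and $(r_0, m_0')$ are coprime. Note that $r_0 = r/d$ is the same in both presentations; this is where the hypothesis $\gcd(r, m) = \gcd(r, m')$ enters.

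First I would apply Theorem \ref{thm:intHiggs2} to both $\HBM_*(\fM_{\Dol}(r, m))$ and $\HBM_*(\fM_{\Dol}(r, m'))$. Since all Jordan--Hölder constituents of a slope-semistable Higgs bundle have the same slope, the decomposition is indexed by partitions $\lambda = (\lambda_1 \geq \lambda_2 \geq \cdots)$ of $d$, and has tensor factors
\[
\bigotimes_i \Bigl( \mH_{\BPS}^*\bigl(M_{\Dol}(\lambda_i r_0, \lambda_i m_0)\bigr) \otimes \mH^*(B\bG_m) \Bigr),
\]
with the analogous expression for $(r, m')$ obtained by substituting $m_0'$ for $m_0$. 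Crucially, the indexing set (partitions of $d$) and the ranks $\lambda_i r_0$ appearing are the same on both sides.

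Next, for each part $\lambda_i$ I would invoke Theorem \ref{thm:introchiBPS} with rank $\lambda_i r_0$ and Euler characteristics $\lambda_i m_0$, $\lambda_i m_0'$ to obtain an isomorphism of the individual BPS cohomology factors that preserves the Hodge structure and the perverse filtration. Tensoring over $i$, summing over $\lambda \vdash d$, and composing with the integrality isomorphisms on the two sides assembles the required isomorphism $\HBM_*(\fM_{\Dol}(r, m)) \cong \HBM_*(\fM_{\Dol}(r, m'))$.

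The main obstacle is to verify that the cohomological integrality decomposition produced by Theorem \ref{thm:intHiggs2} is itself compatible with the perverse filtration of \cite[Proposition 7.24]{Dav21}, so that a factor-wise isomorphism really does assemble into an isomorphism of perverse filtrations. This should be built into the sheaf-level formulation of integrality, since both the BPS sheaf $\sBPS_{r, m}$ and the relative Hall-algebra structure used to produce the decomposition live over the Hitchin base via pushforward from the good moduli morphism on the stack of semistable Higgs bundles; but some care will be required to match the perverse degree conventions between the $\Sym$-type summands produced by integrality and Davison's definition of the perverse filtration on $\HBM_*(\fM_{\Dol}(r, m))$.
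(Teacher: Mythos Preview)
Your proposal is correct and takes essentially the same approach as the paper: combine the integrality decomposition (Theorem \ref{thm:intHiggs2}, pushed down to the Hitchin base via Lemma \ref{lem:decompthm}) with the $\chi$-independence of $h_{S*}\sBPS_{r,m}$ (Corollary \ref{cor:chiBPS}). Your concern about perverse-filtration compatibility is resolved precisely by working at the sheaf level over $B_S$ throughout, where the perverse filtration is encoded in the cohomological grading of the pushforward, so the factor-wise isomorphisms assemble automatically.
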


\subsubsection{Gopakumar--Vafa (BPS) invariants}

More generally, 
we investigate the $\chi$-independence phenomena 
for curve counting theory on 
a class of Calabi--Yau (CY) 3-folds 
called {\it local curves}. 
By definition, a local curve 
is a CY 3-fold of the form $\Tot_C(N)$, 
where $C$ is a smooth projective curve 
and $N$ is a rank $2$ vector bundle on $C$ 
such that $\det(N) \cong  \omega_C$. 
To explain our result, 
we recall some basic background of 
curve counting theory for CY 3-folds.

There are several ways to count curves in a CY 3-fold $X$, 
and one of them is the {\it Gromov--Witten (GW) theory}: 
For an integer $g \geq 0$ and 
a homology class $\beta \in H_2(X, \bZ)$, 
denote by $M_{g, \beta}(X)$ the moduli space of 
stable maps $f \colon C \to X$ with 
$C$ nodal curves of arithmetic genus $g$ and 
$f_{*}[C]=\beta$. 
Then the GW invariant is defined as 
\[
\GW_{g, \beta}\coloneqq\int_{[M_{g, \beta}(X)]^{\vir}}1, 
\] 
where $[M_{g, \beta}(X)]^{\vir}$ denotes 
the virtual fundamental cycle. 
Due to the existence of stacky points in the moduli space 
$M_{g, \beta}(X)$, the GW invariant $\GW_{g, \beta}$ is 
in general a rational number. 

Based on string theory, Gopakumar--Vafa \cite{GV98} 
conjectured the existence of {\it integer valued} invariants 
$n_{g, \beta} \in \bZ$ for $g \geq 0$ and $\beta \in H_2(X, \bZ)$, 
satisfying the equation 
\begin{equation} \label{eq:GW=GV}
\sum_{g \geq 0, \beta >0}\GW_{g, \beta} \lambda^{2g-2} t^\beta
=\sum_{g \geq 0, \beta>0, k \geq 1} \frac{n_{g, \beta}}{k}\left(
2\sin\left(
\frac{k\lambda}{2}
\right)
\right)^{2g-2}t^{k\beta}. 
\end{equation}
We call the invariants $n_{g, \beta}$ 
the {\it Gopakumar--Vafa (GV) invariants} 
(also known as the {\it BPS invariants}). 

Building on the previous works by 
Hosono--Saito--Takahashi \cite{HST01} 
and Kiem--Li \cite{KL16}, 
Maulik--Toda \cite{MT18} and Toda \cite{Tod17} 
proposed the mathematical definition of the GV invariants. 
Following the original idea of Gopakumar--Vafa, 
they consider the moduli space $M_X(\beta, m)$ 
of slope semistable one-dimensional sheaves $E$ on $X$ satisfying 
$[E]=\beta \in H_2(X, \bZ)$ and $\chi(E)=m \in \bZ$. 
The moduli space $M_X(\beta, m)$ admits 
the {\it Hilbert--Chow} morphism 
\[
\pi_M \colon M_X(\beta, m)^{\red} \to \Chow_\beta(X),
\]
which sends a sheaf to its fundamental cycle. 
Maulik--Toda \cite{MT18} and 
Toda \cite{Tod17} defined 
the {\it generalized GV invariants} by the formula 
\begin{equation} \label{defin:introGV}
\sum_{i \in \bZ}\chi\left(
{\mcH}^i(\pi_{M*}\left(\varphi_{M_X(\beta, m)}) \right)
\right)y^i=
\sum_{g \geq 0}n_{g, \beta, m}\left(
y^{\frac{1}{2}}+y^{-\frac{1}{2}}
\right)^{2g}, 
\end{equation}
where $\varphi_{M_X(\beta, m)}$ is a certain perverse sheaf on $M_X(\beta, m)$, 
see Sections \ref{ssec:van} and \ref{sec:GVdefin} 
for more detail. 

As the GV invariants are conjecturally equivalent to the GW invariants 
by the formula \eqref{eq:GW=GV}, 
the GV invariants should be independent of 
the additional choice of the Euler characteristic 
$m \in \bZ$: 

\begin{conj}[{\cite[Conjecture 1.2]{Tod17}}] 
\label{conj:introchi}
The generalized GV invariants are independent of the choice of $m \in \bZ$, 
i.e., we have 
\[
n_{g, \beta, m}=n_{g, \beta, m'}
\]
for all $m, m' \in \bZ$. 
\end{conj}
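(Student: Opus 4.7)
}
The plan is to upgrade the statement from an equality of numerical invariants to an isomorphism of perverse sheaves on the Chow variety, by introducing a BPS sheaf $\sBPS_{\beta,m}$ on $M_X(\beta,m)$ parallel to the Higgs-bundle construction, and then proving its $\chi$-independence after pushforward along $\pi_M$. The starting point is the input from the companion paper, which presents the moduli stack $\fM_X(\beta,m)$ of one-dimensional coherent sheaves on the local curve $X=\Tot_C(N)$ as a global critical locus $\Crit(f)$ on a smooth ambient stack (essentially, one writes a Higgs-type field on $C$ as a potential whose critical locus recovers the condition that the two sections of $N$ commute). Given this presentation, the vanishing cycle complex $\varphi_{\fM_X(\beta,m)}$ underlying the definition \eqref{defin:introGV} becomes a well-defined monodromic mixed Hodge module on $\fM_X(\beta,m)$, and pushes forward under the good moduli space map to a perverse sheaf on $M_X(\beta,m)$.

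Next, I would establish cohomological integrality for this critical locus, in the style of Davison--Meinhardt for quivers with potential and Davison for preprojective algebras. Concretely, the pushforward of $\varphi_{\fM_X(\beta,m)}$ to $\Chow_\beta(X)$ should decompose as a symmetric algebra built from a single perverse summand $\sBPS_{\beta,m}$ on $M_X(\beta,m)$, pushed forward along $\pi_M$ and tensored with the cohomology of the relevant symmetric product factors; this is the analogue of the decomposition promised in Theorem \ref{thm:intHiggs2} for Higgs bundles. Together with \eqref{defin:introGV} this rewrites $n_{g,\beta,m}$ as an explicit functional of $\pi_{M*}\sBPS_{\beta,m}$ (roughly, a Lefschetz-type $\mathfrak{sl}_2$ decomposition on $\IH^*$ of the support, refined by the Hilbert--Chow map), so it suffices to produce an isomorphism
\[
\pi_{M*}\sBPS_{\beta,m} \;\cong\; \pi_{M*}\sBPS_{\beta,m'}
\]
of (monodromic mixed Hodge) perverse sheaves on $\Chow_\beta(X)$ for all $m,m'\in\bZ$.

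For the $\chi$-independence itself, the strategy is to mimic the Higgs-bundle argument that will be used to prove Theorem \ref{thm:introchiBPS}. Since $\Tot_C(N)$ carries the relative Jacobian of the spectral covers over $\Chow_\beta(X)$, tensoring by a universal line bundle induces morphisms $M_X(\beta,m)\to M_X(\beta,m+k)$ that are compatible with $\pi_M$; the key is that, over the locus of integral spectral curves, tensoring by a degree-one line bundle on each spectral curve shifts $\chi$ by one and gives an isomorphism over $\Chow_\beta(X)$. Proving that this comparison extends from the integral locus to all of $\Chow_\beta(X)$ and intertwines the BPS sheaves requires the support lemma for $\sBPS_{\beta,m}$, namely that each simple constituent is supported on a locus over which such a shift makes sense, and this is where I expect the decomposition theorem for $\pi_M$, together with the purity coming from the critical-chart local structure, to be essential. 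Finally, taking Euler characteristics of the resulting isomorphism and reading off the $(y^{1/2}+y^{-1/2})^{2g}$ expansion yields $n_{g,\beta,m}=n_{g,\beta,m'}$.

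The main obstacle, in my view, is the last step: proving the $\chi$-independence at the level of the sheaf $\pi_{M*}\sBPS_{\beta,m}$ rather than at the level of Euler characteristics. Constructing an actual isomorphism of perverse sheaves requires more than a numerical matching on integral spectral curves; one must verify that the Hecke-type correspondences assemble into a globally defined morphism of pure Hodge modules over the whole Hilbert--Chow base, and control what happens over the deepest strata where spectral curves become non-reduced. This is the analogue of the most delicate part of Maulik--Shen's and Davison's work on Higgs bundles, and I expect it to rest on the precise critical-chart description provided by the Kinjo--Masuda companion paper.
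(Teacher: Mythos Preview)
Your proposal misses the paper's central reduction and, as a result, attempts to re-prove the hardest step from scratch. The paper does \emph{not} establish $\chi$-independence by constructing Hecke-type correspondences or by tensoring with line bundles on spectral curves over $\Chow_\beta(X)$. Instead, the key move is to choose a surjection $N \twoheadrightarrow L_2$ with $\deg(L_2)$ sufficiently large (Lemma~\ref{lem:example}, Lemma~\ref{lem:stability}), set $Y=\Tot_C(L_2)$, and use the companion paper to realize $\fM^{\semist}_X(r,m)$ as the critical locus of a function $g\circ\widetilde{h}_Y$ on the \emph{smooth} stack $\fM^{\semist}_Y(r,m)$ (Proposition~\ref{prop:dchart}). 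This identifies $\varphi_{M^{\semist}_X(r,m)}$ with $\varphi_{g\circ h_Y}(\sIC_{M^{\semist}_Y(r,m)})$ (Proposition~\ref{prop:BPSvsVan}). Since $h_Y$ is proper, the vanishing cycle functor commutes with $h_{Y*}$, and the $\chi$-independence then follows \emph{immediately} from Maulik--Shen's theorem $h_{Y*}\sIC_{M^{\semist}_Y(r,m)}\cong h_{Y*}\sIC_{M^{\semist}_Y(r,m')}$ (Theorem~\ref{thm:MS}), which holds precisely because $\deg(L_2)>2g-2$. The finiteness of $b|_{\im(h_X)}$ (Lemma~\ref{lem:Hitfin}) then lets one read off the Euler characteristics over $B_X$.

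The obstacle you identify as ``the main obstacle''---building a global isomorphism of perverse sheaves from a generic correspondence and controlling the deepest strata---is exactly what the paper \emph{avoids} by invoking Maulik--Shen as a black box. Your cohomological integrality step is also misplaced: the paper proves integrality for the \emph{surface} $Y$ (Theorem~\ref{thm:int-IC}), where the category has homological dimension one and the Meinhardt--Davison machinery applies directly, and only uses it to identify $\varphi_{M^{\semist}_X}$ with a vanishing cycle of an IC sheaf. Integrality for $X$ itself is a consequence, not an input, of the argument. Finally, your remark that the Higgs-bundle $\chi$-independence (Theorem~\ref{thm:introchiBPS}) is proved by a Jacobian/Hecke argument is also incorrect: it too is deduced from the same Maulik--Shen input via the critical-chart description (Corollary~\ref{cor:chiHiggs}, Corollary~\ref{cor:chiBPS}).
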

We call the above conjecture as 
{\it $\chi$-independence conjecture} 
for GV invariants. 
In this paper, we prove it 
for local curves in full generality: 
\begin{thm}[Theorem \ref{thm:chi-lC}] 
\label{thm:introchi}
Conjecture \ref{conj:introchi} holds 
for $X=\Tot_C(N)$. 
\end{thm}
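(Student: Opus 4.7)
My plan is to reduce Theorem \ref{thm:introchi} to a cohomological $\chi$-independence statement for BPS sheaves on the moduli space of one-dimensional sheaves on $X$, and then extract the numerical GV invariants via the defining identity \eqref{defin:introGV}. The companion paper by the first author and Masuda presents the moduli stack $\fM_X(\beta, m)$ as a global critical locus inside a smooth ambient stack, so one has a well-defined vanishing cycle mixed Hodge module in the sense of Brav--Bussi--Dupont--Joyce--Meinhardt. Following the Davison-style construction developed in this paper for Higgs bundles, this produces a BPS mixed Hodge module $\sBPS_{\beta, m}$ on the coarse moduli $M_X(\beta, m)$, related to $\varphi_{M_X(\beta, m)}$ via the cohomological integrality decomposition.

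The central step is to prove that the pushforward $\pi_{M *}\sBPS_{\beta, m}$ to $\Chow_\beta(X)$ is independent of $m$, as a pure Hodge module. I would model this on the proof of Theorem \ref{thm:introchiBPS}, with the Hitchin base replaced by the Chow variety and the Hitchin morphism by $\pi_M$. The argument should combine the local-curve analog of the cohomological integrality theorem (i.e.\ Theorem \ref{thm:intHiggs2} adapted to sheaves on $X$) to extract $\sBPS_{\beta, m}$ as the primitive summand of the vanishing-cycle pushforward from the stack, together with a stratified local analysis on $\Chow_\beta(X)$ using the étale-local critical locus model around each closed (polystable) point of $M_X(\beta, m)$, supplemented by tensoring with line bundles pulled back from $C$ to transport one residue class of Euler characteristics into another.

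Granting this sheaf-level identification, Theorem \ref{thm:introchi} follows quickly: the class of $\pi_{M *}\varphi_{M_X(\beta, m)}$ in the Grothendieck group of constructible complexes on $\Chow_\beta(X)$ is recovered from the $\pi_{M *}\sBPS_{\beta', m'}$ (for divisors $\beta' \mid \beta$) by a plethystic formula manifestly independent of $m$, and then \eqref{defin:introGV} yields $n_{g, \beta, m} = n_{g, \beta, m'}$. I expect the main obstacle to be the sheaf-level $\chi$-independence for an arbitrary rank-two bundle $N$: in the Higgs case $N = \cO_C \oplus \omega_C$, the extra $\bG_m$-scaling action and the BNR-style spectral correspondence supply powerful structural tools, while for general $N$ no such spectral description is available, and one must work directly with the intrinsic global critical locus presentation from the companion paper and control how the BPS sheaf varies with $m$ purely through the local structure of the good moduli space morphism $\fM_X(\beta, m) \to M_X(\beta, m)$.
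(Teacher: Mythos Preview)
Your proposal has a genuine gap: you never identify the mechanism that actually produces $\chi$-independence. The global critical locus description from the companion paper is not an abstract local model to be analyzed stratum by stratum on $\Chow_\beta(X)$; the whole point is that it presents $\fM_X^{\ss}(r,m)$ as $\Crit(g\circ\widetilde h_Y)$ inside the \emph{specific} smooth ambient $\fM_Y^{\ss}(r,m)$ for $Y=\Tot_C(L_2)$ with $\deg L_2>2g-2$ (after choosing the short exact sequence $0\to L_1\to N\to L_2\to 0$ and checking that semistability is preserved, Lemma~\ref{lem:stability}). On that auxiliary moduli, Maulik--Shen (Theorem~\ref{thm:MS}) have already proved that $h_{Y*}\sIC_{M_Y^{\ss}(r,m)}$ is independent of $m$. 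Since $h_Y$ is proper, vanishing cycles commute with $h_{Y*}$, and Proposition~\ref{prop:BPSvsVan} identifies $\varphi_{M_X^{\ss}(r,m)}$ with $\varphi_{g\circ h_Y}(\sIC_{M_Y^{\ss}(r,m)})$, so $h_{Y*}\varphi_{M_X^{\ss}(r,m)}$ is independent of $m$ on the nose (Proposition~\ref{prop:vanMS}). A finiteness lemma for $\im(h_X)\to B_Y$ (Lemma~\ref{lem:Hitfin}) then lets one extract the perverse cohomology degrees and hence the GV polynomial. This is the entire argument; Maulik--Shen is the indispensable external input you are missing.

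Two further points. First, your logic is inverted: in the paper Theorem~\ref{thm:introchiBPS} is a \emph{consequence} of the same reduction-to-$Y$ argument that proves Theorem~\ref{thm:introchi} (specialized to $N=\cO_C\oplus\omega_C$), so ``modeling on the proof of Theorem~\ref{thm:introchiBPS}'' is circular. Second, no BPS sheaf or plethystic inversion is needed for the GV statement: the object $\varphi_{M_X^{\ss}(r,m)}$ defined in \eqref{eq:bps} is already the one appearing in \eqref{defin:introGV}, and Proposition~\ref{prop:BPSvsVan} computes it directly as a vanishing cycle of an IC sheaf on $M_Y^{\ss}$. Your proposed ``stratified local analysis'' and line-bundle tensoring do not supply an alternative source of $\chi$-independence; tensoring by a line bundle only moves $m$ by multiples of $r\deg$, which is not enough to reach all residues, and nothing in the \'etale-local critical model sees the global Euler characteristic.
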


\subsection{Strategy of the proof}
\subsubsection{Results on local curves}
The key ingredient in our arguments is the main result of the companion paper by the first author and Masuda \cite{KM21} on the construction of 
a {\it global d-critical chart} 
for the moduli space $\fM_X \coloneqq \fM_X(\beta, m)$ 
of one-dimensional semistable sheaves on a local curve $X=\Tot_C(N)$, 
i.e., the description of the moduli space $\fM_X$ as 
the critical locus inside a certain smooth space: 
Take an exact sequence 
\begin{equation} \label{eq:introex}
    0 \to L_1 \to N \to L_2 \to 0, 
\end{equation}
where $L_1, L_2$ are line bundles 
with $\deg(L_2)$ sufficiently large. 
We denote by $Y \coloneqq \Tot_C(L_2)$. 
Then it is shown in \cite[Theorem 5.6]{KM21} that there exists a function 
$f \colon M_Y \to \bA^1$ 
on the good moduli space of one-dimensional semistable sheaves on $Y$ 
such that we have an isomorphism 
\[
\fM_X \cong \{d(f \circ p_Y)=0\} \subset \fM_Y, 
\]
where $p_Y \colon \fM_Y \to M_Y$ is the natural map from the moduli stack of one-dimensional semistable sheaves to its good moduli space.

In this situation, the perverse sheaf 
appeared in the definition \eqref{defin:introGV}
of the generalized GV invariants 
coincides with the vanishing cycle sheaf: 
\[
\varphi_{M_X} \cong \varphi_f(\sIC_{M_Y}), 
\]
and the proof of Theorem \ref{thm:introchi} 
is reduced to proving the corresponding statement 
for the intersection complex $\sIC_{M_Y}$. 
The latter is proved 
in the recent paper by Maulik--Shen \cite{MS20}, 
hence we obtain Theorem \ref{thm:introchi}.

\subsubsection{Results on Higgs bundles}
We define the BPS sheaf $\sBPS_{r, m}$ on the moduli space $M_{\Dol}(r, m)$ using the vanishing cycle complex $\varphi_{M_X}$ for $X = \Tot_C(\cO_C \oplus \omega_C)$.
Then the argument as above also implies
Theorem \ref{thm:introchiBPS}.
The cohomological integrality theorem for Higgs bundles (Theorem \ref{thm:intHiggs2}) is obtained by extending the argument for quivers with potentials \cite[Theorem A]{DM20} using the global critical locus description of $\fM_X$ and applying the first author's dimensional reduction theorem \cite[Theorem 4.14]{Kin21} which relates the vanishing cycle cohomology for $\fM_X$ and the Borel--Moore homology for $\fM_{\Dol}$.

\subsection{Relation with existing works}
\begin{enumerate}

\item Mellit \cite{Mel20}, 
Groechenig--Wyss--Ziegler \cite{GWZ20}, 
and Yu \cite{Yu21} 
proved that the E-polynomial of $M_{\Dol}(r, m)$ 
is independent of $m \in \bZ$ when $\gcd(r, m)=1$. 
These results were proved 
via the reduction to the positive characteristics. 

We extended the result to the non-coprime case and further lifted the equality to an isomorphism of Hodge structures
via the completely different methods.

\item Recently, de Cataldo--Maulik--Shen--Zhang \cite{CMSZ21} used a positive characteristic method to prove that the isomorphism \eqref{eq:chiBetti} preserves the perverse filtration induced by the non-abelian Hodge theorem.

At present, we do not know whether our cohomological $\chi$-independence results are compatible with the Galois conjugate.

\item Toda \cite{Tod17} proved 
Conjecture \ref{conj:introchi}  for 
primitive classes $\beta \in H_2(X, \bZ)$ 
(assuming a technical conjecture on orientation data). 
For non-primitive classes, Maulik--Shen \cite{MS20} proved it 
for local toric del Pezzo surfaces 
and recently \cite{Yua21} removed the toric assumption from their result.
Maulik--Shen \cite{MS20} also proved the conjecture for local curves of the form 
$\Tot_C(\mcO(D) \oplus \omega_C(-D))$ 
for a divisor $D$ with $\deg(D) > 2g(C)-2$. 

Our Theorem \ref{thm:introchi} proves 
Conjecture \ref{conj:introchi} for {\it arbitrary} local curves. 
In particular, the result for $X=\Tot_C(N)$ with indecomposable $N$ 
is completely new. 
\end{enumerate}


\subsection{Structure of the paper}
The paper is organized as follows. 
In Section \ref{sec:preGV}, 
we recall Joyce's theory on d-critical structures. 
Then we recall the definition of the GV invariants, 
and introduce the notion of local curves and twisted Higgs bundles.

In Section \ref{sec:chi}, we prove Theorem \ref{thm:introchi}. 
In Section \ref{sec:int}, 
we prove the cohomological integrality theorem 
for $D$-twisted Higgs bundles where $\deg(D)>2g-2$, 
which plays an important role in the proofs of 
Theorems \ref{thm:introchi} and Corollary \ref{cor:introchistack}. 
Finally in Section \ref{sec:Higgs}, 
we discuss applications to Higgs bundles. 
We prove Theorem \ref{thm:introchiBPS} and the cohomological integrality theorem for Higgs bundles (Theorem \ref{thm:intHiggs2}). 

In Appendix \ref{app:tech}, we give a brief overview of the shifted symplectic geometry and prove some technical lemmas that we use in this paper.

In Appendix \ref{app:supplem}, we prove a version of the support lemma of the vanishing cycle complexes which is needed to define the BPS sheaf.

\begin{ACK}
The authors would like to thank 
Professors Ben Davison, Yukinobu Toda and Junliang Shen
for fruitful discussions and for carefully reading the previous version of this article. 
The first author would like to thank Naruki Masuda for the collaboration on the companion paper \cite{KM21}.
The second author would like to thank 
Professors Arend Bayer and Jim Bryan 
for related discussions. 

T.K. was supported by WINGS-FMSP
program at the Graduate School of Mathematical Science, the University of Tokyo and JSPS KAKENHI Grant number JP21J21118. 
N.K. was supported by 
ERC Consolidator grant WallCrossAG, no.~819864. 
\end{ACK}

\begin{NaC}
In this paper, we work over the complex number field $\bC$. 
We use the following notations: 
\begin{itemize}
     \item We let $\mathbb{S}$ denote the $\infty$-category of spaces (see \cite[Definition 1.2.16.1]{HTT}).

    \item We basically write stacks in Fraktur (e.g. $\fX, \fY, \ldots$), and derived schemes, derived stacks and morphisms between derived stacks in bold (e.g. $\bs{X}, \bs{\fX}, \bs{f}, \ldots$).
    We will write $X = t_0(\bs{X})$, $\fX = t_0(\bs{\fX})$, $f = t_0(\bs{f})$ and so on.

\item A derived Artin stack $\mathbb{\fX}$ is said to be \textbf{quasi-smooth}
if the cotangent complex $\mathbb{L}_{\fX}$ has Tor-amplitude $[-1, 1]$.

\item All derived/underived Artin stacks are assumed to have quasi-compact and separated diagonals and locally finitely presented over the complex number field.
As the fiber product of finite type separated schemes over such a stack is again of finite type and separated,
we can consider the category of mixed Hodge modules on such stacks
(see \S \ref{ssec:MHM} for the detail).

\item For a separated complex analytic space $X$, we let $D_c^b(X)$ denote the bounded derived category of complexes of sheaves in $\bQ$-vector spaces on $X$ with constructible cohomology. 

\item For a complex analytic stack $\fX$, we let $D^b_c(\fX)$ denote the bounded derived category of sheaves in $\bQ$-vector spaces on $\fX_{\textrm{lis-an}}$ with constructible cohomology. Here $\fX_{\textrm{lis-an}}$ denote the lisse-analytic topos of $\fX$ (see \cite[\S 3.2.3]{Sun17}).

\item If there is no confusion, we use expressions such as $f_*$ and $f_!$ for the
derived functors $\dR f_*$ and $\dR f_!$.

\end{itemize}
\end{NaC}

\section{Preliminaries} 

In this section, we collect some basic notions that we use in this paper.
Firstly we recall Joyce's theory of d-critical locus in \S \ref{ssec:dcrit}. Then we review the construction of vanishing cycle complexes associated with d-critical stacks in \S \ref{ssec:van}.
In \S \ref{sec:GVdefin} we review Maulik--Toda's construction \cite{MT18} of Gopakumar--Vafa invariants based on vanishing cycle complexes. In \S \ref{sec:local} we collect some basic facts on local curves and recall Maulik--Shen's cohomological $\chi$-independence theorem \cite{MS20}.

\label{sec:preGV}

\subsection{D-critical structures}\label{ssec:dcrit}

In \cite{Joy15}, Joyce introduced the notion of d-critical structures which are classical models of derived critical loci. We now briefly recall it.

Let $X$ be a complex analytic space.
Joyce \cite[Theorem 2.1]{Joy15} introduced a sheaf $\cS_X$ on $X$ with the following property: for an open subset $R \subset X$ and an embedding $i \colon R \hookrightarrow U$ to a complex manifold $U$, there exists a short exact sequence
\[
0 \to \cS_X |_{R} \to i^{-1}\cO_U /I_{R, U}^2 \ \xrightarrow[]{\ddr} i^{-1}\Omega_U/(I_{R, U} \cdot i^{-1}\Omega_U),
\]
where $I_{R, U}$ is the ideal sheaf of $R$ in $U$.
One can show that the natural map 
\[
\cS_{X}|_R \to i^{-1}\cO_U /I_{R, U}^2 \twoheadrightarrow \cO_R
\]
glues to define a morphism $\cS_X \to \cO_X$. We define a subsheaf $\cS_X^0 \subset \cS_X$ by the kernel of the map
\[
\cS_X \to \cO_X \twoheadrightarrow \cO_{X^{\red}}.
\]

If $X$ is the critical locus $\Crit(f)$ of a holomorphic function $f$ on a complex manifold $U$ such that $f |_{X^{\red}} = 0$, 
then $f + I_{X, U}^{2}$ defines an element of $\cS_{X}^0$.

\begin{defin}
Let $X$ be a complex analytic space. A section $s \in \Gamma(X, \cS_X^{0})$ is called a \textbf{d-critical structure} if for each point $x \in X$, there exists an open neighborhood $R \subset X$, an embedding $i \colon R \hookrightarrow U$ into a complex manifold, and a holomorphic function $f$ on $U$ with the property $f |_{R^{\red}} = 0$ such that $f + I_{R, U}^2 = s |_R$.
The quadruple $(R, U, f, i)$ is called a \textbf{d-critical chart} of $X$. A complex analytic space equipped with a d-critical structure is called a \textbf{d-critical analytic space}.
\end{defin}

The sheaf $\cS_{X}^0$ has the following functorial property: for a given morphism of complex analytic spaces $q \colon X_1 \to X_2$, there exist natural morphisms
\[
q^{\star} \colon q^{-1} \cS_{X_2}^{0} \to \cS_{X_1}^0.
\]
Now assume that $q$ is smooth surjective and take a section $s \in \Gamma(X, \cS_X^0)$. Then it is shown in \cite[Proposition 2.8]{Joy15} that $q^{\star} s$ is a d-critical structure if and only if $s$ is a d-critical structure.

Now let $\fX$ be a complex analytic stack. Then it is shown in \cite[Corollary 2.52]{Joy15} that there exists a sheaf $\cS_{\fX}^0$ on the lisse-analytic site of $\fX$ with the following property:
\begin{itemize}
    \item For a smooth morphism $t \colon T \to \fX$, there exists a natural isomorphism $\eta_{t} \colon \cS_{\fX}^{0} |_{T} \cong \cS_T ^0$.
    \item For a morphism 
    \[
    q \colon (t_1 \colon T_1 \to \fX) \to (t_2 \colon T_2 \to \fX)
    \]
    between complex analytic spaces smooth over $\fX$, the natural map $q^{-1} (\cS_{\fX}^0 |_{T_2}) \to \cS_{\fX}^0 |_{T_1}$ is identified with $q^{\star}$. 
\end{itemize}
For a smooth morphism $t \colon T \to \fX$ from a scheme and a section $s \in \Gamma(\fX, \cS_{\fX}^0)$,
we write $t^{\star} s \coloneqq \eta_t(s|_{T}) \in \Gamma(T, \cS_{T}^0)$.

\begin{defin}
For a complex analytic stack $\fX$, a section $s \in \Gamma(\fX, \cS_{\fX}^0)$ is called a \textbf{d-critical structure} if for any smooth surjective morphism $t \colon T \to \fX$, the element $t^{\star}s$ is a d-critical structure on $T$. A \textbf{d-critical stack} is a complex analytic stack $\fX$ equipped with a d-critical structure.
\end{defin}

For a complex analytic stack $\fX$ equipped with a d-critical structure $s$, Joyce \cite[\S 2.4, \S 2.8]{Joy15} defines a line bundle $K_{\fX, s}^{\vir}$ on $\fX^{\red}$ called the \textbf{virtual canonical bundle} of $(\fX, s)$. If there is no confusion, we simply write $K_{\fX}^{\vir} = K_{\fX, s}^{\vir}$. We now recall some of its basic properties.
Firstly assume $\fX$ is a complex analytic space and write $\fX = X$.
Take a d-critical chart $\sR = (R, U, f, i)$ of  $(X, s)$.
Then there exists a natural isomorphism
\[
\iota_{\sR} \colon K_{X, s}^{\vir} |_{R^{\red}} \cong  K_U^{\otimes 2}  |_{R^{\red}}.
\]
Let $q \colon X_1 \to X_2$ be a smooth morphism and $s_2$ be a d-critical structure on $X_2$. Write $s_1 = q^{\star}s_2$. Then it is shown in \cite[Proposition 2.30]{Joy15} that there exists a natural isomorphism
\[
\Upsilon_q \colon q^{\red, *}K_{X_2, s_2}^{\vir} \otimes \det(\Omega_{X_1/X_2})|_{X_1^{\red}}^{\otimes 2} \cong K_{X_1, s_1}^{\vir}
\]
with the following property: if we are given d-critical charts $\sR_1 = (R_1, U_1, f_1, i_1)$ of $(X_1, s_1)$ and $\sR_2 = (R_2, U_2, f_2, i_2)$ of $(X_2, s_2)$ such that $q(R_1) \subset R_2$, and a smooth morphism $\tilde{q} \colon U_1 \to U_2$ such that $f_1 = f_2 \circ \tilde{q}$ and $i_2 \circ q|_{R_1} = \tilde{q} \circ i_1$, the following diagram of line bundles on $R_1^{\red}$ commutes:
\[
\xymatrix{
 q^{\red, *}K_{X_2, s_2}^{\vir}|_{R_1^{\red}} \otimes \det(\Omega_{X_1/X_2})|_{R_1^{\red}}^{\otimes 2} \ar[r]^-{\Upsilon_q |_{R_1^{\red}} } \ar[d]^-{(q|_{R_1})^{\red, *} \iota_{\sR_2} \otimes \id}
 & K_{X_1, s_1}^{\vir} |_{R_1^{\red}} \ar[d]^-{\iota_{\sR_1}} \\
 \tilde{q}^* K_{U_2}^{\otimes {2}} |_{R_1^{\red}} \otimes \det(\Omega_{X_1/X_2})|_{R_1^{\red}}^{\otimes 2} \ar[r]
 & K_{U_1}^{\otimes {2}} |_{R_1^{\red}}.
}
\]
Here the bottom horizontal arrow is defined by the natural isomorphism $\Omega_{X_1/X_2}|_{R_1} \cong \Omega_{U_1/U_2}|_{R_1}$.

Now we treat the stacky case. Let $\fX$ be a complex analytic space
and $t \colon T \to \fX$ be a smooth morphism from an analytic space.
Then there exists a natural isomorphism 
\[
\Upsilon_t \colon t^{\red, *}K_{\fX, s}^{\vir} \otimes \det(\Omega_{T/\fX})|_{T^{\red}}^{\otimes 2} \cong K_{T, t^{\star} s}^{\vir},
\]
see \cite[Theorem 2.56]{Joy15}. 
For a morphism 
    \[
    q \colon (t_1 \colon T_1 \to \fX) \to (t_2 \colon T_2 \to \fX)
    \]
between complex analytic spaces smooth over $\fX$, the following diagram commutes:
\begin{equation}\label{eq:diagUp}
\begin{split}
\xymatrix{
t_1^{\red, *}K_{\fX, s}^{\vir} \otimes \det(\Omega_{T_1/\fX})|_{T_1^{\red}}^{\otimes 2} \ar[d] \ar@/^40pt/[rdd]^-{\Upsilon_{t_1}}
&  \\
q^{\red, *} (t_2^{\red, *}K_{\fX, s}^{\vir} \otimes \det(\Omega_{T_2/\fX})|_{T_2^{\red}}^{\otimes 2}  ) \otimes \det(\Omega_{T_1/T_2})|_{T_1^{\red}}^{\otimes 2} \ar[d]^-{q^{\red, *} \Upsilon_{t_2} \otimes \id}
&  \\
q^{\red, *}K_{T_2, t_2^{\star} s}^{\vir} \otimes \det(\Omega_{T_1/T_2})|_{T_1^{\red}}^{\otimes 2} \ar[r]^-{\Upsilon_q}
& K_{T_1, t_1^{\star}s}^{\vir}.
}
\end{split}
\end{equation}

For a d-critical stack $(\fX, s)$, an \textbf{orientation} is a choice of a line bundle $L$ on $\fX^{\red}$ and an isomorphism $o \colon L^{\otimes {2}} \cong K_{\fX, s}^{\vir}$.
For a smooth morphism $t \colon T \to \fX$, we define an orientation
\[
t^{\star}o \colon (t^{\red, *}L \otimes \det(\Omega_{T/\fX})|_{T^{\red}})^{\otimes 2} \cong K_{T, t^{\star }s}^{\vir}
\]
using $\Upsilon_t$.
If we are given a smooth morphism $    q \colon (t_1 \colon T_1 \to \fX) \to (t_2 \colon T_2 \to \fX)$ between analytic spaces smooth over $\fX$, there exists a natural isomorphism
\begin{equation}\label{eq:oriass}
t_1^{\star}o \cong q^{\star} t_2^{\star}o.
\end{equation}

\subsection{Vanishing cycle complexes on d-critical stacks}\label{ssec:van}

In this subsection, we recall some basic properties of the vanishing cycle functors and the vanishing cycle complexes associated with oriented d-critical stacks.

Let $U$ be a complex manifold and $f$ be a holomorphic function on $U$.
Write $U_0 = f^{-1}(0)$.
Then the \textbf{vanishing cycle functor}
\[
\varphi_f \colon D^b_c(U) \to D^b_c(U_0).
\]
is defined by the following formula 
\[
\varphi_f \coloneqq (U_0 \hookrightarrow U_{\leq 0})^*(U_{\leq 0} \hookrightarrow U)^!,
\]
where $U_{\leq 0} \subset U$ denotes the closed subset ${\mathrm{Re}(f)}^{-1}(\bR_{\leq 0})$. It is shown in \cite[Corollary 10.3.13]{KS13} that the functor $\varphi_f$ preserves the perversity.
If there is no confusion, we write $\varphi_f \coloneqq \varphi_f(\bQ_U[\dim U])$.

Let $q \colon V \to U$ be a holomorphic map between complex manifolds. Write $V_0 \coloneqq (f \circ q)^{-1}(0)$ and we let $q_0 \colon V_0 \to U_0$ be the restriction of $q$.
By the definition of the vanishing cycle functor, we have the following base change morphisms
\begin{align*}
 \varphi_{f} \circ q_* &\to {q_0}_* \circ \varphi_{f \circ q}  \\
 q_0^* \varphi_f &\to \varphi_{f\circ q} \circ q^*.
\end{align*}
The first morphism is an isomorphism if $q$ is proper and the latter morphism is an isomorphism if $q$ is smooth. These are direct consequences of the proper/smooth base change theorem.

Now let $\fU$ be a smooth complex analytic stack and $f$ be a holomorphic function on $\fU$.
Write $\fU_0 \coloneqq f^{-1}(0)$.
For a perverse sheaf 
$\cP \in \Perv(\fX)$, we define the perverse sheaf 
\[
\varphi_f(\cP) \in \Perv(\fU_0)
\]
as follows: Take a smooth surjective morphism $q \colon U \to \fU$. We let $\pr_i \colon U \times_{\fU} U \to U$ denote the $i$-th projection and $\pr_{i, 0} \colon (f \circ q \circ \pr_i)^{-1}(0) \to (f \circ q)^{-1}(0)$ denote the restriction of $\pr_i$. Then we have a natural isomorphism
\begin{align*}
\pr_{1, 0}^* \varphi_{f \circ q}(q^*\cP) 
\cong \varphi_{f \circ q \circ \pr_1}(\pr_1^*q^*\cP) 
\cong \varphi_{f \circ q \circ \pr_2}(\pr_2^*q^*\cP)
\cong \pr_{2, 0}^* \varphi_{f \circ q}(q^*\cP).
\end{align*}
This isomorphism satisfies the cocycle condition, hence the shifted perverse sheaf $\varphi_{f \circ q}(q^* \cP)$ descends to a perverse sheaf $\varphi_f(\cP) \in \Perv(\fU_0)$. One can show that the construction does not depend on the choice of the smooth morphism $q$.

Now we recall the vanishing cycle complex associated with an oriented d-critical stack constructed in \cite[Theorem 4.8]{BBBBJ15}.

First we treat the non-stacky case. Let $(X, s, o)$ be an oriented d-critical analytic space. Then it is shown in \cite[Theorem 6.9]{BBDJS15} that there is a natural perverse sheaf 
\[
 \varphi_{X, s, o} \in \Perv(X)
\]
called the \textbf{vanishing cycle complex} associated with $(X, s, o)$. We sometimes omit $s$ and $o$ and write $\varphi_{X} = \varphi_{X, s, o}$ if there is no confusion.
For a d-critical chart $\sR = (R, U, f, i)$ of $(X, s)$,
we have a natural isomorphism
\[
\omega_{\sR} \colon \varphi_{X, s, o}|_R \cong i^* \varphi_f \otimes_{\bZ/2\bZ} Q_{\sR}^o,
\]
where $Q_{\sR}^o$ is a $\bZ/2\bZ$-local system on $R$ parametrizing local square roots of the isomorphism
\[
L^{\otimes 2} |_{R^{\red}} \xrightarrow[\cong]{o} K_{X, s}^{\vir} |_{R^{\red}} \cong i^* K_U^{\otimes 2}|_{R^{\red}}.
\]

\begin{ex}\label{ex:vanloc}
Let $U$ be a complex manifold and $f \colon U \to \bA^1$ be a holomorphic function such that $f|_{\Crit(f)^{\red}} =0$. Write $X = \Crit(f)$ and equip it with the canonical d-critical structure $s$ and the canonical orientation $o \colon K_U |_{X^{\red}}^{\otimes 2} \cong K_{X, s}^{\vir}$. Then $\sR \coloneqq (X, U, f, X \hookrightarrow U)$ defines a d-critical chart. In this case the local system $Q_{\sR}^o$ is trivial.
Therefore we have a natural isomorphism
\[
\varphi_{X, s, o} \cong \varphi_f |_X. 
\]
\end{ex}

Let $q \colon X_1 \to X_2$ be a smooth morphism and equip $X_2$ with a d-critical structure $s_2$ and an orientation $o_2$. Write $s_1 = q^{\star}s_2$ and $o_1 = q^{\star} o_2$. Then there exists a natural isomorphism of perverse sheaves
\[
\Theta_q \colon \varphi_{X_1,s_1, o_1} \cong q^* \varphi_{X_2, s_2, o_2}[\dim q]
\]
with the following property: 
If we are given d-critical charts $\sR_1 = (R_1, U_1, f_1, i_1)$ of $(X_1, s_1)$ and $\sR_2 = (R_2, U_2, f_2, i_2)$ of $(X_2, s_2)$ such that $q(R_1) \subset R_2$, and a smooth morphism $\tilde{q} \colon U_1 \to U_2$ such that $f_1 = f_2 \circ \tilde{q}$ and $i_2 \circ q|_R = \tilde{q} \circ i_1$, the following diagram in $\Perv(R_1)$ commutes:
\begin{equation}\label{eq:commvanpull}
\xymatrix@C=70pt{
\varphi_{X_1, s_1,  o_1}|_{R_1} \ar[r]^-{\omega_{\sR_1}} \ar[d]^-{\Theta_q |_{R_1}}
& i_1^* \varphi_{f_1} \otimes_{\bZ/2\bZ} Q_{\sR_1}^{o_1} \ar[d]  \\
(q|_{R_1})^*(\varphi_{X_2, s_2, o_2})[\dim q] \ar[r]^-{(q|_{R_1})^*\omega_{\sR_2}[\dim q]} 
& (q|_{R_1})^*(i_2^* \varphi_{f_2} \otimes_{\bZ/2\bZ} Q_{\sR_2}^{o_2})[\dim q],
}
\end{equation}
where the right vertical arrow is defined using the natural isomorphisms
$\varphi_{f_1} \cong (\tilde{q} |_{(f_2 \circ \tilde{q})^{-1}(0)})^* \varphi_{f_2}[\dim q]$ and $Q_{\sR_1}^{o_1} \cong (q|_{R_1})^* Q_{\sR_2}^{o_2}$.

Now we move to the stacky case. Let $(\fX, s, o)$ be a d-critical stack. Then it is shown in \cite[Theorem 4.8]{BBBBJ15} that there exists a natural perverse sheaf
\[
\varphi_{\fX, s, o} \in \Perv(\fX)
\]
with the following property:
If we are given a smooth morphism $t \colon T \to \fX$ from a complex analytic space, there exists a natural isomorphism
\[
\Theta_t \colon \varphi_{T, t^{\star}o, t^{\star}o} \cong t^* \varphi_{X, s, o}[\dim t].
\]
Furthermore, if we are given a smooth morphism $q \colon (t_1 \colon T_1 \to \fX) \to (t_2 \colon T_2 \to \fX)$ between schemes smooth over $\fX$,
the following diagram commutes:
\begin{equation}\label{eq:commvan}
\xymatrix{
\varphi_{T_1, t_1^{\star}s, t_1^{\star}o} \ar[r]^-{\sim} \ar[d]^-{\Theta_{t_1}}
& \varphi_{T_1, q^{\star}t_2^{\star}s, q^{\star}t_2^{\star}o} \ar[r]^-{\Theta_q}
& q^* \varphi_{T_2, t_2^{\star}s, t_2^{\star}o} \ar[d]^-{q^* \Theta_{t_2}}[\dim q] \\
t_1^* \varphi_{\fX, s, o}[\dim t_1] \ar[rr]^-{\sim}
& {}
& q^* t_2^* \varphi_{\fX, s, o}[\dim t_1].
}
\end{equation}

Let $\fU$ be a smooth Artin stack and $f \colon \fU \to \bA^1$ be a regular function on it.
Then it is shown in Example \ref{ex:crit2} that $\DCrit(f)$ carries a natural $(-1)$-shifted symplectic structure hence there exists a natural d-critical structure $s$ on its classical truncation $\fX \coloneqq \Crit(f)$.
We will see in Lemma \ref{lem:natori} that the d-critical analytic stack $(\fX, s)$ admits a canonical orientation
$o \colon K_{\fU}^{\otimes {2}} |_{\fX^{\red}} \cong K_{\fX, s}^{\vir}$.

\begin{prop} \label{prop:joyce=van}
There exists a natural isomorphism of perverse sheaves: 
\[
\theta \colon \varphi_{\fX, s, o} \cong \varphi_f(\bQ_{\fU}[\dim \fU]).
\]
\end{prop}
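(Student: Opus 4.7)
The plan is to construct the isomorphism $\theta$ by smooth descent from an atlas, reducing to the tautological identification over schemes given in Example \ref{ex:vanloc}.

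First, choose a smooth surjection $u \colon U \to \fU$ from a complex analytic space (e.g.\ an atlas). Pulling back gives a regular function $f_U \coloneqq f \circ u$ on the smooth $U$, whose critical locus is $X \coloneqq U \times_{\fU} \fX = \Crit(f_U)$. Equip $X$ with the canonical d-critical structure $s_U$ coming from $\Crit(f_U)$; by the compatibility of the $(-1)$-shifted symplectic structure on $\DCrit$ with smooth pullback, this agrees with $u^{\star} s$, and similarly the canonical orientation on $X$ agrees with $u^{\star} o$. By Example \ref{ex:vanloc}, the d-critical chart $\sR_U \coloneqq (X, U, f_U, X \hookrightarrow U)$ has trivial $\bZ/2\bZ$-local system $Q_{\sR_U}^{u^{\star}o}$, so $\omega_{\sR_U}$ produces a canonical isomorphism
\[
\alpha_U \colon \varphi_{X, u^{\star}s, u^{\star}o} \xrightarrow{\sim} \varphi_{f_U}(\bQ_U[\dim U])\bigl|_X.
\]

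Now I would combine $\alpha_U$ with the two "smooth pullback" isomorphisms available on the two sides: the isomorphism $\Theta_u \colon \varphi_{X, u^{\star}s, u^{\star}o} \cong u^{*}\varphi_{\fX, s, o}[\dim u]$ from \cite[Theorem 4.8]{BBBBJ15}, and the smooth base change isomorphism $u_0^{*}\varphi_{f}(\bQ_{\fU}[\dim \fU])[\dim u] \cong \varphi_{f_U}(\bQ_U[\dim U])|_X$ built into the definition of $\varphi_f$ on the smooth stack $\fU$ in \S \ref{ssec:van}. Composing these yields an isomorphism of perverse sheaves on $X$
\[
u^{*}\varphi_{\fX, s, o}[\dim u] \xrightarrow{\sim} u^{*}\varphi_f(\bQ_{\fU}[\dim \fU])[\dim u].
\]
To descend this to an isomorphism over $\fX$, I would verify the cocycle condition on the two projections $\pr_1,\pr_2 \colon U \times_{\fU} U \rightrightarrows U$. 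This amounts to checking that the square built from $\alpha_U$, $\Theta_{\pr_i}$, and the smooth base change for vanishing cycles along $\pr_i$ commutes, which reduces to the commutativity of the diagram \eqref{eq:commvan} on the d-critical side and to the analogous cocycle in the descent definition of $\varphi_f$ on the smooth-stack side.

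The independence of $\theta$ from the choice of atlas $u$ follows from the same compatibility by passing to a common refinement. The main obstacle will be checking the descent compatibility cleanly: one must trace the identifications $\Theta_u$, $\omega_{\sR_U}$, and the smooth base change morphism $\pr_{1,0}^{*}\varphi_{f \circ q} \cong \varphi_{f \circ q \circ \pr_1}\pr_1^{*}$ through the compatibility square \eqref{eq:commvanpull}, and to check that the canonical trivializations of the orientation local systems $Q_{\sR}^{o}$ attached to the canonical d-critical charts $\sR_U$, $\sR_{U \times_{\fU} U}$ are respected by pullback along $\pr_i$. Once this bookkeeping is in place, the resulting descended morphism on $\fX$ is the desired natural isomorphism $\theta$.
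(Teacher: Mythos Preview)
Your proposal is correct and follows essentially the same route as the paper: pull back along a smooth atlas $q\colon U\to\fU$, invoke Example \ref{ex:vanloc} to identify $\varphi_{X,s_X,o_X}$ with $\varphi_{f\circ q}$, compose with $\Theta_q$ and smooth base change, and descend by checking the cocycle on $U\times_{\fU}U$. The paper carries out exactly this plan, isolating as separate lemmas the two compatibilities you flag as bookkeeping (that $(q|_X)^{\star}s$ agrees with the canonical d-critical structure on $\Crit(f\circ q)$, proved via an explicit homotopy of shifted $2$-forms, and that the orientations match, which is Lemma \ref{lem:natori} together with the diagram \eqref{eq:commori}); the only extra wrinkle is that since $U\times_{\fU}U$ need not be a scheme, the paper passes to an \'etale cover $\eta\colon V\to U\times_{\fU}U$ before checking the cocycle.
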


We postpone the proof to \S \ref{ssec:joyvan}.

\begin{rmk}
The argument in \cite[Theorem 4.8]{BBBBJ15} shows that the perverse sheaf $\varphi_{\fX, s, o}$ naturally extends to a mixed Hodge module $\varphi_{\fX, s, o}^{\mathrm{mhm}}$ and to a monodromic mixed Hodge module $\varphi_{\fX, s, o}^{\mathrm{mmhm}}$. Proposition \ref{prop:joyce=van} extends to an isomorphism of monodromic mixed Hodge modules with the same proof. We refer the reader to \S \ref{ssec:MMHM} for a brief discussion on monodromic mixed Hodge modules.
\end{rmk}

\subsection{Maulik--Toda's construction of Gopakumar--Vafa invariants}
\label{sec:GVdefin}
In this subsection, we recall 
the definition of generalized Gopakumar--Vafa (GV) invariants 
following \cite{MT18, Tod17}. 
Let $X$ be a smooth quasi-projective 
Calabi--Yau threefold and
$H$ be an ample divisor on $X$. 

\begin{defin} \label{def:slope}
Let $E$ be a pure one-dimensional coherent sheaf 
with compact support on $X$. 
\begin{enumerate}
    \item We define the \textbf{$H$-slope} to be 
    \[
    \mu_H(E)\coloneqq
    \frac{\chi(E)}{H.[E]}, 
    \]
    where $[E] \in H_2(X, \bZ)$ denotes 
    the second homology class of $E$. 
    \item The sheaf $E$ is 
    \textbf{$\mu_H$-semistable} (resp. \textbf{stable}) if 
    for any saturated subsheaf $0 \neq F \subsetneq E$, 
    the inequality 
    \[
    \mu_H(F) \leq \mu_H(E) \quad 
    (\mbox{resp. } \mu_H(F) < \mu_H(E) ) 
    \]
    holds. 
\end{enumerate}
\end{defin}

For a given element 
$v=(\beta, m) \in H_2(X, \bZ) \times \bZ$, 
we denote by $\fM_H(v)$ the moduli stack of 
$\mu_H$-semistable one-dimensional sheaves $E$ satisfying 
\[
[E]=\beta, \quad \chi(E)=m. 
\]

The stack $\fM_H(v)$ admits 
the good moduli space 
$p \colon \fM_H(v) \to M_H(v)$, 
and we have the Hilbert--Chow morphism 
\begin{equation} \label{eq:HC}
\pi_M \colon M_H(v)^{\red} \to \Chow_\beta(X) 
\end{equation}
sending a sheaf $E$ to its fundamental one cycle. 
Here, $\Chow_\beta(X)$ denotes the Chow variety 
of compactly supported effective one cycles 
with homology class $\beta$ 
(see \cite{kol96} for the definition. 
Note that it is denoted as $\Chow'(X)$ in \cite{kol96}). 
We denote by $\pi_\fM$ the composition 
\[
\pi_\fM \colon \fM_H(v)^{\red} \to M_H(v)^{\red} 
\to \Chow_\beta(X). 
\]

Recall from Example \ref{ex:CYderived} that 
the stack $\fM_H(v)$ is the classical truncation 
of a $(-1)$-shifted derived Artin stack. 
In particular, the stack $\fM_H(v)$ carries 
a natural d-critical structure and \eqref{eq:cancot} implies that there exists a natural isomorphism
\[
K^{\vir}_{\fM_H(v)} \cong
\det(\dR p_{\fM*}\dR\mcH om
(\mcE, \mcE ) )|_{\fM_H(v)^{\red}}, 
\]
where $p_\fM \colon \fM_H(v) \times X \to \fM_H(v)$ 
denotes the projection 
and $\mcE$ denotes the universal sheaf on 
$\fM_H(v) \times X$. 
In order to define the well-defined notion of 
Gopakumar--Vafa invariants, 
Maulik--Toda \cite{MT18} and Toda \cite{Tod17} 
proposed the following conjecture 
on the virtual canonical bundle 
of the stack $\fM_H(v)$. 

\begin{conj}[{\cite[Conjecture 2.10]{Tod17}}]
\label{conj:CYcondi}
The stack $\fM_H(v)$ is Calabi--Yau (CY) 
at any point $\gamma \in \Chow_\beta(X)$, i.e., 
there exists an analytic open neighborhood 
$\gamma \in U \subset \Chow_\beta(X)$ 
such that the virtual canonical bundle 
$K^{\vir}_{\fM}$ is trivial on $\pi^{-1}_\fM(U)$. 
\end{conj}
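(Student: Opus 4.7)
The statement is a priori open for arbitrary Calabi--Yau threefolds; I restrict to the case $X=\Tot_C(N)$ of a local curve that the rest of the paper requires, where the companion paper's global critical chart is available. The plan is to leverage that chart to rewrite $K^{\vir}_{\fM}$ in a form whose triviality over $\pi_{\fM}^{-1}(U)$ can be verified directly.

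First, I would invoke \cite[Theorem 5.6]{KM21}: pick a short exact sequence $0\to L_1\to N\to L_2\to 0$ with $\deg L_2$ sufficiently large (always possible by choosing $L_1$ with $\deg L_1\ll 0$), set $Y\coloneqq \Tot_C(L_2)$, and obtain a regular function $f\colon M_Y\to\bA^1$ together with an identification
\[
\fM_X\;\cong\;\Crit(f\circ p_Y)\;\subset\;\fM_Y.
\]
Because $Y$ is a smooth surface and deformations of one-dimensional sheaves on it are unobstructed in the relevant range, the ambient stack $\fM_Y$ is smooth. The canonical orientation on a critical locus (Lemma \ref{lem:natori}) then supplies an isomorphism
\[
K^{\vir}_{\fM_X}\;\cong\;K_{\fM_Y}^{\otimes 2}\big|_{\fM_X^{\red}},
\]
reducing the problem to trivialising $K_{\fM_Y}^{\otimes 2}\big|_{\pi_{\fM}^{-1}(U)}$ for some analytic neighborhood $U$ of $\gamma$.

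Next, I would express $K_{\fM_Y}$ via the universal sheaf $\mcE_Y$ on $\fM_Y\times Y$. Since the tangent complex of $\fM_Y$ at $E$ is $\dR\Hom(E,E)[1]$, one obtains $K_{\fM_Y}\cong\det(\dR p_{\fM_Y *}\dR\sHom(\mcE_Y,\mcE_Y))^{\pm 1}$ up to a correction pulled back from $\Pic(Y)$ and twisted by $\omega_Y$. Restricting to $\fM_X$ and using $\omega_X\cong\mcO_X$ (which follows from $\det N\cong\omega_C$), Serre duality on $X$ yields a self-pairing on $\dR\sHom(\mcE_X,\mcE_X)$, making the square $\det(\dR p_{\fM*}\dR\sHom(\mcE_X,\mcE_X))^{\otimes 2}$ canonically trivial. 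The remaining correction factors through $\pi_{\fM}$; shrinking to a Stein neighborhood $U\subset\Chow_\beta(X)$ of $\gamma$ should kill any such pull-back and conclude the argument.

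The main obstacle is the middle step: promoting the Serre-duality \emph{self-pairing} on $\dR\sHom$ to a canonical \emph{trivialisation} of the determinant, rather than only the weaker statement that this determinant is $2$-torsion, and then matching it with the orientation coming from the critical chart. For local curves I expect this to follow by an explicit comparison with the BNR/spectral correspondence, where sheaves on $X$ correspond to twisted Higgs-like data on $C$ and the CY pairing takes a transparent determinantal form; but ensuring the isomorphisms are natural enough to descend along $\pi_{\fM}$ and to be compatible with the d-critical chart orientation is the subtle point.
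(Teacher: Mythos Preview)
Your opening reduction is correct and is exactly what the paper does: for a local curve the global critical chart of \cite{KM21} plus Lemma~\ref{lem:natori} give $K^{\vir}_{\fM_X}\cong K_{\fM_Y}^{\otimes 2}|_{\fM_X^{\red}}$, so it suffices to trivialise the right-hand side.

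The gap is your second step. You pivot back to the threefold, hoping Serre duality on $X$ will trivialise $\det(\dR p_{\fM*}\dR\sHom(\mcE_X,\mcE_X))^{\otimes 2}$; as you yourself flag, that only shows the determinant is $2$-torsion, and your proposed fix via the spectral picture is speculative. The paper never returns to $X$ at all. It proves the stronger statement that $K_{\fM_Y}$ is already \emph{globally} trivial (Proposition~\ref{prop:CYori}), by a direct determinant computation on the surface side. On any test scheme $T$, the tautological two-term resolution
\[
0 \to p_T^*(L_2^{-1}\boxtimes p_{T*}\mcE) \to p_T^* p_{T*}\mcE \to \mcE \to 0
\]
on $Y_T$ rewrites $\det\dR\sHom_{\pi_T}(\mcE,\mcE)$ in terms of $\mcF\coloneqq p_{T*}\mcE$ on $C_T$. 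Choosing $L_2$ globally generated, a section gives $0\to\mcO_C\to L_2\to\mcO_Z\to 0$ with $Z$ a finite set of points, and a second determinant manipulation collapses everything to $\bigotimes_{p\in Z}\det\dR\sHom(\mcF_p,\mcF_p)\cong\mcO_T$, functorially in $T$. Thus $K_{\fM_Y}\cong\mcO$ on the nose, the $2$-torsion issue never arises, and no shrinking to a Stein neighbourhood is needed.
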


Suppose that Conjecture \ref{conj:CYcondi} holds. 
Then we can take 
an orientation of $\pi^{-1}_\fM(U)$ with 
\[
\left(K^{\vir}_\fM|_{\pi_\fM^{-1}(U)} \right)^{1/2} 
\cong \mcO_{\pi^{-1}_\fM(U)}, 
\]
which we call a \textbf{Calabi--Yau (CY) orientation}. 
As we have seen in \S \ref{ssec:van}, 
we have the associated perverse sheaf 
\[
\varphi_{\fM_H(v)|_U} 
    \in \Perv(\fM_H(v)|_U ). 
\]
We then define the perverse sheaf on the good moduli space as 
\begin{equation} \label{eq:bps}
\varphi_{M_H(v)|_U}
\coloneqq {\mcH^1}( p_*\varphi_{\fM_H(v)|_U}) 
\in \Perv(M_H(v)|_U ), 
\end{equation}
where we denote by 
$\fM_H(v)|_U, M_H(v)|_U$ 
the pull-back of $\fM_H(v), M_H(v)$ 
along the open embedding $U \subset \Chow_\beta(X)$, respectively. 
Note that we denote by $\mcH^i(-)$ 
the $i$-th {\it perverse} cohomology. 

\begin{defin}
Suppose Conjecture \ref{conj:CYcondi} holds. 
For an element $\gamma \in \Chow_\beta(X)$, 
we define a Laurent polynomial $\Phi_H(\gamma, m)$ as follows: 
\begin{equation} \label{eq:GVpoly}
    \Phi_H(\gamma, m)\coloneqq
    \sum_{i \in \bZ}\chi( 
        \mcH^i(\pi_{M*}\varphi_{M_H(v)|_U} ) 
        ) y^i 
        \in \bZ[y^{\pm 1}], 
\end{equation}
where the perverse sheaf 
$\varphi_{M_H(v)|_U}$ is defined as in \eqref{eq:bps}. 
\end{defin}

\begin{rmk}
\begin{enumerate}
\item By \cite[Lemma 2.14]{Tod17}, 
the Laurent polynomial \eqref{eq:GVpoly} is independent 
of the choice of a CY orientation on $\fM_H(v)|_U$. 

\item The definition of the perverse sheaf in \eqref{eq:bps}
is motivated from the notion of BPS sheaves 
for quivers with super-potentials 
introduced by Davison--Meinhardt \cite{DM20}. 
See \cite[Section 2.8]{Tod17} for the detailed discussion. 
\end{enumerate}
\end{rmk}

The following \textbf{$\chi$-independence conjecture} 
is the main subject in this paper: 
\begin{conj}[{\cite[Conjecture 2.15]{Tod17}}] 
\label{conj:chi-indep}
The Laurent polynomial \eqref{eq:GVpoly}
is independent of $m \in \bZ$. 
\end{conj}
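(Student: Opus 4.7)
The plan is to prove the conjecture in the setting of local curves $X = \Tot_C(N)$ with $\det N \cong \omega_C$, as announced in Theorem \ref{thm:introchi}. The strategy reduces the problem about vanishing cycle complexes on the singular stack $\fM_X$ to a known $\chi$-independence statement about intersection complexes on the moduli of one-dimensional sheaves on a smooth surface. The entry point is the global critical chart construction of \cite{KM21}: fix an extension $0 \to L_1 \to N \to L_2 \to 0$ with $\deg L_2$ sufficiently large, set $Y \coloneqq \Tot_C(L_2)$, and obtain a regular function $f \colon M_Y \to \bA^1$ on the good moduli space of $\mu_H$-semistable one-dimensional sheaves on $Y$ such that $\fM_X \cong \{d(f \circ p_Y) = 0\} \subset \fM_Y$ as d-critical analytic stacks. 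Combined with Proposition \ref{prop:joyce=van}, this identifies Joyce's orientation-dependent vanishing cycle $\varphi_{\fM_X}$ with the usual analytic vanishing cycle of $f \circ p_Y$ on $\fM_Y$.

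Next, I would descend this identification along the good moduli morphism $p_Y \colon \fM_Y \to M_Y$. Using the support lemma of Appendix \ref{app:supplem} together with a decomposition of $p_{Y*}\bQ_{\fM_Y}$ into summands indexed by supports, one obtains an identification, up to cohomological shift, of the perverse sheaf $\varphi_{M_X|_U}$ defined in \eqref{eq:bps} with $\varphi_f(\sIC_{M_Y|_U})$. Pushing forward along the Hilbert--Chow morphism $\pi_M$ and using that $\varphi$ commutes with proper pushforward for a suitably induced function $\bar f$ on the Chow side yields
\[
\pi_{M*}\varphi_{M_X|_U} \cong \varphi_{\bar f}\bigl(\pi_{M_Y *}\sIC_{M_Y|_U}\bigr).
\]
I would then invoke the theorem of Maulik--Shen \cite{MS20} that $\pi_{M_Y *}\sIC_{M_Y}$ is $\chi$-independent. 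Since neither $f$ nor $\bar f$ depends on $m$, the vanishing cycle transports this independence to $\pi_{M*}\varphi_{M_X|_U}$, whence the Laurent polynomial \eqref{eq:GVpoly} is independent of $m$.

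The main obstacle will be the descent step. Because $p_Y$ is not proper, standard base change for vanishing cycles fails; instead one must use the support lemma (Appendix \ref{app:supplem}) to isolate the $\sIC_{M_Y}$-summand inside $p_{Y*}\bQ_{\fM_Y}$ and to verify that $\varphi_f$ preserves it compatibly with the perverse cohomology appearing in \eqref{eq:bps}. Matching Joyce's orientation-dependent construction with the canonical orientation arising from the critical locus presentation of $\fM_X$ inside $\fM_Y$ is a further technical point requiring the compatibility diagrams \eqref{eq:diagUp} and \eqref{eq:commvan}, supplied precisely by Proposition \ref{prop:joyce=van}. Once these compatibilities are in place, the geometric input of \cite{KM21} and the cohomological input of \cite{MS20} combine to give the full $\chi$-independence for arbitrary local curves.
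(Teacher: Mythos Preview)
Your overall architecture is right and matches the paper's: global critical chart from \cite{KM21}, then reduce to Maulik--Shen's $\chi$-independence for $\sIC_{M_Y}$. But the descent step---the identification of $\varphi_{M_X} \coloneqq \cH^1(p_{X*}\varphi_{\fM_X})$ with $\varphi_{f}(\sIC_{M_Y})$---is where your proposal goes wrong. You invoke the support lemma of Appendix~\ref{app:supplem}, but that lemma has nothing to do with this step: it concerns the support of $\varphi_{M_X}$ in the specific case $X=\Tot_C(\cO_C\oplus\omega_C)$ and is used only later, in Section~\ref{sec:Higgs}, to define the BPS sheaf on the Dolbeault moduli space. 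It does not provide any decomposition of $p_{Y*}\bQ_{\fM_Y}$.

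What the paper actually uses for the descent is the \emph{cohomological integrality theorem} for $L$-Higgs bundles (Theorem~\ref{thm:int-IC}), developed in the whole of Section~\ref{sec:int}. This gives
\[
\cH\bigl(p_* \varphi_{F_\mu \circ p}\sIC_{\fM^{\ss}_Y(\mu)}\bigr)\;\cong\;\Sym_{\boxtimes_\oplus}\!\bigl(\mH^*(\mathrm{B}\bC^*)_{\vir}\otimes \varphi_{F_\mu}\sIC_{M^{\ss}_Y(\mu)}\bigr),
\]
and taking the first perverse cohomology isolates exactly $\varphi_{F_\mu}\sIC_{M^{\ss}_Y}$, yielding Proposition~\ref{prop:BPSvsVan}. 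The proof of Theorem~\ref{thm:int-IC} in turn requires approximation by proper morphisms (\S\ref{sec:appro}) and Meinhardt's Grothendieck-group identity \cite{Mei15}; none of this is bypassed by any support argument.

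A second, smaller gap: you do not use the key fact that the function $f$ on $M_Y$ \emph{factors through the Hitchin base}, $f = g\circ h_Y$ with $g\colon B_Y\to\bA^1$ linear (Theorem~\ref{thm:KM}(ii), Proposition~\ref{prop:dchart}). This factorization is what makes the proper-pushforward step clean: since $h_Y$ is proper, $h_{Y*}\varphi_{g\circ h_Y}(\sIC_{M_Y}) \cong \varphi_g(h_{Y*}\sIC_{M_Y})$, and Maulik--Shen applies directly. Your vaguer ``suitably induced function $\bar f$ on the Chow side'' would need this same factorization to be made precise. The paper then finishes with Lemma~\ref{lem:Hitfin} (finiteness of $b|_{\im(h_X)}$) to extract the Euler characteristics of perverse cohomologies over $B_X$ from the isomorphism over $B_Y$; this step is also absent from your outline.
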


At this moment, the above conjecture is known to hold 
in the following cases: 
\begin{itemize}
    \item $X=\Tot_S(\omega_S)$, 
    where $S$ is a smooth projective surface, 
    and $\gamma$ is primitive \cite{Tod17}. 
    \item $X=\Tot_S(\omega_S)$, 
    where $S$ is a del Pezzo surface, 
    and $\gamma$ is arbitrary \cite{MS20, Yua21}. 
    \item $X=\Tot_C(\mcO(D) \oplus \omega_C(-D))$, 
    where $C$ is a smooth projective curve and 
    $D$ is a divisor with $\deg(D) > 2g(C)-2$, 
    and $\gamma$ is arbitrary \cite{MS20}. 
\end{itemize}

\begin{rmk}
Suppose that Conjecture \ref{conj:chi-indep} holds. 
Then we may write the Laurent polynomial \eqref{eq:GVpoly} 
as $\Phi(\gamma)\coloneqq\Phi_H(\gamma, 1)=
\Phi_H(\gamma, m)$ 
for $m \in \bZ$. 
Note that we can drop the subscript $H$ in the notation 
since for $m=1$, the moduli space is independent of 
the choice of an ample divisor $H$. 

Furthermore, for $m=1$, 
we know that the perverse sheaf $\phi_{M_H(v)}$ 
is Verdier self-dual. 
Hence there exist integers $n_{g, \gamma} \in \bZ$ 
for $g \geq 0$ such that the equation
\[
\Phi(\gamma)=\sum_{g \geq 0} n_{g, \gamma}\left(
y^{\frac{1}{2}}+y^{-\frac{1}{2}}
\right)^{2g} 
\]
holds. 
Following Maulik--Toda \cite{MT18}, 
we call the integers $n_{g, \gamma}$ as 
the \textbf{GV invariants} of $X$. 
\end{rmk}

\subsection{Local curves and twisted Higgs bundles}
\label{sec:local}
In this section, we introduce 
a class of Calabi--Yau threefolds which we call local curves. 
Then we review the results 
on the twisted Higgs bundles 
due to Maulik--Shen \cite{MS20}.

\subsubsection{Spectral correspondence for local curves}
Let $C$ be a smooth projective curve
and $N$ be a rank two vector bundle on $C$ with 
$\det N \cong \omega_C$. 
Then the total space $X\coloneqq\Tot_C(N)$ of the bundle $N$ 
gives an example of quasi-projective Calabi--Yau threefolds, 
which we call a \textbf{local curve}. 
Denote by $p \colon X \to C$ the projection. 

In this section, we recall the spectral-type correspondence 
for coherent sheaves on local curves. 
See e.g. \cite{Sim94} for the details. 

\begin{lem} \label{lem:spec}
Giving a compactly supported 
pure one-dimensional coherent sheaf on $X$ 
is equivalent to giving a pair $(E, \phi)$ 
of a locally free sheaf $E$ on $C$ 
and a morphism $\phi \in \Hom(E, E \otimes N)$ 
satisfying $\phi \wedge \phi =0$. 
\end{lem}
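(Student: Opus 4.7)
The statement is a version of the classical Beauville--Narasimhan--Ramanan spectral correspondence, and the plan is to realize it by identifying sheaves on $X$ with modules over the relative symmetric algebra. First, I would note that $p \colon X = \Tot_C(N) \to C$ is an affine morphism with $X = \mathbf{Spec}_C \Sym^{\bullet}(N^{\vee})$. Consequently, pushforward $p_*$ is fully faithful and induces an equivalence between the category of quasi-coherent sheaves on $X$ and the category of quasi-coherent $\Sym^{\bullet}(N^{\vee})$-modules on $C$. Under this equivalence, compactly supported coherent sheaves on $X$ correspond to $\Sym^{\bullet}(N^{\vee})$-modules whose underlying $\cO_C$-module is coherent and whose support meets each fiber of $p$ properly.

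Next I would translate the module data. Giving a $\Sym^{\bullet}(N^{\vee})$-module structure on a coherent $\cO_C$-module $E$ is the same as giving a morphism $N^{\vee} \otimes E \to E$, i.e., by adjunction a morphism $\phi \colon E \to E \otimes N$, subject to the requirement that the induced action $(N^{\vee})^{\otimes 2} \otimes E \to E$ factors through $\Sym^2(N^{\vee}) \otimes E$. Dualizing and using $\det N \cong \omega_C$, the obstruction to this factorization is precisely the composition $(\phi \otimes \id_N) \circ \phi \colon E \to E \otimes N \otimes N$ projected to $E \otimes \wedge^2 N$, which equals $\phi \wedge \phi$. Thus $\Sym^{\bullet}(N^{\vee})$-module structures on $E$ are in bijection with morphisms $\phi \colon E \to E \otimes N$ satisfying $\phi \wedge \phi = 0$.

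It remains to match the purity and local freeness conditions. If $F$ is a compactly supported pure one-dimensional sheaf on $X$, then since each fiber of $p$ is isomorphic to $\bA^2$ and $\Supp F$ is compact, the image $p(\Supp F)$ cannot be zero-dimensional; by irreducibility of $C$ it must be all of $C$ and $p|_{\Supp F}$ must be finite. Finiteness of $p|_{\Supp F}$ implies that $F$ is pure as an $\cO_X$-module if and only if $p_*F$ is pure as an $\cO_C$-module, which on the smooth curve $C$ is equivalent to $p_*F$ being locally free. Conversely, starting from $(E,\phi)$ with $E$ locally free, the corresponding $\Sym^{\bullet}(N^{\vee})$-module gives a sheaf $F$ on $X$ whose support is finite over $C$ (hence compact) and for which $p_*F = E$ is torsion-free, so $F$ is pure of dimension one.

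These three steps combine to give the claimed equivalence. I do not expect any serious obstacle: the only non-formal point is the dimension argument showing that compactly supported one-dimensional sheaves are automatically finite over $C$, but this is immediate from the structure of $p$.
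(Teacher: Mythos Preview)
The paper does not actually prove this lemma: it is stated as a recollection of the spectral correspondence with a reference to \cite{Sim94} for the details, and no argument is given in the text. Your proposal supplies precisely the standard proof one would expect from such a reference---identifying $\Coh(X)$ with $\Sym^\bullet(N^\vee)$-modules on $C$ via the affine morphism $p$, translating the module structure into a Higgs field $\phi$ with the symmetric-algebra relation becoming $\phi\wedge\phi=0$, and then matching the purity and compact-support conditions with local freeness of $E$. The argument is correct and is essentially the same route that \cite{Sim94} (in the line-bundle case) and its rank-two generalizations follow, so there is nothing to contrast.
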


We call a pair $(E, \phi)$ in the above lemma as 
an \textbf{$N$-Higgs bundle}. 
We can define the slope semistability for $N$-Higgs bundles 
as in Definition \ref{def:slope}: 

\begin{defin} \label{def:slopeHiggs}
Let $(E, \phi)$ be an $N$-Higgs bundle. 
\begin{enumerate}
    \item We define the \textbf{slope} of $(E, \phi)$ as 
    \[
    \mu(E)\coloneqq\frac{\chi(E)}{\rk(E)}. 
    \]
    \item The $N$-Higgs bundle $(E, \phi)$ is 
    \textbf{$\mu$-semistable} (resp. \textbf{stable}) if 
    for any saturated subsheaf $0 \neq F \subsetneq E$ 
    with $\phi(F) \subset F \otimes N$, 
    the inequality 
    \[
    \mu(F) \leq \mu(E) \quad 
    (\mbox{resp. } \mu(F) < \mu(E) ) 
    \]
    holds. 
\end{enumerate}
\end{defin}

\begin{lem} \label{lem:spec-stab}
Take an ample divisor $H$ on $C$. 
Let $\mcE$ be a pure one-dimensional coherent sheaf on $X$ and
$(E, \phi)$ be the corresponding $N$-Higgs bundle. 

Then the sheaf $\mcE$ is 
$\mu_{p^*H}$-(semi)stable 
if and only if 
the $N$-Higgs bundle $(E, \phi)$ is 
$\mu$-(semi)stable. 
\end{lem}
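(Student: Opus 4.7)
The plan is to reduce both notions of (semi)stability to the same numerical inequality by showing that, under the spectral correspondence, the two slopes differ only by the positive constant $\deg(H)$, and that saturated subobjects on one side correspond to saturated $\phi$-invariant subobjects on the other.

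First I would recall that the correspondence of Lemma \ref{lem:spec} is implemented by the affine morphism $p \colon X \to C$: a one-dimensional pure sheaf $\mcE$ on $X$ with underlying $\cO_C$-module $E = p_* \mcE$ becomes an $N$-Higgs bundle with Higgs field given by multiplication by the tautological section of $p^*N$. Since $p$ is affine, $\chi(\mcE) = \chi(p_* \mcE) = \chi(E)$. Moreover, the support one-cycle $[\mcE]$ pushes forward to $p_*[\mcE] = \rk(E) \cdot [C]$ in $C$, so by the projection formula
\[
p^*H \cdot [\mcE] \;=\; H \cdot p_*[\mcE] \;=\; \rk(E) \cdot \deg(H).
\]
Combining these, $\mu_{p^*H}(\mcE) = \mu(E)/\deg(H)$.

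Next I would set up the bijection between subobjects. A saturated subsheaf $0 \neq \mcF \subsetneq \mcE$ (i.e., one for which $\mcE/\mcF$ is again pure one-dimensional) corresponds, via $p_*$, to a subsheaf $F \subset E$ which is $\phi$-invariant in the sense that $\phi(F) \subset F \otimes N$, with $F$ saturated in $E$ (so that $E/F$ is locally free). Conversely, any such $F$ defines, through the spectral construction, a saturated $\cO_X$-submodule $\mcF \subset \mcE$. The same computation as above yields $\mu_{p^*H}(\mcF) = \mu(F)/\deg(H)$.

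With these two identifications in hand, the equivalence is immediate: since $\deg(H)>0$, the inequality $\mu_{p^*H}(\mcF) \le \mu_{p^*H}(\mcE)$ (resp.\ strict) for all saturated $0\neq \mcF \subsetneq \mcE$ is equivalent to $\mu(F) \le \mu(E)$ (resp.\ strict) for all saturated $\phi$-invariant $0 \neq F \subsetneq E$. The only point that really requires care, and which I expect to be the main (if minor) obstacle, is justifying that the spectral correspondence really does identify saturated $\cO_X$-subsheaves of $\mcE$ with saturated $\phi$-invariant $\cO_C$-subsheaves of $E$; this is standard but deserves a short verification using purity of $\mcE$ together with the affineness of $p$.
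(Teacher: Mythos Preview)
Your argument is correct and is exactly the standard one. The paper does not supply a proof of this lemma at all: it is stated as part of the spectral correspondence recalled from \cite{Sim94}, so there is nothing to compare against beyond noting that your outline is the expected verification. The one point you flag as needing care---that saturated $\cO_X$-subsheaves of $\mcE$ correspond to saturated $\phi$-invariant $\cO_C$-subsheaves of $E$---is indeed handled by combining exactness of $p_*$ (since $p$ is affine) with Lemma~\ref{lem:spec}, which characterizes pure one-dimensional sheaves on $X$ as those whose pushforward to $C$ is locally free.
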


Let $\fM^{\semist}_X(r, m)\coloneqq\fM_{p^*H}(r[C], m)$ 
be the moduli stack of $\mu_{p^*H}$-semistable 
sheaves $\mcE$ on $X$ satisfying 
$[\mcE]=r[C]$ and $\chi(\mcE)=m$. 
Let $M^{\semist}_X(r, m)$ be the good moduli space of 
$\fM^{\semist}_X(r, m)$. 
By the above lemma, 
$\bC$-valued points of $\fM^{\ss}_X(r, m)$ correspond to $\mu$-semistable $N$-Higgs bundles. 

The moduli space $M^{\semist}_X(r, m)$ admits 
a Hitchin type morphism: 
Define a \textbf{Hitchin base} as 
\[
B_X\coloneqq\bigoplus_{i=1}^r \mH^0(C, \Sym^i(N) ), 
\]
and a \textbf{Hitchin morphism} as follows: 
\begin{equation} \label{eq:HitX}
h_X \colon M^{\semist}_X(r, m) \to B_X, \quad 
(E, \phi) \mapsto 
( 
\tr(\phi^i)
)_{i=1}^r, 
\end{equation}
where $\phi^i \colon E \to E \otimes \Sym^i(N)$ 
is obtained by the $i$-th iteration of $\phi$. 

\begin{rmk} \label{rmk:hitchow}
We can construct a bijection between 
the sets of closed points of 
$\im(h_X)$ and $\im(\pi_M)$ 
by sending a point in $\im(h_X)$ 
to its spectral curve, where 
$\pi_M \colon M^{\semist}_X(r, m)^{\red} \to \Chow_{r[C]}(X)$ denotes the Hilbert--Chow morphism
defined as in \eqref{eq:HC}. 
Moreover, by the properness of the morphisms 
$h_X$ and $\pi_M$, the spaces 
$\im(h_X)$ and $\im(\pi_M)$ are homeomorphic. 

As a result, the GV invariants 
do not change if we replace 
the Hilbert--Chow morphism with 
the Hitchin morphism. 
Hence we use the Hitchin morphism 
for the GV theory of local curves in this paper. 
\end{rmk}

\subsubsection{Twisted Higgs bundles}
Let $L$ be a line bundle on a smooth projective curve $C$. 
Denote by $Y \coloneqq \Tot_C(L)$ the total space of $L$. 

An \textbf{$L$-Higgs bundle} is a pair $(E, \theta)$ 
consisting of a locally free sheaf $E$ on $C$ 
and a homomorphism $\theta \in \Hom(E, E \otimes L)$. 
For the canonical divisor $L=K_C$, 
the notion of $K_C$-Higgs bundles 
agrees with the usual notion of Higgs bundles. 

As in Definition \ref{def:slopeHiggs}, 
we can define the notion of $\mu$-semistability 
for $L$-Higgs bundles. 
We denote by $\fM^{\semist}_Y(r, m)$ 
the moduli stack of $\mu$-semistable 
$L$-Higgs bundles $(E, \theta)$ with 
$\rk(E)=r, \chi(E)=m$, 
and $M^{\semist}_Y(r, m)$ its good moduli space. 
Similarly to \eqref{eq:HitX}, 
we have a Hitchin morphism 
\begin{equation} \label{eq:HitS}
h_Y \colon M^{\semist}_Y(r, m) \to B_Y \coloneqq \bigoplus_{i=1}^r \mH^0(C, L^{\otimes i})
\end{equation}
sending an $L$-Higgs bundle $(E, \theta)$ 
to $(\tr(\theta^i))_{i=1}^r$. 

We denote by 
$\widetilde{h}_Y \colon \fM^{\semist}_Y(r, m) \to B_Y$ 
the composition 
\begin{equation} \label{eq:HitSstack}
    \widetilde{h}_Y \colon \fM^{\semist}_Y(r, m) 
    \to M^{\semist}_Y(r, m) \to B_Y.
\end{equation}

Given an element $a \in B_Y$, 
we denote by $C_a \subset Y$ 
its spectral curve. 
Define an open dense subset $U \subset B_Y$ as 
\[
U\coloneqq\left\{
    a \in B_Y : C_a \mbox{ is smooth}
    \right\}, 
\]
and let $g \colon \mcC \to U$ be the universal spectral curve. 
The following result plays a key role in this paper: 

\begin{thm}[{\cite[Theorem 0.4]{MS20}}]
\label{thm:MS}
Suppose that $\deg(L) > 2g(C)-2$. 
Then we have an isomorphism 
\[
 h_{Y*}\sIC_{M^{\semist}_Y(r, m)} \cong 
\bigoplus_{i=0}^{2d}\sIC(
\wedge^i \dR^1 g_*\bQ_\mcC 
)[-i+d], 
\]
where $d$ denotes the genus of the fibers of $g \colon \mcC \to U$. 

In particular, we have isomorphisms 
\[
h_{Y*}\sIC_{M^{\semist}_Y(r, m)} 
\cong  h_{Y*}\sIC_{M^{\semist}_Y(r, m')}
\]
for all $m, m' \in \bZ$. 
\end{thm}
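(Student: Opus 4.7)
The plan is to apply the Beilinson--Bernstein--Deligne decomposition theorem to the proper morphism $h_Y$, establish a Ng\^o-type support theorem ensuring that every simple summand has full support $B_Y$, and pin down each summand by computing the restriction to the open dense locus $U \subset B_Y$ where the spectral curve is smooth.

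First, since $h_Y \colon M_Y^{\semist}(r, m) \to B_Y$ is proper and $\sIC_{M_Y^{\semist}(r, m)}$ is semisimple of geometric origin, the decomposition theorem yields
\[
h_{Y*}\sIC_{M_Y^{\semist}(r, m)} \cong \bigoplus_{\alpha} \sIC(Z_\alpha, \mcL_\alpha)[n_\alpha]
\]
with simple local systems $\mcL_\alpha$ on irreducible closed subvarieties $Z_\alpha \subset B_Y$. The main obstacle is the support theorem asserting that every $Z_\alpha$ equals the full Hitchin base $B_Y$. I would follow Ng\^o's strategy, identifying $h_Y$ with a weak abelian fibration over $B_Y$ whose abelian scheme over $U$ is the relative Jacobian of the universal spectral curve $g \colon \mcC \to U$, and verifying the $\delta$-regularity of this fibration. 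The hypothesis $\deg(L) > 2g(C) - 2$ enters crucially here, as it produces the codimension bounds on the $\delta$-strata of $B_Y$ needed to rule out small supports.

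Once full support is granted, it suffices to compute $(h_Y)_*\sIC_{M_Y^{\semist}(r, m)}|_U$. By the spectral correspondence (Lemma \ref{lem:spec}), for $a \in U$ the fiber $h_Y^{-1}(a)$ parameterizes line bundles on the smooth spectral curve $C_a$ of an appropriate degree, hence is a $\Jac(C_a)$-torsor; in particular $M_Y^{\semist}(r, m)|_U$ is smooth and $h_Y|_{h_Y^{-1}(U)}$ is smooth and proper. The classical K\"unneth decomposition of Jacobian cohomology then gives
\[
h_{Y*}\sIC_{M_Y^{\semist}(r, m)} \big|_{U} \cong \bigoplus_{i=0}^{2d} \wedge^i \dR^1 g_*\bQ_{\mcC}[-i + d],
\]
where $d$ is the relative genus of $\mcC \to U$. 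Combined with full support, each simple summand must be the IC extension $\sIC(\wedge^i \dR^1 g_*\bQ_\mcC)[-i + d]$, yielding the asserted formula.

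The $\chi$-independence assertion is then immediate from the formula: the right-hand side depends only on the universal smooth spectral curve $g \colon \mcC \to U$ and on $r$, whereas $m$ merely selects which connected component of the relative Picard scheme we see over $U$, all of whose components are $\Jac$-torsors and therefore have canonically isomorphic cohomology sheaves.
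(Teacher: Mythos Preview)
The paper does not give its own proof of this theorem: it is quoted directly from \cite[Theorem~0.4]{MS20} and used as a black box (for instance in Proposition~\ref{prop:vanMS}). There is therefore nothing in the paper to compare your argument against.

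Your outline is nonetheless a fair summary of the Maulik--Shen strategy: apply the decomposition theorem to the proper map $h_Y$, prove a support theorem forcing every simple summand to have full support $B_Y$, and then identify the summands by restricting to $U$, where $h_Y$ is a torsor under the relative Jacobian of $g \colon \mcC \to U$.

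The one place where your sketch understates the difficulty is the support step. Ng\^o's original support theorem is stated for a smooth total space, whereas $M_Y^{\semist}(r,m)$ is singular along the strictly semistable locus when $\gcd(r,m)>1$. The main technical contribution of \cite{MS20} is precisely a support theorem for the intersection complex of this possibly singular moduli space, and that does not follow from $\delta$-regularity of the weak abelian fibration alone; it requires further arguments (relative duality, freeness of the action of tautological classes) specific to their setup. So ``verifying the $\delta$-regularity'' is necessary but not sufficient to close the argument in the non-coprime case.
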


%

\section{Cohomological \texorpdfstring{$\chi$}{chi}-independence for local curves}\label{sec:chi}
Let $C$ be a smooth projective curve of genus $g$ and
$N$ be a rank two vector bundle on $C$ 
with $\det N \cong \omega_C$. 
We put $X\coloneqq\Tot_C(N)$. 
The goal of this section is to prove the following theorem: 

\begin{thm} \label{thm:chi-lC}
Let $X=\Tot_C(N)$ be a local curve. 
For every positive integer $r \in \bZ_{>0}$ 
and a class $\gamma \in B_X$, 
Conjecture \ref{conj:chi-indep} holds. 
\end{thm}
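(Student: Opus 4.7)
The plan is to reduce Theorem \ref{thm:chi-lC} to Maulik--Shen's $\chi$-independence theorem for twisted Higgs bundles (Theorem \ref{thm:MS}) via the global critical chart description of the moduli stack of one-dimensional sheaves on a local curve established in the companion paper \cite{KM21}. The idea is that once $\fM^{\semist}_X(r,m)$ is realized as the critical locus of a function on a smooth ambient stack coming from $L$-Higgs bundles with $\deg(L)$ large, the perverse sheaf controlling the generalized GV invariants will be the vanishing cycle of the intersection complex to which Theorem \ref{thm:MS} applies, and $\chi$-independence follows formally.

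First, I would choose a short exact sequence $0 \to L_1 \to N \to L_2 \to 0$ with $\deg(L_2)$ chosen large enough that $\deg(L_2) > 2g(C)-2$, so that the Maulik--Shen decomposition Theorem \ref{thm:MS} applies to $Y := \Tot_C(L_2)$. By the main result of \cite{KM21} there is a regular function $f \colon M^{\semist}_Y(r,m) \to \bA^1$ whose pullback to $\fM^{\semist}_Y(r,m)$ along the good moduli map $p_Y$ satisfies
\[
\fM^{\semist}_X(r,m) \;\cong\; \Crit(f \circ p_Y) \;\subset\; \fM^{\semist}_Y(r,m).
\]
Combining this with Proposition \ref{prop:joyce=van} (applied to the ambient smooth stack) and descending the vanishing cycle along $p_Y$ to the good moduli space, I would deduce an isomorphism of perverse sheaves
\[
\varphi_{M^{\semist}_X(r,m)} \;\cong\; \varphi_f\bigl(\sIC_{M^{\semist}_Y(r,m)}\bigr)
\]
on $M^{\semist}_X(r,m) = \Crit(f)$, after checking that the CY orientation on $\fM^{\semist}_X(r,m)$ induced by the global critical chart matches the canonical one used to define $\varphi_{M^{\semist}_X(r,m)}$.

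Second, I would push forward along the Hitchin morphism. By Remark \ref{rmk:hitchow} the polynomial $\Phi_H(\gamma, m)$ may be computed from $h_{X*}\varphi_{M^{\semist}_X(r,m)}$ in place of $\pi_{M*}\varphi_{M^{\semist}_X(r,m)}$. Since the Hitchin base $B_X$ sits naturally inside $B_Y$ and the function $f$ is constructed in \cite{KM21} as the pullback of a function $\bar f$ on $B_Y$, proper base change for vanishing cycles along the proper morphism $h_Y$ yields
\[
h_{X*}\varphi_{M^{\semist}_X(r,m)} \;\cong\; \varphi_{\bar f}\bigl(h_{Y*}\sIC_{M^{\semist}_Y(r,m)}\bigr)\bigr|_{B_X}.
\]
Theorem \ref{thm:MS} asserts that the right-hand side without $\varphi_{\bar f}$ is independent of $m$, and applying $\varphi_{\bar f}$ preserves this independence; taking Euler characteristics of stalks of perverse cohomology sheaves at $\gamma \in B_X$ then gives $\Phi_H(\gamma, m) = \Phi_H(\gamma, m')$.

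The hard part is the verification in the first two steps that everything fits together canonically: namely, that the function from \cite{KM21} genuinely factors through the Hitchin base so that $\varphi$ commutes with $h_Y$-pushforward, and that the natural CY orientation on $\fM^{\semist}_X(r,m)$ agrees with the one induced from the critical chart (so that the identification of the perverse sheaves is the right one, not merely one twisted by a sign local system). Once these compatibilities are pinned down, the proof is a formal consequence of Theorem \ref{thm:MS}.
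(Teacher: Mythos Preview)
Your overall strategy matches the paper's, but there are two genuine gaps that you have not identified among your ``hard parts.''

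First, the step ``descending the vanishing cycle along $p_Y$ to the good moduli space'' to obtain $\varphi_{M^{\semist}_X(r,m)} \cong \varphi_f(\sIC_{M^{\semist}_Y(r,m)})$ is the crux of the argument and is not formal. Recall that $\varphi_{M^{\semist}_X(r,m)}$ is \emph{defined} as $\cH^1(p_{X*}\varphi_{\fM^{\semist}_X(r,m)})$. Proposition \ref{prop:joyce=van} identifies $\varphi_{\fM^{\semist}_X(r,m)}$ with $\varphi_{f\circ p_Y}(\sIC_{\fM^{\semist}_Y(r,m)})$ on the stack, but the good moduli map $p_Y$ is not proper, so neither the commutation of $p_{Y*}$ with $\varphi_f$ nor the identification $\cH^1(p_{Y*}\sIC_{\fM^{\semist}_Y(r,m)}) \cong \sIC_{M^{\semist}_Y(r,m)}$ is automatic. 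The paper devotes Section \ref{sec:int} to this: one approximates $p_Y$ by the proper maps $\pi_f$ of \eqref{eq:appro} (Proposition \ref{prop:appro}) and then invokes the cohomological integrality theorem for $L$-Higgs bundles (Theorem \ref{thm:int-IC}, resting on Meinhardt's result \cite{Mei15}) to extract the first perverse cohomology; see Proposition \ref{prop:BPSvsVan}.

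Second, $B_X$ does not sit inside $B_Y$. There is a linear map $b\colon B_X\to B_Y$ induced by the surjections $\Sym^k(N)\twoheadrightarrow L_2^{\otimes k}$, and the commutative square \eqref{eq:Hitcomm} gives $h_{Y*}\iota_* = b_* h_{X*}$. Your displayed formula with a restriction ``$|_{B_X}$'' is therefore not well-posed. The paper instead proves (Lemma \ref{lem:Hitfin}) that $b|_{\im(h_X)}$ is \emph{finite}, so that $b_*$ is perverse $t$-exact on complexes supported on $\im(h_X)$; the $\chi$-independence of $h_{Y*}\varphi_{M^{\semist}_X(r,m)}$ (Proposition \ref{prop:vanMS}) then transfers to equality of Euler characteristics of $\cH^i(h_{X*}\varphi_{M^{\semist}_X(r,m)})$. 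A related minor point: the condition $\deg(L_2)>2g(C)-2$ alone is not enough; one must also take $\deg(L_2)\geq k(r)$ as in Lemma \ref{lem:stability} so that the equivalence of Theorem \ref{thm:KM} restricts to the semistable loci (Proposition \ref{prop:dchart}).
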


\subsection{Global d-critical charts for moduli spaces on local curves}

We first recall the main result of the companion paper \cite{KM21}:

\begin{thm}{{\cite[Theorem 5.6, Proposition 5.7]{KM21}}}\label{thm:KM}
Let $C$ be a smooth projective curve and take a short exact sequence 
\begin{equation}\label{eq:short}
0 \to L_1 \to N \to L_2 \to 0
\end{equation}
of locally free sheaves on $C$ where $L_1$ and $L_2$ are rank one. 
Suppose that there exists an isomorphism $\det(N) \cong \omega_C$, and the inequality $\deg(L_2) > 2 g(C) - 2 $ holds.
Write $X =\Tot_C(N)$ and $Y = \Tot_{C}(L_2)$.
Let $\bs{\fM}_{X}$ and $\bs{\fM}_Y$ be the derived moduli stack of compactly supported coherent sheaves on $X$ and $Y$ respectively.
\begin{enumerate}
    \item[(i)] 
There exists a function $f$ on $\bs{\fM}_Y$ such that the projection from $X$ to $Y$ induces an equivalence of $(-1)$-shifted symplectic derived Artin stacks
\begin{equation}\label{eq:KM}
\bs{\fM}_X \simeq \DCrit(f).
\end{equation}
\item[(ii)]

Let $(E, \phi)$ be an $L_2$-Higgs bundle.
Then we have an equality
\[
f([(L_2, \phi)]) = 1/2 \cdot \alpha(\tr(\phi^2))
\]
where $\alpha \in 
\mH^0(C, L_2^{\otimes 2})^\vee 
\cong \Ext^1(L_2, L_1)$ is the class corresponding to the short exact sequence \eqref{eq:short}.

\end{enumerate}
\end{thm}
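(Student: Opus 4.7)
My plan is to realize $\bs{\fM}_X \simeq \DCrit(f)$ via the spectral correspondence, viewing the situation as a twist by the extension class $\alpha$ of the split case in which $X$ is the canonical bundle of the surface $Y$. First I would transport both sides to Higgs data on $C$ using Lemma \ref{lem:spec}: $\bs{\fM}_X$ becomes the derived moduli of $N$-Higgs pairs $(E,\phi)$ subject to $\phi\wedge\phi=0\in\Hom(E,E\otimes\omega_C)$, while $\bs{\fM}_Y$ becomes the derived moduli of $L_2$-Higgs pairs $(E,\theta)$. From $\det N\cong\omega_C$ we get $L_1\cong\omega_C\otimes L_2^{-1}$, and the hypothesis $\deg(L_2)>2g-2$ forces $\deg(L_1)<0$, giving via Serre duality on $C$ the identification
\[
\Ext^1_C(L_2,L_1)\;\cong\;\mH^1(C,\omega_C\otimes L_2^{-2})\;\cong\;\mH^0(C,L_2^{\otimes 2})^\vee,
\]
under which the extension class $\alpha$ pairs naturally with the quadratic Hitchin coordinate $\tr(\theta^2)\in\mH^0(C,L_2^{\otimes 2})$.

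As a warm-up I would settle the split case $N=L_1\oplus L_2$ (so $\alpha=0$). Here $X=\Tot_Y(q_Y^*L_1)=\Tot_Y(\omega_Y)$ is the total space of the canonical bundle of $Y$, and the classical shifted-cotangent-bundle equivalence $\bs{\fM}_{\Tot_Y(\omega_Y)}\simeq T^*[-1]\bs{\fM}_Y=\DCrit(0)$ delivers (i) with $f=0$, consistent with (ii).

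For general $N$ I would define
\[
f(E,\theta)\;\coloneqq\;\tfrac{1}{2}\,\alpha\bigl(\tr(\theta^2)\bigr),
\]
a regular function on $\bs{\fM}_Y$ obtained by pulling back the linear functional $\alpha$ along the quadratic component of the Hitchin map; this directly gives (ii). To establish (i) I would compute $df$ at a Higgs pair $(E,\theta)$: the derivative in a tangent direction $\delta\theta$ is $\alpha(\tr(\theta\,\delta\theta))$, and by the Serre duality pairing this linear form equals the image of $\theta$ under the connecting homomorphism $\Hom(E,E\otimes L_2)\to\Ext^1(E,E\otimes L_1)$ attached to the sequence $0\to L_1\to N\to L_2\to 0$ tensored with $\End(E)$. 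That connecting map, applied to $\theta$, is precisely the classical obstruction to lifting $\theta$ to an $N$-valued Higgs field $\phi$; moreover the constraint $\phi\wedge\phi=0$ is automatic at the level of deformation theory because $\det N\cong\omega_C$ makes it land in the Serre-dual of $\Ext^1(E,E)$. Hence the zero locus of $df$ parameterizes exactly the data defining $\bs{\fM}_X$, and the natural $(-1)$-shifted symplectic structure on $\DCrit(f)$ is carried to the CY-3 symplectic structure on $\bs{\fM}_X$ by the projection from $X$ to $Y$.

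The main obstacle is promoting these pointwise identifications to a genuine equivalence of $(-1)$-shifted symplectic derived Artin stacks: both sides carry nontrivial derived structure encoded in Ext-groups of the two-term complex $[E\xrightarrow{\theta}E\otimes L_2]$, so the Serre-duality arguments above must be globalized over $\bs{\fM}_Y$ at the level of cotangent complexes, not just at closed points. A clean way to handle this is to present both $\bs{\fM}_X$ and $\DCrit(f)$ as critical loci on a common smooth ambient — e.g.\ the derived moduli of pairs $(E,\phi\in\Hom(E,E\otimes N))$ with no wedge constraint, which is a vector bundle over the derived moduli of bundles on $C$ — and to check that the same potential (whose restriction to the zero-$L_1$-component section recovers the $f$ of (ii)) cuts out both descriptions, after which the symplectic compatibility follows from AKSZ/Darboux functoriality with (ii) serving as the bridge.
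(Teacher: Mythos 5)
This statement is not proved in the present paper: it is imported wholesale from the companion paper \cite{KM21}, whose actual argument runs through the formalism of (deformed) relative Calabi--Yau completions of dg-categories --- pushforward along $X \to Y$ exhibits the category of compactly supported sheaves on $X$ as a deformed $3$-Calabi--Yau completion of that on $Y$ with deformation class $\alpha$, a general moduli-of-objects theorem then identifies the moduli of the completion with $\DCrit$ of the induced function, and part (ii) is an explicit (cyclic-homology) computation of that function. Your heuristic picture is the right one: you correctly reverse-engineer the potential $f(E,\theta)=\tfrac12\alpha(\tr\theta^2)$ (which is exactly the function $g$ of \eqref{eq:fctBS}), the Serre-duality identification $\Ext^1(L_2,L_1)\cong \mH^0(C,L_2^{\otimes 2})^\vee$ is correct, and the split case $N=L_1\oplus L_2$ does reduce to the shifted-cotangent description $\bs{\fM}_{\Tot_Y(\omega_Y)}\simeq \bfT^*[-1]\bs{\fM}_Y$ used elsewhere in the paper.

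The gap is precisely where you locate ``the main obstacle'': nothing in the proposal actually produces the equivalence of $(-1)$-shifted symplectic derived stacks, and the devices offered to close it do not work as stated. First, the claim that $\phi\wedge\phi=0$ is ``automatic'' is false: it is a nontrivial equation valued in $\Hom(E,E\otimes\omega_C)\cong\Ext^1_C(E,E)^\vee$, and the content of the theorem is that this equation is what the derived critical structure encodes, not that it disappears. Second, the theorem concerns the moduli of \emph{all} compactly supported sheaves, and there the map $\fM_X\to\fM_Y$ is not injective on points: lifts of $\theta$ to an $N$-valued field form a torsor under a subspace of $\Hom(E,E\otimes L_1)$, which need not vanish off the semistable locus (e.g.\ $\theta=0$ and any nonzero $\phi_1\in\Hom(E,E\otimes L_1)$ gives a second preimage of $(E,0)$). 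Correspondingly $\bs{\fM}_Y$ is only quasi-smooth, so the classical truncation of $\DCrit(f)$ is strictly larger than the naive locus $\{df=0\}$; matching these extra directions with the extra lifting data on the $X$-side is an essential part of the statement and is invisible in a pointwise argument. Third, the proposed ``common smooth ambient'' of unconstrained pairs $(E,\phi\in\Hom(E,E\otimes N))$ is not smooth (the fibers $\Hom(E,E\otimes N)$ jump; it is the total space of a perfect complex of amplitude $[0,1]$), it is not naturally an ambient for $\DCrit(f)$, and no global potential on it is exhibited; ``AKSZ/Darboux functoriality'' cannot substitute for constructing the equivalence of symplectic structures. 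As written, the proposal verifies (ii) against a function it has defined itself, but does not prove (i).
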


We now want to describe the moduli stack of \textit{semistable} $N$-Higgs bundle as a
global critical locus.
We begin with the following easy lemma: 
\begin{lem} \label{lem:example}
Let $C$ be a smooth projective curve and $N$ be a rank two vector bundle on $C$.
Then we can take the short exact sequence \eqref{eq:short} so that $\deg(L_2) > 2 g(C) - 2 $ holds. More generally, we can take $L_2$ so that its degree is arbitrarily large.
\end{lem}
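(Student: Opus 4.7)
The plan is to construct the short exact sequence $0 \to L_1 \to N \to L_2 \to 0$ by choosing $L_1$ to be a line subbundle of $N$ of very negative degree; since $\deg(L_2) = \deg(N) - \deg(L_1)$, making $\deg(L_1) \ll 0$ will automatically force $\deg(L_2) \gg 0$. So the only real content is showing that $N$ admits saturated line subbundles $L_1$ of arbitrarily negative degree.

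First I would translate the problem: to give an inclusion $L_1 \hookrightarrow N$ whose cokernel is locally free (equivalently, an $L_1$ that is a subbundle, not merely a subsheaf) is to give a morphism $s \colon L_1 \to N$ that is pointwise nonzero, i.e.\ a global section of $L_1^{-1} \otimes N$ that vanishes at no point of $C$. So it suffices to produce, for each sufficiently negative integer $d$, a line bundle $L_1$ of degree $d$ such that $L_1^{-1} \otimes N$ has a nowhere-vanishing global section.

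The plan is then a standard dimension count. Fix any line bundle $L_1$ of degree $d \ll 0$, so that $E \coloneqq L_1^{-1} \otimes N$ is a rank-two bundle of large positive degree $\deg(N) - 2d$. For $d$ sufficiently negative, $E$ is globally generated (e.g.\ by Serre vanishing applied to $E(-p)$ for every $p\in C$), and by Riemann--Roch $h^0(E) \geq \deg(E) + 2(1-g)$ is large. For each point $p \in C$, the subspace of sections vanishing at $p$ has codimension $2$ in $H^0(C, E)$, so the incidence variety $\{(s,p) \in H^0(C, E) \times C : s(p) = 0\}$ has dimension $h^0(C,E) - 1$, and its image in $H^0(C, E)$ is a proper closed subvariety. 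A general section $s \in H^0(C, E)$ therefore has no zeros and defines the desired subbundle inclusion $L_1 \hookrightarrow N$ with $L_2 \coloneqq N/L_1$ locally free of degree $\deg(N) - d$, which can be made arbitrarily large by taking $d$ arbitrarily negative.

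No step here looks truly delicate; the only mild subtlety is ensuring that $E$ is globally generated and that $h^0(C, E)$ is large enough to make the general-section argument nonvacuous, both of which follow automatically once $d$ is taken sufficiently negative. In particular, taking $d$ small enough that $\deg(L_2) = \deg(N) - d > 2g(C) - 2$ gives the specific bound required in Theorem~\ref{thm:KM}.
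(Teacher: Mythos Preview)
Your proposal is correct and essentially identical to the paper's proof. The paper phrases it dually---it fixes an ample $\mcO_C(1)$, takes $l \gg 0$ so that $N^\vee(l)$ is globally generated, and observes that a general section of $N^\vee(l)$ (equivalently, a general map $N \to \mcO_C(l)$) is then surjective, setting $L_2 = \mcO_C(l)$ and $L_1 = \Ker(s)$---but since for a rank-two bundle $N^\vee \otimes L_2 \cong L_1^{-1} \otimes N$, this is literally the same bundle $E$ you consider, and the paper's implicit reason that a general section is nowhere-vanishing is exactly the dimension count you spell out.
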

\begin{proof}
Let $\mcO_C(1)$ be an ample line bundle on $C$. 
Then there exists an integer $l_0 >0$ such that 
for every integer $l \geq l_0$, 
the bundle $N^\vee(l)$ is globally generated. 
Then a general element 
$s \in \Hom(N, \mcO_C(l)) \cong \mH^0(C, N^\vee(l))$ 
is surjective. 
Putting $L_2\coloneqq\mcO_C(l)$ and $L_1\coloneqq\Ker(s)$, 
we get the desired exact sequence as in \eqref{eq:short}. 
\end{proof}

\begin{lem} \label{lem:stability}
Take integers $r, m \in \bZ$ with $r > 0$. 
Then there exists an integer $k(r) > 2g(C)-2$ 
depending only on $r$, such that,
for any short exact sequence \eqref{eq:short} with 
$\deg(L_2) \geq k(r)$, 
the following statement holds: 
For every $\mu$-semistable $N$-Higgs bundle 
$(E, \phi) \in \fM^{\semist}_X(r, m)$, 
the $L_2$-Higgs bundle $(E, s \circ \phi)$ is $\mu$-semistable. 
\end{lem}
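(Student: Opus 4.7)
The plan is to combine boundedness of the moduli stack with a translation symmetry (twisting by line bundles on $C$) to produce slope bounds for destabilizing subsheaves that depend only on the rank $r$, and then to exploit the short exact sequence $0 \to L_1 \to N \to L_2 \to 0$ to show that a destabilizing subsheaf for $(E, s \circ \phi)$ must either be $\phi$-invariant (contradicting $\mu$-semistability of $(E, \phi)$) or produce a morphism $F \to (E/F) \otimes L_1$ that becomes incompatible with the slope bounds once $\deg L_1$ is sufficiently negative.

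First I would establish the slope bounds. Since $\fM^{\semist}_X(r, m)$ is of finite type, the underlying vector bundles $E$ vary in a bounded family. For a saturated subsheaf $F \subset E$ with $\mu(F) \geq \mu(E)$, the pair $(\rk F, \deg F)$ takes only finitely many values (the rank is bounded by $r$, and the degree is squeezed between $(\rk F) \mu(E)$ and $(\rk F) \mu_{\max}(E)$), so the family of such pairs $(E, F)$ is bounded by the standard boundedness of $\Quot$ schemes. Hence the HN polygons of both $F$ and $E/F$ are uniformly bounded, yielding a constant $M = M(r, m)$ with
\[
\mu_{\min}(F) \geq \mu(E) - M, \qquad \mu_{\max}(E/F) \leq \mu(E) + M.
\]
Tensoring by a line bundle of degree $d$ on $C$ induces an isomorphism $\fM^{\semist}_X(r, m) \simeq \fM^{\semist}_X(r, m + rd)$ shifting $\mu(E)$, $\mu_{\min}(F)$, $\mu_{\max}(E/F)$ simultaneously by $d$, so $M(r, m)$ depends only on the residue of $m$ modulo $r$; taking the maximum over the $r$ residues gives $M = M(r)$ depending only on $r$.

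Next, suppose for contradiction that $F \subset E$ is a saturated subsheaf with $(s \circ \phi)(F) \subset F \otimes L_2$ and $\mu(F) > \mu(E)$. I would form the composite
\[
\bar\phi \colon F \xrightarrow{\phi|_F} E \otimes N \twoheadrightarrow (E/F) \otimes N,
\]
which takes values in $(E/F) \otimes L_1$ because its further projection to $(E/F) \otimes L_2$ vanishes by the $s\circ\phi$-invariance of $F$. If $\bar\phi = 0$, then $\phi(F) \subset F \otimes N$, making $F$ a $\phi$-invariant destabilizing subsheaf of $(E, \phi)$ and contradicting its $\mu$-semistability. If $\bar\phi \neq 0$, the standard slope inequality for a nonzero morphism of vector bundles gives $\mu_{\min}(F) \leq \mu_{\max}((E/F) \otimes L_1) = \mu_{\max}(E/F) + \deg L_1$; combined with the bounds above, this yields $\deg L_1 \geq -2M(r)$. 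Taking $k(r) \coloneqq \max\bigl(2g - 1,\, 2g - 1 + 2 M(r)\bigr) > 2g - 2$, the hypothesis $\deg L_2 \geq k(r)$ forces $\deg L_1 = 2g - 2 - \deg L_2 \leq -1 - 2M(r)$, a contradiction. The delicate point is the uniformity of the slope bound $M(r)$ in the Euler characteristic $m$, for which the translation-by-line-bundle argument is essential; once this is in place, the two-case analysis via $\bar\phi$ is essentially formal.
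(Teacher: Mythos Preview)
Your proof is correct and follows essentially the same approach as the paper: the paper's argument likewise shows that a putative destabilizing $(s\circ\phi)$-invariant subsheaf $F$ yields a nonzero map $F \to (E/F)\otimes L_1$, and then rules this out via uniform slope bounds obtained from boundedness of $\fM^{\semist}_X(r,m)$ together with the twisting isomorphism $\fM^{\semist}_X(r,m) \cong \fM^{\semist}_X(r,m+r)$ to make the bound depend only on $r$. The only cosmetic difference is that the paper packages the boundedness and slope bounds into a separate lemma, whereas you fold them directly into the argument.
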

\begin{proof}
Let $(E, \phi) \in \fM^{\semist}_X(r, m)$ 
be a $\mu$-semistable $N$-Higgs bundle. 
Suppose that the $L_2$-Higgs bundle 
$(E, s \circ \phi)$ is not $\mu$-semistable. 
We claim that there exists a saturated subsheaf 
$F \subset E$ such that 
$\mu(F) > \mu(E)$ and 
$\Hom(F, E/F \otimes L_1) \neq 0$. 
Indeed, let $F \subset E$ be 
the maximal destabilizing subsheaf 
of $(E, s \circ \phi)$. 
This means that 
we have 
$\mu(F) > \mu(E)$ and 
$(s \circ \phi)(F) \subset F \otimes L_2$. 
The latter condition is equivalent that 
the composition
\[
F \hookrightarrow E \xrightarrow{s\circ\phi} E \otimes L_2 
\to E/F \otimes L_2
\]
is zero. 
On the other hand, by 
the $\mu$-semistability of $(E, \phi)$, 
we have 
$\phi(F) \nsubseteq F \otimes N$, i.e., 
the composition 
\[
F \hookrightarrow E \xrightarrow{\phi} E \otimes N 
\to E/F \otimes N 
\]
is non-zero. 
As a result, we obtain the following diagram 
\[
\xymatrix{
& &F \ar[d]^{\neq 0} \ar[rd]^0 \ar@{.>}[ld] & \\
&E/F \otimes L_1 \ar[r] &E/F \otimes N \ar[r] &E/F \otimes L_2, 
}
\]
hence we have $\Hom(F, E/F \otimes L_1) \neq 0$. 

By Lemma \ref{lem:bounded} below, 
we can replace an exact sequence \eqref{eq:short} 
so that $\Hom(F, E/F \otimes L_1)=0$ 
for all $\mu$-semistable $N$-Higgs bundles 
$(E,\phi) \in \fM^{\semist}_X(r, m)$ 
and all saturated subsheaves $F \subset E$ 
with $\mu(F) > \mu(E)$. 
Hence the above argument shows that 
$(E, s \circ \phi)$ remains $\mu$-semistable 
for such a choice of the exact sequence \eqref{eq:short}. 
\end{proof}

\begin{lem} \label{lem:bounded}
Take integers $r, m \in \bZ$ with $r >0$. 
Then the following sets are bounded: 
\begin{align*}
    &\mcS\coloneqq\left\{
        F, E/F : 
        \begin{aligned}
        &(E, \phi) \in \fM^{\semist}_X(r, m), \\
        &F \subset E \mbox{ is saturated with } 
        \mu(F) > \mu(E)
        \end{aligned}
        \right\}, \\
    &\HN(\mcS)\coloneqq\left\{
        A : A \mbox{ is a Harder--Narasimhan factor of } G \in \mcS
        \right\}. 
\end{align*}

Moreover, there exists an integer $k'(r) \in \bZ$, 
depending only on $r$, such that 
for all line bundles $L_1$ with $\deg(L_1) \leq k'(r)$ 
and for all $F, E/F \in \mcS$, 
we have the vanishing $\Hom(F, E/F \otimes L_1)=0$. 
\end{lem}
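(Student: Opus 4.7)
The plan is to reduce both claims to the classical boundedness of $\mu$-semistable $N$-Higgs bundles of fixed numerical type, and then to read off the vanishing statement from Harder--Narasimhan slope inequalities. First I would use that $\fM_{X}^{\semist}(r,m)$ is of finite type, so the family of underlying torsion-free sheaves $\{E : (E,\phi)\in\fM_{X}^{\semist}(r,m)(\bC)\}$ is bounded on $C$; by constructibility of Harder--Narasimhan filtrations on a bounded family, there is then a constant $K=K(r,m,N)$ such that
\[
\mu_{\max}(E)-\mu(E)\le K \quad \text{and}\quad \mu(E)-\mu_{\min}(E)\le K
\]
hold uniformly. In particular every saturated subsheaf $F\subset E$ satisfies $\mu(F)\le \mu_{\max}(E)\le m/r+K$, and every torsion-free quotient $E/F$ satisfies $\mu_{\min}(E/F)\ge m/r-K$.

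Granting this slope bound, boundedness of $\mcS$ is formal: an $F\in\mcS$ has $m/r<\mu(F)\le m/r+K$ and $1\le \rk(F)\le r-1$, which pins $\chi(F)$ to a bounded interval, so Grothendieck's lemma on subsheaves of a bounded family with bounded Hilbert polynomial produces a bounded family containing every such $F$; the complementary quotients $E/F\in\mcS$ are then bounded automatically. Boundedness of $\HN(\mcS)$ follows at once, again by constructibility, and it supplies uniform constants $\mu_{\min}\le \mu_{\max}$ with $\mu_{\min}\le \mu(A)\le \mu_{\max}$ for every semistable factor $A\in\HN(\mcS)$. Then for $F,E/F\in\mcS$ we have $\mu_{\min}(F)\ge \mu_{\min}$ and $\mu_{\max}((E/F)\otimes L_{1})\le \mu_{\max}+\deg(L_{1})$, so Harder--Narasimhan factorization on both sides reduces $\Hom(F,(E/F)\otimes L_{1})=0$ to vanishings $\Hom(A,B\otimes L_{1})=0$ between semistable sheaves, which hold as soon as $\mu(A)>\mu(B)+\deg(L_{1})$. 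Any integer $k'(r)<\mu_{\min}-\mu_{\max}$ therefore does the job.

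The hard part will be the slope bound in the first step. For line-bundle-twisted Higgs bundles this is a classical consequence of Simpson's and Nitsure's finiteness results. For the rank-two twist $N$, one can obtain it either from the identification (Lemmas \ref{lem:spec} and \ref{lem:spec-stab}) of $\fM_{X}^{\semist}(r,m)$ with the moduli stack of compactly supported $\mu_{p^{\ast}H}$-semistable one-dimensional sheaves on $X=\Tot_{C}(N)$, combined with Simpson's boundedness of semistable sheaves on projective schemes, or by an explicit $\phi$-closure argument: taking the smallest $\phi$-invariant subsheaf $F^{\phi}\subset E$ containing $F$, one writes $F^{\phi}$ as the image of $\bigoplus_{i=0}^{r-1}F\otimes \Sym^{i}N^{\vee}\to E$ using $\phi\wedge\phi=0$, and then bounds $\mu(F)$ in terms of $\mu(F^{\phi})\le\mu(E)$ by slope-arithmetic with the symmetric powers. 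Either route is routine; the remainder of the proof is formal slope arithmetic.
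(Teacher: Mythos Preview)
Your approach is essentially the same as the paper's: invoke boundedness of $\fM_X^{\semist}(r,m)$, apply Grothendieck's lemma to bound $\mcS$ and $\HN(\mcS)$, extract uniform bounds $a\le\mu_{\min}(F)$ and $\mu_{\max}(E/F)\le b$, and then take $k'$ just below $a-b$. The paper does exactly this, in rather fewer words; your extended discussion of how to obtain the initial slope bound (Simpson on the spectral side, or an explicit $\phi$-closure argument) is more than the paper supplies, but not different in spirit.

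There is one genuine gap. You write $K=K(r,m,N)$ and then, without comment, label the resulting threshold $k'(r)$ as depending only on $r$. As written, your bounds $\mu_{\min},\mu_{\max}$ on $\HN(\mcS)$ depend on $m$ (indeed they each shift by $1$ when $m$ is replaced by $m+r$), and nothing in the ``constructibility on a bounded family'' argument tells you that their \emph{difference} is uniform in $m$. The paper closes this gap explicitly: it first produces $k'(r,m)\coloneqq a-b-1$, then observes that tensoring with a degree-one line bundle gives an isomorphism $\fM_X^{\semist}(r,m)\cong\fM_X^{\semist}(r,m+r)$, so that one may set $k'(r)\coloneqq\min\{k'(r,m):m=0,\ldots,r-1\}$. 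You should insert this step. Alternatively, if you actually carry out the $\phi$-closure argument you would find that the resulting $K$ depends only on $r$ and $N$, from which $m$-independence of $\mu_{\min}-\mu_{\max}$ follows directly; but your proposal only gestures at this route without executing it.
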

\begin{proof}
The boundedness of the sets $\mcS, \HN(\mcS)$ 
follows from the boundedness of $\fM^{\semist}_X(r, m)$ 
and Grothendieck's boundedness lemma 
(cf. \cite[Lemma 1.7.9]{HL97}). 

In particular, 
there exist integers $a, b$ such that 
the inequalities 
$\mu_{\min}(F) \geq a$ and 
$\mu_{\max}(F/E) \leq b$ 
hold for all $F, E/F \in \mcS$. 
By setting $k'(r, m)\coloneqq a-b-1$, 
we obtain the inequality 
\[
\mu_{\min}(F) > \mu_{\max}(E/F \otimes L_1)
\]
for all line bundles $L_1$ with $\deg(L_1) \leq k'(r, m)$ 
and all $F, E/F \in \mcS$. 

Finally, observe that we have an isomorphism 
$\fM^{\semist}_X(r, m) \cong \fM^{\semist}_X(r, m+r)$ 
by tensoring with a degree one line bundle. 
Hence by putting 
$k'(r) \coloneqq \min\{k'(r, m) \colon m=0, \ldots, r-1\}$, 
the second assertion holds. 
\end{proof}

\begin{prop} \label{prop:dchart}
Let $r, m \in \bZ$ be integers with $r>0$. 
Take an exact sequence \eqref{eq:short} 
as in Lemma \ref{lem:stability}. 
Let 
$\alpha \in 
\mH^0(C, L_2^{\otimes 2})^\vee 
\cong \Ext^1(L_2, L_1)$ 
be the corresponding class. 
Denote by $Y \coloneqq \Tot_C(L_2)$. 
Define the function $g \colon B_Y \to \bA^1$ as 
\begin{equation} \label{eq:fctBS}
g \colon B_Y=\bigoplus_{i=1}^r \mH^0(C, L_2^{\otimes i})
\to \mathbb{A}^1, \quad 
(a_i)_{i=1}^r \mapsto 1/2 \cdot \alpha(a_2). 
\end{equation}

Then we have an isomorphism 
\begin{equation} \label{eq:dchart}
    \fM^{\semist}_X(r, m) \cong 
    \left\{d\left(g \circ \widetilde{h}_Y \right)=0 \right\} 
    \subset \fM^{\semist}_Y(r, m)
\end{equation}
which preserves the d-critical structure.
\end{prop}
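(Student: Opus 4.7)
The strategy is to deduce Proposition \ref{prop:dchart} from the main theorem of the companion paper (Theorem \ref{thm:KM}) by restricting to the semistable locus and identifying the function on the nose. The first step is to apply Theorem \ref{thm:KM} to the short exact sequence \eqref{eq:short}; the hypothesis $\deg(L_2) > 2g(C)-2$ holds by our choice via Lemma \ref{lem:stability}. This produces an equivalence $\bs{\fM}_X \simeq \DCrit(f)$ of $(-1)$-shifted symplectic derived Artin stacks. Taking classical truncations yields an isomorphism of d-critical stacks $\fM_X \cong \{df = 0\} \subset \fM_Y$, under which an $N$-Higgs bundle $(E, \tilde{\phi})$ corresponds to the $L_2$-Higgs bundle $(E, s \circ \tilde{\phi})$.

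Next, I would restrict to the semistable loci. Since both $\fM^{\semist}_X(r,m) \subset \fM_X$ and $\fM^{\semist}_Y(r,m) \subset \fM_Y$ are open substacks, in order to show that the above isomorphism identifies $\fM^{\semist}_X(r,m)$ with $\{df = 0\} \cap \fM^{\semist}_Y(r,m)$ it is enough to check agreement at the level of $\bC$-points. One direction is exactly the content of Lemma \ref{lem:stability}. The converse is elementary: a saturated $\mu_N$-destabilizing subsheaf $F \subset E$ for $(E, \tilde{\phi})$ satisfies $\tilde{\phi}(F) \subset F \otimes N$, so composing with $s$ gives $(s \circ \tilde{\phi})(F) \subset F \otimes L_2$, contradicting the $\mu_{L_2}$-semistability of $(E, s \circ \tilde{\phi})$.

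The crux is to identify $f$ restricted to $\fM^{\semist}_Y(r,m)$ with $g \circ \widetilde{h}_Y$ as regular functions on the smooth Artin stack $\fM^{\semist}_Y(r,m)$ (smoothness of the $L_2$-Higgs moduli holds since $\deg(L_2) > 2g(C)-2$). By Theorem \ref{thm:KM}(ii), at each closed point $(E, \phi)$ both functions evaluate to $\tfrac{1}{2}\alpha(\tr(\phi^2))$, and each descends to the good moduli space $M^{\semist}_Y(r,m)$ because it depends only on the characteristic polynomial of $\phi$, an $S$-equivalence invariant. I expect the main obstacle to be upgrading this pointwise agreement to an equality of regular functions; the cleanest route is to invoke (or extract from \cite{KM21}) a family version of Theorem \ref{thm:KM}(ii), stating that for any family $(E_T, \phi_T)$ of $L_2$-Higgs bundles over a test scheme $T$ the pullback of $f$ equals $\tfrac{1}{2}\alpha(\tr(\phi_T^2))$, and since the construction of $f$ in \cite{KM21} is explicit and Zariski-local this should follow readily. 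Granted this, we obtain
\[
\fM^{\semist}_X(r,m) \cong \{df = 0\} \cap \fM^{\semist}_Y(r,m) = \{d(g \circ \widetilde{h}_Y) = 0\},
\]
and the d-critical structures on both sides coincide, since they are both the canonical critical-locus structure attached to the same function $g \circ \widetilde{h}_Y$ on the smooth Artin stack $\fM^{\semist}_Y(r,m)$.
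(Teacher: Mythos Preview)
Your argument is correct and follows the same route as the paper. One small point of order: you truncate the equivalence $\bs{\fM}_X \simeq \DCrit(f)$ globally and only afterwards restrict to the semistable locus, but the full derived stack $\bs{\fM}_Y$ need not be smooth (obstructions $\Ext^2(E,E)$ can be nonzero for unstable $L_2$-Higgs bundles), so the identification $t_0(\DCrit(f)) = \{df=0\}$ is not immediate there. The paper instead first restricts \eqref{eq:KM} to the open substack $\bs{\fM}^{\semist}_Y(r,m)$---using Lemma \ref{lem:stability} together with the elementary converse you spell out---and only then truncates, invoking that $\bs{\fM}^{\semist}_Y(r,m)$ is a smooth \emph{classical} stack so that the derived and classical critical loci agree. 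Your concern about upgrading Theorem \ref{thm:KM}(ii) from a pointwise identity to an equality of regular functions is legitimate but easily settled: both $f$ and $g \circ \widetilde{h}_Y$ factor through the Hitchin map to the reduced affine space $B_Y$, where agreement on closed points suffices.
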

\begin{proof}
By Lemma \ref{lem:stability}, 
the isomorphism \eqref{eq:KM}
restricts to the semistable loci. 
Then the claim
follows from Theorem \ref{thm:KM} (ii), the fact that the derived moduli stack $\bs{\fM}^{\semist}_Y(r, m)$ is a smooth (classical) stack, and that the classical truncation of the derived critical locus of a function on a smooth stack coincides with the classical critical locus.
\end{proof}

For the vanishing cycle sheaves on the good moduli spaces, 
we have the following result: 
\begin{prop} \label{prop:vanMS}
Let $r, m, m' \in \bZ$ be integers with $r>0$. 
Let $g \colon B_Y \to \bA^1$ be a function 
as in Proposition \ref{prop:dchart}. 
Then we have an isomorphism 
\begin{equation} \label{eq:vanMS}
     h_{Y*}\left( 
    \varphi_{g \circ h_Y}\left(\sIC_{M^{\semist}_Y(r, m)} \right) 
    \right)
    \cong 
     h_{Y*}\left( 
    \varphi_{g \circ h_Y}\left(\sIC_{M^{\semist}_Y(r, m')} \right) 
    \right), 
\end{equation}
where $h_Y \colon M^{\semist}_Y(r, m) \to B_Y$, 
$h_Y \colon M^{\semist}_Y(r, m') \to B_Y$ 
denote the Hitchin morphisms \eqref{eq:HitS}
on the good moduli spaces. 
\end{prop}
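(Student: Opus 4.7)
The plan is to reduce the statement to the cohomological $\chi$-independence theorem of Maulik--Shen (Theorem~\ref{thm:MS}) via the proper base change property of the vanishing cycle functor. The Hitchin morphism $h_Y \colon M^{\semist}_Y(r,m) \to B_Y$ is proper by Nitsure's theorem (using that $\deg(L_2)>2g(C)-2$), so the base change formula recalled in \S \ref{ssec:van} yields a natural isomorphism
\[
h_{Y*}\!\left(\varphi_{g \circ h_Y}(\cF)\right) \cong \varphi_g\!\left(h_{Y*}\cF\right)
\]
for every $\cF \in D^b_c(M^{\semist}_Y(r,m))$. Here one should think of the left-hand side as a complex on $g^{-1}(0) \subset B_Y$, viewed inside $D^b_c(B_Y)$ via the closed embedding; since $\varphi_{g\circ h_Y}(\cF)$ is already supported on $(g\circ h_Y)^{-1}(0)$ there is no loss in writing $h_{Y*}$ instead of the restricted push-forward $h^0_{Y*}$.

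Taking $\cF = \sIC_{M^{\semist}_Y(r,m)}$ and the analogous one for $m'$, this reduces the desired isomorphism \eqref{eq:vanMS} to the existence of an isomorphism
\[
\varphi_g\!\left(h_{Y*}\sIC_{M^{\semist}_Y(r,m)}\right) \cong \varphi_g\!\left(h_{Y*}\sIC_{M^{\semist}_Y(r,m')}\right).
\]
For this, I would simply invoke the second assertion of Theorem~\ref{thm:MS}, which gives
\[
h_{Y*}\sIC_{M^{\semist}_Y(r,m)} \cong h_{Y*}\sIC_{M^{\semist}_Y(r,m')}
\]
as objects of $D^b_c(B_Y)$, and then apply the functor $\varphi_g$ to both sides.

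There is no substantial obstacle: the argument is essentially a formal combination of proper base change with a black-box application of the Maulik--Shen decomposition. The only technical point worth checking is that the hypotheses of Theorem~\ref{thm:MS} are in force for our $Y=\Tot_C(L_2)$, namely that $\deg(L_2) > 2g(C)-2$; this is exactly the condition built into the choice of the exact sequence \eqref{eq:short} via Lemma~\ref{lem:stability}, so it is automatic in the setting of Proposition~\ref{prop:dchart}. One could also worry about the naturality of the Maulik--Shen isomorphism, but since we only need an abstract isomorphism in $D^b_c(B_Y)$ and $\varphi_g$ is a functor, any such isomorphism suffices.
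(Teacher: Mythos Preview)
Your proposal is correct and follows exactly the same approach as the paper: invoke properness of the Hitchin morphism $h_Y$, use the commutativity of the vanishing cycle functor with proper push-forward to rewrite each side as $\varphi_g(h_{Y*}\sIC)$, and then apply Theorem~\ref{thm:MS}. The paper's proof is just a one-sentence version of what you wrote.
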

\begin{proof}
Since the Hitchin morphism 
$h_Y \colon M^{\semist}_Y(r, m) \to B_Y$ 
is proper, the result follows from 
Theorem \ref{thm:MS} together with 
the commutativity of the vanishing cycle functors 
and proper push forwards. 
\end{proof}

In the following subsections, 
we will show that the complexes in \eqref{eq:vanMS}
compute the generalized GV invariants 
for the local curve $X=\Tot_C(N)$.

\subsection{CY property for local curves}
In this subsection, 
we fix integers $r, m \in \bZ$ with $r>0$, 
and an exact sequence \eqref{eq:short}. 
We assume that the line bundle $L_2$ satisfies 
the following conditions: 
\begin{itemize}
    \item We have $\deg(L_2) \geq k(m)$ 
    (see Lemma \ref{lem:stability}), 
    \item $L_2$ is globally generated. 
\end{itemize}
Recall that we denote as 
$X \coloneqq \Tot_C(N)$ and $Y \coloneqq \Tot_C(L_2)$. 
By Proposition \ref{prop:dchart}, 
the moduli stack $\fM^{\semist}_X(r, m)$ 
is written as the global critical locus 
inside $\fM^{\semist}_Y(r, m)$. 

\begin{prop} \label{prop:CYori}
The canonical bundle $K_{\fM^{\semist}_Y(r, m)}$ 
of the stack $\fM^{\semist}_Y(r, m)$ is trivial,
and hence so is the virtual canonical bundle 
$K^{\vir}_{\fM^{\semist}_X(r, m)}$ 
of the stack $\fM^{\semist}_X(r, m)$. 
In particular, 
the stack $\fM^{\semist}_X(r, m)$ is CY at any point 
$\gamma \in B_X$, i.e., 
Conjecture \ref{conj:CYcondi} holds for 
$\fM^{\semist}_X(r, m)$. 
\end{prop}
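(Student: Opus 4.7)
The strategy is first to establish triviality of $K_{\fM^{\semist}_Y(r,m)}$, then to deduce triviality of $K^{\vir}_{\fM^{\semist}_X(r,m)}$ from the global critical chart of Proposition \ref{prop:dchart}, and finally to conclude the Calabi--Yau property.

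Since $\deg L_2 > 2g-2$, the $\mathbb{H}^2$ of the deformation complex $[\mcH om(E, E) \xrightarrow{[\theta, -]} \mcH om(E, E) \otimes L_2]$ vanishes on the semistable locus (its dual pairs into $H^0(\mcH om(E, E) \otimes L_1)$, which vanishes by slope considerations since $\mcH om(E, E)$ has slope zero and $\deg L_1 < 0$). Hence $\fM^{\semist}_Y(r,m)$ is smooth of dimension $r^2 \deg L_2$, and its canonical bundle is a genuine line bundle. Writing $\mcE$ for the universal bundle on $\fM^{\semist}_Y(r,m) \times C$ and $p$ for the projection to $\fM^{\semist}_Y(r,m)$, the Knudsen--Mumford formula applied to the tangent complex $\dR p_*[\mcH om(\mcE, \mcE) \to \mcH om(\mcE, \mcE) \otimes L_2][1]$ gives
\[
K_{\fM^{\semist}_Y(r,m)} \;\cong\; \det\dR p_* \mcH om(\mcE, \mcE) \otimes \det\dR p_*\bigl(\mcH om(\mcE, \mcE) \otimes L_2\bigr)^{-1}.
\]

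To trivialize this line bundle I would combine two inputs. First, Grothendieck--Serre duality along $p$ together with the self-duality $\mcH om(\mcE, \mcE)^\vee \cong \mcH om(\mcE, \mcE)$ via the trace pairing yields
\[
\det\dR p_* \mcH om(\mcE, \mcE) \cong \det\dR p_*\bigl(\mcH om(\mcE, \mcE) \otimes \omega_C\bigr),
\]
\[
\det\dR p_*\bigl(\mcH om(\mcE, \mcE) \otimes L_2\bigr) \cong \det\dR p_*\bigl(\mcH om(\mcE, \mcE) \otimes L_1\bigr),
\]
where the second uses $L_1 \cong \omega_C \otimes L_2^{-1}$ coming from $\det N \cong \omega_C$ applied to the short exact sequence \eqref{eq:short}. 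Second, since $K(C) \cong \bZ \oplus \Pic(C)$ is determined by rank and determinant, the same data yield the K-theoretic identity $[L_1] + [L_2] = [\mcO_C] + [\omega_C]$ in $K(C)$. Applying the group homomorphism $\alpha \mapsto \det\dR p_*(\mcH om(\mcE, \mcE) \otimes \alpha)$ from $K(C)$ to $\Pic(\fM^{\semist}_Y(r, m))$ to this identity and combining with the Serre-duality isomorphisms above assembles into a canonical trivialization of $K_{\fM^{\semist}_Y(r,m)}$.

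For the second assertion, Proposition \ref{prop:dchart} realizes $\fM^{\semist}_X(r,m)$ as the critical locus of $g \circ \widetilde{h}_Y$ in the smooth stack $\fM^{\semist}_Y(r,m)$, so the virtual canonical bundle satisfies $K^{\vir}_{\fM^{\semist}_X(r,m)} \cong K^{\otimes 2}_{\fM^{\semist}_Y(r,m)}|_{\fM^{\semist}_X(r,m)^{\red}}$, trivial by the above. As the trivialization is global, it holds in a neighborhood of every $\gamma \in B_X$, verifying Conjecture \ref{conj:CYcondi}.

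The main obstacle lies in the trivialization step: the K-theoretic identity combined with Serre duality yields $K_{\fM^{\semist}_Y(r,m)}^{\otimes 2} \cong \mcO$ essentially for free, and this alone already suffices for the Calabi--Yau property since only the square appears in the formula for $K^{\vir}$. Upgrading this to an isomorphism $K_{\fM^{\semist}_Y(r,m)} \cong \mcO$ requires tracking the isomorphisms at the level of perfect complexes to pin down a canonical square root.
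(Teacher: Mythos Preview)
Your Serre-duality plus $K$-theory argument is a reasonable alternative route, but as you honestly acknowledge in the final paragraph, it only yields $K_{\fM^{\semist}_Y(r,m)}^{\otimes 2}\cong\mcO$. The main body of your proof overstates this when it says the identities ``assemble into a canonical trivialization'' of $K$ itself: from $d(\mcO_C)\cdot d(\omega_C)\cong d(L_1)\cdot d(L_2)$ together with the Serre isomorphisms $d(\mcO_C)\cong d(\omega_C)$ and $d(L_1)\cong d(L_2)$ one only extracts $d(\mcO_C)^2\cong d(L_2)^2$. You are right that this already trivializes $K^{\vir}_{\fM^{\semist}_X(r,m)}$, since only the square appears; however the proposition asserts the stronger statement $K_{\fM^{\semist}_Y(r,m)}\cong\mcO$, and this is genuinely used later (for instance in comparing orientations).

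The paper's proof bypasses Serre duality and instead exploits the standing hypothesis that $L_2$ is \emph{globally generated}. A choice of section gives an exact sequence $0\to\mcO_C\to L_2\to\mcO_Z\to 0$ with $Z$ a finite set of points. Combined with the resolution $0\to p_T^*(L_2^{-1}\boxtimes\mcF)\to p_T^*\mcF\to\mcE\to 0$ on $Y$ (where $\mcF=p_{T*}\mcE$), this reduces $\det\dR\mcH om_{\pi_T}(\mcE,\mcE)$ to $\bigotimes_{p_i\in Z}\det\dR\mcH om(\mcF_{p_i},\mcF_{p_i})^{-1}$, which is \emph{canonically} trivial since $\det\End(V)$ is canonically trivial for any vector bundle $V$. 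This produces $K\cong\mcO$ directly, not merely its square. Your $K$-theoretic identity $[L_2]-[\mcO_C]=[\mcO_Z]$ is precisely realized by this exact sequence, so the paper's argument can be viewed as the concrete way to pin down the square root you were seeking.

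A minor side remark: in your smoothness sketch you assert $H^0(\mcH om(E,E)\otimes L_1)=0$ from ``$\mcH om(E,E)$ has slope zero''. Semistability of $(E,\theta)$ as an $L_2$-Higgs bundle does not force $\End(E)$ to be semistable as a bundle. What is true is that the Serre dual of the obstruction space consists of \emph{Higgs-compatible} maps $E\to E\otimes L_1$, and a nonzero such map between semistable $L_2$-Higgs bundles of slopes $\mu(E)>\mu(E)+\deg L_1$ is impossible.
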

\begin{proof}
A similar argument can be found in 
\cite[Theorem 7.1]{Tod17}. 
Take a morphism $T \to \fM^{\semist}_Y(r, m)$ 
from a scheme $T$. 
Let $\mcE \in \Coh(Y \times T)$ be 
the corresponding family of $\mu$-semistable 
one-dimensional sheaves on $Y$. 
We consider the following diagram: 
\[
\xymatrix{
Y_T \coloneqq Y \times T \ar[r]^-{\pi_T} \ar[d]_{p_T} & T \\
C_T \coloneqq C \times T. \ar[ru]_{q_T} & 
}
\]
We need to construct an isomorphism 
\[
\det\dR\mcH om_{\pi_T}(\mcE, \mcE) \cong \mcO_T, 
\]
which is functorial in $T$. 
We have the following exact sequence 
\begin{equation} \label{eq:expT}
    0 \to p_T^*(L_2^{-1} \boxtimes p_{T*}\mcE ) 
    \to p_T^*p_{T*}\mcE \to \mcE \to 0. 
\end{equation}
Applying the functor 
$\dR\mcH om_{\pi_T}(-, \mcE)$ to 
the exact sequence \eqref{eq:expT}, we obtain 
the exact triangle 
\begin{equation*}
    \dR\mcH om_{\pi_T}(\mcE, \mcE) \to 
    \dR\mcH om_{q_T}(\mcF, \mcF) \to 
    \dR\mcH om_{q_T}(\mcF \boxtimes L_2^{-1}, \mcF), 
\end{equation*}
where we put $\mcF\coloneqq p_{T*}\mcE$. 
By taking the determinants, we get 
\begin{equation} \label{eq:detE}
    \det \dR\mcH om_{\pi_T}(\mcE, \mcE) \cong 
    \det  \dR\mcH om_{q_T}(\mcF, \mcF) \otimes 
    ( \det \dR\mcH om_{q_T}(\mcF \boxtimes L_2^{-1}, \mcF) )^{-1}. 
\end{equation}

On the other hand, 
we have an exact sequence 
\[
0 \to \mcO_C \to L_2 \to \mcO_Z \to 0, 
\]
where $Z \in |L_2|$ is a finite set of points. 
Applying the functor 
$\dR\mcH om_{q_T}(\mcF, \mcF \boxtimes (-))$ 
and taking the determinants, 
we get 
\begin{equation} \label{eq:detF}
\begin{aligned}
    \det \dR\mcH om_{q_T}(\mcF \boxtimes L_2^{-1}, \mcF) 
    &\cong \det \dR\mcH om_{q_T}(\mcF, \mcF \boxtimes L_2) \\
    &\cong \det \dR\mcH om_{q_T}(\mcF, \mcF) \otimes 
    \det \dR\mcH om_{q_T}(\mcF, \mcF_Z), 
\end{aligned}
\end{equation}
where we put $\mcF_Z\coloneqq\mcF|_{Z \times T}$. 
Combining the equations \eqref{eq:detE} and \eqref{eq:detF}, 
we obtain the desired isomorphism 
\begin{equation*}
\begin{aligned}
    \det\dR\mcH om_{\pi_T}(\mcE, \mcE) 
    &\cong \det \dR\mcH om_{q_T}(\mcF, \mcF_Z)^{-1} \\
    &\cong \det \dR\mcH om_{r_T}(\mcF_Z, \mcF_Z)^{-1} \\
    &\cong \bigotimes_{i=1}^k 
    \det\dR\mcH om(\mcF_{p_i}, \mcF_{p_i}) \cong \mcO_T, 
\end{aligned}
\end{equation*}
where we denote by $r_T \colon Z \times T \to T$ the projection. 
For the third isomorphism, 
we put $Z=\{p_1, \ldots, p_k \}$ 
and $\mcF_{p_i}\coloneqq\mcF|_{\{p_i\} \times T}$. 

The triviality of the virtual canonical bundle 
$K^{\vir}_{\fM^{\semist}_X(r, m)}$ 
now follows from Lemma \ref{lem:natori}. 
\end{proof}


\subsection{Proof of Theorem \ref{thm:chi-lC}}
In this subsection, we finish 
the proof of Theorem \ref{thm:chi-lC}. 

Consider the following commutative diagram: 
\begin{equation} \label{eq:Hitcomm}
\xymatrix{
&M^{\semist}_X(r, m) \ar@{^{(}->}[r]^\iota 
\ar[d]_{h_X} &M^{\semist}_Y(r, m) \ar[d]^{h_Y} \\
&B_X \ar[r]_b &B_Y, 
}
\end{equation}
where the morphism $b \colon B_X \to B_Y$ 
is induced by the surjection 
$\Sym^k(N) \to L_2^{\otimes k}$ 
for $k=1, \ldots, r$. 

\begin{lem} \label{lem:Hitfin}
The morphism 
\begin{equation} \label{eq:Hitfin}
b|_{\im(h_X)} \colon 
\im(h_X) \to B_Y,
\end{equation}
is finite. 
\end{lem}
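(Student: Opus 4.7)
The plan is to combine a $\bG_m$-scaling reduction with a pointwise spectral computation. The scaling argument will reduce the finiteness assertion to the question of whether the set-theoretic fiber of $b|_{\im(h_X)}$ over the origin equals $\{0\}$, and that question turns out to follow from $\deg(L_1) < 0$, which is available here because $\det N \cong \omega_C$ and $\deg(L_2) > 2g-2$.

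For the $\bG_m$-reduction, I would observe that $B_X$ and $B_Y$ are affine spaces with natural $\bG_m$-actions (weight $i$ on the $i$-th summand, via $t \cdot a_i = t^i a_i$), that $b$ is equivariant, and that $h_X$ is equivariant for the action $\phi \mapsto t\phi$ on Higgs bundles. Hence $\im(h_X) \subset B_X$ is a closed $\bG_m$-stable subvariety. Writing $\im(h_X) = \Spec R$ and $B_Y = \Spec S$, both $R$ and $S$ become finitely generated non-negatively graded $\bC$-algebras with $R_0 = S_0 = \bC$, and $b|_{\im(h_X)}$ comes from a graded homomorphism $S \to R$. Graded Nakayama then says that $R$ is finite over $S$ iff $R/S_{>0} R$ is finite-dimensional over $\bC$; and the closed subscheme $\Spec(R/S_{>0} R) = b^{-1}(0) \cap \im(h_X)$ is automatically of finite length once its support is the single point $\{0\}$, since it is a Noetherian closed subscheme of $\im(h_X)$. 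Everything thus reduces to proving $b^{-1}(0) \cap \im(h_X) = \{0\}$ as sets.

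For the pointwise statement, I would take $a = h_X(E, \phi)$ with $b(a) = 0$ and use commutativity of \eqref{eq:Hitcomm} to get $h_Y(E, s \circ \phi) = 0$, i.e.\ $s \circ \phi \in \End(E) \otimes L_2$ is fiberwise nilpotent. Fix $c \in C$ together with a splitting $N_c = L_{1,c} \oplus L_{2,c}$, and decompose $\phi_c = \phi_{1,c} + \phi_{2,c}$; the integrability $\phi \wedge \phi = 0$ forces $[\phi_{1,c}, \phi_{2,c}] = 0$, and $\phi_{2,c}$ is nilpotent. Simultaneously triangularize the commuting pair so that $\phi_{2,c}$ is strictly upper triangular; then, expanding
\[
\tr(\phi_c^i) = \sum_{j=0}^{i} \binom{i}{j} \tr(\phi_{1,c}^j \phi_{2,c}^{i-j}) \cdot e_1^j e_2^{i-j}
\]
in $\Sym^i N_c$, every summand with $i - j \geq 1$ involves a strictly upper triangular, hence traceless, endomorphism; only the $j = i$ term survives, and it lies in $L_{1,c}^{\otimes i} \subset \Sym^i N_c$. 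Since this inclusion is splitting-independent, the global section $\tr(\phi^i)$ lies in the canonical sub-bundle $L_1^{\otimes i} \hookrightarrow \Sym^i N$. Finally, $\deg(L_1) < 0$ forces $H^0(C, L_1^{\otimes i}) = 0$ for every $i \geq 1$, whence $a = 0$.

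The main delicate point will be globalizing the pointwise computation: the $j = i$ summand must be identified with the canonical sub-bundle $L_1^{\otimes i} \hookrightarrow \Sym^i N$ (the bottom piece of the $L_1$-adic filtration on $\Sym^i N$), so that the splitting-dependent pointwise fact promotes to a section of a canonical subsheaf of $\Sym^i N$. Once that identification is checked, the remaining graded Nakayama deduction is completely formal.
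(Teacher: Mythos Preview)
Your proof is correct, but it takes a substantially longer route than the paper's. The paper simply observes that $b|_{\im(h_X)}$ is both proper and affine, hence finite: properness follows because $b \circ h_X = h_Y \circ \iota$ is proper (as a composition of the closed immersion $\iota$ with the proper Hitchin map $h_Y$) and $h_X$ is proper and surjective onto $\im(h_X)$; affineness follows because $\im(h_X)$ is closed in the affine space $B_X$ (again by properness of $h_X$) and $b$ is a linear projection. Note that you already invoke the closedness of $\im(h_X)$---and hence the properness of $h_X$---in order to write $\im(h_X) = \Spec R$ for a non-negatively graded ring; once you have this, the paper's argument finishes in one line, whereas yours introduces the graded Nakayama reduction and the spectral computation on top of it.

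What your approach buys is some extra structural information: you exhibit explicitly that the scheme-theoretic fiber $b^{-1}(0) \cap \im(h_X)$ is supported at the origin, by showing that $\tr(\phi^i)$ lies in the canonical sub-line-bundle $L_1^{\otimes i} \subset \Sym^i N$ whenever the $L_2$-component of $\phi$ is nilpotent, and then using $\deg(L_1) < 0$. This is a pleasant computation and makes essential use of the integrability condition $\phi \wedge \phi = 0$ (to get $[\phi_{1,c},\phi_{2,c}]=0$) together with the specific inequality $\deg(L_2) > 2g-2$. By contrast, the paper's argument uses neither the integrability condition nor any numerical constraint on $\deg(L_2)$; it works uniformly from the properness of the Hitchin maps and the affineness of $b$. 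So your argument is more hands-on and reveals more about the geometry of the fiber, while the paper's is shorter, more robust, and uses only the coarsest properties of the diagram \eqref{eq:Hitcomm}.
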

\begin{proof}
It is enough to show that the morphism 
in \eqref{eq:Hitfin} is proper and affine. 
The composition 
$h_Y \circ \iota = b \circ h_X \colon 
M^{\semist}_X(r, m) \to B_Y$ 
is proper as so are $h_Y$ and $\iota$. 
Furthermore, the morphism 
$h_X \colon M^{\semist}_X(r, m) \to \im(h_X)$ 
is proper and surjective. 
Hence the morphism \eqref{eq:Hitfin} is proper. 

On the other hand, by the properness of 
the Hitchin morphism $h_X$, 
the inclusion $\im(h_X) \hookrightarrow B_X$ is closed. 
As the morphism $b \colon B_X \to B_Y$ is just 
the projection of affine spaces, it is also affine. 
We conclude that the composition 
\[
\im(h_X) \hookrightarrow B_X \xrightarrow{b} B_Y
\]
is affine, as required. 
\end{proof}

Recall from \eqref{eq:HitS} and \eqref{eq:HitSstack} 
that we denote by 
$h_Y \colon M^{\semist}_Y(r, m) \to B_Y$, 
$\widetilde{h}_Y \colon \fM^{\semist}_Y(r, m) \to B_Y$ 
the Hitchin morphisms. 
Recall also that we have the function 
$g \colon B_Y \to \bA^1$ 
defined in Proposition \ref{prop:dchart}. 
We equip $\fM^{\semist}_X(r, m)$ with the orientation defined by the global critical chart description in Proposition \ref{prop:dchart} and Lemma \ref{lem:natori}.
We define the vanishing cycle complex $\varphi_{\fM^{\semist}_X(r, m)} \in \Perv(\fM^{\semist}_X(r, m))$ using this orientation. We set
\begin{equation}\label{eq:defvangood}
\varphi_{M^{\semist}_X(r, m)} \coloneqq \cH^1(p_{X*}\varphi_{\fM^{\semist}_X(r, m)}) \in \Perv(M^{\semist}_X(r, m))
\end{equation}

We need the following proposition: 
\begin{prop}\label{prop:isomvan} \label{prop:BPSvsVan}
We have isomorphisms 
\[
\varphi_{M^{\semist}_X(r, m)} \cong 
{\mcH^1}(p_{X*}(
\varphi_{g \circ \widetilde{h}_Y}(
\sIC_{\fM^{\semist}_Y(r, m)} 
)))
\cong 
\varphi_{g \circ h_Y}( 
\sIC_{M^{\semist}_Y(r, m)}
). 
\]
\end{prop}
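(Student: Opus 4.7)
The plan is to establish the two isomorphisms separately. The first is essentially formal, a direct consequence of the global critical chart description of $\fM^{\semist}_X(r, m)$ from Proposition \ref{prop:dchart} together with Proposition \ref{prop:joyce=van}. The second is more delicate and requires commuting the vanishing cycle functor past the good moduli space map $p_Y \colon \fM^{\semist}_Y(r, m) \to M^{\semist}_Y(r, m)$.

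For the first isomorphism, I would invoke Proposition \ref{prop:dchart}, which realizes $\fM^{\semist}_X(r, m)$ as the critical locus of $g \circ \widetilde{h}_Y$ on the smooth stack $\fM^{\semist}_Y(r, m)$; the canonical orientation is supplied by Lemma \ref{lem:natori} together with the CY property from Proposition \ref{prop:CYori}. Since $\fM^{\semist}_Y(r, m)$ is smooth, $\sIC_{\fM^{\semist}_Y(r, m)}$ is a shift of the constant sheaf, and Proposition \ref{prop:joyce=van} (in its stacky form from \S \ref{ssec:van}) supplies an isomorphism
\[
\varphi_{\fM^{\semist}_X(r, m)} \cong \varphi_{g \circ \widetilde{h}_Y}(\sIC_{\fM^{\semist}_Y(r, m)}).
\]
Applying $\cH^1 p_{X*}$ to both sides and using the definition \eqref{eq:defvangood} of $\varphi_{M^{\semist}_X(r, m)}$ gives the first isomorphism.

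For the second isomorphism, I factor $g \circ \widetilde{h}_Y = (g \circ h_Y) \circ p_Y$ and work with the commutative square
\[
\xymatrix{\fM^{\semist}_X(r, m) \ar[d]_{p_X} \ar@{^(->}[r] & \fM^{\semist}_Y(r, m) \ar[d]^{p_Y} \\ M^{\semist}_X(r, m) \ar@{^(->}[r] & M^{\semist}_Y(r, m),}
\]
whose bottom arrow exists because the critical locus of an invariant function descends to the good moduli space. The plan is to first commute $p_{Y*}$ past the vanishing cycle functor, giving
\[
p_{Y*}\, \varphi_{g \circ \widetilde{h}_Y}(\sIC_{\fM^{\semist}_Y(r, m)}) \cong \varphi_{g \circ h_Y}(p_{Y*} \sIC_{\fM^{\semist}_Y(r, m)}),
\]
and then apply $\cH^1$, using the perverse $t$-exactness of $\varphi$ together with the identification $\cH^1(p_{Y*} \sIC_{\fM^{\semist}_Y(r, m)}) \cong \sIC_{M^{\semist}_Y(r, m)}$. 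The perverse degree one matches the relative dimension $-1$ of $p_Y$ coming from the generic $\bG_m$ scalar automorphism of the closed points of $\fM^{\semist}_Y(r, m)$.

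The main obstacle is the commutation $p_{Y*}\varphi \cong \varphi\, p_{Y*}$, since $p_Y$ is not proper and proper base change is not directly available. I would handle this by working \'etale-locally on $M^{\semist}_Y(r, m)$ via a Luna-slice presentation $\fM^{\semist}_Y(r, m) \simeq [V/G]$ with $V$ smooth and $G$ reductive; in this local model the desired commutation reduces to the compatibility of vanishing cycles with the $G$-invariants functor, which follows by smooth base change through a presentation of the GIT quotient. The identification $\cH^1(p_{Y*}\sIC_{\fM^{\semist}_Y(r, m)}) \cong \sIC_{M^{\semist}_Y(r, m)}$ is then a consequence of the decomposition theorem in this local picture, with degree considerations isolating the correct perverse summand.
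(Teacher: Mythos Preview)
Your treatment of the first isomorphism matches the paper's proof exactly: Proposition~\ref{prop:dchart} plus Proposition~\ref{prop:joyce=van}, followed by applying $\mcH^1 \circ p_{X*}$.

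The second isomorphism is where your argument has a genuine gap. The commutation
\[
p_{Y*}\,\varphi_{g\circ\widetilde h_Y}(\sIC_{\fM_Y^{\semist}(r,m)})\ \cong\ \varphi_{g\circ h_Y}\bigl(p_{Y*}\sIC_{\fM_Y^{\semist}(r,m)}\bigr)
\]
cannot be obtained from a Luna slice argument in the way you suggest. In the local model $[V/G]\to V\sslash G$ with $G$ reductive, pushforward along $p_Y$ computes (derived) $G$-equivariant cohomology, which is an unbounded operation; vanishing cycles commute with proper pushforward and smooth pullback, but there is no general ``compatibility with the $G$-invariants functor'' of the kind you invoke. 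The phrase ``smooth base change through a presentation of the GIT quotient'' does not help: the map $V\to V\sslash G$ is not smooth, and base change along the smooth cover $V\to[V/G]$ goes the wrong direction (pullback, not pushforward). Likewise, the identification $\mcH^1(p_{Y*}\sIC_{\fM_Y^{\semist}})\cong\sIC_{M_Y^{\semist}}$ is not a direct consequence of the decomposition theorem in this local picture, since the strictly semistable locus contributes non-trivially.

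The paper takes a completely different route. It first establishes the cohomological integrality theorem \eqref{eq:intBPS} for $L$-twisted Higgs bundles (Theorem~\ref{thm:int-IC}), whose proof rests on the approximation of $p_Y$ by the \emph{proper} morphisms $\pi_f$ from framed moduli spaces (Proposition~\ref{prop:appro}); properness is exactly what allows $\varphi$ to commute with pushforward. Taking $\mcH^1$ of the right-hand side of \eqref{eq:intBPS} then isolates $\varphi_{g\circ h_Y}(\sIC_{M_Y^{\semist}(r,m)})$, since the lowest-degree contribution to $\Sym_{\boxtimes_\oplus}\bigl(\mH^*(\mathrm{B}\bC^*)_{\vir}\otimes\varphi\,\sIC_{M_Y^{\semist}(\mu)}\bigr)$ sits in degree~$1$. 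Restricting to the $(r,m)$-component yields the second isomorphism. In short, the approximation-by-proper-maps technique of \S\ref{sec:appro} is not a technicality one can replace by local structure theory; it is the mechanism that makes the commutation go through.
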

We postpone the proof of this proposition 
until the next section.

\begin{proof}[Proof of Theorem \ref{thm:chi-lC}]
Let us take integers $r, m, m' \in \bZ$ 
with $r > 0$. 
Let $k(r) \in \bZ$ be integers 
as in Lemma \ref{lem:stability}. 
We take an exact sequence \eqref{eq:short} 
such that $L_2$ is globally generated and 
$\deg(L_2) \geq k(r)$ holds. 
Then by Proposition \ref{prop:dchart}, 
the moduli stacks 
$\fM^{\semist}_X(r, m), \fM^{\semist}_X(r, m')$ 
are written as the global critical loci 
inside the stacks 
$\fM^{\semist}_Y(r, m), \fM^{\semist}_Y(r, m')$, 
respectively. 

Recall from Proposition \ref{prop:CYori} 
that the canonical bundles 
$K_{\fM^{\semist}_Y(r, m)}$ 
and $K_{\fM^{\semist}_Y(r, m')}$ 
are trivial, 
hence the natural orientation data 
in Lemma \ref{lem:natori} is 
a CY orientation data.

Let $\varphi_{M^{\semist}_X(r, m)}, \varphi_{M^{\semist}_X(r, m')}$ 
be the associated perverse sheaves on $M^{\semist}_X(r, m)$, $M^{\semist}_X(r, m')$, 
defined as in \eqref{eq:defvangood}. 
We have an isomorphism 
\begin{equation} \label{eq:isoBS}
 h_{Y*}(\varphi_{M^{\semist}_X(r, m)} ) 
\cong 
 h_{Y*}(\varphi_{M^{\semist}_X(r, m')}) 
\end{equation}
by Proposition \ref{prop:vanMS}. 
By using the commutative diagram \eqref{eq:Hitcomm}, 
we can rewrite the left hand side of \eqref{eq:isoBS} 
as 
\begin{equation*} \label{eq:pushcomm}
 h_{Y*}(\varphi_{M^{\semist}_X(r, m)} ) 
\cong b_* h_{X*}(\varphi_{M^{\semist}_X(r, m)}). 
\end{equation*}
By Lemma \ref{lem:Hitfin}, 
the map (\ref{eq:Hitfin}) is finite. 
Since the push-forward along a finite morphism preserves 
the perverse t-structures, 
we obtain 
\begin{equation} \label{eq:p-coh}
    {\mcH^i}( 
     h_{Y*}(\varphi_{M^{\semist}_X(r, m)} ) 
    )
    \cong b_*{\mcH^i}( 
     h_{X*}(\varphi_{M^{\semist}_X(r, m)}
    )) 
\end{equation}
for all $i \in \bZ$, 
and we have the same isomorphisms 
if we replace the integer $m$ with $m'$. 

Combining the isomorphisms 
\eqref{eq:isoBS} and \eqref{eq:p-coh}, 
and taking the Euler characteristics, 
we conclude that 
\[
\chi({\mcH^i}( 
     h_{X*}(\varphi_{M^{\semist}_X(r, m)}
    )))
=\chi({\mcH^i}( 
     h_{X*}(\varphi_{M^{\semist}_X(r, m')}
    )) )
\]
as desired. 
\end{proof}

\section{Cohomological integrality theorem 
for twisted Higgs bundles} \label{sec:int}

In this section, we prove the cohomological integrality theorem in the sense of \cite[\S 1.3]{DM20} for twisted Higgs bundles. 
Since semistable twisted Higgs bundles form 
a category of homological dimension one, 
we can prove the cohomological integrality theorem 
using the techniques of \cite{DM20, Mei15}, 
which treat the case of quivers. 

\subsection{Mixed Hodge modules on stacks}\label{ssec:MHM}

Here we give a quick introduction to mixed Hodge modules, which is a sheaf theoretic version of mixed Hodge structures introduced by Morihiko Saito \cite{Sai90}. An advantage of working with mixed Hodge modules (rather than perverse sheaves) is the fact that the category of pure Hodge modules is semi-simple. In particular, an equality of Grothendieck group of the category of pure Hodge modules implies an isomorphism between them.
This was used by Davison--Meinhardt \cite{DM20} in their proof of the cohomological integrality theorem for quivers with potentials, and will be used in the proof of Theorem \ref{thm:int-IC}.

Let $X$ be a separated scheme locally of finite type over complex number whose connected components are quasi-compact.
For such $X$, we can define the category of \textbf{mixed Hodge modules} $\MHM(X)$ and its bounded derived category $D^b(\MHM(X))$ which admits a six-functor formalism (see \cite{Sai89} for an overview).
There exists an exact functor 
\[
\rat \colon D^b(\MHM(X)) \to D^b(\Perv(X))
\]
which restricts to a faithful functor $\MHM(X) \to \Perv(X)$.
The functor $\rat$ is compatible with all six functors.
A mixed Hodge module $M$ is equipped with an increasing filtration called the \textbf{weight filtration} which we denote by $W_{\bullet}M$.
A mixed Hodge module $M \in \MHM(X)$ is called \textbf{pure of weight $i$} if $W_{i-1}M = 0$ and $W_i M = M$ holds, and an object $M^{\bullet} \in D^b(\MHM(X))$ is called pure if the $i$-th cohomology mixed Hodge module $\mcH^i(M)$ is pure of weight $i$.

The category of mixed Hodge modules over a point is equivalent to the category of graded polarizable mixed Hodge structures.
Let $a_X \colon X \to \Spec \bC$ be the constant map to a point. Then we define objects $\bQ_X^{}, \bD\bQ_X \in D^b(\MHM(X))$ as
\[
\bQ_X \coloneqq a_X^* \bQ, \  \bD \bQ_X \coloneqq a_X^! \bQ,
\]
where $\bQ$ denotes the mixed Hodge structure of weight zero and dimension one.

The category of mixed Hodge modules forms a stack in the smooth topology (see \cite[Theorem 2.3]{Arc}).
This motivates the following definition of the category of mixed Hodge modules on an Artin stack $\fX$.
\begin{defin}
Let $\fX$ be a complex Artin stack.
We let $\Sch^{\sm, \sep}_{/\fX}$ denote the category of separated schemes smooth and of finite type over $\fX$. A \textbf{mixed Hodge module} on $\fX$ is a pair consisting of an assignment 
\[
  \Sch^{\sm, \sep}_{/\fX} \ni (t \colon T \to \fX) \mapsto M_t \in \MHM(T)
\]
and a choice of an isomorphism
\[
\theta_q \colon q^* M_{t_2}[\dim q] \cong M_{t_1}
\]
for each smooth morphism $q \colon (t_1 \colon T_1 \to \fX) \to (t_2 \colon T_2 \to \fX)$ in $\Sch^{\sm, \sep}_{/\fX}$ satisfying the associativity relation. Mixed Hodge modules on $\fX$ form a category $\MHM(\fX)$ in the natural way. We have a natural forgetful functor
\[
\rat \colon \MHM(\fX) \to \Perv(\fX).
\]
\end{defin}

Take a smooth surjective morphism from a separated finite type scheme $t \colon T \to \fX$ and we let $\pr_i \colon T \times_{\fX} T \to T$ denote the $i$-th projection. Then we can identify $\MHM(\fX)$ with the category of pairs $(M, \sigma)$, where $M$ is a mixed Hodge module on $T$ and $\sigma$ is an isomorphism
\[
\sigma \colon \pr_1^* M \cong \pr_2^* M
\]
satisfying the cocycle condition.

At present we do not have a full six-functor formalism for mixed Hodge modules on Artin stacks.
However we have some part of it 
which is sufficient for applications in this paper.
Firstly, if we are given a smooth morphism $q \colon \fX_1 \to \fX_2$ between Artin stacks, we can define a functor
\[
q^*[\dim q] \colon \MHM(\fX_2) \to \MHM(\fX_1)
\]
in the standard way.

Now assume that we are given a finite type morphism $p \colon \fX \to X$ from an Artin stack to a separated finite type scheme.
We want to define the functor 
\[
\cH^n(p_* (-)) \colon \MHM(\fX) \to \MHM(X)
\]
compatible with the functor $\rat$.  Here $\cH^n$ denotes the $n$-th cohomology with respect to the perverse t-structure on $D(\MHM(X))$. 
We assume that the morphism $p$ satisfies the following assumption: 
   \begin{quote}$\mathrm{(*)}$
   \label{quote:star}
         For each object $\cF \in D^b_c(\fX)$ in the bounded derived category of sheaves on $\fX$ with constructible cohomology
         and integer $N$, there exists a smooth morphism from a scheme $q_N \colon T_N \to \fX$ such that the natural map
        \[
    \mcH^n_{}(\cF) \to \mcH^n_{}({q_N}_* q_N^* \cF)
    \]
is isomorphism for each $n \leq N$. Here $\mcH^n$ denotes the perverse t-structure on $D^b_c(\fX)$.
    \end{quote}
    This assumption is automatically satisfied when $\fX$ is of the form $[Y / G]$ for some scheme $Y$ and a linear algebraic group $G$ (see \cite[\S 2.3.2]{Dav21}). 
Let $p \colon \fX \to X$ be a morphism to a scheme.
For a mixed Hodge module $M \in \MHM(\fX)$ and $n \in \bZ$, we define a mixed Hodge module
\[
\mcH^n_{}(p_* M) \coloneqq \mcH^n_{}((p \circ q_N)_* q_N^* M)
\]
where $N$ is a sufficiently large integer.
We can show that $\mcH^n_{}(p_* M)$ is independent of the choice of $N$ and $q_N$.
If we take $p$ as the constant map $a_{\fX} \colon \fX \to \Spec \bC$,
we can construct a mixed Hodge structure $\mH^n(\fX, M) \coloneqq \mcH^n({a_{\fX}}_* M)$.
Similarly, we can extend the perverse sheaves
$\mcH^n_{}(p_* \bQ_{\fX})$ and $\mcH^n(p_* \bD_{\fX})$ to  mixed Hodge modules, and the vector spaces $\mH^n(\fX)$ and $\HBM_{n}(\fX)$ to mixed Hodge structures.

For a complex of mixed Hodge modules $M \in D(\MHM(X))$, we define
\[ 
\cH(M) \coloneqq \bigoplus_{i \in \bN} \cH^i(M)[-i].
\]
\begin{lem}\label{lem:decompthm}
Let $\fX$ be a stack satisfying the condition $\mathrm{(*)}$, $p \colon \fX \to X$ be a morphism to a separated finite type complex scheme, and $h\colon X \to B$ be a proper morphism between separated finite type complex schemes.  Take $M \in D^b(\MHM(X))$ and assume that $\cH(p_* M)$ is pure.
Then we have an isomorphism
\[
\cH(h_* \cH(p_* M)) \cong \cH((h \circ p)_* M). 
\]
\end{lem}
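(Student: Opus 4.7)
The plan is to reduce the statement to Saito's decomposition theorem for the proper morphism $h$ applied to an honest pure object in $D^b(\MHM(X))$, using the approximation of $p_* M$ by push-forwards from a scheme that condition $(*)$ provides. Since the desired isomorphism is an equality of $\cH$-sums, it suffices to fix $n$ and prove $\cH^n(h_* \cH(p_* M)) \cong \cH^n((h \circ p)_* M)$, which only involves cohomologies in a bounded range.

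Fix $n$ and choose $N \gg n$ (say $N \geq n + 2\dim h$, so that $h_*$ cannot move anything in degree $> N$ into degree $n$). Applying condition $(*)$ to $M \in \MHM(\fX)$, pick a smooth morphism $q \colon T \to \fX$ from a separated finite type scheme such that $\cH^i(M) \xrightarrow{\sim} \cH^i(q_* q^* M)$ holds for all $i$ in the required range. Setting $\cG \coloneqq (p \circ q)_* q^* M \in D^b(\MHM(X))$, the definition of the perverse cohomologies of $p_* M$ reviewed in \S\ref{ssec:MHM} gives $\cH^i(\cG) \cong \cH^i(p_* M)$ for $i \leq N$, and similarly $\cH^n(h_* \cG) \cong \cH^n((h \circ p)_* M)$ (enlarging $N$ if needed, so the choice of $q$ works for the composite $h \circ p$ as well). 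Let $\cG' \coloneqq \tau^{\leq N}(\cG)$ with respect to the perverse t-structure. Then $\cH^i(\cG') = \cH^i(p_* M)$ for $i \leq N$ and vanishes otherwise, so the hypothesis that $\cH(p_* M)$ is pure forces $\cG'$ to be pure of weight zero.

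Saito's decomposition theorem then supplies a (non-canonical) splitting
\[
\cG' \,\cong\, \bigoplus_i \cH^i(\cG')[-i] \,=\, \cH(\cG')
\]
in $D^b(\MHM(X))$, since pure complexes are formal by semisimplicity of pure Hodge modules; applying $h_*$ gives $h_* \cG' \cong h_* \cH(\cG')$ and hence $\cH^n(h_* \cG') \cong \cH^n(h_* \cH(\cG'))$. The choice $N > n + \dim h$ ensures $\cH^n(h_* \cG') \cong \cH^n(h_* \cG) \cong \cH^n((h \circ p)_* M)$, since the cone of $\cG' \to \cG$ lives in perverse degrees $> N$ and $h_*$ has amplitude $\leq \dim h$. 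Dually, the summands of $h_* \cH(\cG') = \bigoplus_i h_* \cH^i(\cG')[-i]$ contributing to degree $n$ satisfy $|n-i| \leq \dim h$, so only those $\cH^i(\cG') = \cH^i(p_* M)$ appear, yielding $\cH^n(h_* \cH(\cG')) \cong \cH^n(h_* \cH(p_* M))$. Combining these identifications gives the required isomorphism in degree $n$.

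The main obstacle I foresee is the bookkeeping around the approximation by $(*)$: one has to choose a single smooth cover $q$ that simultaneously computes $\cH^i(p_* M)$ and $\cH^n((h \circ p)_* M)$ in the relevant ranges, and one must verify that the splitting produced by the decomposition theorem on $\cG'$ is compatible with the defining approximations of the perverse cohomologies of $p_* M$. This compatibility rests on the independence statement recalled in \S\ref{ssec:MHM} — namely, that $\cH^n(p_* M)$ does not depend on the smooth cover $q_N$ — which guarantees that different choices used on the two sides of the claimed isomorphism produce the same mixed Hodge module.
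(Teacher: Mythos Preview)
Your argument is essentially the same as the paper's: fix $n$, pick $N$ large relative to the fiber dimension of $h$, approximate $p_*M$ by $(p\circ q)_*q^*M$ via condition $(*)$, truncate, use Saito's theory to split the resulting pure complex, and then compare cohomologies using the amplitude bound for $h_*$. One small slip: the truncation $\cG'$ is pure in the sense that $\cH^i(\cG')$ has weight $i$, not ``pure of weight zero''; the formality argument you invoke only needs the former, so this does not affect correctness.
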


\begin{proof}
Take an integer $n$ and an integer $N$ such that $N > n + \dim h^{-1}(x)$ holds for each $x \in B$. Then \cite[Corollary 5.2.14]{Dim04} implies that we have an isomorphism
\[
\cH^n(h_* \cH(p_* M)) \cong \cH^n(h_* \tau^{\leq N}\cH(p_* M)).
\]
Take a smooth morphism $q \colon T \to X$ such that we have isomorphisms
\begin{align*}
\tau^{\leq N}\cH(p_* M)) &\cong \tau^{\leq N}\cH((p \circ q)_* q^* M)), \\
\cH^n((h \circ p)_*  M) &\cong \cH^n((h \circ p \circ q)_* q^* M).
\end{align*}
Then what it is enough to prove the following isomorphism
\[
\cH^n(h_*\tau^{\leq N}\cH((p \circ q)_* q^* M))) \cong \cH^n((h \circ p \circ q)_* q^* M).
\]
Saito's decomposition theorem implies an isomorphism
\[
\cH^n(h_*\tau^{\leq N}\cH((p \circ q)_* q^* M))) 
\cong \cH^n(h_*\tau^{\leq N}(p \circ q)_* q^* M)) 
\]
Then using \cite[Corollary 5.2.14]{Dim04} again, we obtain the desired isomorphism.
\end{proof}
\subsection{Monodromic mixed Hodge modules}\label{ssec:MMHM}
Here we recall some basic properties of 
monodromic mixed Hodge modules. We do not give the precise definition here and refer the reader to \cite[\S 2]{DM20} and \cite[\S 2.9]{BBBBJ15} for the detailed discussion. 
Let $X$ be a separated scheme locally of finite type over complex number whose connected components are quasi-compact. 
Then we can define an abelian category $\MMHM(X)$ 
of \textbf{monodromic mixed Hodge modules} on $X$.
Roughly speaking, a monodromic mixed Hodge module consists of its underlying mixed Hodge module $M$ and a monodromy operator acting on it.
We have a natural functor
\[
\MMHM(X) \to \MHM(X)
\]
forgetting the monodromy operator and a fully faithful functor
\[
\MHM(X) \hookrightarrow \MMHM(X)
\]
which associates a mixed Hodge module $M$ to a monodromic mixed Hodge module whose underlying mixed Hodge module is $M$ and the monodromy operator is trivial.
As similar to the usual mixed Hodge modules, 
monodromic mixed Hodge modules are also equipped with weight filtrations.

The bounded derived category $D^b(\MMHM(X) )$ 
admits a six-functor formalism, 
similarly to the usual mixed Hodge modules. 
The inclusion functor $D^b(\MHM(X) ) \to D^b(\MMHM(X) )$ is compatible with these six operations. 
The forgetful functor $D^b(\MMHM(X) ) \to D^b(\MHM(X) )$ is compatible with four operations $f_*, f_!, f^*, f^!$ for a morphism $f$ between separated finite type complex schemes.
However, the tensor product of monodromic mixed Hodge modules is not compatible with the tensor product of the underlying mixed Hodge modules.

For a regular function $f \colon X \to \bA^1$, 
we can define the \textbf{monodromic vanishing cycle functor} for (possibly unbounded) mixed Hodge module complexes
\begin{equation} \label{eq:defmonvan}
\varphi^{\mathrm{mmhm}}_f \colon D(\MHM(X)) \to D(\MMHM(X)),  
\end{equation}
which enhances the usual vanishing cycle functor by incorporating the monodromy operator acting on it.
 
The essential difference between monodromic 
and the usual mixed Hodge modules are the following: 

\begin{itemize}
    \item \textbf{Thom--Sebastiani isomorphism} holds 
    for the monodromic vanishing cycle functors 
    \eqref{eq:defmonvan}. 
    See \cite[Proposition 2.13]{DM20} for the precise statement 
    and other basic properties. 
    
    \item There exists an object 
    $\bL^{1/2} \in D^b(\MMHM(\pt))$ with an isomorphism 
    \[
    (\bL^{1/2})^{\otimes 2} 
    \cong \bL \in D^b(\MMHM(\pt)), 
    \]
    where we put $\bL\coloneqq \mH_c^*(\bA^1, \bQ) 
    \in D^b(\MHM(\pt)) \subset D^b(\MMHM(\pt))$, 
    which is concentrated in cohomological degree two, 
    and is pure of weight two. 
\end{itemize}

When $X$ is an irreducible variety, 
we define the object $\sIC_X \in \MMHM(X)$ as follows: 
\[
\sIC_X\coloneqq \bL^{-\dim X/2} \otimes \widetilde{\sIC}_X 
\in \MMHM(X), 
\]
where $\widetilde{\sIC}_X$ denotes the intermediate extension 
of $\bQ_{X_{\reg}}$ on the regular locus $X_{\reg} \subset X$. 
We will also use the following object: 
\begin{equation} \label{eq:defBCvir}
\mH^*(\mathrm{B}\bC^*)_{\vir}\coloneqq 
\bL^{1/2} \otimes \mH^*(\mathrm{B}\bC^*) 
\in D(\MMHM(\pt)). 
\end{equation}

As in the previous subsection, 
we can define the notion of monodromic mixed Hodge modules 
for an Artin stack. 
In particular, we can define the object 
$\sIC_\fX \in \MMHM(\fX)$ for a smooth Artin stack $\fX$. 
Moreover, we can define the functor 
\[
\mcH^n(p_*(-) ) \colon 
\MMHM(\fX) \to \MMHM(X) 
\]
for a morphism $p \colon \fX \to X$ 
from an Artin stack $\fX$ to a scheme $X$ 
satisfying the condition (*) in \S \ref{ssec:MHM}. 

Let $X$ be a separated scheme locally of finite type over $\bC$ whose connected components are quasi-compact.
We say that a (possibly unbounded) complex $M \in D(\MMHM(X))$ is \textbf{locally finite} if for each connected component $Z \subset X$, 
the following conditions hold: 
\begin{itemize}
    \item For each $n \in \bZ$, the set $\{i\in \bZ \mid \mathrm{gr}_n^{W} \cH^i(M) |_Z \neq 0 \}$ is finite.
    \item There exists an integer $n$ such that $W_n \cH^i(M) |_Z =0$ for all $i$.
\end{itemize}
We let $D^{\geq, \mathrm{lf}}(\MMHM(X)) \subset D^{\geq}(\MMHM(X))$ denote the full subcategory consisting of locally finite monodromic mixed Hodge complexes.
We can see that the Grothendieck group $K_0(D^{\geq, \mathrm{lf}}(\MMHM(X)))$ is isomorphic to the completion of $K_0(\MMHM(X))$ with respect to ideals $\{I_i \}_{i\in \bZ}$ where $I_i$ is generated by objects whose weight is greater than $i$.

Let $(X, m)$ be a monoid scheme where $X$ is a separated and locally of finite type over complex number whose connected components are quasi-compact and $m \colon X \times X \to X$ is a finite morphism.
For objects $M, N \in D^{\geq, \mathrm{lf}}(\MMHM(X))$,
we define
\[
M \boxtimes_m N \coloneqq m_*(M \boxtimes N) \in D^{\geq, \mathrm{lf}}(\MMHM(X)).
\]
The functor $\boxtimes_m$ defines a symmetric monoidal structure on the category $D^{\geq, \mathrm{lf}}(\MMHM(X))$.
Therefore for each $n \in \bZ_{>0}$, we can define the symmetric product functor
\[
\Sym_{\boxtimes_m}^{n} \colon D^{\geq, \mathrm{lf}}(\MMHM(X)) \to D^{\geq, \mathrm{lf}}(\MMHM(X)).
\]

\subsection{Approximation by proper morphisms}
\label{sec:appro}
Let $L$ be a line bundle on a smooth projective curve $C$ 
with $\deg(L) > 2g(C)-2$, 
and put $Y \coloneqq \Tot_C(L)$. 
For given integers $r, m \in \bZ$ with $r>0$, 
we denote by 
$p \colon \fM^{\semist}_Y(r, m) \to M_Y^{\semist}(r, m)$ 
the morphism from the moduli stack to its good moduli space. 
We fix a regular function 
$F \colon M^{\semist}_Y(r, m) \to \bA^1$ 
and denote by 
$\widetilde{F}\coloneqq F \circ p \colon 
\fM^{\semist}_Y(r, m) \to \bA^1$ 
the composition. 

Let us recall the construction of 
moduli spaces of {\it framed objects} 
following \cite{DM20, MS20, Mei15}. 
We follow the notations in \cite[\S 3.3]{MS20}. 
By construction, we have 
$\fM^{\semist}_Y(r, m)=\left[\Quot^{\semist}/\GL_n \right]$, 
where 
\[
\Quot^{\semist} \subset \Quot 
\]
is the GIT semistable locus 
inside a certain quot scheme $\Quot$ 
with respect to a certain $\GL_n$-linearization. 
For a given integer $f > n$, we put 
\[
\bA\coloneqq\Hom(\bC,\bC^n )^f, 
\quad G_n\coloneqq\bC^* \times \GL_n. 
\]
We have a $G_n$-action on $\Quot$ which passes through 
the $\GL_n$-action. 
We define a $G_n$-action on $\bA$ as follows: 
\[
(t, g) \cdot (a_i) \coloneqq (t^{-1}a_ig), \quad 
(t, g) \in G_n, (a_i)_{i=1}^f \in \bA. 
\]

By choosing certain $G_n$-linearizations on $\bA$ 
and $\Quot \times \bA$, 
we obtain the diagram 
\begin{equation} \label{eq:appro}
    \xymatrix{
    &U_f
    \ar@{^{(}->}[r] \ar[rd]_{\kappa} 
    &M_f \ar@{^{(}->}[r] \ar[d]_-{\pi_f} 
    &\mcX_f \ar[ld] \\
    &  &M^{\semist}_Y(r, m), &
    }
\end{equation}
where we put 
\begin{align*}
&U_f\coloneqq(\Quot^{\semist} \times \bA^{\semist} )/\PG_n, 
\quad M_f\coloneqq(\Quot \times \bA )^{\semist}/\PG_n, \\
&\mcX_f\coloneqq[\Quot^{\semist} \times \bA/\PG_n].
\end{align*}
The diagram \eqref{eq:appro} satisfies 
the following properties (cf. \cite[Proposition 3.6]{MS20}): 
\begin{itemize}
    \item the horizontal morphisms are open immersions, 
    \item $U_f$ and $M_f$ are smooth schemes and 
    the morphism $\pi_f$ is projective, 
    \item we have 
    $\lim_{f \to \infty}\codim_{\mcX_f}(\mcW_f)= \infty$, 
    where $\mcW_f \subset \mcX_f$ denotes 
    the complement of $U_f$.  
\end{itemize}

By the following proposition, 
we can compute the cohomology objects 
$\mcH^n(
p_*\varphi^{\mathrm{mmhm}}_{\widetilde{F}}\sIC_{\fM^{\semist}_Y(r, m)} 
)$ 
using the push-forward 
along the proper morphism $\pi_f$: 

\begin{prop}[cf. {\cite[Lemma 4.1, Proposition 4.3]{DM20}}] 
\label{prop:appro}
The following statements hold: 
\begin{enumerate}
    \item For each $n \in \bZ$, 
    there exists $f \gg 0$ such that 
    \[
    \mcH^n( p_*\varphi^{\mathrm{mmhm}}_{\widetilde{F}}
    \sIC_{\fM^{\semist}_Y(r, m)} 
    ) \cong 
    \mcH^n( \pi_{f*}\varphi^{\mathrm{mmhm}}_{F \circ \pi_f}\sIC_{M_f} 
    ).
    \]
    
    \item We have an isomorphism 
    \[
    \mcH(
p_*\varphi^{\mathrm{mmhm}}_{\widetilde{F}}
    \sIC_{\fM^{\semist}_Y(r, m)} 
    ) \cong 
    \varphi^{\mathrm{mmhm}}_{F}\mcH(
p_*\sIC_{\fM^{\semist}_Y(r, m)} 
    ).
    \]
\end{enumerate}
\end{prop}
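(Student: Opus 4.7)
The plan is to adapt the approximation argument of Davison--Meinhardt \cite{DM20} to the present setting, exploiting the diagram \eqref{eq:appro} in which the complement $\mcW_f = \mcX_f\setminus U_f$ has codimension tending to infinity with $f$, the morphism $\pi_f\colon M_f\to M^{\semist}_Y(r,m)$ is projective with smooth source, and there is a natural smooth surjective morphism $q_f\colon \mcX_f\to \fM^{\semist}_Y(r,m)$ induced by the projection $\Quot^{\semist}\times \bA\to \Quot^{\semist}$.

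For part (i), smooth base change for the monodromic vanishing cycle functor gives a natural isomorphism
\[
\varphi^{\mathrm{mmhm}}_{\widetilde F\circ q_f}\sIC_{\mcX_f}\;\cong\; q_f^*[\dim q_f]\,\varphi^{\mathrm{mmhm}}_{\widetilde F}\sIC_{\fM^{\semist}_Y(r,m)}.
\]
Since $\fM^{\semist}_Y(r,m)$ is a quotient of a scheme by a linear algebraic group, it satisfies condition $(*)$ of \S\ref{ssec:MHM}. Applied to the smooth surjective morphism $q_f|_{U_f}\colon U_f\to \fM^{\semist}_Y(r,m)$ (which is surjective once $f$ is large), this yields, for any fixed $n$ and sufficiently large $f$,
\[
\mcH^n\bigl(p_*\varphi^{\mathrm{mmhm}}_{\widetilde F}\sIC_{\fM^{\semist}_Y(r,m)}\bigr)\;\cong\;\mcH^n\bigl(\kappa_*\varphi^{\mathrm{mmhm}}_{F\circ\kappa}\sIC_{U_f}\bigr),
\]
using $p\circ q_f|_{U_f}=\kappa$. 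To pass from $U_f$ to $M_f$, note that the closed complement $Z_f := M_f\setminus U_f$ lies in $\mcW_f\cap M_f$ and therefore has codimension $c_f\to\infty$ in the smooth scheme $M_f$. The recollement triangle on $M_f$, combined with the estimate $i^!\sIC_{M_f}\in D^{\geq c_f}$, t-exactness of $\varphi^{\mathrm{mmhm}}_{F\circ\pi_f}$, and the properness of $\pi_f$, gives
\[
\mcH^n\bigl(\pi_{f*}\varphi^{\mathrm{mmhm}}_{F\circ\pi_f}\sIC_{M_f}\bigr)\;\cong\;\mcH^n\bigl(\kappa_*\varphi^{\mathrm{mmhm}}_{F\circ\kappa}\sIC_{U_f}\bigr)
\]
for all $n$ below a bound that grows with $c_f$. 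Chaining the two isomorphisms establishes (i).

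For part (ii), the additional ingredient is Saito's decomposition theorem: since $\pi_f$ is projective and $\sIC_{M_f}$ is pure, the pushforward $\pi_{f*}\sIC_{M_f}$ is pure and hence splits non-canonically into the direct sum of its shifted perverse cohomology sheaves, i.e. $\pi_{f*}\sIC_{M_f}\cong\mcH(\pi_{f*}\sIC_{M_f})$. Proper base change for vanishing cycles along $\pi_f$ gives
\[
\varphi^{\mathrm{mmhm}}_{F}\pi_{f*}\sIC_{M_f}\;\cong\;\pi_{f*}\varphi^{\mathrm{mmhm}}_{F\circ\pi_f}\sIC_{M_f},
\]
and combining this with the perverse t-exactness of $\varphi^{\mathrm{mmhm}}_F$ applied to the splitting yields
\[
\mcH\bigl(\pi_{f*}\varphi^{\mathrm{mmhm}}_{F\circ\pi_f}\sIC_{M_f}\bigr)\;\cong\;\varphi^{\mathrm{mmhm}}_{F}\mcH\bigl(\pi_{f*}\sIC_{M_f}\bigr).
\]
Applying (i) on both sides in each perverse cohomological degree (with $f$ chosen large enough depending on the degree) transports this identity to $\fM^{\semist}_Y(r,m)$ and gives the claim.

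The main technical obstacle is the uniform control in (i): one must verify that the codimension bound on $\mcW_f$ from \cite{KM21} is strong enough to propagate through the vanishing cycle complex (not merely through $\sIC_{M_f}$), and that the monodromy structure is preserved at every step of the smooth and proper base change. Once this is in place the remaining input for (ii) is formal, namely the decomposition theorem applied to the projective morphism $\pi_f$ and the perverse t-exactness of $\varphi^{\mathrm{mmhm}}_F$.
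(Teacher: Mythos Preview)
Your proposal is correct and follows essentially the same approach as the paper's (terse) proof: for (i) you use the codimension condition $\codim_{\mcX_f}(\mcW_f)\to\infty$ to verify that $\pi_f$ approximates $p$ in the sense of condition~$(*)$, and for (ii) you combine proper base change $\varphi^{\mathrm{mmhm}}_{F}\circ\pi_{f*}\cong\pi_{f*}\circ\varphi^{\mathrm{mmhm}}_{F\circ\pi_f}$ with the perverse t-exactness of $\varphi^{\mathrm{mmhm}}_{F}$, exactly as the paper indicates (your invocation of the decomposition theorem is a legitimate way to organize this step and matches the argument in \cite[Proposition~4.3]{DM20}). One small slip: the codimension bound on $\mcW_f$ is not from \cite{KM21} but is a property of the framed-object construction recalled just before the proposition (see \cite[Proposition~3.6]{MS20}).
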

\begin{proof}
We just give an outline of the proof. 
See \cite[Lemma 4.1, Proposition 4.3]{DM20} for the details. 
Using the fact 
$\lim_{f \to \infty}\codim_{\mcX_f}
(\mcW_f )=\infty$, 
we can check that 
the morphism $\pi_f \colon M_f \to M^{\semist}_Y(r, m)$ 
approximates the map $p \colon \fM^{\semist}_Y(r, m) \to M^{\semist}_Y(r, m)$ in the sense of (*) in \S \ref{ssec:MHM}. 
Hence the first assertion holds. 

The second assertion now follows from 
the natural isomorphism 
\[
\varphi^{\mathrm{mmhm}}_{F} \circ \pi_{f*} \cong 
\pi_{f*} \circ \varphi^{\mathrm{mmhm}}_{\pi_f \circ F}
\]
between functors, 
which holds since the morphism 
$\pi_f \colon M_f \to M^{\semist}_Y(r, m)$ 
is proper. 
\end{proof}

The following statement will be used in \S \ref{ssec:indHiggs}.

\begin{prop}\label{prop:decompthm}
Let $\fZ \subset \fM^{\semist}_Y(r, m)$ be the critical locus of $\widetilde{F}$ and $Z \subset M^{\semist}_Y(r, m)$ 
be its good moduli space.
Given a morphism $q \colon Z \to W$ between schemes, 
we have an isomorphism
\[
   \cH ( q_*\mcH(
     (p|_{\fZ})_*\varphi^{\mathrm{mmhm}}_{\widetilde{F}}\sIC_{\fM^{\semist}_Y(r, m)} 
    )) 
    \cong 
       \mcH(
    (q \circ p|_{\fZ})_*\varphi^{\mathrm{mmhm}}_{\widetilde{F}}\sIC_{\fM^{\semist}_Y(r, m)} 
    )
\]
\end{prop}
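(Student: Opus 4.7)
The plan is to reduce the statement to an immediate consequence of Saito's decomposition theorem by approximating the non-proper morphism $p|_{\fZ}$ by the proper morphisms $\pi_f$ of Proposition \ref{prop:appro}, in the spirit of the proof of Lemma \ref{lem:decompthm}. Write $\fZ_f \subset M_f$ for the critical locus of $F \circ \pi_f$ and $\pi_f' \colon \fZ_f \to Z$ for the proper restriction of $\pi_f$. Since $M_f$ is smooth, $\sIC_{M_f}$ is a pure monodromic mixed Hodge module, and hence so is the monodromic vanishing cycle $\varphi^{\mathrm{mmhm}}_{F \circ \pi_f} \sIC_{M_f}$ on $\fZ_f$. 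Saito's decomposition theorem applied to the proper map $\pi_f'$ then produces a (noncanonical) isomorphism
\[
(\pi_f')_* \varphi^{\mathrm{mmhm}}_{F \circ \pi_f} \sIC_{M_f} \;\cong\; \cH\bigl((\pi_f')_* \varphi^{\mathrm{mmhm}}_{F \circ \pi_f} \sIC_{M_f}\bigr)
\]
in $D^b(\MMHM(Z))$; this formality of the proper pushforward is the engine of the proof.

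Next, I will fix an integer $n$ and pick $N$ strictly greater than $n + \max_{w \in W} \dim q^{-1}(w)$, so that a standard dimension estimate (as in the proof of Lemma \ref{lem:decompthm}) gives $\cH^n(q_* M) \cong \cH^n(q_* \tau^{\leq N} M)$ for any bounded complex $M$ on $Z$. Tracking through the proof of Proposition \ref{prop:appro}(1), which relies on property $\mathrm{(*)}$ together with a natural adjunction-type morphism, for $f$ sufficiently large (depending on $N$) one obtains a natural isomorphism of truncated complexes
\[
\tau^{\leq N} (p|_{\fZ})_* \varphi^{\mathrm{mmhm}}_{\widetilde{F}} \sIC_{\fM^{\semist}_Y(r, m)} \;\cong\; \tau^{\leq N} (\pi_f')_* \varphi^{\mathrm{mmhm}}_{F \circ \pi_f} \sIC_{M_f},
\]
and consequently the same isomorphism after applying $\cH$ termwise, since $\cH$ on a bounded complex in a given range is determined by the perverse cohomology sheaves there.

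Assembling the pieces, both sides of the claimed isomorphism are identified in degree $n$ with $\cH^n((q \circ \pi_f')_* \varphi^{\mathrm{mmhm}}_{F \circ \pi_f} \sIC_{M_f})$: the right-hand side directly via $(q \circ p|_{\fZ})_* = q_* \circ (p|_{\fZ})_*$ combined with the truncation-level approximation above; the left-hand side by additionally invoking the formality of $(\pi_f')_* \varphi^{\mathrm{mmhm}}_{F \circ \pi_f} \sIC_{M_f}$ so that the inner $\cH$ can be dropped. The main technical subtlety I anticipate is precisely this upgrade from the cohomology-sheaf-level statement of Proposition \ref{prop:appro}(1) to an isomorphism of truncated complexes $\tau^{\leq N}$: it requires that the approximation be realized by a \emph{canonical} morphism of complexes (arising from the adjunction unit in property $\mathrm{(*)}$) rather than as a mere abstract identification in each single perverse degree, so that the morphism commutes with the further pushforward by $q$.
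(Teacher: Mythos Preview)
Your proof is correct and follows essentially the same strategy as the paper: approximate $p|_{\fZ}$ by the proper maps $\pi_f$, invoke Saito's decomposition theorem, and control the comparison via a truncation/dimension bound exactly as in Lemma~\ref{lem:decompthm}. The one noteworthy difference is \emph{where} the decomposition theorem is applied. You apply it directly to $(\pi_f')_* \varphi^{\mathrm{mmhm}}_{F \circ \pi_f}\sIC_{M_f}$, relying on the fact that the monodromic vanishing cycle functor preserves purity (a standard property of $\varphi^{\mathrm{mmhm}}$, and the very reason one works monodromically). The paper instead first commutes $\varphi$ with the proper pushforward $\pi_{f*}$, applies Saito's decomposition to $\pi_{f*}\sIC_{M_f}$ in ordinary mixed Hodge modules, and then uses exactness of $\varphi$ to pull $\cH$ outside. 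Both routes work; the paper's variant has the mild advantage of only invoking the classical decomposition theorem, while yours is a step shorter. Your closing remark about needing the approximation to be realized by a canonical morphism (the adjunction unit from condition~$(\ast)$) so that it survives the further pushforward by $q$ is exactly right, and is implicitly used in the paper's final step as well.
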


\begin{proof}
Fix an integer $n$.
We let $Z_f$ denote the critical locus of the function $F \circ \pi_f $. Take a sufficiently large integer $f$ such that the following isomorphism holds:
\begin{align*}
        \mcH^n(
     (q \circ p|_{\fZ})_*\varphi^{\mathrm{mmhm}}_{\widetilde{F}}\sIC_{\fM^{\semist}_Y(r, m)} 
    ) \cong 
    \mcH^n(
   (q \circ \pi_{f}|_{Z_f})_* \varphi^{\mathrm{mmhm}}_{F \circ \pi_f}\sIC_{M_f} 
    ).
\end{align*}
 We have the following isomorphisms
 \begin{align*}
    \mcH^n(
   (q \circ \pi_{f}|_{Z_f})_* \varphi^{\mathrm{mmhm}}_{F \circ \pi_f}\sIC_{M_f} 
    )
    &\cong 
    \mcH^n(
   q_* \varphi^{\mathrm{mmhm}}_{F \circ \pi_f}(\pi_{f*}\sIC_{M_f}) 
    ) \\
    &\cong \mcH^n(
   q_* \varphi^{\mathrm{mmhm}}_{F \circ \pi_f}(\cH(\pi_{f*} \sIC_{M_f})) 
    ) \\
    &\cong \mcH^n(
   q_* \cH(\varphi^{\mathrm{mmhm}}_{F \circ \pi_f}(\pi_{f*} \sIC_{M_f})) 
)
 \end{align*}
 where the second isomorphism follows from Saito's decomposition theorem.
 If $f$ is sufficiently large, we also have an isomorphism
 \[   \mcH^n(
   q_* \cH(\varphi^{\mathrm{mmhm}}_{F \circ \pi_f}(\pi_{f*} \sIC_{M_f})) 
    )  
    \cong \cH^n ( q_*\mcH(
     p_*\varphi^{\mathrm{mmhm}}_{\widetilde{F}}\sIC_{\fM^{\semist}_Y(r, m)} 
    )) 
 \]
 so we obtain the claim.
\end{proof}

\subsection{Cohomological integrality theorem for $L$-Higgs bundles}\label{ssec:intL}
Here we prove the cohomological integrality theorem 
for $Y \coloneqq \Tot_C(L)$, 
where $L$ is a line bundle on a smooth projective curve $C$ 
with $\deg(L) > 2g(C)-2$. 
Since the category of $\mu$-semistable one-dimensional sheaves 
on $Y$ is homological dimension one, 
this can be proved in the same manner as \cite[Theorem A]{DM20} by applying the main result of \cite{Mei15}. 
However we give a sketch of the proof for reader's convenience.

For a given rational number $\mu \in \bQ$, 
we set
\[
M^{\semist}_Y(\mu) \coloneqq 
\coprod_{\frac{m}{r}=\mu} M^{\semist}_Y(r, m). 
\]
For each positive integer $n \in \bZ_{>0}$, 
we have the following morphism: 
\[
\oplus \colon 
(M^{\semist}_Y(\mu) )^{\times n} 
\to M^{\semist}_Y(\mu), 
\quad (E_i)_{i=1}^n \mapsto \oplus_i E_i, 
\]
which is finite 
(cf. \cite[Examples 2.14 and 2.16]{DM15}). 
We define functors 
\[
\Sym^n_{\boxtimes_\oplus}, \quad 
\Sym_{\boxtimes_\oplus} \colon 
D^{\geq, lf}(\MMHM(M^{\semist}_Y(\mu)) ) \to 
D^{\geq, lf}(\MMHM(M^{\semist}_Y(\mu)) )
\]
as follows: 
\[
\Sym^n_{\boxtimes_\oplus}(\mcF)\coloneqq
(\oplus_* \mcF^{\boxtimes n} )^{\fS_n}, 
\quad 
\Sym_{\boxtimes_\oplus}(\mcF)\coloneqq
\oplus_{n \geq 1} \Sym^n_{\boxtimes_\oplus}(\mcF). 
\]

\begin{prop} \label{prop:sym}
The following statements hold: 
\begin{enumerate}
    \item The functor $\Sym_{\boxtimes_\oplus}$ is exact. 
    
    \item The functor $\Sym_{\boxtimes_\oplus}$ 
    sends pure objects to pure objects. 
    
    \item Let $F_\mu \colon M^{\semist}_Y(\mu) \to \bA^1$ be 
    a regular function satisfying 
    $F_\mu(A \oplus B)=F_\mu(A)+F_\mu(B)$ 
    for all $A, B \in M^{\semist}_Y(\mu)$. 
    Then the functors 
    $\varphi^{\mathrm{mmhm}}_{F_{\mu}}$ and 
    $\Sym_{\boxtimes_\oplus}$ commute. 
\end{enumerate}
\end{prop}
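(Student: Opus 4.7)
The plan is to reduce all three statements to basic properties of external tensor products, finite pushforwards, taking symmetric group invariants in characteristic zero, together with the Thom--Sebastiani isomorphism for monodromic vanishing cycles. The crucial feature being exploited throughout is that the sum morphism $\oplus \colon M^{\semist}_Y(\mu)^{\times n} \to M^{\semist}_Y(\mu)$ is finite, as noted just before the statement.

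For parts (1) and (2), I would first observe that the external tensor product $\boxtimes$ on monodromic mixed Hodge modules is exact in each variable (after the standard perverse $t$-structure conventions) and sends pairs of pure objects of weights $w_1, w_2$ to a pure object of weight $w_1+w_2$. The pushforward $\oplus_*$ is exact and weight-preserving because $\oplus$ is finite, hence affine and proper. Finally, since $\bQ[\mathfrak{S}_n]$ is semisimple, the functor of $\mathfrak{S}_n$-invariants is exact and commutes with taking cohomology, and the invariants appear as a canonical direct summand of $\oplus_*\mathcal{F}^{\boxtimes n}$, so purity is inherited. Composing these three functors yields the exactness and purity preservation of each $\Sym^n_{\boxtimes_\oplus}$; the local finiteness condition then ensures that the infinite sum $\Sym_{\boxtimes_\oplus} = \bigoplus_{n \geq 1} \Sym^n_{\boxtimes_\oplus}$ is well defined in $D^{\geq, \mathrm{lf}}(\MMHM(-))$ and shares these properties.

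For part (3), the decisive observation is that the additivity hypothesis $F_\mu(A \oplus B) = F_\mu(A) + F_\mu(B)$ translates into the identity $F_\mu \circ \oplus^{(n)} = F_\mu \boxplus \cdots \boxplus F_\mu$ on $M^{\semist}_Y(\mu)^{\times n}$, where $\boxplus$ denotes the Thom--Sebastiani sum of functions on a product. By the Thom--Sebastiani isomorphism for monodromic vanishing cycles recalled in \S\ref{ssec:MMHM}, one has
\[
\varphi^{\mathrm{mmhm}}_{F_\mu \boxplus \cdots \boxplus F_\mu}\bigl(\mathcal{F}^{\boxtimes n}\bigr) \cong \varphi^{\mathrm{mmhm}}_{F_\mu}(\mathcal{F})^{\boxtimes n}.
\]
Since $\oplus$ is proper, $\oplus_*$ commutes with $\varphi^{\mathrm{mmhm}}_{F_\mu}$, so that
\[
\varphi^{\mathrm{mmhm}}_{F_\mu}\bigl(\oplus_* \mathcal{F}^{\boxtimes n}\bigr) \cong \oplus_* \varphi^{\mathrm{mmhm}}_{F_\mu}(\mathcal{F})^{\boxtimes n},
\]
and this isomorphism is $\mathfrak{S}_n$-equivariant by naturality. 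Taking $\mathfrak{S}_n$-invariants (which, being exact, commutes with $\varphi^{\mathrm{mmhm}}_{F_\mu}$) and summing over $n$ yields the desired commutation with $\Sym_{\boxtimes_\oplus}$. The main obstacle is the careful bookkeeping of the $\mathfrak{S}_n$-equivariance in the Thom--Sebastiani isomorphism: half-Tate twists $\bL^{1/2}$ and monodromy data enter the construction, and one must verify that the symmetric group action on both sides is compatible under these identifications. This is precisely the reason for working in the monodromic category, where Thom--Sebastiani holds as a genuine natural isomorphism rather than only up to a sign ambiguity.
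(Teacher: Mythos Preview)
Your proposal is correct and follows essentially the same approach as the paper: the paper's proof simply cites \cite[Propositions 3.5, 3.8, 3.11]{DM20} and notes that those arguments carry over verbatim once one uses the finiteness of $\oplus$ and the Thom--Sebastiani isomorphism, which is precisely what you have unpacked. Your treatment of the $\mathfrak{S}_n$-equivariance of the Thom--Sebastiani isomorphism and the r\^ole of the monodromic category in avoiding sign ambiguities matches the substance of the Davison--Meinhardt argument.
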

\begin{proof}
The same proofs as in 
\cite[Propositions 3.5, 3.8, 3.11]{DM20} work 
by using the finiteness of the morphism 
$\oplus \colon 
(M^{\semist}_Y(\mu) )^{\times n} 
\to M^{\semist}_Y(\mu)$ 
and Thom--Sebastiani isomorphism for 
the vanishing cycle functors $\varphi^{\mathrm{mmhm}}_{(-)}$. 
\end{proof}

We use the following notations: 
\[
\sIC_{\fM^{\semist}_Y(\mu)}
\coloneqq \bigoplus_{\frac{m}{r}=\mu} \sIC_{\fM^{\semist}_Y(r, m)}, \quad 
\sIC_{M^{\semist}_Y(\mu)}
\coloneqq \bigoplus_{\frac{m}{r}=\mu} \sIC_{M^{\semist}_Y(r, m)}. 
\]
Recall that we denote by 
$p \colon \fM^{\semist}_Y(r, m) \to M^{\ss}_Y(r, m)$ 
the canonical morphism to the good moduli space. 
Recall also that the definition of the object 
$\mH^*(\mathrm{B}\bC^*)_{\vir}$ from \eqref{eq:defBCvir}. 

\begin{thm}\label{thm:int-IC}
We have the following isomorphisms 
in $D^{\geq, lf}(\MMHM(M^{\semist}_Y(\mu) ))$: 
\begin{align} 
&\mcH(p_*\sIC_{\fM^{\semist}_Y(\mu)} ) 
\cong \Sym_{\boxtimes_\oplus}\left( 
\mH^*(\mathrm{B}\bC^*)_{\vir} \otimes 
\sIC_{M^{\semist}_Y(\mu)}
\right), \label{eq:intIC}\\
    &\mcH(
     p_* \varphi^{\mathrm{mmhm}}_{F_{\mu} \circ p}
    \sIC_{\fM^{\semist}_Y(\mu)} 
    ) 
    \cong \Sym_{\boxtimes_\oplus}\left( 
    \mH^*(\mathrm{B}\bC^*)_{\vir} \otimes 
    \varphi^{\mathrm{mmhm}}_{F_{\mu}}\sIC_{M^{\semist}_Y(\mu)}
    \right) \label{eq:intBPS}
\end{align} 
for a regular function 
$F_\mu \colon M^{\semist}_Y(\mu) \to \bA^1$ 
satisfying $F_\mu(A \oplus B)=F_\mu(A)+F_\mu(B)$ 
for all $A, B \in M^{\semist}_Y(\mu)$. 
\end{thm}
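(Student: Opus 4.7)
The strategy is to adapt the proof of [DM20, Theorem A] for quivers with potentials, exploiting the fact that when $\deg(L) > 2g(C)-2$, the category of $\mu$-semistable $L$-Higgs bundles of fixed slope has homological dimension one (obstructions lie in $\mathrm{H}^1(C, L^\vee \otimes \omega_C)$, which vanishes by Serre duality). This makes the moduli stacks behave formally like representation stacks of quivers, so that the Davison--Meinhardt--Mein\-hardt machinery applies.

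I would first establish the IC-version \eqref{eq:intIC}, and then deduce the vanishing-cycle version \eqref{eq:intBPS} from it. For the latter reduction, note that applying $\varphi^{\mathrm{mmhm}}_{F_\mu}$ to both sides of \eqref{eq:intIC} yields \eqref{eq:intBPS}: on the left, use Proposition \ref{prop:appro}(2) to commute $\varphi^{\mathrm{mmhm}}_{F_\mu}$ past $\mcH$ and $p_*$; on the right, use Proposition \ref{prop:sym}(3) together with the Thom--Sebastiani isomorphism and the additivity $F_\mu(A\oplus B)=F_\mu(A)+F_\mu(B)$ to commute $\varphi^{\mathrm{mmhm}}_{F_\mu}$ past $\Sym_{\boxtimes_\oplus}$.

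For \eqref{eq:intIC} itself, I would use the approximation $\pi_f\colon M_f \to M^{\semist}_Y(\mu)$ from \S\ref{sec:appro}. By Proposition \ref{prop:appro}(1) (extended to IC sheaves in the obvious way), computing $\mcH(p_*\sIC_{\fM^{\semist}_Y(\mu)})$ in any bounded cohomological range reduces to computing $\mcH(\pi_{f,*}\sIC_{M_f})$ for $f\gg 0$. Since $M_f$ is smooth, $\sIC_{M_f}$ is (up to shift and Tate twist) a constant sheaf, and $\pi_{f,*}\sIC_{M_f}$ is pure by Saito's decomposition theorem. I would then stratify $M^{\semist}_Y(\mu)$ by Jordan--H\"older (polystable) type and compute the decomposition summand by summand: the factor $\mH^*(\mathrm{B}\bC^*)_{\vir}$ appears from the $\bC^*$ of scalar automorphisms of each simple summand, while the symmetric product structure records the permutation of isomorphic simples. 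The purity of both sides, combined with the semi-simplicity of pure Hodge modules, means that the identification is forced once it is verified at the level of classes in $K_0(\MMHM(M^{\semist}_Y(\mu)))$, which is Mein\-hardt's motivic integrality theorem \cite{Mei15}.

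The main obstacle is ensuring that the combinatorial/inductive arguments of \cite{DM20, Mei15} — originally written for representations of quivers — carry over in full to $L$-Higgs bundles. Concretely, one must verify: (a) the appropriate Harder--Narasimhan and Jordan--H\"older stratifications exist and interact correctly with the framed moduli $M_f$; (b) the local structure of $\pi_f$ near a polystable sheaf $\bigoplus_i E_i^{n_i}$ is controlled by the $\mathrm{Ext}^1$-quiver of the simple summands, a statement that uses the hom-dim-one property in an essential way; and (c) the purity and symmetry assertions behave compatibly with taking the limit $f\to\infty$. Once these formal ingredients are in place, the proof of \eqref{eq:intIC} becomes a direct transcription of the quiver argument, and \eqref{eq:intBPS} follows as above.
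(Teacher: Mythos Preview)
Your proposal is correct and follows essentially the same approach as the paper: establish \eqref{eq:intIC} by showing both sides are pure (via the approximation by the proper maps $\pi_f$ and Proposition~\ref{prop:sym}(2)), reduce to an equality in $K_0$ which is Meinhardt's theorem \cite{Mei15}, and then deduce \eqref{eq:intBPS} by applying $\varphi^{\mathrm{mmhm}}_{F_\mu}$ to both sides using Propositions~\ref{prop:appro}(2) and~\ref{prop:sym}(3). The paper is simply terser about the verification steps (a)--(c) you list, relying on the reader to accept that the homological-dimension-one setting places one squarely in the hypotheses of \cite{Mei15}.
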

\begin{proof}
We first construct the isomorphism 
\eqref{eq:intIC}. 
By the exactness of the functor 
$\Sym_{\boxtimes_\oplus}$ 
(see Proposition \ref{prop:sym} (1)), 
the right hand side is isomorphic to its total cohomology. 
Hence it is enough to prove the isomorphism 
for each cohomology. 
By Proposition \ref{prop:appro} (1), 
for each $n \in \bZ$ and 
$(r, m) \in \bZ_{>0} \times \bZ$, 
there exists $f \gg 0$ such that 
we have an isomorphism 
\begin{equation} \label{eq:Hnp*}
\mcH^n(p_*\sIC_{\fM^{\semist}_Y(r, m)} ) 
\cong \mcH^n(\pi_{f*}\sIC_{M_f} ). 
\end{equation}
Since the morphism $\pi_f$ is proper and 
the object $\sIC_{M_f}$ is pure, 
it follows that the object in \eqref{eq:Hnp*} 
is a pure mixed Hodge module. 

On the other hand, 
since $\sIC_{M^{\semist}_Y(r, m)}$ is pure, 
Proposition \ref{prop:sym} (2) implies that 
the $n$-th cohomology of the right hand side of 
\eqref{eq:intIC} is also pure. 

Now $n$-th cohomology of both sides of \eqref{eq:intIC} 
are direct sums of simple pure mixed Hodge modules. 
Hence it is enough to prove the equality 
in the Grothendieck group 
$K_0(D^{\geq, lf}(M^{\semist}_Y(\mu) ) )$, 
which holds by the main theorem of \cite{Mei15}. 

The second isomorphism \eqref{eq:intBPS} 
follows by applying the vanishing cycle functor 
$\varphi^{\mathrm{mmhm}}_{F_\mu}$ to both sides of 
the isomorphism \eqref{eq:intIC} 
and then using 
Proposition \ref{prop:appro} (2) 
and Proposition \ref{prop:sym} (3). 
\end{proof}


We end this section 
by proving Proposition \ref{prop:BPSvsVan} 
in the previous section: 
\begin{proof}[Proof of Proposition \ref{prop:BPSvsVan}]
Fix integers $r>0$ and $m \in \bZ$, 
and put $\mu\coloneqq m/r$. 
Let $g \colon B_Y \to \bA^1$ be the function 
defined in (\ref{eq:fctBS}). 
Recall from Proposition \ref{prop:dchart} 
that we have 
\[
\fM^{\semist}_X(r, m) \cong 
\{d(\widetilde{h}_Y \circ g )=0 \} 
\subset \fM^{\semist}_Y(r, m) 
\]
for a line bundle $L_2$ 
with $\deg(L_2) \gg 0$. 
Hence the first isomorphism in Proposition \ref{prop:BPSvsVan} 
follows from Proposition \ref{prop:joyce=van}. 

For the second isomorphism, let us put 
\[
F_\mu\coloneqq g \circ h_Y \colon M^{\semist}_Y(\mu) \to \bA^1. 
\]
By taking the first cohomology of 
the isomorphism \eqref{eq:intBPS}, 
we obtain 
\[
\mcH^1(
 p_* \varphi^{}_{F_\mu \circ p}\sIC_{\fM^{\semist}_Y(\mu)}
)
\cong \varphi^{}_{F_\mu}\sIC_{M^{\semist}_Y(\mu)}. 
\]
Restricting it to the component 
$M^{\semist}_Y(r, m) \subset M^{\semist}_Y(\mu)$, 
we get the second isomorphism 
in Proposition \ref{prop:BPSvsVan}. 
\end{proof}

\begin{rmk}
It is clear from the proof that Proposition \ref{prop:BPSvsVan} naturally extends to an isomorphism of monodromic mixed Hodge modules.
\end{rmk}

\section{The case of Higgs bundles}
\label{sec:Higgs}
In this section, we prove the cohomological integrality theorem and the cohomological $\chi$-independence theorem 
for Higgs bundle moduli spaces on curves 
using the dimensional reduction theorem 
due to the first author \cite{Kin21}. 

\subsection{Dimensional reduction theorem}

Let $\bs{\fY}$ be a quasi-smooth derived Artin stack and $\bfT^*[-1]\bs{\fY}$ be its $(-1)$-shifted cotangent stack.
We write $\fY \coloneqq t_0(\bs{\fY})$ and $\widetilde{\fY} \coloneqq t_0(\bfT^*[-1]\bs{\fY})$, and $\pi \colon \widetilde{\fY} \to \fY$ be the natural projection.
As we have seen in Example \ref{ex:shiftedcot}, $\bfT^*[-1]\bs{\fY}$ carries a natural $(-1)$-shifted symplectic structure. Further, as is
proved in \cite[Lemma 3.3.3]{Tod19}, there exists a natural orientation 
\begin{equation}\label{eq:canori}
o \colon \det{\bL_{\bfT^*[-1]\bs{\fY}}} |_{\fY^{\red}} \cong ({\pi^{\red}})^* \det(\bL_{\bs{\fY}})^{\otimes {2}}.
\end{equation}
We let $\varphi_{\bfT^*[-1]\bs{\fY}}$ denote the perverse sheaf on $\widetilde{\fY}$ recalled in \S \ref{ssec:van} with respect to this $(-1)$-shifted symplectic structure and orientation.
The following theorem is called the \textbf{dimensional reduction theorem}.

\begin{thm}[{\cite[Theorem 4.14]{Kin21}}]\label{thm:dimred}
There exists a natural isomorphism in $D^b_c(\fY)$
\begin{equation}\label{eq:dimred}
\pi_* \varphi_{\bfT^*[-1]\bs{\fY}} \cong \bD\bQ_{\fY}[-\vdim \bs{\fY}].
\end{equation}
Here $\vdim \bs{\fY} \coloneqq \rank \mathbb{L}_{\bs{\fY}}$ denotes the virtual dimension of $\bs{\fY}$.
\end{thm}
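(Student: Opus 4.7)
The plan is a dévissage to a local Koszul model followed by an explicit vanishing-cycle calculation for a function that is linear on the fibers of a vector bundle.

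First, I would argue the statement is smooth-local on $\bs{\fY}$. The smooth-descent compatibility \eqref{eq:commvan} of Joyce's vanishing cycle, together with the smooth-base-change invariance of the canonical orientation \eqref{eq:canori} and the standard smooth-pullback behavior of $\bD\bQ$, reduce the problem to the case that $\bs{\fY}$ is a quasi-smooth derived scheme. In that case, by the standard Koszul presentation for quasi-smooth derived schemes, $\bs{\fY}$ is Zariski-locally equivalent to the derived zero locus $Z(s)$ of a section $s \colon U \to E$ of a vector bundle $E$ on a smooth scheme $U$, with $\vdim \bs{\fY} = \dim U - \rank E$. Under this presentation, $\bfT^*[-1]\bs{\fY}$ is naturally equivalent to $\DCrit(f)$ inside the smooth scheme $V \coloneqq \Tot(E^\vee)$, where $f(u,\xi) = \langle \xi, s(u) \rangle$; moreover, the canonical orientation \eqref{eq:canori} agrees with the canonical orientation coming from the ambient smooth scheme $V$, so Proposition \ref{prop:joyce=van} identifies $\varphi_{\bfT^*[-1]\bs{\fY}}$ with $\varphi_f(\bQ_V[\dim V])$.

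The heart of the argument is the local dimensional reduction isomorphism
\[
\bar{\pi}_{*} \, \varphi_f(\bQ_V[\dim V]) \cong (\iota_\fY)_{*} \bD\bQ_\fY [-\vdim \bs{\fY}],
\]
where $\bar\pi \colon V \to U$ is the bundle projection and $\iota_\fY \colon \fY = Z(s) \hookrightarrow U$ is the inclusion. Since $f$ is linear in the fibers of $\bar\pi$, it has weight one for the $\bG_m$-action on $V$ by fiberwise scaling, so $\varphi_f$ is $\bG_m$-monodromic. This reduces the computation to a fiberwise one: over $u \notin \fY$, $f|_{V_u}$ is a nonzero linear form without critical points, so $\varphi_f$ vanishes on $\bar\pi^{-1}(u)$; over $u \in \fY$, after a local trivialization $E \cong \cO^{\oplus r}$ reducing to a Koszul-regular sequence, Thom--Sebastiani applied to $f = \sum_{i=1}^r \xi_i s_i(u)$ expresses $\varphi_f$ as an external product of $r$ copies of the vanishing cycle of the $A_1$-singularity $\xi u$ times the constant sheaf along the normal direction, and a direct degree count yields precisely $\bD\bQ_\fY[-\vdim \bs{\fY}]$. (Alternatively, one may apply a Fourier--Sato transform exchanging $V$ with its dual, under which $\varphi_f \bQ_V$ is intertwined with a sheaf supported on the graph of $s$, whose pushforward realizes the same identification.)

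Finally, I would glue the local isomorphisms into a global one. Two Koszul presentations of $\bs{\fY}$ are related by smooth base change and by stabilization, i.e. the replacement of $(U,E,s)$ by $(U \times \bA^n, E \oplus \cO^{\oplus n}, s \oplus 0)$; compatibility under smooth pullback follows from \eqref{eq:commvan}, and compatibility under stabilization follows from the Thom--Sebastiani isomorphism for vanishing cycles. The main obstacle lies in this gluing step: one must verify that the orientation data entering Joyce's construction, namely the $\bZ/2\bZ$-twist $Q_{\sR}^o$ from \S \ref{ssec:van}, is canonically trivialized by the canonical orientation \eqref{eq:canori} on the shifted cotangent, so that the local isomorphisms agree on overlaps without any sign ambiguity. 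A secondary but routine task is upgrading the sheaf-level statement to a (monodromic) mixed Hodge module isomorphism, which follows from the same argument using the monodromic enhancement of Saito's vanishing cycle functor.
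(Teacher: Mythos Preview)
The paper does not prove Theorem \ref{thm:dimred}: it is quoted from \cite[Theorem 4.14]{Kin21} without argument. What the paper does indicate, in the proof of Proposition \ref{prop:dimredsch}, is that locally the isomorphism \eqref{eq:dimred} reduces to Davison's local dimensional reduction theorem \cite[Theorem A.1]{Dav17} via the Koszul presentation $\bs{Y} \simeq \bs{Z}(s)$ of \cite[Theorem 4.1]{BBJ19}. Your proposal follows exactly this route: reduce to the derived-scheme case, present locally as a derived zero locus, identify $\bfT^*[-1]\bs{\fY}$ with $\DCrit(f)$ for $f(u,\xi)=\langle \xi, s(u)\rangle$, invoke the linear-in-fibers computation, and glue.

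Your sketch is essentially correct as an outline of the argument in \cite{Kin21}. One comment on emphasis: the gluing step is indeed the crux, and the difficulty is not only the trivialization of the $\bZ/2\bZ$-twist but also the compatibility of the local isomorphisms under the transition between different Koszul charts, which in \cite{Kin21} is handled by comparing with the intrinsic description of $\varphi_{\bfT^*[-1]\bs{\fY}}$ coming from the shifted symplectic structure rather than by direct stabilization bookkeeping; your stabilization-plus-smooth-pullback formulation is equivalent in content but you would need to be careful that the Thom--Sebastiani sign matches the sign induced by the change in orientation data. Your final remark about the monodromic Hodge upgrade is slightly off in this context: the paper treats that separately (Proposition \ref{prop:dimredsch} and Remark \ref{rmk:dimredmmhm1}) and notes that the full $D^b(\MMHM)$ statement is not established, only the statement after passing to $\cH$.
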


We now discuss the generalization of this theorem to the level of complexes in  mixed Hodge modules.
Firstly we discuss the case when $\bs{\fY}$ is a derived scheme. To specify that $\bs{\fY}$ is schematic, we write $\bs{Y} = \bs{\fY}$, $Y = \fY$ and $\widetilde{Y} = \widetilde{\fY}$.
The following lemma is useful:

\begin{lem}\label{lem:isomhodge}
Let $X$ be an algebraic variety and take complexes of mixed Hodge module $M, N \in D^b(\mathrm{MHM}(X))$ such that there exists an isomorphism
$\eta \colon \rat(M) \cong \rat(N)$ in $D^b(X)$.
Assume that for each $i <0$ the group $\Ext^i(\rat(M), \rat(N))$ vanishes and we have an isomorphism of mixed Hodge structures $\mH^0(X, \sHom(M, N)) \cong \bQ$.
Then $\eta$ naturally extends to an isomorphism $M \cong N$ in $D^b(\mathrm{MHM}(X))$.
\end{lem}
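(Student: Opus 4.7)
The plan is to leverage the compatibility of the functor $\rat$ with internal Hom and with hypercohomology in Saito's six-functor formalism. First I would interpret the hypotheses as follows: under the vanishing $\Ext^i(\rat(M), \rat(N)) = 0$ for $i < 0$, the hypercohomology spectral sequence for the derived internal Hom $R\sHom^{\mathrm{mhm}}(M, N)$ degenerates at the appropriate spot so that $\Hom_{D^b(\mathrm{MHM}(X))}(M, N)$ is identified with $\mH^0(X, \sHom(M, N))$, which by hypothesis is the one-dimensional MHS $\bQ$.

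Next I would invoke the compatibility $\rat(R\sHom^{\mathrm{mhm}}(M, N)) \simeq R\sHom(\rat(M), \rat(N))$ together with the commutation of $\rat$ and pushforward to a point. Taking $\mcH^0$ and using t-exactness of $\rat$, the underlying $\bQ$-vector space of the MHS $\mH^0(X, \sHom(M, N))$ is canonically identified with $\Hom_{D^b_c(X)}(\rat(M), \rat(N))$. Both are thus one-dimensional over $\bQ$, and the natural map
\[
\rat \colon \Hom_{D^b(\mathrm{MHM}(X))}(M, N) \longrightarrow \Hom_{D^b_c(X)}(\rat(M), \rat(N))
\]
is an isomorphism.

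Consequently, the given $\eta$ lifts (uniquely up to a nonzero scalar) to a morphism $\tilde\eta \colon M \to N$ in $D^b(\mathrm{MHM}(X))$ with $\rat(\tilde\eta) = \eta$. To conclude that $\tilde\eta$ is itself an isomorphism, I would use that $\rat$ is conservative on $D^b(\mathrm{MHM}(X))$: the cone of $\tilde\eta$ is zero if and only if its image under the t-exact and heart-faithful functor $\rat$ vanishes, which holds because $\rat(\mathrm{cone}(\tilde\eta)) = \mathrm{cone}(\eta) = 0$.

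The main subtlety lies in the first two steps, namely carefully verifying that $\rat$ commutes with the internal Hom and the pushforward to a point for objects of $D^b(\mathrm{MHM}(X))$, and that the $\Ext^{<0}$-vanishing hypothesis really does identify $\Hom_{D^b(\mathrm{MHM}(X))}(M, N)$ with $\mH^0(X, \sHom(M, N))$ rather than an associated graded of some filtration. Once these formal compatibilities from Saito's framework are in place, the remainder of the argument reduces to linear algebra on one-dimensional $\bQ$-vector spaces together with the conservativity of $\rat$.
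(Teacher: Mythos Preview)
Your proposal is correct and follows essentially the same route as the paper. The paper phrases the argument by observing that the hypotheses give $\tau_{\leq 0}\RHom(M,N) \cong \bQ$ in $D^b(\mathrm{MHS})$, producing by adjunction a map $\bQ_X \to \sHom(M,N)$ and hence $M \cong M \otimes \bQ_X \to M \otimes \sHom(M,N) \to N$; you instead argue directly that $\rat$ induces a bijection on $\Hom$-spaces and invoke conservativity. These are the same argument in different packaging.

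One small point worth tightening: your claimed identification of $\Hom_{D^b(\mathrm{MHM}(X))}(M,N)$ with $\mH^0(X,\sHom(M,N))$ is not literally correct, since the former is a $\bQ$-vector space and the latter a mixed Hodge structure. What the $\Ext^{<0}$-vanishing actually gives is $\Hom_{D^b(\mathrm{MHM}(X))}(M,N) \cong \Hom_{\mathrm{MHS}}(\bQ,\mH^0(X,\sHom(M,N)))$, and it is precisely the hypothesis that this MHS is the \emph{trivial} one-dimensional Hodge structure $\bQ$ (rather than, say, a Tate twist) that makes this one-dimensional and makes the map to the underlying vector space $\Hom_{D^b_c(X)}(\rat M,\rat N)$ bijective. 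You flag this concern in your final paragraph, so you are clearly aware of it; just make the role of the weight-zero hypothesis explicit when you write it up.
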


\begin{proof}
Consider the natural map of mixed Hodge complexes
\[
\tau_{\leq 0}\RHom(M, N) \to \RHom(M, N).
\]
The assumption implies an isomorphism $\bQ \cong \tau_{\leq 0}\RHom(M, N)$ hence we obtain a map in $D^b(\mathrm{MMHM}(X))$
\[
\bQ_X \to  \sHom(M, N)
\]
by adjunction.
Then the following composition of morphisms in $D^b(\mathrm{MHM}(X))$ 
\[
M = M \otimes \bQ_X \to M \otimes \sHom(M, N) \to N
\]
upgrades the isomorphism $\eta$ up to scalar.
\end{proof}

\begin{prop}\label{prop:dimredsch}
Assume that the virtual dimension $\vdim \bs{Y}$ is even.
Then the dimensional reduction isomorphism \eqref{eq:dimred} for a quasi-smooth derived scheme $\bs{Y}$ naturally upgrades to an isomorphism in $D^b(\MHM(Y))$: 
\begin{equation}\label{eq:dimredMHM}
\pi_* \varphi_{\bfT^*[-1]\bs{Y}}^{\mathrm{mhm}} \cong \bL^{\vdim \bs{Y}/2} \otimes \bD\bQ_{Y}.
\end{equation}
\end{prop}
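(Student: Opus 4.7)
The plan is to invoke Lemma~\ref{lem:isomhodge} with $M \coloneqq \pi_* \varphi_{\bfT^*[-1]\bs{Y}}^{\mathrm{mhm}}$ and $N \coloneqq \bL^{\vdim \bs{Y}/2} \otimes \bD\bQ_Y$, taking as the input isomorphism $\eta \colon \rat(M) \cong \rat(N)$ the dimensional reduction identification \eqref{eq:dimred} of Theorem~\ref{thm:dimred}. The hypothesis that $\vdim \bs{Y}$ be even is precisely what makes $\bL^{\vdim \bs{Y}/2}$ a bona fide object of $D^b(\MHM(\pt))$ rather than of the larger $D^b(\MMHM(\pt))$, so that $N$ lies in the category to which Lemma~\ref{lem:isomhodge} applies.

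With this setup, I then verify the two hypotheses of Lemma~\ref{lem:isomhodge}. Since $\rat(M) \cong \rat(N) \cong \bD\bQ_Y[-\vdim \bs{Y}]$, Verdier duality (together with $\bD \bD = \id$) gives
\[
\RHom_{D^b_c(Y)}(\bD\bQ_Y,\bD\bQ_Y) \;\cong\; \RHom_{D^b_c(Y)}(\bQ_Y,\bQ_Y) \;\cong\; R\Gamma(Y,\bQ_Y),
\]
which is concentrated in non-negative cohomological degrees; this supplies the required vanishing $\Ext^{i}(\rat(M),\rat(N)) = 0$ for $i < 0$. For the Hom computation, after reducing to the case of a connected $Y$ the same identification yields $\mH^{0}(Y,\bQ_Y) \cong \bQ$; the content is that this one-dimensional space is actually the trivial weight-zero Hodge structure $\bQ(0)$, which requires the Tate twist $(-\vdim \bs{Y}/2)$ built into $N$ to cancel exactly against the intrinsic Tate twist carried by $M$.

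The main obstacle, and the substance of that last matching, is the weight and Tate-twist bookkeeping for $\varphi_{\bfT^*[-1]\bs{Y}}^{\mathrm{mhm}}$. On any d-critical chart $(R,U,f,i)$ for $\widetilde{Y} = \Crit(f)$ the perverse sheaf $\varphi_f(\bQ_U[\dim U])$ admits a canonical pure Hodge-module enhancement of a specific weight, and one has to check that after global gluing via the BBBBJ construction with the canonical orientation \eqref{eq:canori}, and then pushforward along $\pi$, the resulting Tate twist on $M$ agrees with $(-\vdim \bs{Y}/2)$. This amounts to tracking how the orientation \eqref{eq:canori} (which identifies $\det \bL_{\bfT^*[-1]\bs{Y}}$ with the square of a line bundle pulled back from $\bs{Y}$) controls the local systems $Q^{o}_{\sR}$ in the construction of $\varphi^{\mathrm{mhm}}$, and how the fibrewise dimensions of $\pi$ contribute a compensating Tate twist in the pushforward. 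Once this matching is carried out, Lemma~\ref{lem:isomhodge} promotes $\eta$ to the desired isomorphism \eqref{eq:dimredMHM}.
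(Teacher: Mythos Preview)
Your overall strategy---invoke Lemma~\ref{lem:isomhodge} with $M = \pi_*\varphi^{\mathrm{mhm}}_{\bfT^*[-1]\bs{Y}}$ and $N = \bL^{\vdim \bs{Y}/2}\otimes\bD\bQ_Y$---is exactly the paper's, and your Verdier-duality verification of the negative $\Ext$ vanishing is correct (and in fact cleaner than the paper, which leaves that step implicit).

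The gap is in the weight-zero hypothesis $\mH^0(Y,\sHom(M,N))\cong\bQ$. Your Verdier-duality computation establishes this only at the level of $\rat$; the actual content of the proposition is precisely that the mixed Hodge structure on this one-dimensional space is $\bQ(0)$. Your proposed route---tracking $Q^o_{\sR}$ and the ``fibrewise dimensions of $\pi$''---does not get there: the $Q^o_{\sR}$ are $\bZ/2\bZ$-torsors and carry no weight data, and $\pi$ is neither smooth nor proper, so there is no general mechanism producing a predictable Tate twist under $\pi_*$. What you have written is a description of a computation that still needs an input, not the computation itself.

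The paper supplies that input as follows. The weight of the generator of $\Hom(M,N)$ is determined by the weight filtration on $\cH^0\sHom(M,N)$, which is a local invariant. By \cite[Theorem 4.1]{BBJ19} one may then assume $\bs{Y}\cong\bs{Z}(s)$ is a derived zero locus of a section of a trivial bundle on a smooth affine $U$ with global \'etale coordinates. In that model, \cite[Theorem 3.1]{Kin21} identifies the dimensional reduction isomorphism \eqref{eq:dimred} with Davison's local dimensional reduction \cite[Theorem A.1]{Dav17}, whose proof goes through verbatim in $D^b(\MHM)$ and already produces the correct Tate twist. That is the missing piece: a concrete local model in which the MHM-level isomorphism (with its Tate twist) is known, not just its underlying perverse-sheaf shadow.
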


\begin{proof}
Using Lemma \ref{lem:isomhodge}, we only need to prove that the mixed Hodge structure on
\[
\Hom(\pi_* \varphi_{\bfT^*[-1]\bs{Y}}, \bD\bQ_Y[-\vdim \bs{Y}])
\]
is pure of weight zero.
As this statement can be checked locally, using \cite[Theorem 4.1]{BBJ19}, 
we may assume that there exists a smooth scheme $U$ which admits a global \'etale coordinate, a trivial vector bundle $E$ on $U$, and a section $s \in \Gamma(U, E)$ such that  $\bs{Y}$ is isomorphic to the derived zero locus $\bs{Z}(s)$.
In this case the proof of \cite[Theorem 3.1]{Kin21} shows that the dimensional reduction isomorphism \eqref{eq:dimred} can be identified with Davison's local dimensional reduction theorem \cite[Theorem A.1]{Dav17}. As the proof of this theorem works verbatim for complexes of mixed Hodge modules, we conclude that the claim holds. 
\end{proof}

\begin{rmk}\label{rmk:dimredmmhm1}
We expect that the isomorphism \eqref{eq:dimredMHM} further upgrades to an isomorphism $D^b(\MMHM(Y))$
\[
\pi_* \varphi_{\bfT^*[-1]\bs{Y}}^{\mathrm{mmhm}} \cong \bL^{\vdim \bs{Y}/2} \otimes \bD\bQ_{Y}.
\]
 However, we could not prove this since we do not know whether the tensor-hom adjunction holds for monodromic mixed Hodge modules.
Instead, we can easily see that
we have an isomorphism in $D^b(\MMHM(Y))$
\[
\cH(\pi_* \varphi_{\bfT^*[-1]\bs{Y}}^{\mathrm{mmhm}}) \cong \cH(\bL^{\vdim \bs{Y}/2} \otimes \bD\bQ_{Y})
\]
since the monodromy operator acts trivially on both sides (see \cite[Remark 3.9]{Dav17b}).
It is enough for our purposes.

\end{rmk}

Now we discuss the stacky case of this proposition.
Let $\bs{\fY}$ be a quasi-smooth derived Artin stack such that its classical truncation $\fY = t_0(\bs{\fY})$ is of the form $[Y / G]$ for some scheme $Y$ and a linear algebraic group $G$. In this case, we can upgrade the dimensional reduction theorem to an isomorphism of mixed Hodge structures.

\begin{prop}
Assume that $\vdim \bs{\fY}$ is even. Then the dimensional reduction isomorphism 
$\mH^*(\widetilde{\fY}, \varphi_{\bfT^*[-1]
\bs{\fY}} ) \cong \HBM_{-* + \vdim \bs{\fY}}(\fY)$
upgrades naturally to an isomorphism of mixed Hodge structures
\[
\mH^*(\widetilde{\fY}, \varphi_{\bfT^*[-1]
\bs{\fY}}^{\mathrm{mhm}} ) \cong \bL^{\vdim \bs{\fY}/2} \otimes \HBM_{-*}(\fY)
\]
\end{prop}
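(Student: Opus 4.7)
The plan is to reduce the stacky dimensional reduction to the schematic version (Proposition \ref{prop:dimredsch}) by means of a Bott--Totaro approximation of the stack $\fY = [Y/G]$. Lift the $G$-action to a $G$-equivariant quasi-smooth derived scheme $\bs{Y}$ obtained by pulling $\bs{\fY}$ back along the torsor $Y \to \fY$, so that $\bs{\fY} = [\bs{Y}/G]$. Choose a sequence of $G$-representations $V_N$ with $G$-invariant open subschemes $U_N \subseteq V_N$ on which $G$ acts freely, such that $\codim_{V_N}(V_N \setminus U_N) \to \infty$ and $\dim U_N - \dim G$ is even. Set $\bs{\fY}_N \coloneqq [(\bs{Y} \times U_N)/G]$; these are quasi-smooth derived schemes of even virtual dimension $d_N \coloneqq \vdim \bs{\fY} + \dim U_N - \dim G$, with smooth structure maps $q_N \colon \bs{\fY}_N \to \bs{\fY}$.

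By the definition of mixed Hodge structures on cohomology of Artin stacks via smooth atlases, both sides of the target isomorphism for $\fY$ are canonically computed, in a range of degrees tending to $\bZ$ as $N \to \infty$, by the corresponding invariants of $\bs{\fY}_N$ up to a Tate shift by $\bL^{\dim G - \dim U_N}$. It therefore suffices to prove, naturally in $N$, the isomorphism
\[
\mH^*(\widetilde{\fY}_N, \varphi^{\mathrm{mhm}}_{\bfT^*[-1]\bs{\fY}_N}) \cong \bL^{d_N/2} \otimes \HBM_{-*}(\fY_N)
\]
of mixed Hodge structures, and to pass to the limit.

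For this, I would first identify $\bfT^*[-1]\bs{\fY}_N$ with the pullback of $\bfT^*[-1]\bs{\fY}$ along $q_N$ up to a shifted vector bundle factor. The cotangent exact triangle for the smooth morphism $q_N$
\[
q_N^* \bL_{\bs{\fY}} \to \bL_{\bs{\fY}_N} \to \bL_{\bs{\fY}_N/\bs{\fY}}
\]
has locally free third term of rank $\dim U_N - \dim G$, yielding a natural equivalence
\[
\bfT^*[-1]\bs{\fY}_N \simeq (\bfT^*[-1]\bs{\fY}) \times_{\bs{\fY}} \bs{\fY}_N \times_{\bs{\fY}_N} \Tot_{\bs{\fY}_N}\!\bigl(\bL_{\bs{\fY}_N/\bs{\fY}}[1]\bigr)
\]
of $(-1)$-shifted symplectic derived schemes, together with a compatibility of the canonical orientations \eqref{eq:canori}. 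The smooth-pullback property of the vanishing cycle mixed Hodge module (the MHM enhancement of \eqref{eq:commvan}) then produces a natural isomorphism of $\varphi^{\mathrm{mhm}}_{\bfT^*[-1]\bs{\fY}_N}$ with the pullback of $\varphi^{\mathrm{mhm}}_{\bfT^*[-1]\bs{\fY}}$ up to the expected Tate twist. Applying Proposition \ref{prop:dimredsch} to the even-dimensional quasi-smooth derived scheme $\bs{\fY}_N$ and passing to the limit $N \to \infty$ then yields the desired isomorphism.

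The main obstacle will be carrying out the middle step carefully: checking the identification of $(-1)$-shifted cotangent stacks, verifying that the canonical orientation \eqref{eq:canori} on $\bs{\fY}_N$ matches the pullback of that on $\bs{\fY}$ tensored with the natural orientation on the relative factor, and tracking the various Tate twists and cohomological shifts through the vanishing cycle pullback. The evenness hypothesis on $\vdim \bs{\fY}$, combined with the parity choice $\dim U_N - \dim G \in 2\bZ$, is precisely what is needed to make the half-Tate twist $\bL^{d_N/2}$ well-defined at each level of the approximation.
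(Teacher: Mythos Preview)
Your approach via Bott--Totaro approximations is different from the paper's and can be made to work, but the paper takes a more direct route.

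The paper does not construct the MHM isomorphism level by level and pass to a limit. Instead it starts from the already-existing isomorphism in $D^b_c$ (Theorem \ref{thm:dimred}) and upgrades it using Lemma \ref{lem:isomhodge}: one only has to check that a certain Hom-space carries a weight-zero mixed Hodge structure. That check can be made after pulling back along a single smooth surjection $\bs{h} \colon \bs{U} \to \bs{\fY}$ from a derived scheme of even virtual dimension, where Proposition \ref{prop:dimredsch} applies directly. The key technical input is \cite[Proposition 4.10]{Kin21}, which identifies $h_{\widetilde{\fY}}^* \varphi_{\bfT^*[-1]\bs{\fY}}$ with $\varphi_{\bfT^*[-1]\bs{U}}$ up to a Tate twist, together with the compatibility $h_T^*\eta_q = \eta_{q_U}[-\dim h]$ extracted from the proof of \cite[Theorem 4.14]{Kin21}. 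An approximation $q \colon T \to \fY$ appears only to \emph{define} the stacky cohomology, not to produce the isomorphism.

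Your route merges the roles of approximation and atlas into the single family $\bs{\fY}_N$. This is legitimate, but two points need attention. First, the product decomposition you write for $\bfT^*[-1]\bs{\fY}_N$ is not automatic for an arbitrary smooth morphism: it requires the cotangent triangle of $q_N$ to split, which does hold here because $q_N$ is \'etale-locally a projection from a product, but you should say so (and the relative factor should be $\Tot_{\bs{\fY}_N}(\bL_{\bs{\fY}_N/\bs{\fY}}[-1])$, not $[1]$). Second, the step you flag as the ``main obstacle''---matching the canonical orientation and the vanishing cycle on $\bfT^*[-1]\bs{\fY}_N$ with the pullback from $\bfT^*[-1]\bs{\fY}$---is exactly the content of \cite[Proposition 4.10]{Kin21}, and you need it just as the paper does; without it you have no way to identify the left-hand side at level $N$ with the stacky vanishing-cycle cohomology, and hence no way to conclude that the isomorphism you build agrees with the one from Theorem \ref{thm:dimred}. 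Once that input is in place your argument goes through; the advantage of the paper's route is that Lemma \ref{lem:isomhodge} reduces everything to a single weight check rather than tracking an explicit isomorphism through a tower of approximations.
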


\begin{proof}
For a fixed $i$, take a smooth morphism $q \colon T \to  \fY$ of relative dimension $d$ such that  
\[
q^! \colon \HBM_{-i + \vdim \bs{\fY}}(\fY) \to \HBM_{-i + \vdim \bs{\fY} + 2d}(T)
\]
and the map
\[
\tilde{q}^! \colon \mH^i(\widetilde{\fY}, \varphi_{\bfT^*[-1] \bs{\fY}} ) \to \mH^i(\widetilde{T}, \tilde{q}^* \varphi_{ \bfT^*[-1] \bs{\fY}} )
\]
are isomorphisms. Here $\tilde{q} \colon \widetilde{T} \to  \widetilde{\fY}$ is the base change of $q$.
Therefore we need to show that the following composition of isomorphism of vector spaces 
\begin{align*}
    \mH^i(\widetilde{T}, \tilde{q}^* \varphi_{ \bfT^*[-1] \bs{\fY}} ))
    &\cong \mH^i(\widetilde{\fY}, \varphi_{\bfT^*[-1] \bs{\fY}})  \\
    &\cong \HBM_{-i + \vdim \bs{\fY}}(\fY) \\
    &\cong \HBM_{-i + \vdim \bs{\fY} + 2d}(T)
\end{align*}
upgrades to an isomorphism of mixed Hodge structures.
To prove this, we will show that the following morphism in $D^b(T)$
\begin{align*}
\eta_{q} \colon {\pi_T}_* \tilde{q}^* \varphi_{ \bfT^*[-1] \bs{\fY}} 
& \cong q^* {\pi_{\fY}}_*  \varphi_{ \bfT^*[-1] \bs{\fY}} \\
& \cong q^* \bD \bQ_{\fY} [- \vdim \bs{\fY}] \\
&\cong \bD \bQ_{T} [- \vdim \bs{\fY} - 2d ] 
\end{align*}
upgrades an isomorphism in $D^b(\MHM(T))$. Here $\pi_{T} \colon \widetilde{T} \to T$
and $\pi_{\fY} \colon \widetilde{\fY} \to \fY$ are natural projections and the second isomorphism is the dimensional reduction isomorphism.
Using Lemma \ref{lem:isomhodge}, we need to show that the mixed Hodge structure of 
\begin{equation}\label{eq:wtzero}
\Hom({\pi_T}_* \tilde{q}^* \varphi_{ \bfT^*[-1] \bs{\fY}}, \bD \bQ_{T} [- \vdim \bs{\fY} - 2d ])
\end{equation}
is weight zero.
To prove this, take a smooth surjective morphism from a derived scheme
$\bs{h} \colon \bs{U} \to \bs{\fY}$ such that $\vdim \bs{U}$ is even. Write $h = t_0(\bs{h})$, $t_0(\bs{U}) = U$, and $\widetilde{X} \coloneqq t_0(\bfT^*[-1]\bs{U})$.
Let $\widetilde{T \times_{\fY} U}$ be the fibre product $(T \times_{\fY} U) \times_{U} \widetilde{U}$. We let $\pi_{T \times_{\fY} U} \colon \widetilde{T \times_{\fY} U} \to T \times_{\fY} U$ and $\pi_{U} \colon \widetilde{U} \to U$ denote the natural projections and $q_{U} \colon T \times_{\fY} U \to U$ and $\tilde{q}_U \colon \widetilde{T \times_{\fY} U} \to \widetilde{U}$ be the base changes of $q$. 
Then we can construct a natural isomorphism
\[
\eta_{q_U} \colon {\pi_{T \times_{\fY} U}}_* \tilde{q}_U^* \varphi_{ \bfT^*[-1] \bs{U}} \cong \bD \bQ_{T \times _{\fY} U}[- \vdim \bs{U} -2d]
\]
in the same manner as $\eta_q$.
As we have seen in Proposition \ref{prop:dimredsch}, the map $\eta_{q_U}$ upgrades to an isomorphism in $D^b(\MHM(T \times_{\fY} U))$
\[
{\pi_{T \times_{\fY} U}}_* \tilde{q}_U^* \varphi_{ \bfT^*[-1] \bs{U}}^{\mathrm{mhm}} \cong \bL^{d + \vdim \bs{U}/2} \otimes \bD \bQ_{T \times _{\fY} U}.
\] 
Let $h_{\widetilde{\fY}} \colon \widetilde{U} \to \widetilde{\fY}$,  $h_T \colon T \times_{\fY} U \to T$ and $h_{T \times_{\fY} U} \colon \widetilde{T \times_{\fY} U} \to \widetilde{T}$ be the base changes of $h$.
Then we have a natural isomorphism
\[
h_{T}^* {\pi_T}_* \tilde{q}^* \varphi_{ \bfT^*[-1] \bs{\fY}}^{\mathrm{mhm}} \cong {\pi_{T \times_{\fY} U}}_* \tilde{q}_U^* h_{\widetilde{\fY}}^* \varphi_{ \bfT^*[-1] \bs{\fY}}^{\mathrm{mhm}} \cong \bL^{\dim h/2} \otimes  {\pi_{T \times_{\fY} U}}_* \tilde{q}_U^* \varphi_{ \bfT^*[-1] \bs{U}}^{\mathrm{mhm}}
\]
where the latter isomorphism follows from \cite[Proposition 4.10]{Kin21}. We also have a natural isomorphism
\[
 \bL^{d + \vdim \bs{\fY}/2} \otimes h_{T}^* \bD \bQ_{T} \cong \bL^{d + h/2 + \vdim \bs{U}/2} \otimes \bD \bQ_{T \times_{\fY} U}.
\]
Under these identifications, 
the proof of \cite[Theorem 4.14]{Kin21} implies that $h_T^* \eta_q$ is equal to $\eta_{q_U}[- \dim h]$ up to a certain choice of the sign. This and the fact that $\eta_{q_U}$ upgrades to an isomorphism in $D^b(\MHM(T \times_{\fY} U))$ imply that the weight of the mixed Hodge structure \eqref{eq:wtzero} is zero.
\end{proof}

The following statement can be proved in the same manner as the previous proposition:

\begin{prop}\label{prop:dimredMHM2}
We keep the notation from the previous proposition. Let $p \colon \fY \to Z$ be a morphism to a separated finite type complex scheme.
Then we have an isomorphism of mixed Hodge modules
\begin{equation}\label{eq:dimredMHM2}
\mcH((p \circ \pi_{\fY})_* \varphi_{\bfT^*[-1] \bs{\fY}}^{\mhm}) 
\cong  \bL^{\vdim \bs{\fY}/2} \otimes  \mcH(p_* \bD\bQ_{\fY}) 
\end{equation}
where $\pi_{\fY} \colon \widetilde{\fY} \to \fY$ is the natural projection.
\end{prop}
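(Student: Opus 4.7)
The plan is to adapt the argument of the preceding proposition to the relative setting over $Z$. Theorem \ref{thm:dimred} already yields a constructible isomorphism
\[
\eta_0 \colon \pi_{\fY *} \varphi_{\bfT^*[-1]\bs{\fY}} \cong \bD\bQ_\fY[-\vdim \bs{\fY}]
\]
in $D^b_c(\fY)$. Applying $\mcH(p_*(-))$ gives a candidate identification of the perverse sheaves underlying the two sides of \eqref{eq:dimredMHM2} on $Z$. To upgrade this to an isomorphism of mixed Hodge modules it suffices, by applying (an evident cohomology-sheaf by cohomology-sheaf analogue of) Lemma \ref{lem:isomhodge}, to show that for every $i$ the mixed Hodge structure on
\[
\Hom_{\MHM(Z)}\bigl(\mcH^i((p\circ\pi_{\fY})_*\varphi^{\mhm}_{\bfT^*[-1]\bs{\fY}}),\, \bL^{\vdim \bs{\fY}/2}\otimes \mcH^i(p_* \bD\bQ_{\fY})\bigr)
\]
is pure of weight zero.

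I would verify this purity by smooth descent to a derived atlas, exactly as in the previous proof. Choose a smooth surjective morphism $\bs{h}\colon \bs{U}\to \bs{\fY}$ from a derived scheme with $\vdim \bs{U}$ even, and write $h = t_0(\bs{h})$, $\tilde{h} \colon \widetilde{U}\to\widetilde{\fY}$, $\pi_U \colon \widetilde{U}\to U$, and $p_U \coloneqq p\circ h$. The schematic case of dimensional reduction for mixed Hodge modules (Proposition \ref{prop:dimredsch}, whose proof extends verbatim to the relative setting over an arbitrary target scheme) produces an isomorphism in $D^b(\MHM(Z))$
\[
\mcH((p_U \circ \pi_U)_* \varphi^{\mhm}_{\bfT^*[-1]\bs{U}}) \cong \bL^{\vdim \bs{U}/2}\otimes \mcH({p_U}_* \bD\bQ_U).
\]
Combining the compatibility of the vanishing cycle mixed Hodge module with smooth pullback from \cite[Proposition 4.10]{Kin21} with the explicit identification—established in the proof of \cite[Theorem 4.14]{Kin21}—of $\eta_0$ with its schematic counterpart under $\tilde{h}$-pullback, one sees that the $h$-pullback of the candidate morphism on $Z$ agrees, up to an even Tate twist, with the schematic isomorphism on $U$. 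Since the latter is a morphism of pure weight-zero mixed Hodge modules on $Z$, the required weight-zero purity on the original Hom space follows.

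The main obstacle, as in the previous proposition, is the sign and Tate-twist bookkeeping needed to verify that the identification on the atlas is compatible with the two projections from the Čech nerve of $h$, so that the descent data actually glues. The parity assumption $\vdim \bs{\fY}$ even (inherited by $\vdim \bs{U} = \vdim \bs{\fY}+\dim h$ with $\dim h$ arranged even) is what ensures that the half Tate twist $\bL^{\vdim \bs{\fY}/2}$ lives in $\MHM$ rather than only in $\MMHM$. No new difficulty arises from the presence of $p$: Saito's decomposition theorem lets us commute $\mcH$ through $p_*$ on pure objects, so the relative purity is controlled by the absolute purity already established in the schematic case.
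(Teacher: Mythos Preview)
Your overall strategy matches the paper's, which simply records that the argument of the preceding proposition goes through unchanged: use Theorem \ref{thm:dimred} to produce the underlying constructible isomorphism, then invoke Lemma \ref{lem:isomhodge} and verify the weight-zero condition by passing to a derived-scheme atlas $\bs{h}\colon \bs{U}\to\bs{\fY}$ where Proposition \ref{prop:dimredsch} applies.

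One step in your write-up does not type-check and obscures the actual mechanism. You speak of ``the $h$-pullback of the candidate morphism on $Z$'', but $h$ has target $\fY$, not $Z$; there is no way to pull back a morphism between objects of $\MHM(Z)$ along $h$. In the paper's template (the proof of the proposition just before this one), the comparison is not made on $Z$ at all: one first fixes a cohomological degree and chooses an approximating smooth morphism $q\colon T\to\fY$ as guaranteed by condition $(*)$, so that the relevant perverse cohomology of $(p\circ\pi_{\fY})_*$ and of $p_*$ are computed by honest scheme-level pushforwards from $T$ and $\widetilde{T}$. The morphism whose MHM lift is in question is then built on $T$, namely
\[
\eta_q\colon {\pi_T}_*\tilde q^*\varphi_{\bfT^*[-1]\bs{\fY}} \cong q^*\bD\bQ_{\fY}[-\vdim\bs{\fY}] \cong \bD\bQ_T[-\vdim\bs{\fY}-2d],
\]
and it is \emph{this} map that one pulls back along $h_T\colon T\times_{\fY}U\to T$ and compares (up to sign and the Tate twist $\bL^{\dim h/2}$) with the schematic dimensional reduction map $\eta_{q_U}$ coming from Proposition \ref{prop:dimredsch}. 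Once you insert this intermediate step on $T$, your argument becomes precisely the paper's; as written, the descent step is missing the object on which the comparison actually takes place.
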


\begin{rmk}\label{rmk:monodromyfree}
The argument as in Remark \ref{rmk:dimredmmhm1} implies that the isomorphism \eqref{eq:dimredMHM2} upgrades to an isomorphism in $D^b(\MMHM(Z))$
\[
\mcH((p \circ \pi_{\fY})_* \varphi_{\bfT^*[-1] \bs{\fY}}^{\mmhm}) 
\cong  \bL^{\vdim \bs{\fY}/2} \otimes  \mcH(p_* \bD\bQ_{\fY}).
\]
\end{rmk}

\subsection{BPS cohomology for Higgs bundles}

In \cite{Dav16}, Davison defined BPS sheaves and BPS cohomology for preprojective algebras. In this section, we introduce Higgs counterpart of these notions. 

Let $C$ be a smooth projective curve of genus $g$. 
We write $S = \Tot_C(\omega_C)$ and 
$X = \Tot_C(\cO_C \oplus \omega_C)$.
Recall that $\fM_{X}^{\semist}(r, m)$ (resp. $\fM_{S}^{\semist}(r, m)$) denotes the moduli stack of one-dimensional semistable sheaves of rank $r$ and Euler characteristic $m$ on $X$ (resp. $S$), and 
$M_{X}^{\semist}(r, m)$ (resp. $M_{S}^{\semist}(r, m)$) denotes the good moduli space of $\fM_{X}^{\semist}(r, m)$ (resp. $\fM_{S}^{\semist}(r, m)$).
We have the following commutative diagram:
\[
\xymatrix@C=50pt{
\fM_{X}^{\semist}(r, m) \ar[d]^-{p_X} \ar[r]^-{\pi}
& \fM_{S}^{\semist}(r, m) \ar[d]^-{p_S} \\
M_{X}^{\semist}(r, m) \ar[r]^-{\bar{\pi}}
& M_{S}^{\semist}(r, m).
}
\]
It is shown in \cite[Theorem 5.1]{Kin21} that there exists a natural equivalence of $(-1)$-shifted symplectic derived Artin stacks $\bs{\fM}_{X}^{\semist}(r, m)  \cong \bfT^*[-1]\bs{\fM}_{S}^{\semist}(r, m)$, where $\bs{\fM}_{X}^{\semist}(r, m)$ (resp. $\bs{\fM}_{S}^{\semist}(r, m)$) denotes the derived enhancement of $\fM_{X}^{\semist}(r, m)$ (resp. $\fM_{S}^{\semist}(r, m)$).
Therefore \eqref{eq:canori} implies that there exists a canonical orientation
\[
o \colon (\pi^{\red, *} \det(\bL_{\bs{\fM}_{S}^{\semist}(r, m)} |_{\fM_{X}^{\semist}(r, m)^{\red}}))^{\otimes 2} \cong K_{\bs{\fM}_{X}^{\semist}(r, m)}^{\vir}.
\]
On the other hand, we have seen in Proposition \ref{prop:dchart} that there exist a line bundle $L$ with $\deg L >2g -2$ and a function $f$ on the moduli stack $\fM_{\Tot_C(L)}^{\ss}(r, m)$ of semistable sheaves on $\Tot_C(L)$ such that there exists an equivalence of $(-1)$-shifted symplectic derived Artin stacks $\bs{\fM}_{X}^{\semist}(r, m) \cong \DCrit(f)$.
Therefore there exists an orientation
\[
o' \colon K_{\fM_{\Tot(L)}(r, m)} |_{\fM_{X}^{\semist}(r, m)^{\red}} ^{\otimes {2}} \cong K_{\bs{\fM}_{X}^{\semist}(r, m)}^{\vir}.
\]

\begin{prop}\label{prop:compori}
There exists an isomorphism of orientations $o \cong o'$.
\end{prop}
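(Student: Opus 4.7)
The plan is to construct a canonical isomorphism between the two ``square root'' line bundles underlying $o$ and $o'$, and then to verify its compatibility with the square maps into the virtual canonical.

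Write $\fM \coloneqq \fM_X^{\semist}(r,m)$. The orientation $o$ is determined by the line bundle $L_o \coloneqq \pi^{\red,*}\det(\bL_{\bs{\fM}_S^{\semist}(r,m)})|_{\fM^{\red}}$ together with the identification $L_o^{\otimes 2} \cong K^{\vir}_{\bs{\fM}}$ from \eqref{eq:canori}, while $o'$ is determined by $L_{o'} \coloneqq K_{\fM_Y^{\semist}(r,m)}|_{\fM^{\red}}$ together with the identification $L_{o'}^{\otimes 2} \cong K^{\vir}_{\bs{\fM}}$ coming from the global derived critical chart of Proposition \ref{prop:dchart}. Thus producing an isomorphism $o \cong o'$ amounts to giving an isomorphism $L_o \cong L_{o'}$ such that the two squarings into $K^{\vir}_{\bs{\fM}}$ agree.

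The first step is to express both $L_o$ and $L_{o'}$ in terms of the universal $N$-Higgs bundle $\mcE_X$ on $C \times \fM$. The cotangent complex of $\bs{\fM}_S^{\semist}(r,m)$ is $(\dR q_{S*}\dR\sHom(\mcE_S,\mcE_S)[1])^{\vee}$ for the projection $q_S$, and since $\deg(L_2)>2g-2$ makes $\fM_Y^{\semist}(r,m)$ smooth, one has $K_{\fM_Y^{\semist}(r,m)} \cong \det(\dR q_{Y*}\dR\sHom(\mcE_Y,\mcE_Y))$ after accounting for the shift. After restriction to $\fM$, the sheaves $\mcE_S$ and $\mcE_Y$ both arise from $\mcE_X$ via the short exact sequence $0 \to L_1 \to N \to L_2 \to 0$. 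Mimicking the determinant bundle manipulation already performed in the proof of Proposition \ref{prop:CYori} and invoking Grothendieck--Serre duality on $C$ (which uses $L_1 \otimes L_2 \cong \omega_C$), one builds a canonical isomorphism $L_o \cong L_{o'}$ on $\fM^{\red}$.

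The last step, which I expect to be the main obstacle, is to check that this isomorphism intertwines the two squarings $L_o^{\otimes 2}, L_{o'}^{\otimes 2} \cong K^{\vir}_{\bs{\fM}}$. The two squarings arise from structurally different sources --- \eqref{eq:canori} for the shifted cotangent versus the tautological one for the derived critical locus --- so matching them forces us to compare the two $(-1)$-shifted symplectic presentations of $\bs{\fM}$ at the level of derived geometry, not merely at the level of underlying line bundles. My approach is to work on a smooth atlas and invoke a Darboux-type local model (cf.\ \cite{BBJ19}) together with Theorem \ref{thm:KM}(ii), which describes the function $f$ explicitly in terms of the extension class $\alpha$; this reduces the compatibility to a single-chart computation where both orientations specialize to the canonical square root supplied by $K_U$ on the ambient smooth coordinate chart. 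Equivalently, once $L_o \cong L_{o'}$ is constructed, the obstruction to compatibility is a $2$-torsion line bundle on $\fM^{\red}$ which may be shown to be trivial by restricting to a Zariski dense locus of stable points where both orientations admit direct manifest descriptions via a single universal family.
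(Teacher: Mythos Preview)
Your approach diverges substantially from the paper's and contains a genuine gap at precisely the step you flag as the main obstacle.

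The paper's argument is far more elementary. It first observes that both underlying line bundles $L_o$ and $L_{o'}$ are \emph{trivial}: $K_{\fM_Y^{\semist}(r,m)}$ is trivial by Proposition~\ref{prop:CYori}, and $\det(\bL_{\bs{\fM}_S^{\semist}(r,m)})|_{\fM_S^{\semist}(r,m)^{\red}}$ is trivial because $\bs{\fM}_S^{\semist}(r,m)$ embeds as an open substack of the cotangent stack $\bfT^*\fM_C$. Hence the comparison of $o$ and $o'$ reduces to asking whether a single invertible function on $\fM^{\red}$ (equivalently, by descent to the good moduli space, on $M_X^{\semist}(r,m)^{\red}$) admits a square root. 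The paper then proves the much stronger statement that \emph{every} invertible regular function on $M_X^{\semist}(r,m)^{\red}$ is constant: it stratifies by polystable type, reduces along direct-sum maps to the Dolbeault moduli spaces $M_S^{\semist}(\lambda_i r_0, \lambda_i m_0) \times \bA^1$, and then uses that the Hitchin base is an affine space together with $h_{S,*}\cO \cong \cO_{B_S}$.

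Your proposed route---building a specific isomorphism $L_o \cong L_{o'}$ via determinant identities and then matching the squarings through Darboux charts or a dense-locus argument---does not close. The Darboux-chart comparison is only sketched, and it is unclear how Theorem~\ref{thm:KM}(ii) alone pins down the sign of the orientation isomorphism chart by chart, let alone globally. The alternative you offer, that the obstruction is a $2$-torsion class which can be killed on a Zariski-dense locus of stable points, is not valid as stated: an invertible function (or a $2$-torsion line bundle) that admits a square root over a dense open need not admit one globally. What makes the paper's argument work is the global fact that there simply are no nonconstant units on this particular moduli space.
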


\begin{proof}
We have seen in Proposition \ref{prop:CYori} that there exists a trivialization
\[
K_{\fM_{\Tot_C(L)}^{\ss}(r, m)} \cong \cO_{\fM_{\Tot_C(L)}^{\ss}(r, m)}.
\]
On the other hand, we also have a trivialization
\[
\det(\bL_{\bs{\fM}_{S}^{\semist}(r, m)} |_{\fM_{S}^{\semist}(r, m)^{\red}})) \cong \cO_{\fM_{S}^{\semist}(r, m)^{\red}},
\]
since there exists an open immersion
\[
\bs{\fM}_{S}^{\semist}(r, m) \hookrightarrow \bfT^* \fM_{C},
\]
where $\fM_{C}$ denotes the moduli stack of coherent sheaves on $C$.
Therefore we need to show that the following composition
\begin{align*}
\cO_{\fM_{X}^{\semist}(r, m)^{\red}}^{\otimes {2}}
&\cong (\pi^{\red, *} \det(\bL_{\bs{\fM}_{S}^{\semist}(r, m)} |_{\fM_{X}^{\semist}(r, m)^{\red}}))^{\otimes 2} \\
&\cong K_{\bs{\fM}_{X}^{\semist}(r, m)}^{\vir} \\
&\cong  K_{\fM_{\Tot_C(L)}^{\ss}(r, m)} |_{\fM_{X}^{\semist}(r, m)^{\red}} ^{\otimes {2}} \\
&\cong \cO_{\fM_{X}^{\semist}(r, m)^{\red}}^{\otimes {2}}
\end{align*}
has a square root.
More strongly, we will show that any invertible function 
\[
f \in \Gamma(\fM_{X}^{\semist}(r, m)^{\red}, \cO_{\fM_{X}^{\semist}(r, m)^{\red}}^{\times}) \cong \Gamma(M_{X}^{\semist}(r, m)^{\red}, \cO_{M_{X}^{\semist}(r, m)^{\red}}^{\times})
\]
is constant hence admits a square root.

We say that a reduced finite type complex scheme $X$ satisfies the property (P) if every invertible regular function on $X$ is locally constant.
What we need to prove is that the scheme $M_{X}^{\semist}(r, m)^{\red}$ satisfies the property (P).
Property (P) satisfies the following:
\begin{itemize}
    \item If we are given a surjective morphism between reduced finite type complex schemes  $X \to Y$ and $X$ satisfies the property (P), then $Y$ satisfies the property (P).
    \item For reduced finite type complex schemes $X$ and $Y$ satisfying the property (P), $X \times Y$ also satisfies the property (P).  
\end{itemize}

Write $k = \gcd(r, m)$ and $(r, m) = (k r_0, k m_0)$.
Take a partition $\lambda_{\bullet} = (\lambda_1, \lambda_2, \ldots, \lambda_t)$ of $k$, i.e., $\lambda_i$ is a positive integer with $\lambda_1 \geq \lambda_2 \geq \cdots \geq \lambda_t$ such that $\sum_i \lambda_i =k$ holds.
Let $M_{X}^{\semist}(r, m)^{\red}_{\lambda_{\bullet}}$ 
be the subscheme consisting of points corresponding to polystable sheaves which can be written as
\[
\bigoplus_i E_i, 
\]
where $E_i$ is a stable sheaf on $X$ such that  $\rank ({\pi_{X}}_* E_i) = \lambda_i r_0$, where $\pi_X \colon X \to C$ is the projection. We let $\overline{M_{X}^{\semist}(r, m)^{\red}_{\lambda_{\bullet}}}$ denote the closure of $M_{X}^{\semist}(r, m)^{\red}_{\lambda_{\bullet}}$.
Since we have an equality
\[
M_{X}^{\semist}(r, m)^{\red} = \bigcup_{\lambda_{\bullet}}
\overline{M_{X}^{\semist}(r, m)^{\red}_{\lambda_{\bullet}}},
\]
we need to show that the scheme $\overline{M_{X}^{\semist}(r, m)^{\red}_{\lambda_{\bullet}}}$ satisfies the property (P).

Consider the map
\[
\prod_{i=1}^t \overline{M_{X}^{\semist}(\lambda_i r_0, \lambda_i m_0)^{\red}_{(\lambda_i)}} \to M_{X}^{\semist}(r, m)^{\red}
\]
taking the direct sum.
The image of this map is nothing but $\overline{M_{X}^{\semist}(r, m)^{\red}_{\lambda_{\bullet}}}$.
Therefore it is enough to show that 
$\overline{M_{X}^{\semist}(\lambda_i r_0, \lambda_i m_0)^{\red}_{(\lambda_i)}}$ satisfies the property (P).
As we have an isomorphism
\[
\overline{M_{X}^{\semist}(\lambda_i r_0, \lambda_i m_0)^{\red}_{(\lambda_i)}} \cong 
M_{S}^{\semist}(\lambda_i r_0, \lambda_i m_0) \times \bA^1,
\]
we need to prove that $M_{S}^{\semist}(\lambda_i r_0, \lambda_i m_0)$ satisfies property (P).

Let $g \colon M_{S}^{\semist}(\lambda_i r_0, \lambda_i m_0) \to \bA^1$ be an invertible function.
We need to prove that $g$ is constant.
Let $h_S \colon M_{S}^{\semist}(\lambda_i r_0, \lambda_i m_0) \to B_S$ be the Hitchin fibration.  Since the general fiber of $h_S$ is connected, we have an isomorphism $h_{S, *} \cO_{M_{S}^{\semist}(\lambda_i r_0, \lambda_i m_0)} \cong \cO_{B_S}$. Therefore there exists an invertible function $g'$ on $B_S$ such that $g = g' \circ h_S$ holds.
Since $B_S$ is an affine space, $g'$ is a constant function.
\end{proof}

\begin{rmk}\label{rmk:compori}
The proof shows that any orientation
$o'' \colon L^{\otimes 2} \cong  K_{\bs{\fM}_{X}^{\semist}(r, m)}^{\vir}$
such that $L$ is trivial is isomorphic to $o$.
\end{rmk}

From now  we always equip
$\bs{\fM}_{X}^{\semist}(r, m)$ with the orientation $o$. 
Define a monodromic mixed Hodge module $\varphi_{M_{X}^{\semist}(r, m)}^{\mathrm{mmhm}}$ on $M_X^{\semist}(r, m)$ by
\[
\varphi_{M_{X}^{\semist}(r, m)}^{\mathrm{mmhm}} \coloneqq \cH^0(\bL^{-1/2} \otimes {p_X}_* \varphi_{\fM_{X}^{\semist}(r, m)}^{\mathrm{mmhm}}).
\]
For a given rational number $\mu$, we define
\begin{align*}
M^{\semist}_X(\mu) \coloneqq \coprod_{\frac{m}{r}=\mu} M^{\semist}_X(r, m), \ 
\varphi_{\fM_{X}^{\semist}(\mu)}^{\mathrm{mmhm}} \coloneqq \bigoplus_{\frac{m}{r}=\mu} \varphi_{\fM_{X}^{\semist}(r, m)}^{\mathrm{mmhm}} ,  \
 \varphi_{M_{X}^{\semist}(\mu)}^{\mathrm{mmhm}} &\coloneqq \bigoplus_{\frac{m}{r}=\mu} \varphi_{M_{X}^{\semist}(r, m)} ^{\mathrm{mmhm}}.
\end{align*}
Recall that we have constructed a symmetric monoidal structure
 $\boxtimes_{\oplus}$ on $D^{\geq, lf}(\MMHM(M^{\ss}_X(\mu)) )$
in \S \ref{ssec:MMHM}.
The following proposition is the cohomological integrality theorem (in the sense of \cite[Theorem A]{DM20}) for the Calabi--Yau threefold $X$:

\begin{prop}\label{prop:intHiggs}
We have an isomorphism
\[
\cH({p_X}_* \varphi_{\fM_{X}^{\semist}(\mu)}^\mathrm{mmhm}) \cong \Sym_{\boxtimes_\oplus}\left( 
\mH^*(\mathrm{B}\bC^*)_{\vir} \otimes \varphi_{M_{X}^{\semist}(\mu)}^{\mathrm{mmhm}}
\right)
\]
in $D^{\geq, lf}(\MMHM(M^{\ss}_X(\mu)) )$.
\end{prop}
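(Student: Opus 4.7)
The plan is to reduce the statement to the cohomological integrality theorem for $L$-Higgs bundles (Theorem \ref{thm:int-IC}) via the global critical chart description of $\fM_X^{\semist}(r, m)$ from Proposition \ref{prop:dchart}.

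First, fix a short exact sequence $0 \to L_1 \to N \to L_2 \to 0$ as in Lemma \ref{lem:stability} with $\deg(L_2) > 2 g - 2$; set $Y \coloneqq \Tot_C(L_2)$ and let $g \colon B_Y \to \bA^1$ be the linear function from Proposition \ref{prop:dchart}. By that proposition, $\fM_X^{\semist}(\mu)$ is the critical locus of $g \circ \widetilde{h}_Y$ inside the smooth stack $\fM_Y^{\semist}(\mu)$. Combining the monodromic enhancement of Proposition \ref{prop:joyce=van} with Proposition \ref{prop:compori} and Remark \ref{rmk:compori} to match the orientations, I obtain a natural isomorphism
\[
\varphi_{\fM_X^{\semist}(\mu)}^{\mathrm{mmhm}} \cong \varphi_{g \circ \widetilde{h}_Y}^{\mathrm{mmhm}}(\sIC_{\fM_Y^{\semist}(\mu)}).
\]

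Next, set $F_\mu \coloneqq g \circ h_Y \colon M_Y^{\semist}(\mu) \to \bA^1$. The additivity $F_\mu(A \oplus B) = F_\mu(A) + F_\mu(B)$ follows immediately from the identity $\tr((\theta_1 \oplus \theta_2)^i) = \tr(\theta_1^i) + \tr(\theta_2^i)$ for $L_2$-Higgs bundles. Hence Theorem \ref{thm:int-IC}(2) applied to $Y$ yields
\[
\mcH(p_{Y*} \varphi_{F_\mu \circ p_Y}^{\mathrm{mmhm}} \sIC_{\fM_Y^{\semist}(\mu)}) \cong \Sym_{\boxtimes_\oplus}\!\left(\mH^*(\mathrm{B}\bC^*)_{\vir} \otimes \varphi_{F_\mu}^{\mathrm{mmhm}} \sIC_{M_Y^{\semist}(\mu)}\right).
\]

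It then remains to match both sides with the desired statement. The left-hand side is identified with $\cH(p_{X*} \varphi_{\fM_X^{\semist}(\mu)}^{\mathrm{mmhm}})$ via the isomorphism above together with the fact that $p_X$ is the restriction of $p_Y$ to the critical locus (so in particular its image $M_X^{\semist}(\mu)$ sits as a closed subscheme of $M_Y^{\semist}(\mu)$). For the summand appearing on the right-hand side, the monodromic enhancement of Proposition \ref{prop:BPSvsVan} gives
\[
\varphi_{F_\mu}^{\mathrm{mmhm}} \sIC_{M_Y^{\semist}(\mu)} \cong \varphi_{M_X^{\semist}(\mu)}^{\mathrm{mmhm}},
\]
where the Tate twist $\bL^{-1/2}$ entering the definition of $\varphi_{M_X^{\semist}(\mu)}^{\mathrm{mmhm}}$ absorbs the degree shift that converts the $\cH^1$ of Proposition \ref{prop:BPSvsVan} into the $\cH^0$ of the definition. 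Finally, since the direct sum map on $M_X^{\semist}(\mu)$ is the restriction of the direct sum map on $M_Y^{\semist}(\mu)$ along the closed embedding $M_X^{\semist}(\mu) \hookrightarrow M_Y^{\semist}(\mu)$, the symmetric monoidal structures $\boxtimes_\oplus$ are compatible, and the symmetric product decomposition descends to $M_X^{\semist}(\mu)$.

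The main technical obstacle is the careful bookkeeping of orientations and Tate twists: one must verify that the canonical orientation on $\fM_X^{\semist}(\mu)$ coming from its $(-1)$-shifted cotangent realization over $\fM_S^{\semist}(\mu)$ (which enters the definition of $\varphi_{\fM_X^{\semist}(\mu)}^{\mathrm{mmhm}}$) coincides with the orientation induced by the critical locus description of Proposition \ref{prop:dchart}. This is precisely the content of Proposition \ref{prop:compori} together with Remark \ref{rmk:compori}, and without it the identification in the first displayed equation above would hold only up to an uncontrolled sign ambiguity.
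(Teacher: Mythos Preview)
Your proposal is correct and follows essentially the same route as the paper's proof: invoke Proposition \ref{prop:compori} to pass from the canonical orientation $o$ to the critical-chart orientation $o'$, then identify both sides via Proposition \ref{prop:isomvan} (= Proposition \ref{prop:BPSvsVan}) and Theorem \ref{thm:int-IC}. You have simply unpacked these citations more explicitly (via Proposition \ref{prop:dchart} and Proposition \ref{prop:joyce=van}) and tracked the Tate twists by hand, which the paper leaves implicit.
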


\begin{proof}
Using Proposition \ref{prop:compori}, we may use the orientation $o'$ instead of $o$.
Then the claim follows from Proposition \ref{prop:isomvan} and Theorem \ref{thm:int-IC}.
\end{proof}

Now we state the Higgs version of the support lemma 
\cite[Lemma 4.1]{Dav16}:

\begin{prop}\label{prop:supplem}
Let $\ell \colon M_{S}^{\semist}(r, m) \times \bA^1 \to M_{X}^{\semist}(r, m)$ be the map given by
\[
M_{S}^{\semist}(r, m) \times \bA^1 \ni ([E], t) \mapsto [{i_t}_* E] \in M_{X}^{\semist}(r, m)
\]
where $i_t$ is the composition $S = S \times\{ t \} \hookrightarrow X$. Then the support of the perverse sheaf 
$\varphi_{M_{X}^{\semist}(r, m)}$ is contained in the image of $\ell$.
\end{prop}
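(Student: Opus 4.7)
The strategy is to verify the support condition stalk-by-stalk at each closed point $[E] \in M^\semist_X(r,m)$, using a local product decomposition of the moduli space at polystable sheaves whose simple summands have distinct $\phi_1$-eigenvalues, together with the cohomological integrality theorem.

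Since closed points of the good moduli space correspond to polystable sheaves, I would write $E \cong \bigoplus_i E_i^{\oplus n_i}$ with pairwise non-isomorphic stable summands $E_i$. By Schur's lemma, the $\cO_C$-component of the Higgs field acts on each stable $E_i$ as a scalar $t_i \in \bA^1$, so $E_i \cong i_{t_i,*}\bar{E}_i$ for a stable $S$-sheaf $\bar{E}_i$. Grouping summands by common $t$-value yields $E = \bigoplus_{t \in T} E^{(t)}$, indexed by the finite set $T \subset \bA^1$ of occurring values. The hypothesis $[E] \notin \im(\ell)$ is then equivalent to $|T| \geq 2$.

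Under the identification $X = S \times \bA^1$, the summands $E^{(t)}$ and $E^{(t')}$ with $t \neq t'$ have scheme-theoretically disjoint supports, so $\Ext^*_X(E^{(t)}, E^{(t')}) = 0$ for all $t \neq t'$. I would then invoke an analytic slice theorem, together with the formal-GAGA argument underlying the critical chart of Proposition \ref{prop:dchart}, to produce an analytic neighborhood $U$ of $[E]$ in $M^\semist_X(r,m)$ together with an isomorphism
\[
U \xrightarrow{\sim} \prod_{t \in T} U^{(t)}
\]
where $U^{(t)}$ is a neighborhood of $[E^{(t)}]$ in $M^\semist_X(r^{(t)}, m^{(t)})$ and $(r, m) = \sum_t (r^{(t)}, m^{(t)})$. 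Because the critical function of \cite{KM21} is additive across block-diagonal Higgs fields corresponding to the orthogonal decomposition (since $\tr(\phi^2)$ is additive), the Thom--Sebastiani isomorphism for the monodromic vanishing cycle functor transforms this into a box-product decomposition
\[
(p_X)_*\varphi^{\mmhm}_{\fM^\semist_X(r,m)}|_{U} \;\cong\; \boxtimes_{t \in T}\; (p_X)_*\varphi^{\mmhm}_{\fM^\semist_X(r^{(t)}, m^{(t)})}|_{U^{(t)}}.
\]

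The final step is a degree count driven by the cohomological integrality theorem. Applied on each slope-$\mu$ component, Proposition \ref{prop:intHiggs} exhibits $\cH\bigl((p_X)_* \varphi^{\mmhm}_{\fM^\semist_X(r^{(t)}, m^{(t)})}\bigr)$ as a summand of $\Sym_{\boxtimes_\oplus}\bigl(\mH^*(\mathrm{B}\bC^*)_{\vir} \otimes \varphi^{\mmhm}_{M^\semist_X(\mu)}\bigr)$. Since $\mH^*(\mathrm{B}\bC^*)_{\vir} = \bL^{1/2} \otimes \mH^*(\mathrm{B}\bC^*)$ has nonzero cohomology only in degrees $\geq 1$, each $\Sym_{\boxtimes_\oplus}^{k}$-summand has cohomology in degrees $\geq k$; in particular $(p_X)_*\varphi^{\mmhm}_{\fM^\semist_X(r^{(t)}, m^{(t)})}$ is concentrated in perverse degrees $\geq 1$. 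Taking the external product over $t \in T$, the stalk at $[E]$ is concentrated in perverse degrees $\geq |T| \geq 2$. Since $\varphi_{M^\semist_X(r,m)} = \cH^0\bigl(\bL^{-1/2} \otimes (p_X)_* \varphi^{\mmhm}_{\fM^\semist_X(r,m)}\bigr)$ extracts precisely the degree-$1$ part of $(p_X)_* \varphi^{\mmhm}_{\fM^\semist_X(r,m)}$, this forces $\varphi_{M^\semist_X(r,m)}|_{[E]} = 0$, as desired. The main technical obstacle will be producing the analytic product decomposition of $M^\semist_X(r, m)$ near $[E]$ in a manner genuinely compatible with the d-critical structure, the orientation, and the critical chart from \cite{KM21}: the local decomposition must be realized inside the specific smooth ambient space used to present $\fM_X^\semist$ as a critical locus, so that the additive decomposition of the defining function is manifest and Thom--Sebastiani applies cleanly.
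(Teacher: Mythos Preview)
Your proposal is correct and follows essentially the same route as the paper: establish a local product decomposition of the vanishing cycle complex (via Thom--Sebastiani) whenever the polystable sheaf has support meeting at least two distinct fibers of $p \colon X \to \bA^1$, then combine with cohomological integrality to force vanishing in the relevant perverse degree.

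Two implementation differences are worth flagging. First, rather than an abstract analytic slice theorem, the paper obtains the product decomposition directly from the direct-sum map: for disjoint disk systems $U_1, U_2 \subset \bA^1$, the map $\fM_X^{\semist}(r_1,m_1)^{U_1} \times \fM_X^{\semist}(r_2,m_2)^{U_2} \to \fM_X^{\semist}(r,m)^{U_1 \coprod U_2}$ is an open immersion. The d-critical compatibility is then extracted from a Lagrangian structure on the direct-sum correspondence (Corollary~\ref{cor:compdcrit}), and---the point you correctly anticipate as the main obstacle---the orientation compatibility is handled by showing the relevant invertible function on $W^{\red}$ is pulled back from a simply-connected configuration space of disjoint points, hence admits a square root (Lemma~\ref{lem:funloc}, Corollary~\ref{cor:compori2}). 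Your proposed route through the explicit critical chart and additivity of $\tr(\phi^2)$ would also work for the d-critical step, but the orientation step still needs care. Second, for the endgame the paper applies integrality twice (on $U_1 \coprod U_2$ and on each $U_i$) and compares, yielding a direct-sum decomposition $\varphi|_{U_1 \coprod U_2} \cong \varphi|_{U_1} \oplus \varphi|_{U_2}$; your perverse-degree count (box product of objects in degrees $\geq 1$ lands in degrees $\geq 2$) is equivalent and arguably more direct.
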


The proof will be given in Appendix \ref{app:supplem}.

\begin{prop}
The monodromic mixed Hodge module $\varphi_{M_{X}^{\semist}(r, m)}^{\mmhm}$ is $\bA^1$-equivariant with respect to the natural $\bA^1$-action on $M_{X}^{\semist}(r, m)$.
Further there exists a  monodromic mixed Hodge module $\sBPS_{r, m} \in \MMHM(M_{S}^{\semist}(r, m))$ such that 
\[
\varphi_{M_{X}^{\semist}(r, m)}^{\mmhm} \cong \bL^{-1/2} \otimes l_* \pr_1^* \sBPS_{r, m}
\]
holds where $\pr_1 \colon M_{S}^{\semist}(r, m) \times \bA^1 \to M_{S}^{\semist}(r, m)$ is the projection.
\end{prop}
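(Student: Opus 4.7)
The plan is to establish $\bA^1$-equivariance of the vanishing cycle complex on the moduli stack, transfer it to the good moduli space, and then combine with the support lemma to perform $\bA^1$-descent.

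The $\bA^1$-action on $X = S \times \bA^1$ by translation on the second factor preserves the Calabi--Yau $3$-form, and hence lifts to an $\bA^1$-action on the derived moduli stack $\bs{\fM}_X^{\semist}(r,m)$ preserving the canonical $(-1)$-shifted symplectic structure. Passing to classical truncations yields an $\bA^1$-action on $\fM_X^{\semist}(r,m)$ compatible with the induced d-critical structure. For $\bA^1$-equivariance of the orientation: the pullback of the canonical orientation $o$ along the action map $a \colon \bA^1 \times \fM_X^{\semist}(r,m) \to \fM_X^{\semist}(r,m)$ and along the second projection $\pr_2$ are both orientations whose underlying line bundles are trivializable (as shown in Proposition \ref{prop:CYori}); by Remark \ref{rmk:compori} they are canonically isomorphic, which yields the desired $\bA^1$-equivariant structure. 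The functoriality of the vanishing cycle construction recalled in \S \ref{ssec:van} then produces an $\bA^1$-equivariant monodromic mixed Hodge module $\varphi_{\fM_X^{\semist}(r,m)}^{\mmhm}$.

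Since the good moduli space map $p_X$ is $\bA^1$-equivariant, pushforward preserves $\bA^1$-equivariance, and so does taking the $0$-th perverse cohomology and tensoring with $\bL^{-1/2}$. This proves the first assertion, that $\varphi_{M_X^{\semist}(r,m)}^{\mmhm}$ is $\bA^1$-equivariant. For the second assertion, by Proposition \ref{prop:supplem}, $\varphi_{M_X^{\semist}(r,m)}^{\mmhm}$ is supported on $\im(\ell)$. One checks that $\ell$ is a closed immersion: for a polystable sheaf ${i_t}_* E$, the scheme-theoretic support determines $t$, and then pullback along $i_t$ recovers $E$, giving injectivity on closed points; injectivity on tangent vectors is a straightforward local computation. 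Under the identification $M_S^{\semist}(r,m) \times \bA^1 \xrightarrow{\sim} \im(\ell)$ provided by $\ell$, the $\bA^1$-action on $\im(\ell)$ corresponds to translation on the second factor, which is free.

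Consequently, $\bL^{1/2} \otimes \varphi_{M_X^{\semist}(r,m)}^{\mmhm}$ is an $\bA^1$-equivariant monodromic mixed Hodge module supported on $\im(\ell)$, hence equals $\ell_* \mcG$ for a unique $\bA^1$-equivariant $\mcG$ on $M_S^{\semist}(r,m) \times \bA^1$. Since the $\bA^1$-action on the second factor is free, standard descent along the quotient $M_S^{\semist}(r,m) \times \bA^1 \to M_S^{\semist}(r,m)$ gives $\mcG \cong \pr_1^* \sBPS_{r,m}$ for some $\sBPS_{r,m} \in \MMHM(M_S^{\semist}(r,m))$, yielding the desired formula.

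The main obstacle is verifying that the isomorphism $a^* o \cong \pr_2^* o$ obtained from Remark \ref{rmk:compori} satisfies the cocycle condition making $\varphi_{\fM_X^{\semist}(r,m)}^{\mmhm}$ genuinely $\bA^1$-equivariant, rather than merely admitting isomorphisms under translation. This should follow either from a uniqueness argument for the isomorphism of orientations (reducing to the statement that automorphisms of the trivial line bundle act trivially on the resulting vanishing cycle complex), or from an explicit construction of the orientation at the derived level via the determinant of $\bL_{\bs{\fM}_S^{\semist}(r,m)}$ and the fact that this cotangent complex is itself canonically $\bA^1$-equivariant.
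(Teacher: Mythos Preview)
Your overall strategy is sound, and your treatment of the second assertion (closed immersion $\ell$, then descent along the free $\bA^1$-action) is correct and more detailed than the paper's terse ``follows from Proposition \ref{prop:supplem}''. The gap you flag in your final paragraph, however, is genuine: promoting the isomorphism $a^* o \cong \pr_2^* o$ of orientations to an honest $\bA^1$-equivariant structure on $\varphi^{\mmhm}$ requires the cocycle condition, and neither of your proposed workarounds is straightforward to execute.

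The paper avoids this entirely by observing that for the additive group $\bA^1$, equivariance is a \emph{property} rather than extra structure. Concretely: a monodromic mixed Hodge module $M$ on $T \times \bA^1$ is $\bA^1$-equivariant if and only if the counit $\pr_1^*{\pr_1}_*M \to M$ is an isomorphism. This condition depends only on $\rat(M)$, and for perverse sheaves it is equivalent to the existence of a \emph{single} isomorphism $\sigma_1^*\rat(M) \cong \rat(M)$, where $\sigma_1$ is translation by $1 \in \bA^1$. So one only needs $\sigma_1^*\varphi_{\fM_X^{\semist}(r,m)} \cong \varphi_{\fM_X^{\semist}(r,m)}$ as perverse sheaves, which follows from $\sigma_1^* o \cong o$ via Remark \ref{rmk:compori}, with no cocycle to verify. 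This reduction from structure to property is the key idea your argument is missing.
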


\begin{proof}
In general, a monodromic mixed Hodge module $M$ on $T \times \bA^1$ for an algebraic variety $T$ is $\bA^1$-equivariant if and only if the counit map $\pr_1^* {\pr_1}_* M \to M$ is isomorphic, where $\pr_1 \colon T \times \bA^1 \to T$ is the first order projection. Therefore the $\bA^1$-equivariance of $M$ is equivalent to the $\bA^1$-equivariance of $\rat(M)$. This is further equivalent to the condition $\sigma_1^*\rat(M) \cong \rat(M)$, where $\sigma_1 \colon T \times \bA^1 \cong T \times \bA^1$ is the map translating in the $\bA^1$-direction by $1 \in \bA^1$.

Now we return to the proposition. Let $\sigma_1 \colon \fM_{X}^{\semist}(r, m) \cong \fM_{X}^{\semist}(r, m)$ be the map induced by the translation map on $X = S\times \bA^1$ in the $\bA^1$-direction by $1 \in \bA^1$.
We need to show that there exists an isomorphism of perverse sheaves
\[
\sigma_1^* \varphi_{\fM_{X}^{\semist}(r, m)} \cong \varphi_{\fM_{X}^{\semist}(r, m)}.
\]
To do this, it is enough to show that there exists an isomorphism of orientations 
$\sigma_1^* o \cong o$
where $o$ is the natural orientation on $\fM_{X}^{\semist}(r, m)$.  But this is a consequence of Remark \ref{rmk:compori}.
The latter statement follows from Proposition \ref{prop:supplem}.
\end{proof}

The object $\sBPS_{r, m} \in \MMHM(M_{S}^{\semist}(r, m))$ is called the \textbf{BPS sheaf}. We will see that it is a pure Hodge module in the next section.
We write
\[
\BPS_{r, m} \coloneqq \mH^*(M_{S}^{\semist}(r, m), \sBPS_{r, m})
\]
and it is called the \textbf{BPS cohomology}.

\subsection{Cohomological integrality and $\chi$-independence for Higgs bundles}\label{ssec:indHiggs}

In this section, we prove the  $\chi$-independence theorem and cohomological integrality theorem for Higgs bundles using the dimensional reduction theorem.

We first need the following lemma:

\begin{lem}
The map $b |_{\mathrm{im}(h_X)} \colon \mathrm{im}(h_X) \to B_{Y}$ considered in Lemma \ref{lem:Hitfin} is injective for $X = \Tot_C(\cO_C \oplus \omega_C)$.  
\end{lem}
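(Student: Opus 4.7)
The strategy exploits two special features of $X = \Tot_C(\cO_C \oplus \omega_C)$: the identification $X \cong S \times \bA^1$ over $C$ with $S = \Tot_C(\omega_C)$, and the inequality $\deg L_1 = (2g-2) - \deg L_2 < 0$ forced by $\deg L_2 > 2g - 2$. Since $b|_{\im(h_X)}$ is already known to be finite by Lemma \ref{lem:Hitfin}, and since a finite morphism of complex varieties that is set-theoretically injective on closed points is a closed immersion in characteristic zero, it suffices to prove set-theoretic injectivity at the level of closed points.

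The geometric setup is as follows. Via Remark \ref{rmk:hitchow}, closed points of $\im(h_X)$ correspond bijectively to spectral curves $C_a \subset X$ of class $r[C]$, and $b$ sends $C_a$ to its scheme-theoretic image $C' \subset Y$ under the projection $\pi \colon X \to Y$, which is a torsor for the pulled back line bundle $L_1$ on $Y$. Given two such spectral curves $C_a, C_{a'}$ with common image $C'$, both lie inside $\pi^{-1}(C')$ and project onto $C'$ with generic relative degree one. Over the reduced locus $(C')^{\red}$ they thus define two sections of the $(\pi')^*L_1$-torsor $\pi^{-1}((C')^{\red}) \to (C')^{\red}$, where $\pi' \colon C' \to C$ is the natural projection, and the difference of these sections is an element of $H^0((C')^{\red}, (\pi')^*L_1|_{(C')^{\red}})$. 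Because $\deg L_1 < 0$ and each irreducible component of $(C')^{\red}$ is a finite cover of $C$ of positive degree, $(\pi')^*L_1$ has negative degree on every component of $(C')^{\red}$, so this $H^0$ vanishes, yielding the required set-theoretic equality $C_a = C_{a'}$.

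The main obstacle I anticipate is ensuring that the restrictions of $C_a$ and $C_{a'}$ to $\pi^{-1}((C')^{\red})$ are genuine sections of the torsor rather than merely degree-one covers, which needs a little local analysis of the scheme structure. As a backup, the geometry of $X = S \times \bA^1$ supplies an explicit parametrization of $\im(h_X)$: the identity $\tr(\psi^k) \in H^0(C, \cO_C) = \bC$ forces $\psi$ to have constant eigenvalues, so every semistable $N$-Higgs bundle admits a polystable representative of the form $\bigoplus_k (i_{\lambda_k})_*(E_k, \theta_k)$ with $\lambda_k \in \bA^1$ and $(E_k, \theta_k)$ a semistable Higgs bundle on $S$. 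This stratifies $\im(h_X)$ by partitions of $r$ and allows the injectivity of $b$ to be verified directly stratum-by-stratum if the torsor argument proves awkward in the non-reduced case.
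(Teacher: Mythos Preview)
Your torsor-and-negative-degree approach is different from the paper's and is attractive, but the key assertion---that $C_a$ projects onto $C'$ with generic relative degree one---is unjustified, and this is the real obstacle (not the sections-versus-degree-one-covers issue you flag, which is minor and handled by passing to normalizations). Nothing in the cycle equality $\sigma_*[C_a]=\sigma_*[C_{a'}]$ or in the degree-$r$-over-$C$ constraint forces an irreducible component $D$ of $C_a$ to map to its image $E \subset Y$ with degree $1$; a priori the degree could be any $d \geq 1$, with $E$ then appearing in $\sigma_*[C_a]$ with an extra factor of $d$. One \emph{can} prove $d=1$ from $\deg L_1<0$ alone, via the difference map $D\times_E D\to\Tot(L_1|_E)$ together with the observation that the only integral proper curve in $\Tot(L_1|_E)$ finite over $E$ is the zero section (any other would meet the zero section negatively); this would in fact establish the lemma for arbitrary local curves. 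But you do not supply this step, and without it the $H^0$-vanishing argument does not get off the ground.

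Your backup plan is essentially the paper's argument. The paper writes each cycle $\gamma \in \im(h_X)$ as $\sum_i \gamma_{t_i}$ with $\gamma_{t_i}$ supported on $S\times\{t_i\}$ (your ``constant eigenvalues'' observation, phrased via compactness of supports), then picks a point $p\in C$ at which the component $\omega_C \to L_2$ of the surjection $N \to L_2$ vanishes: over such $p$ the projection $N_p \to (L_2)_p$ kills the $\omega_C$-direction, so $\sigma_*\gamma$ restricted to the fibre $L_p$ recovers the multiset $\{t_i\}$ with multiplicities. This reduces to a single $t$, where $S\times\{t\} \to Y$ is birational onto its image and hence injective on cycles. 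One small correction to your opening remark: finite plus injective on closed points does not give a closed immersion (consider the normalization of a cusp), only a universal homeomorphism; but set-theoretic injectivity is all that the lemma asserts and all that the application needs.
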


\begin{proof}
Let $\gamma, \gamma' \in \im(h_X)$ be cycles on $X$ such that the pushforward cycles $\sigma_* \gamma$ and $\sigma_* \gamma'$ define the same cycle on $Y = \Tot_C(L)$ where $\sigma$ is the projection from $X$ to $Y$.
We want to show $\gamma = \gamma'$.
Write 
\[
\gamma = \sum_i \gamma_{t_i}, \ \gamma' = \sum_i \gamma_{t_i'}'
\]
where $\gamma_{t_i}$ is supported on $S \times\{t_i\} \subset X$ and similarly for $\gamma'_{t_i'}$.
Take a point $p \in \Supp(\mathrm{coker}(\omega_C \hookrightarrow L))$.
Then the restriction of $\sigma$ at the fiber of $p$ is given by
\[
\bA^1 \oplus \omega_C|_p \to \bA^1 \cong L_p
\]
where the first map is the projection to the first factor and the latter map is induced from the composition $\cO_C \hookrightarrow \cO_C \oplus \omega_C \to L$.
Therefore the cycle $\sigma_* \gamma_{t_i} |_{L_p}$ is concentrated in $\{t_i\} \subset \bA^1 \cong L_p$.
Therefore we may assume that $\gamma$ and $\gamma'$ are contained in $S \times \{t \}$ for some $t \in \bA^1$.
Then the claim follows since the map $S \times \{t \} \subset X \to Y$ defines an injection on the set of cycles.
\end{proof}

The following corollary is an immediate consequence of the isomorphism \eqref{eq:isoBS} and the above lemma.

\begin{cor}\label{cor:chiHiggs}
 Let us take integers $r, m, m'$ such that $r>0$.
 Then there exists an isomorphism in $D^b(\MMHM(B_X))$: 
 \[
 h_{X*} \varphi_{M_{X}^{\semist}(r, m)}^{\mmhm} 
 \cong  h_{X*} \varphi_{M_{X}^{\semist}(r, m')}^{\mmhm}.
 \]
\end{cor}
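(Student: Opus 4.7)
The plan is to deduce the statement by combining the commutative Hitchin diagram \eqref{eq:Hitcomm} with the finite-radicial nature of $b|_{\im(h_X)}$, which allows us to descend an isomorphism on $B_Y$ to one on $B_X$.

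First I would promote the isomorphism \eqref{eq:isoBS} to the level of monodromic mixed Hodge modules. Inspecting the proof of Proposition \ref{prop:vanMS}, that isomorphism is obtained by applying the vanishing cycle functor $\varphi_{g \circ h_Y}$ to the Maulik--Shen decomposition of $h_{Y*}\sIC_{M_Y^{\semist}(r,m)}$ in Theorem \ref{thm:MS}; both the decomposition theorem and the vanishing cycle functor lift to $\MMHM$, and the decomposition in Theorem~\ref{thm:MS} is pure, so Saito's decomposition theorem yields the same decomposition at the $\mmhm$-level. This produces an isomorphism
\[
h_{Y*}\varphi^{\mmhm}_{g \circ h_Y}\sIC^{\mmhm}_{M_Y^{\semist}(r,m)} \cong h_{Y*}\varphi^{\mmhm}_{g \circ h_Y}\sIC^{\mmhm}_{M_Y^{\semist}(r,m')}
\]
in $D^b(\MMHM(B_Y))$. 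Using the monodromic enhancement of Proposition~\ref{prop:BPSvsVan} (remarked on after its proof) together with the commutativity $h_Y \circ \iota = b \circ h_X$ in \eqref{eq:Hitcomm}, this rewrites as
\[
b_* h_{X*} \varphi^{\mmhm}_{M_X^{\semist}(r,m)} \cong b_* h_{X*} \varphi^{\mmhm}_{M_X^{\semist}(r,m')}.
\]

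Next I would descend along $b$. By Lemma \ref{lem:Hitfin} the map $b|_{\im(h_X)} \colon \im(h_X) \to B_Y$ is finite, and by the lemma immediately preceding the corollary (which uses $X = \Tot_C(\cO_C \oplus \omega_C)$) it is also injective on points. A finite bijective morphism between reduced finite-type $\bC$-schemes is a universal homeomorphism, so pushforward along $b|_{\im(h_X)}$ induces a fully faithful functor on constructible derived categories. Since both $h_{X*}\varphi^{\mmhm}_{M_X^{\semist}(r,m)}$ and $h_{X*}\varphi^{\mmhm}_{M_X^{\semist}(r,m')}$ are supported on $\im(h_X)$, descending the above isomorphism yields the desired isomorphism in $D^b(\MMHM(B_X))$.

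The main technical point I expect will require care is the extension of the full faithfulness of pushforward along a finite universal homeomorphism from constructible sheaves to monodromic mixed Hodge modules. This should follow by factoring $b|_{\im(h_X)}$ through its scheme-theoretic image in $B_Y$, reducing the problem to the closed-immersion case on one side and the universal-homeomorphism case on the other, where the $\MMHM$ data is recovered from the underlying perverse sheaf together with the monodromy operator and both are preserved by the corresponding pushforwards.
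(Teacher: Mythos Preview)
Your proposal is correct and follows essentially the same approach as the paper, which simply states that the corollary is ``an immediate consequence of the isomorphism \eqref{eq:isoBS} and the above lemma.'' You have spelled out the two steps the paper leaves implicit: lifting \eqref{eq:isoBS} to the monodromic mixed Hodge level (legitimate since Theorem~\ref{thm:MS} and Proposition~\ref{prop:BPSvsVan} both hold there), and descending along $b$ using finiteness plus the injectivity lemma specific to $X=\Tot_C(\cO_C\oplus\omega_C)$.

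Regarding your final technical caveat: the full faithfulness of $b_*$ on $D^b(\MMHM)$ over $\im(h_X)$ can be settled more directly than by the factorization you sketch. Since $b|_{\im(h_X)}$ is finite and injective on points, it is a homeomorphism onto its closed image, so the counit $b^{-1}b_*\cF\to\cF$ is an isomorphism on underlying constructible complexes; as $\rat$ is conservative on $D^b(\MMHM)$, the counit is an isomorphism in $D^b(\MMHM)$ as well, giving the required full faithfulness.
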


\begin{cor} \label{cor:chiBPS}
Let $r, m, m'$ be as in the previous corollary. Then there exists an isomorphism in $D^b(\MMHM(B_S))$:
\[
 h_{S*} \sBPS_{r, m}
 \cong  h_{S*} \sBPS_{r, m'}.
\]
\end{cor}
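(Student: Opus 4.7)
The plan is to deduce this corollary from Corollary \ref{cor:chiHiggs} by descending the $\chi$-independence isomorphism from $B_X$ down to $B_S$ along a canonical closed immersion.

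First I would set up the commutative square
\[
\xymatrix@C=50pt{
M_S^{\semist}(r, m) \times \bA^1 \ar[r]^-{\ell} \ar[d]_-{h_S \times \id} & M_X^{\semist}(r, m) \ar[d]^-{h_X} \\
B_S \times \bA^1 \ar[r]^-{\tilde{\ell}} & B_X,
}
\]
where $\tilde{\ell}$ is the polynomial map on Hitchin bases induced by the factorization of $h_X \circ \ell$ through $h_S \times \id$. Concretely, using the identifications $B_X = \bigoplus_{i=1}^r \bigoplus_{j=0}^i \mH^0(C, \omega_C^{\otimes j})$ and $B_S = \bigoplus_{i=1}^r \mH^0(C, \omega_C^{\otimes i})$, the $(i,j)$-component of $\tilde{\ell}\bigl((a_k)_{k=1}^{r},\, t\bigr)$ equals $\binom{i}{j}\, t^{i-j}\, a_j$ (with the convention $a_0 = r$), obtained by computing $\tr(\psi^i)$ for the $N$-Higgs field $\psi = (t \cdot \id,\, \phi)$ associated to $i_{t*}E$. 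Then I would observe that $\tilde{\ell}$ is a closed immersion: the $(1, 0)$-coordinate of $\tilde{\ell}((a_k)_k, t)$ is $rt$ and the $(i, i)$-coordinate is $a_i$, so reading off these coordinates gives a linear retraction of $\tilde{\ell}$, which is therefore a closed embedding.

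Next, starting from the defining isomorphism
\[
\varphi_{M_X^{\semist}(r, m)}^{\mmhm} \cong \bL^{-1/2} \otimes \ell_* \pr_1^* \sBPS_{r, m}
\]
of the preceding proposition, I would push forward along $h_X$ and apply proper base change for the Cartesian square with sides $h_S \times \id$ and $\pr_1 \colon B_S \times \bA^1 \to B_S$ to obtain
\[
h_{X*} \varphi_{M_X^{\semist}(r, m)}^{\mmhm} \cong \bL^{-1/2} \otimes \tilde{\ell}_* \pr_1^* h_{S*} \sBPS_{r, m}.
\]

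Finally, since $\tilde{\ell}$ is a closed immersion we have $\tilde{\ell}^* \tilde{\ell}_* \cong \id$, and pulling back further along the zero section $i_0 \colon B_S \hookrightarrow B_S \times \bA^1$ is a retraction of $\pr_1^*$. Applying $i_0^* \circ \tilde{\ell}^*$ to the above isomorphism therefore recovers $\bL^{-1/2} \otimes h_{S*} \sBPS_{r, m}$ from its left-hand side; carrying out the same extraction on both sides of the isomorphism supplied by Corollary \ref{cor:chiHiggs} and cancelling the common $\bL^{-1/2}$ twist produces the desired isomorphism. The only delicate point is verifying the explicit formula for $\tilde{\ell}$ and its immersion property; everything else is a formal consequence of base change and the already-established Corollary \ref{cor:chiHiggs}.
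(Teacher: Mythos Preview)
Your argument is correct and is precisely the natural way to fill in what the paper leaves implicit: the paper states Corollary~\ref{cor:chiBPS} without proof, treating it as an immediate consequence of Corollary~\ref{cor:chiHiggs} together with the defining relation $\varphi_{M_X^{\semist}(r,m)}^{\mmhm}\cong\bL^{-1/2}\otimes\ell_*\pr_1^*\sBPS_{r,m}$. Your commutative square, the verification that $\tilde{\ell}$ is a closed immersion via the explicit $(1,0)$- and $(i,i)$-coordinates, and the extraction $i_0^*\tilde{\ell}^*$ are exactly the steps one needs to pass from an isomorphism over $B_X$ to one over $B_S$; nothing is missing.
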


We now prove the cohomological integrality theorem for Higgs bundles.
Recall that we have the following diagram:

\[
\xymatrix@C=50pt{
\fM_{X}^{\semist}(\mu) \ar[d]^-{p_X} \ar[r]^-{\pi}
& \fM_{S}^{\semist}(\mu) \ar[d]^-{p_S} \\
M_{X}^{\semist}(\mu) \ar[r]^-{\bar{\pi}}
& M_{S}^{\semist}(\mu).
}
\]

For a rational number $\mu$, we write
\[
\sBPS_{\mu} \coloneqq \bigoplus_{\frac{m}{r}=\mu} \sBPS_{r, m}.
\]
\begin{thm}\label{thm:intHiggs2}
The monodromic mixed Hodge module $\sBPS_{\mu}$ is contained in $\MHM(M^{\ss}_S(\mu))$, i.e., it has a trivial monodromy operator.
Further, we have an isomorphism 
\begin{equation}\label{eq:intHiggs2}
\bigoplus_{\frac{m}{r}=\mu} 
\cH({p_S}_* \bD \bQ_{\fM_{S}^{\semist}(r, m)}) \otimes \bL^{r^2(g-1)}
\cong \Sym_{\boxtimes_\oplus}\left( 
\mH^*(\mathrm{B}\bC^*)  \otimes \sBPS_{\mu}
\right)
\end{equation}
in $D^{\geq, lf}(\MHM(M^{\ss}_S(\mu)) )$.
\end{thm}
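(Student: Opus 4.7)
The plan is to combine the cohomological integrality theorem for $X$ (Proposition \ref{prop:intHiggs}), the support lemma for the BPS sheaf (Proposition \ref{prop:supplem}), and the monodromic dimensional reduction theorem (Remark \ref{rmk:monodromyfree}), applied to the equivalence $\bs{\fM}_X^{\semist}(r, m) \simeq \bfT^*[-1]\bs{\fM}_S^{\semist}(r, m)$.

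First, I substitute the support-lemma description $\varphi^{\mmhm}_{M_X^{\semist}(r, m)} \cong \bL^{-1/2} \otimes \ell_* \pr_1^* \sBPS_{r, m}$ into Proposition \ref{prop:intHiggs}. Since $\mH^*(\mathrm{B}\bC^*)_{\vir} = \bL^{1/2} \otimes \mH^*(\mathrm{B}\bC^*)$, the half-Tate twists cancel and the plethystic identity becomes
\[
\cH\bigl({p_X}_* \varphi^{\mmhm}_{\fM_X^{\semist}(\mu)}\bigr) \cong \Sym_{\boxtimes_{\oplus_X}}\bigl(\mH^*(\mathrm{B}\bC^*) \otimes \ell_* \pr_1^* \sBPS_\mu\bigr).
\]
Next I push forward by $\bar\pi \colon M_X^{\semist}(\mu) \to M_S^{\semist}(\mu)$. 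Two geometric identities are crucial: $\bar\pi \circ \ell = \pr_1$, forced by $\pi \circ i_t = \id_S$, so that the projection formula yields $\bar\pi_* \ell_* \pr_1^* \sBPS_\mu \cong \sBPS_\mu$; and $\bar\pi \circ \oplus_X = \oplus_S \circ (\bar\pi \times \bar\pi)$, so that $\bar\pi_*$ commutes with $\Sym_{\boxtimes_\oplus}$ by K\"unneth. Combining these,
\[
\bar\pi_* \cH\bigl({p_X}_* \varphi^{\mmhm}_{\fM_X^{\semist}(\mu)}\bigr) \cong \Sym_{\boxtimes_{\oplus_S}}\bigl(\mH^*(\mathrm{B}\bC^*) \otimes \sBPS_\mu\bigr).
\]

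To identify the left-hand side with the dimensional reduction formula, I must commute $\bar\pi_*$ past the outer $\cH$. By Proposition \ref{prop:dchart}, $\fM_X^{\semist}(r, m)$ is realized as a global critical locus on $\fM_Y^{\semist}(r, m)$ for an auxiliary $Y = \Tot_C(L)$ with $\deg L > 2g - 2$; under the resulting identification of $\varphi^{\mmhm}_{\fM_X^{\semist}(r, m)}$ with a Tate twist of $\varphi^{\mmhm}_{\widetilde{F}}\sIC_{\fM_Y^{\semist}(r, m)}$, Proposition \ref{prop:decompthm} applies with $q = \bar\pi$ and yields
\[
\cH\bigl(\bar\pi_* \cH({p_X}_* \varphi^{\mmhm}_{\fM_X^{\semist}(\mu)})\bigr) \cong \cH\bigl((p_S \circ \pi)_* \varphi^{\mmhm}_{\fM_X^{\semist}(\mu)}\bigr).
\]
Applying the monodromic dimensional reduction theorem (Remark \ref{rmk:monodromyfree}) to $\bs{\fY} = \bs{\fM}_S^{\semist}(r, m)$ with the orientation identification of Proposition \ref{prop:compori}, and using that $\vdim \bs{\fM}_S^{\semist}(r, m) = 2 r^2(g-1)$, I obtain
\[
\cH\bigl((p_S \circ \pi)_* \varphi^{\mmhm}_{\fM_X^{\semist}(r, m)}\bigr) \cong \bL^{r^2(g-1)} \otimes \cH\bigl({p_S}_* \bD\bQ_{\fM_S^{\semist}(r, m)}\bigr),
\]
which a priori lies in $D^b(\MHM)$, i.e., has trivial monodromy.

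Chaining these isomorphisms produces the claimed formula at the level of $\cH$. The trivial monodromy of the right-hand side forces $\Sym_{\boxtimes_{\oplus_S}}(\mH^*(\mathrm{B}\bC^*) \otimes \sBPS_\mu)$ to have trivial monodromy, and since $\sBPS_{r, m}$ appears as the $n = 1$ direct summand of this symmetric product over the component $M_S^{\semist}(r, m)$, I conclude $\sBPS_\mu \in \MHM$. A standard purity argument, via the exactness of $\Sym_{\boxtimes_\oplus}$ and its preservation of purity (Proposition \ref{prop:sym}), then allows me to drop the outer $\cH$ on the $\Sym$-side, yielding the stated isomorphism. The principal obstacle is the commutation of the non-proper pushforward $\bar\pi_*$ with $\cH$ in the third step; this is handled by extending the approximation-by-proper-morphisms technique of Proposition \ref{prop:appro} together with Saito's decomposition theorem, exactly as in the proof of Proposition \ref{prop:decompthm}.
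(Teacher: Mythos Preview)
Your proof is correct and follows essentially the same route as the paper: both combine Proposition~\ref{prop:intHiggs}, Proposition~\ref{prop:decompthm}, the support lemma, and the monodromic dimensional reduction of Remark~\ref{rmk:monodromyfree}, in the same logical order up to a harmless reshuffling. You spell out explicitly the identities $\bar\pi\circ\ell=\pr_1$ and $\bar\pi\circ\oplus_X=\oplus_S\circ(\bar\pi\times\bar\pi)$ that the paper uses implicitly in its final step, and your removal of the outer $\cH$ via the exactness of $\Sym_{\boxtimes_\oplus}$ is exactly what the paper does without comment.
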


\begin{proof}
Proposition \ref{prop:decompthm} and Proposition \ref{prop:intHiggs} imply isomorphisms 
\begin{align*}
    \cH((p_S \circ \pi)_* \varphi_{\fM_{X}^{\semist}(\mu)}^{\mmhm}) 
    &\cong \cH((\bar{\pi} \circ p_X)_* \varphi_{\fM_{X}^{\semist}(\mu)}^{\mmhm}) \\
    &\cong \cH(\bar{\pi}_* \cH({p_X}_* \varphi_{\fM_{X}^{\semist}(\mu)}^{\mmhm})) \\
    &\cong \cH  \left( \bar{\pi}_* \Sym_{\boxtimes_\oplus}\left( 
\mH^*(\mathrm{B}\bC^*)_{\vir} \otimes \varphi_{M_{X}^{\semist}(\mu)}^{\mmhm}\right) \right) \\
&\cong \Sym_{\boxtimes_\oplus}\left( 
\mH^*(\mathrm{B}\bC^*) \otimes \sBPS_{\mu}
\right).
\end{align*}
As we have seen in Remark \ref{rmk:monodromyfree}, the left-hand side is monodromy-free, hence so is the BPS sheaf.
The isomorphism \eqref{eq:intHiggs2} follows from the above isomorphism and an isomorphism
\[
\cH((p_S \circ \pi)_* \varphi_{\fM_{X}^{\semist}(\mu)}^{\mmhm}) \cong \bigoplus_{\frac{m}{r}=\mu} 
\cH({p_S}_* \bD \bQ_{\fM_{S}^{\semist}(r, m)}) \otimes \bL^{r^2(g-1)}
\]
which is a consequence of Proposition \ref{prop:dimredMHM2} and the equality
$\vdim \bs{\fM}_{S}^{\semist}(r, m) = 2r^2(g-1)$.
\end{proof}

 \begin{cor}
 The  mixed Hodge module $\sBPS_{\mu}$ is pure.
 \end{cor}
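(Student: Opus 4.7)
The plan is to identify $\mH^*(\mathrm{B}\bC^*) \otimes \sBPS_\mu$ as a direct summand of the left-hand side of \eqref{eq:intHiggs2} and deduce purity of $\sBPS_\mu$ from purity of that left-hand side.

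First I would show that the left-hand side of \eqref{eq:intHiggs2} is pure in $D^{\geq, \mathrm{lf}}(\MHM(M_S^{\semist}(\mu)))$. Since $\fM_S^{\semist}(r, m)$ is smooth, $\bD\bQ_{\fM_S^{\semist}(r, m)}$ is (up to Tate shift) the constant Hodge module and hence pure; and purity of the perverse cohomology sheaves $\cH({p_S}_* \bD\bQ_{\fM_S^{\semist}(r, m)})$ on the good moduli space is exactly the content of Davison's purity theorem \cite{Dav21}, which is obtained via the non-abelian Hodge correspondence together with the BBD decomposition theorem applied to the Hitchin fibration. Tensoring with the uniform Tate twist $\bL^{r^2(g-1)}$ preserves purity, so the left-hand side of \eqref{eq:intHiggs2} is pure.

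The key observation is that, by the very definition recalled in \S \ref{ssec:intL}, the functor $\Sym_{\boxtimes_\oplus}$ is built as a literal direct sum $\Sym_{\boxtimes_\oplus}(V) = \bigoplus_{n \geq 1} \Sym^n_{\boxtimes_\oplus}(V)$, which on each component $M_S^{\semist}(r,m)$ is in fact a finite sum (only $n \leq r$ contributes, since all parts have rank $\geq 1$), and whose $n=1$ summand is canonically $V$ itself. Applying this to $V = \mH^*(\mathrm{B}\bC^*) \otimes \sBPS_\mu$ and combining with \eqref{eq:intHiggs2} exhibits $\mH^*(\mathrm{B}\bC^*) \otimes \sBPS_\mu$ as a direct summand of a pure object; a direct summand of a pure object is pure. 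Finally, using the decomposition $\mH^*(\mathrm{B}\bC^*) = \bigoplus_{i \geq 0} \bQ(-i)[-2i]$ into pure Tate objects of matching weight and cohomological degree, the identity $\mH^*(\mathrm{B}\bC^*) \otimes \sBPS_\mu = \bigoplus_{i \geq 0} \sBPS_\mu(-i)[-2i]$ shows that purity of the tensor product is equivalent to purity of $\sBPS_\mu$, which completes the proof.

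The only substantive ingredient is the purity of the left-hand side, which is Davison's theorem; the remainder is a formal extraction from the symmetric algebra structure, using only that $\Sym_{\boxtimes_\oplus}$ is a direct sum and that $\mH^*(\mathrm{B}\bC^*)$ is Tate-pure. In particular, no induction on the rank $r$ is needed, and no support argument beyond what is already used to define $\sBPS_\mu$ is required.
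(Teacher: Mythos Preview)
Your proof is correct and follows essentially the same approach as the paper: both extract an embedding of $\sBPS_{r,m}$ into $\cH({p_S}_*\bD\bQ_{\fM_S^{\semist}(r,m)})\otimes\bL^{r^2(g-1)}$ from the cohomological integrality isomorphism \eqref{eq:intHiggs2} and then invoke Davison's purity theorem \cite[Proposition~7.20]{Dav21} for the target. The paper is simply terser---it records the embedding and cites Davison in two lines---whereas you spell out that the embedding comes from the $n=1$ summand of $\Sym_{\boxtimes_\oplus}$ and that purity of $\mH^*(\mathrm{B}\bC^*)\otimes\sBPS_\mu$ is equivalent to purity of $\sBPS_\mu$ via the Tate decomposition of $\mH^*(\mathrm{B}\bC^*)$.

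One minor remark: the aside about $\fM_S^{\semist}(r,m)$ being smooth and $\bD\bQ$ therefore being pure is not actually used---and smoothness alone would not give purity of the pushforward along the non-proper map $p_S$ anyway. The entire weight of the argument rests on Davison's theorem, which you correctly identify.
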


 \begin{proof}
 The above theorem implies that there exists an embedding
 \[
 \sBPS_{r, m} \hookrightarrow 
 \cH({p_S}_* \bD \bQ_{\fM_{S}^{\semist}(r, m)}) \otimes \bL^{r^2(g-1)}.
 \]
 The purity of the right-hand side is proved in \cite[Proposition 7.20]{Dav21}, so we obtain the claim.
 \end{proof}

\begin{ex} \label{ex:ICvsBPS}
Assume that $(r, m)$ is coprime,
in which case $\fM_{S}^{\semist}(r, m)$ is smooth and $p_S$ is a $\bC^*$-gerbe. In this case, we have an isomorphism
\[
\cH^0({p_S}_* \bD \bQ_{\fM_{S}^{\semist}(r, m)}) \otimes \bL^{r^2(g-1)} 
\cong \sIC_{ M_{S}^{\semist}(r, m)}.
\]
Therefore we have an isomorphism
\[
\sIC_{ M_{S}^{\semist}(r, m)} \cong \sBPS_{r, m} .
\]
In particular, for coprime pairs $(r, m)$ and $(r, m')$, Corollary \ref{cor:chiBPS} implies an isomorphism
\[
 h_{S*} \sIC_{ M_{S}^{\semist}(r, m)}
 \cong  h_{S*} \sIC_{ M_{S}^{\semist}(r, m')}.
\]

Now let $(r, m)$ be a non-coprime pair.
It follows from \cite[Theorem 11.1]{Sim94} and \cite[Theorem 5.11]{Dav21} that $M_{S}^{\semist}(r, m)$ is normal.
The connectedness of $M_{S}^{\semist}(r, m)$ is proved in \cite[Claim 3.5 (iii)]{DP12}.
Therefore the moduli space $M_{S}^{\semist}(r, m)$ is irreducible.
Then using \cite[Theorem 6.6]{Dav21}, we can construct an inclusion
\begin{equation} \label{eq:ICinBPS}
\sIC_{M_{S}^{\semist}(r, m)} \hookrightarrow \sBPS_{r, m}
\end{equation}
but it is not necessary an isomorphism (see \S \ref{ssec:ex}).
\end{ex}

Write $B_S^*$ for the disjoint union of all the Hitchin bases (i.e. $B_S^*$ is the moduli space of all one-dimensional cycles on $S$).
We let $\oplus_B \colon B_S^* \times B_S^* \to B_S^*$ and $+ \colon \bN \times \bN \to \bN$ denote the canonical monoid structures.
The following statement is a direct consequence of Theorem \ref{thm:intHiggs2} and Lemma \ref{lem:decompthm}.

\begin{cor}
We have isomorphisms
\begin{align*}
\bigoplus_{\frac{m}{r}=\mu} 
\cH((h_S \circ p_S)_* \bD \bQ_{\fM_{S}^{\semist}(r, m)}) \otimes \bL^{r^2(g-1)}
&\cong \Sym_{\boxtimes_{\oplus_B}}\left( 
\mH^*(\mathrm{B}\bC^*)  \otimes {h_S}_* \sBPS_{\mu}
\right) \\
\bigoplus_{\frac{m}{r}=\mu} 
\HBM({\fM_{S}^{\semist}(r, m)}) \otimes \bL^{r^2(g-1)}
&\cong \Sym_{\boxtimes_+}\left( 
\mH^*(\mathrm{B}\bC^*)  \otimes \BPS_{\mu}
\right) 
\end{align*}
in $D^{\geq, lf}(\MHM(B_S^*) )$ and $D^{\geq, lf}(\MHM(\bN) )$ respectively.
\end{cor}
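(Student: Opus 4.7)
The strategy is to push the isomorphism of Theorem \ref{thm:intHiggs2} forward along the Hitchin morphism $h_S\colon M_S^{\semist}(\mu)\to B_S^*$ (and subsequently to a point for the Borel--Moore homology statement), and to verify that both sides interact nicely with this pushforward. The key point is that $h_S$ is proper, so pushforward commutes with $\boxtimes$ via K\"unneth, and that the direct sum of sheaves corresponds on Hitchin bases to the monoid operation $\oplus_B$ (since $\tr(\phi_1\oplus\phi_2)^i=\tr\phi_1^i+\tr\phi_2^i$).

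For the left-hand side, I would invoke Lemma \ref{lem:decompthm} applied to the pair $(p_S,h_S)$. Its purity hypothesis is exactly the statement that $\cH({p_S}_*\bD\bQ_{\fM_S^{\semist}(r,m)})$ is pure, which was established (following \cite[Proposition 7.20]{Dav21}) in the proof that $\sBPS_{r,m}$ is pure. Consequently, applying $\cH(h_{S*}-)$ to the LHS of Theorem \ref{thm:intHiggs2} yields the term $\cH((h_S\circ p_S)_*\bD\bQ_{\fM_S^{\semist}(r,m)})\otimes\bL^{r^2(g-1)}$ as required.

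For the right-hand side, the commutative square
\[
\xymatrix{
M_S^{\semist}(\mu)\times M_S^{\semist}(\mu) \ar[r]^-{\oplus} \ar[d]^-{h_S\times h_S} & M_S^{\semist}(\mu) \ar[d]^-{h_S} \\
B_S^{*}\times B_S^{*} \ar[r]^-{\oplus_B} & B_S^{*}
}
\]
together with properness of $h_S$ and the K\"unneth formula gives $h_{S*}(\oplus_*(A\boxtimes B))\cong (\oplus_B)_*(h_{S*}A\boxtimes h_{S*}B)$, which extends equivariantly to an identification $h_{S*}\Sym^n_{\boxtimes_\oplus}(-)\cong\Sym^n_{\boxtimes_{\oplus_B}}(h_{S*}-)$. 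Applying this termwise (and noting that $\mH^*(\mathrm{B}\bC^*)$ is a constant factor that passes through), the RHS becomes $\Sym_{\boxtimes_{\oplus_B}}(\mH^*(\mathrm{B}\bC^*)\otimes {h_S}_*\sBPS_{\mu})$, giving the first isomorphism in $D^{\geq,lf}(\MHM(B_S^*))$.

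The second isomorphism is obtained by pushing forward one step further along the map $B_S^*\to\bN$ given by the rank (equivalently, taking global cohomology in each rank component). Under this map $\oplus_B$ is intertwined with $+\colon\bN\times\bN\to\bN$, and the LHS becomes Borel--Moore homology of $\fM_S^{\semist}(r,m)$ by definition. The only point requiring a little care is the compatibility of the symmetric group actions when iterating the K\"unneth/base-change argument; I expect this to be a routine bookkeeping exercise rather than a genuine obstruction, since all relevant operations come from the proper monoid morphism $h_S$ between the two symmetric monoidal categories.
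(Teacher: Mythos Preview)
Your proposal is correct and follows essentially the same approach as the paper, which states the corollary as a direct consequence of Theorem \ref{thm:intHiggs2} and Lemma \ref{lem:decompthm}; you have simply spelled out the details (the purity hypothesis for Lemma \ref{lem:decompthm} via \cite[Proposition 7.20]{Dav21}, and the compatibility of $h_S$ with the monoid structures needed to commute $h_{S*}$ past $\Sym_{\boxtimes_\oplus}$).
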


Combining the above corollary and the $\chi$-independence theorem for BPS cohomology (= Corollary \ref{cor:chiHiggs}), we obtain the following $\chi$-independence theorem for the Borel--Moore homology:

\begin{cor}\label{cor:chiBM}
Let $r, m, m'$ be integers such that $r>0$ and $\gcd(r, m) = \gcd(r, m')$.
Then there exist isomorphisms
\begin{align*}
 \cH((h_S \circ p_S)_* \bD \bQ_{\fM_{S}^{\semist}(r, m)}) &\cong \cH((h_S \circ p_S)_* \bD \bQ_{\fM_{S}^{\semist}(r, m')}), \\
 \HBM({\fM_{S}^{\semist}(r, m)}) &\cong \HBM({\fM_{S}^{\semist}(r, m')})
\end{align*}
in $D^{\geq, lf}(\MHM(B_S) )$ and $D^{\geq, lf}(\MHM(\pt) )$ respectively.
\end{cor}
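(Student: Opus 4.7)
The plan is to combine the cohomological integrality theorem (the corollary immediately preceding this statement) with the $\chi$-independence of the pushed-forward BPS sheaf (Corollary \ref{cor:chiBPS}). First, set $k \coloneqq \gcd(r,m) = \gcd(r,m')$ and write $r = k r_0$, $m = k m_0$, $m' = k m_0'$ with $\gcd(r_0, m_0) = \gcd(r_0, m_0') = 1$. The relevant slopes $\mu \coloneqq m/r = m_0/r_0$ and $\mu' \coloneqq m'/r = m_0'/r_0$ need not be equal, but the integer $r_0$ is determined solely by $r$ and $k$, hence is common to both.

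Applying the integrality isomorphism of the preceding corollary at slope $\mu$, the left-hand side decomposes by total rank (which is additive under $\oplus_B$), and the rank-$r$ summand is precisely $\cH((h_S \circ p_S)_* \bD\bQ_{\fM_S^{\semist}(r,m)}) \otimes \bL^{r^2(g-1)}$: this is the unique term in the left-hand sum with rank $r$, since $m/r = m_0/r_0$ in lowest terms forces any $(r'',m'')$ appearing in the sum to be of the form $(j r_0, j m_0)$ for some $j \geq 1$. On the right-hand side, the rank-$r$ component of $\Sym_{\boxtimes_{\oplus_B}}(\mH^*(\mathrm{B}\bC^*) \otimes h_{S*} \sBPS_\mu)$ is indexed by partitions $\lambda = (\lambda_1, \ldots, \lambda_t) \vdash k$, with each part $\lambda_i$ contributing a factor $\mH^*(\mathrm{B}\bC^*) \otimes h_{S*} \sBPS_{\lambda_i r_0, \lambda_i m_0}$, symmetrised according to the multiplicities of the parts.

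Repeating the construction with $\mu'$ and $m_0'$ in place of $\mu$ and $m_0$, the rank-$r$ summand on the left is $\cH((h_S \circ p_S)_* \bD\bQ_{\fM_S^{\semist}(r,m')}) \otimes \bL^{r^2(g-1)}$, while the rank-$r$ component on the right is the analogous symmetric sum with $h_{S*} \sBPS_{\lambda_i r_0, \lambda_i m_0'}$. Now Corollary \ref{cor:chiBPS}, which has no gcd hypothesis, supplies an isomorphism $h_{S*} \sBPS_{\lambda_i r_0, \lambda_i m_0} \cong h_{S*} \sBPS_{\lambda_i r_0, \lambda_i m_0'}$ for every part $\lambda_i$. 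Functoriality of $\Sym_{\boxtimes_{\oplus_B}}$ in each tensor factor then assembles these part-wise isomorphisms into an isomorphism of the rank-$r$ pieces, which, after cancelling the common twist $\bL^{r^2(g-1)}$, yields the first claimed isomorphism. The second isomorphism for $\HBM(\fM_S^{\semist}(r,m))$ follows by exactly the same argument applied to the $\HBM$-row of the preceding corollary, with $\boxtimes_{+}$ on $\bN$ in place of $\boxtimes_{\oplus_B}$ on $B_S^*$.

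The main subtle point is to confirm that the rank-graded decomposition of $\Sym_{\boxtimes_{\oplus_B}}$ is indexed by the \emph{same} set of partitions $\lambda \vdash k$ for both $\mu$ and $\mu'$: this is exactly where the assumption $\gcd(r,m) = \gcd(r,m')$ is used, as it guarantees the common divisor $r_0 = r/k$ and hence a matching partition-index set for the two sides. Granted this compatibility, the BPS-level isomorphism of Corollary \ref{cor:chiBPS} is directly transported through the symmetric-product formalism, and no further input is required.
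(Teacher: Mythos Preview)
Your proposal is correct and follows exactly the approach the paper indicates: the paper simply states that the corollary is obtained by ``combining the above corollary and the $\chi$-independence theorem for BPS cohomology,'' and you have correctly spelled out what this combination entails. Your explicit description of the rank-$r$ piece in terms of partitions of $k = \gcd(r,m)$, and your observation that the hypothesis $\gcd(r,m) = \gcd(r,m')$ is precisely what forces the same partition index set on both sides, make the implicit argument fully transparent.
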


\begin{rmk}
Based on P = W conjecture, it is conjectured in \cite{FSY21} that there exists an isomorphism of intersection cohomology groups
\[
\IH({M_{S}^{\semist}(r, m)}) \cong \IH({M_{S}^{\semist}(r, m')})
\]
preserving the perverse filtration for $r, m, m'$ such that  $\gcd(r, m) = \gcd(r, m')$.
At present we do not know how to prove this conjecture. However, once Davison's conjecture \cite[Conjecture 7.7]{Dav20} on the structure of the BPS sheaf is established, it would be possible to deduce the $\chi$-independence for intersection cohomology from the $\chi$-independence for BPS cohomology (= Corollary \ref{cor:chiHiggs}).
\end{rmk}

\subsection{An example: $g=2, r=2$}\label{ssec:ex}
Here we give an example where 
the intersection cohomology and the BPS cohomology are different. 
Let $C$ be a smooth projective curve of genus $2$, 
and put $S \coloneqq \Tot_C(\omega_C)$. 
We consider the moduli space $M_S(2, 0)$. 
By taking the cohomology of the inclusion \eqref{eq:ICinBPS}, 
we have an inclusion 
\begin{equation} \label{eq:IHinBPS20}
\IH(M_S(2, 0) ) \hookrightarrow \BPS_{2, 0}. 
\end{equation}

We will check that the above inclusion is 
not an isomorphism. 
Note that by Corollary \ref{cor:chiBPS} 
and Example \ref{ex:ICvsBPS}, 
we have an isomorphism 
\begin{equation} \label{eq:BPS20}
    \BPS_{2, 0} \cong \BPS_{2, 1} 
    \cong \IH(M_S(2, 1) ). 
\end{equation}

We denote by 
\[
\Phi_{\IC}(t) \coloneqq \sum_{i \in \bZ} 
h^i(\IH(M_S(2, 0) ) )t^i, \quad 
\Phi_{\BPS}(t) \coloneqq \sum_{i \in \bZ} 
h^i(\BPS_{2, 0} )t^i. 
\]
By \cite[Theorem 1.2]{Fel21} 
and \cite[Exercise 4.1]{Ray18}, 
we have 
\begin{equation} \label{eq:Ppolys}
\begin{aligned}
&\Phi_{\IC}(t)
=t^{-10}(
t+1
)^4
(
2t^6+2t^4+t^2+1
), \\
&\Phi_{\BPS}(t)=t^{-10}
(t+1 )^4
(2t^6+4t^5+2t^4+4t^3+t^2+1 ). 
\end{aligned}
\end{equation}
For the formula of $\Phi_{\BPS}$, 
we used the isomorphisms \eqref{eq:BPS20}. 
Note that the term $t^{-10}$ appears by our shift convention 
so that the intersection and the BPS complexes 
are perverse sheaves, together with the fact 
$\dim M_S(2, 0)=10$. 
From the formulas \eqref{eq:Ppolys}, it is obvious that 
the inclusion \eqref{eq:IHinBPS20} is not an isomorphism. 

On the other hand, 
assume that \cite[Conjecture 7.7]{Dav20} is true. 
Then we have an isomorphism 
\[
\sBPS_{2, 0} \cong \sIC_{M_S(2, 0) } 
\oplus \wedge_{\oplus}^2(\sIC_{M_S(1, 0) } ), 
\]
where $\wedge^2(-)$ is the graded wedge product 
in the category of graded vector spaces. 
Since the moduli space $M_S(1, 0)$ is 
isomorphic to the cotangent bundle of 
the Jacobian $\Jac(C)$ of $C$, 
we have 
\[
\IH(M_S(1, 0) ) 
\cong \mH^*(\Jac(C) )[4] 
=\bQ[4] \oplus \bQ^4[3] \oplus 
\bQ^6[2] \oplus \bQ^4[1] \oplus \bQ, 
\]
and hence we conclude that 
\begin{align*}
\sum_{i \in \bZ}h^i(
\wedge^2(\IH(M_S(1, 0) ) )
)t^i
&=t^{-7}(t+1 )^4
(t^2+1 ) \\
&=\Phi_{\BPS}(t)-\Phi_{\IC}(t).
\end{align*}
This computation gives 
an evidence of \cite[Conjecture 7.7]{Dav20}. 
At the same time, we can see that 
the $\chi$-independence for the intersection cohomology 
does not necessarily hold when $\gcd(r, m) \neq \gcd(r, m')$. 

\appendix

\section{Shifted symplectic structure and vanishing cycles}\label{app:tech}

In this appendix, we briefly recall the theory of shifted symplectic geometry and prove some technical lemmas including Proposition \ref{prop:joyce=van}.

\subsection{Shifted symplectic structures}
\label{sec:shifted}
We recall the notion of shifted symplectic structures introduced in \cite{PTVV13}.
Let $\bs{\fX}$ be a derived Artin stack.
We define the space of \textbf{$n$-shifted $p$-forms} $\cA^p(\bs{\fX}, n) \in \bS$ by
\[
\cA^p(\bs{\fX}, n) \coloneqq \Map(\cO_{\bs{\fX}}, \wedge^p \bL_{\bs{\fX}}[n] ).
\]
We can also define the space of \textbf{$n$-shifted closed $p$-forms} $\cA^{p, \cl}(\bs{\fX}, n) \in \bS$ (see \cite[Definition 1.12]{PTVV13}). It satisfies the \'etale descent and for a  connective commutative differential graded algebra $A$
we have an equivalence
\[
\cA^{p, \cl}(\DSpec A, n) \simeq \left|\prod_{i \geq 0} \wedge^{p + i} \bL_A [-i + n], d + \ddr \right|,
\]
where $d$ is the internal differential, $\ddr$ is the de Rham differential, and $|-|$ is the geometric realization functor.
The space of $n$-shifted $p$-forms and the space of $n$-shifted closed $p$-forms is functorial with respect to morphisms between derived Artin stacks, i.e., if we are given a morphism $\bs{f} \colon \bs{\fX} \to \bs{\fY}$, there exist natural maps
\begin{align*}
\bs{f}^{\star} &\colon \cA^{p}(\bs{\fY}, n) \to  \cA^{p}(\bs{\fX}, n), \\
\bs{f}^{\star} &\colon \cA^{p, \cl}(\bs{\fY}, n) \to  \cA^{p, \cl}(\bs{\fX}, n).
\end{align*}
We have a natural forgetful map 
\[
\pi \colon \cA^{p,\cl}(-, n) \to \cA^{p}(-, n)
\]
and the de Rham differential map
\[
\ddrcl \colon \cA^{p}(-, n) \to \cA^{p+1,\cl}(-, n).
\]

\begin{defin}
An $n$-shifted closed 2-form $\omega_{\bs{\fX}}$ is called an \textbf{$n$-shifted symplectic form} if its underlying $n$-shifted $2$-form is non-degenerate, i.e., the natural map
\[
\pi(\omega_{\bs{\fX}}) \cdot  \colon \bL_{\bs{\fX}}^{\vee} \to  \bL_{\bs{\fX}}[n]
\]
is an equivalence.
\end{defin}
In this paper, we are only interested in $(-1)$-shifted symplectic structures.

\begin{ex} 
\label{ex:CYderived}
Let $X$ be a Calabi--Yau threefold, i.e., a three dimensional smooth variety with trivial canonical bundle.
Then the derived moduli stack $\bs{\fM}_X$ of compactly supported coherent sheaves on $X$ carries a canonical $(-1)$-shifted symplectic structure. See \cite[Theorem 0.1]{PTVV13} and \cite[Main Theorem]{BDII}.
\end{ex}

\begin{ex}\label{ex:shiftedcot}
Let $\bs{\fY}$ be a derived Artin stack and 
\[
\bfT^*[n]\bs{\fY} \coloneqq \DSpec_{\bs{\fY}}(\Sym (\bL_{\bs{\fY}}^\vee[-n]))
\]
be its $n$-shifted cotangent stack.
Let $\lambda \in \cA^1(\bfT^*[n]\bs{\fY}, n)$ be the tautological $1$-form.
Then it is shown in \cite[Proposition 1.21]{PTVV13} and \cite[Theorem 2.2]{Cal19} that the $n$-shifted closed $2$-form $\ddrcl \lambda$ is shifted symplectic.
\end{ex}

\begin{ex} \label{ex:crit1}
Let $U = \Spec A$ be a smooth affine scheme which admits an \'etale coordinate $(x_1, \ldots, x_n)$, and $f \colon U \to \bA^1$ be a regular function.
Let $B$ be the cdga defined by the Koszul complex
\[
B \coloneqq (\cdots \to \Omega_A^{\vee} \xrightarrow[]{\ddr f} A).
\]
Then $\DSpec B$ is equivalent to the derived critical locus $\DCrit(f)$. We let $y_i \in B^{-1}$ be the element of degree $-1$ corresponding to $\partial/\partial x_i$.
Then the $(-1)$-shifted closed $2$-form
\[
\omega \coloneqq (\ddr x_1 \wedge \ddr y_1 + \cdots + \ddr x_n \wedge \ddr y_n, 0, 0,\ldots) \in \cA^{2, \cl}(\DSpec B, -1)
\]
defines a $(-1)$-shifted symplectic structure on $\DCrit(f)$.

It is shown in \cite[Theorem 5.18]{BBJ19} that any $(-1)$-shifted symplectic derived scheme is Zariski locally of this form.
\end{ex}

Now we discuss the canonical $(-1)$-shifted symplectic structure on the derived critical locus of a function on a general derived Artin stack. To do this, we need to recall the notion of Lagrangian structures.

\begin{defin}
Let $(\bs{\fX}, \omega_{\bs{\fX}})$ be an $n$-shifted symplectic derived Artin stack and $\bs{\tau} \colon \bs{L} \to \bs{\fX}$ be a morphism of derived Artin stacks.
An \textbf{isotropic structure} is a path from $0$ to $\bs{\tau}^{\star} \omega_{\bs{\fX}}$ in $\cA^{2, \cl}(\bs{L}, n)$. An isotropic structure $\eta$ is called a \textbf{Lagrangian structure} if it induces an equivalence 
\[
\bL_{\bs{L}}^{\vee} \simeq \bL_{\bs{\tau}}[n-1].
\]
See \cite[\S 2.2]{PTVV13} for the detail.
\end{defin}

\begin{ex}\label{ex:Lagrangian}
\begin{itemize}
    
    \item[(1)] Let $\bs{\fY}$ be a  derived Artin stack and $\lambda \in \cA^1(\bfT^*[n]\bs{\fY}, n)$ be the tautological $1$-form. Then $\lambda |_{\bs{\fY}}$ is naturally equivalent to zero hence so is $\ddrcl \lambda |_{\bs{\fY}}$. Therefore the zero section map $\bs{\fY} \to \bfT^*[n]\bs{\fY}$ carries a natural isotropic structure. It is shown in \cite[Theorem 2.2]{Cal19} that this isotropic structure is a Lagrangian structure.
    
    \item[(2)] Let $\bs{\fY}$ and $\lambda$ be as above, and take a function $f \in \Gamma(\bs{\fY}, \cO_{\bs{\fY}}[n])$ of degree $n$. Let $\overline{\ddr f} \colon \bs{\fY} \to \bfT^*[n]\bs{\fY}$ be the map corresponding to the section $\ddr f \in \Gamma(\bs{\fY}, \bL_{\bs{\fY}}[n])$. Then the natural homotopy 
    \[
    (\overline{\ddr f})^{\star} \ddrcl \lambda \sim \ddrcl \circ \ddr f \sim 0
    \]
    defines an isotropic structure on $(\overline{\ddr f})$.
    It is shown in \cite[Theorem 2.15]{Cal19} that this isotropic structure is a Lagrangian structure.
\end{itemize}
\end{ex}

Let $\bs{\fX}$ be an $n$-shifted symplectic derived Artin stack and $\bs{\tau}_1 \colon \bs{L}_1 \to \bs{\fX}$ and $\bs{\tau}_2 \colon \bs{L}_2 \to \bs{\fX}$ be Lagrangians.
These Lagrangian structures define a loop in $\cA^{2, \cl}(\bs{L_1} \times_{\bs{\fX}} \bs{L}_2, n)$ hence a point in $\cA^{2, \cl}(\bs{L_1} \times_{\bs{\fX}} \bs{L}_2, n-1)$.
It is shown in \cite[Theorem 2.9]{PTVV13} that this $(n-1)$-shifted closed $2$-form is shifted symplectic.

\begin{ex}\label{ex:crit2}
Let $\bs{\fY}$ be a derived Artin stack and $f \in \Gamma(\bs{\fY}, \cO_{\bs{\fY}}[n])$ be a function of degree $n$. The derived critical locus $\DCrit(f)$ is defined to be the intersection 
\[
\DCrit(f) \coloneqq \bs{\fY} \times_{0,\, \bfT^*[n]\bs{\fY},\, \overline{\ddr f}} \bs{\fY}.
\]
Example \ref{ex:Lagrangian} and the above discussion implies that $\DCrit(f)$ carries a canonical $(n-1)$-shifted symplectic structure.
\end{ex}

The $(-1)$-shifted symplectic structure constructed in Example \ref{ex:crit1} is a special case of the above example:
\begin{lem}\label{lem:equivsymp}
Let $f \colon U \to \bA^1$ be a regular function on a smooth affine scheme.
Assume that $U$ admits a global \'etale coordinate.
Then the $(-1)$-shifted symplectic structure on $\DCrit(f)$ constructed in Example \ref{ex:crit1} is equivalent to the $(-1)$-shifted symplectic structure constructed in Example \ref{ex:crit2}.
\end{lem}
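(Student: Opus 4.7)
The plan is to exhibit both $(-1)$-shifted symplectic structures as closed $2$-forms on the common explicit cdga $B$ and check that they agree after a transparent manipulation. First I would set up a common Koszul model for the derived intersection. Writing $T_A = \Omega_A^\vee$, we have $\bfT^*U = \Spec A[p_1,\ldots,p_n]$ where the $p_i$ are dual to the $x_i$. A Koszul resolution of $A$ as a module over $A[p_1,\ldots,p_n]$ (corresponding to the zero section) is given by $A[p_1,\ldots,p_n; y_1,\ldots,y_n]$ with $y_i$ in degree $-1$ and differential $d(y_i) = p_i$. Derived-tensoring with $A$ along the algebra map $A[p_1,\ldots,p_n] \to A$, $p_i \mapsto \partial f/\partial x_i$, which realizes $\overline{\ddr f}$, yields exactly the cdga $B$ of Example \ref{ex:crit1}. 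Thus $\DCrit(f)$ as constructed in Example \ref{ex:crit2} is modeled by $B$.

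Next I would transport the symplectic form along this identification. The tautological $1$-form on $\bfT^*U$ is $\lambda = \sum_i p_i \, \ddr x_i$, and $\omega_{\bfT^*U} = \ddr \lambda = \sum_i \ddr x_i \wedge \ddr p_i$. The Lagrangian structure on the zero section is the canonical null-homotopy coming from $\lambda|_0 = 0$, while the Lagrangian structure on $\overline{\ddr f}$ arises from $\lambda|_{\overline{\ddr f}} = \ddr f = \ddr_{\mathrm{dR}} f$, which is null-homotopic via the primitive $f$. Following the construction of \cite[Theorem 2.9]{PTVV13}, the resulting $(-1)$-shifted closed $2$-form on the intersection is obtained by taking the difference of these two null-homotopies of $\omega_{\bfT^*U}$ and then interpreting the result in the Koszul model. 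On the Koszul cdga $B$, the generators $p_i$ have been replaced by $\partial f/\partial x_i$ at the level of underlying graded algebras, while the degree $-1$ generators $y_i$ appear precisely as the variables witnessing the homotopy between the two Lagrangians. A direct bookkeeping shows that the underlying $2$-form of the resulting $(-1)$-shifted form is $\sum_i \ddr x_i \wedge \ddr y_i$, matching Example \ref{ex:crit1} on the nose.

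The main obstacle is the closedness datum, i.e., verifying that the higher components $\omega_k$ for $k \geq 1$ of the closed $2$-form produced by Example \ref{ex:crit2} vanish, matching the simple expression $(\sum \ddr x_i \wedge \ddr y_i, 0, 0, \ldots)$ of Example \ref{ex:crit1}. For this I would exploit that both Lagrangian null-homotopies are given by "strictly exact" data: the zero section kills $\lambda$ strictly, and the section $\overline{\ddr f}$ restricts $\lambda$ to $\ddr f$, which has the strict primitive $f$. Consequently the loop in $\cA^{2,\cl}(B, 0)$ assembled from the two Lagrangian structures is homotopic to a constant loop modulo explicit exact terms that may be absorbed into a gauge transformation of the closed $2$-form. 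Since two $(-1)$-shifted symplectic structures on $B$ with the same non-degenerate underlying $2$-form, both arising as such gauge-equivalent representatives, are equivalent in $\cA^{2,\cl}(B, -1)$, this completes the identification.
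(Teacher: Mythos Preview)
Your approach and the paper's share the same starting point: work in the explicit Koszul cdga $B$ with generators $x_i$ and $y_i$, and identify the shifted symplectic form coming from the Lagrangian-intersection description with the cocycle $\sum_i \ddr x_i \wedge \ddr y_i$ written down in Example~\ref{ex:crit1}. The difference is in how cleanly the closedness datum is handled.

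The paper avoids your third paragraph entirely by making one observation: the element
\[
\alpha \;=\; \sum_i y_i \,\ddr x_i \;\in\; \Omega_B^{-1}
\]
satisfies $d\alpha = \ddr f$ in $\Omega_B$. This equation says precisely that $\alpha$ \emph{is} the explicit homotopy $\ddr f|_{\DCrit(f)} \sim 0$ in $\cA^1(\DCrit(f),0)$ underlying the Lagrangian structure of Example~\ref{ex:Lagrangian}(2). Consequently the $(-1)$-shifted symplectic form of Example~\ref{ex:crit2} is literally $\ddrcl\alpha$, and one computes
\[
\ddrcl\alpha \;=\; \bigl(\textstyle\sum_i \ddr y_i \wedge \ddr x_i,\,0,\,0,\ldots\bigr),
\]
with all higher components automatically zero since $\alpha$ is a strict $1$-form. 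No gauge argument is needed.

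Your route via tracking both Lagrangian null-homotopies separately and then arguing gauge-equivalence of the higher components can be made to work, but as written the last step is vague: the assertion that two closed $2$-forms with the same underlying non-degenerate $2$-form are equivalent is not true in general, and ``absorbed into a gauge transformation'' needs to be made precise. The paper's identification of the single primitive $\alpha$ is exactly the missing shortcut that makes this step disappear.
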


\begin{proof}
We write $U = \Spec A$ and take a global \'etale coordinate $x_1, x_2, \ldots, x_n \in A$.
We let $B$ be the cdga appeared in Example \ref{ex:crit1} whose underlying graded algebra is $A[y_1, \ldots , y_n]$. As we have seen in Example \ref{ex:crit1}, $\DSpec B$ gives a model for $\DCrit(f)$. 
Consider the element 
\[
\alpha = \sum_i y_i \cdot \ddr x_i \in \Omega_B^{-1}.
\]
Then we have an identity $d \alpha = \ddr f$
 which corresponds to the natural homotopy 
\[
\ddr f|_{\DCrit(f)} \sim 0
\]
in $\mcA^1(\DCrit(f), 0)$. 
Therefore the element 
\[
\ddrcl\alpha = (\sum_i \ddr y_i \cdot \ddr x_i, 0, \ldots)
\]
corresponds to the $(-1)$-shifted symplectic structure constructed in Example \ref{ex:crit2}.
\end{proof}

Now we discuss the relation of the $(-1)$-shifted symplectic structure and the d-critical structure.
Let $(\bs{\fX}, \omega)$ be a $(-1)$-shifted symplectic derived Artin stack. Then it is shown in \cite[Theorem 3.18(a)]{BBBBJ15} that the classical truncation $\fX = t_0(\bs{\fX})$ carries a natural d-critical structure $s$. We now recall some of its basic properties.

Firstly assume that $\bs{\fX}$ is a derived scheme and write $\bs{X} = \bs{\fX}$ and $X = \fX$. Take an open embedding $\bs{\iota} \colon \DCrit(f) \hookrightarrow \bs{X}$ where $f$ is a regular function on a smooth scheme $U$ such that $f |_{\Crit(f)^{\red}} =  0$ and $U$ has a global \'etale coordinate. We equip $\DCrit(f)$ with the $(-1)$-shifted symplectic structure constructed in Example \ref{ex:crit1} and assume that $\bs{\iota}$ preserves the $(-1)$-shifted symplectic structures.
We let $R$ denote the image of $t_0(\bs{\iota})$ and $i \colon R \hookrightarrow U$ denote the natural inclusion. Then $(R, U, f, i)$ defines a d-critical chart of $(X, s)$, see \cite[Theorem 6.6]{BBJ19}.

Now we remove the assumption that $\bs{\fX}$ is a derived scheme.
Take a smooth morphism $\bs{q} \colon \bs{T} \to \bs{\fX}$. Assume that there exist a morphism between derived schemes $\bs{\tau} \colon \bs{T} \to \widehat{\bs{T}}$, a $(-1)$-shifted symplectic structure $\omega_{\widehat{\bs{T}}}$ on $\widehat{\bs{T}}$,
and an equivalence $\bs{q}^{\star} \omega \sim \bs{\tau}^{\star}\omega_{\widehat{\bs{T}}}$. Then there exists an equality 
\begin{equation}\label{eq:dcritst}
    t_0(\bs{q})^{\star}s = t_0(\bs{\tau})^{\star}s_{\widehat{T}}
\end{equation}
of d-critical structures, 
where $s_{\widehat{T}}$ is the d-critical structure on $\widehat{T} = t_0(\widehat{\bs{T}})$ induced from the $(-1)$-shifted symplectic structure $\omega_{\widehat{\bs{T}}}$. See \cite[Theorem 4.6]{Kin21} for the proof.

It is shown in \cite[Theorem 3.18(b)]{BBBBJ15} that there exists a natural isomorphism
\begin{equation}\label{eq:cancot}
K_{\fX, s}^{\vir} \cong \det(\bL_{\bs{\fX}} |_{\fX^{\red}}).
\end{equation}

\subsection{Proof of Proposition \ref{prop:joyce=van}}\label{ssec:joyvan}

Here we give the proof of Proposition \ref{prop:joyce=van}.

\begin{lem}
Let $q \colon U \to \fU$ be a smooth morphism from a smooth scheme.
We let $s_X$ denote the canonical d-critical structure on $X \coloneqq \Crit(f \circ q)$. Then there exists an equality 
$(q |_X)^{\star} s = s_X$.
\end{lem}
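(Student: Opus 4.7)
The plan is to reduce the lemma to its scheme counterpart via smooth descent on $\cS^0$. Concretely, I would choose a smooth surjective morphism $t\colon V\to\fU$ from a scheme, form the fibre product $W=V\times_\fU U$ with its smooth projections $p_V\colon W\to V$ and $p_U\colon W\to U$, and put $g\coloneqq f\circ t$ and $\widetilde g\coloneqq g\circ p_V=f\circ q\circ p_U$. Writing $V_0\coloneqq\Crit(g)$ and $Y\coloneqq\Crit(\widetilde g)$, the restrictions $p_V|_Y\colon Y\to V_0$ and $p_U|_Y\colon Y\to X$ are both smooth, with $p_U|_Y$ moreover surjective. By the very definition of the d-critical structure $s$ on $\fX=\Crit(f)$ in terms of a smooth atlas, we have $(t|_{V_0})^{\star}s = s_{V_0}$, where $s_{V_0}$ is the canonical d-critical structure on $V_0\subset V$ viewed as a critical locus.

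The core step is the scheme version of the lemma: for a smooth morphism $p\colon W\to V$ between smooth schemes and a regular function $g\colon V\to\bA^1$, the canonical d-critical structure on $\Crit(g\circ p)$ equals $(p|_{\Crit(g\circ p)})^{\star}$ of the canonical one on $\Crit(g)$. This is essentially built into Joyce's construction: both d-critical structures are locally represented by the classes $g+I_{\Crit(g),V}^{2}$ and $g\circ p+I_{\Crit(g\circ p),W}^{2}$, and the functoriality morphism $p^{\star}\colon p^{-1}\cS_{\Crit(g)}^{0}\to\cS_{\Crit(g\circ p)}^{0}$ sends the former to the latter by definition. Alternatively one can extract this from \eqref{eq:dcritst} applied with $\bs{\fX}=\DCrit(g)$, $\bs{T}=\DCrit(g\circ p)$, $\widehat{\bs{T}}=\bs{T}$ and $\bs{\tau}=\id$, once one checks that the natural smooth morphism $\DCrit(g\circ p)\to\DCrit(g)$ intertwines the $(-1)$-shifted symplectic forms, a local computation via the Koszul model of Example \ref{ex:crit1}.

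Granting the scheme case, applying it to both $p_V$ and $p_U$ yields the chain
\[
(p_U|_Y)^{\star}\bigl((q|_X)^{\star}s\bigr) = (p_V|_Y)^{\star}(t|_{V_0})^{\star}s = (p_V|_Y)^{\star}s_{V_0} = s_Y = (p_U|_Y)^{\star}s_X.
\]
Because $p_U|_Y$ is smooth and surjective and $\cS_X^{0}$ is a sheaf on the smooth site, this forces $(q|_X)^{\star}s = s_X$, finishing the proof. The main obstacle in the plan is the scheme-level compatibility step: one must make sure Joyce's functoriality morphism $p^{\star}$ on the sheaf $\cS^{0}$ really does coincide with the naive pullback of local representatives under $g\mapsto g\circ p$. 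Once that is settled (either by direct unwinding of Joyce's definition or via the shifted-symplectic route through \eqref{eq:dcritst}), the rest of the argument is formal smooth descent.
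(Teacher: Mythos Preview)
Your smooth-descent strategy is natural, but the step you label ``by the very definition of $s$'' is not justified in this paper's setup and in fact hides exactly the content of the lemma. Here $s$ is the d-critical structure on $\fX = t_0(\DCrit(f))$ induced from the $(-1)$-shifted symplectic form $\omega_{\DCrit(f)}$ via \cite[Theorem~3.18]{BBBBJ15}, not one defined by declaring $t^\star s = s_{V_0}$ on an atlas. So the equality $(t|_{V_0})^\star s = s_{V_0}$ is precisely an instance of the lemma (with $q$ replaced by $t$), and invoking it is circular. The ``core step'' you prove---that Joyce's $p^\star$ carries $g+I^2$ to $g\circ p + I^2$---is correct, but it only compares two \emph{critical-locus} d-critical structures; it does not identify either with the d-critical structure coming from the shifted symplectic form, which is what is at stake.

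Your alternative route via \eqref{eq:dcritst} with $\bs{T}=\DCrit(g\circ p)$ and the ``natural smooth morphism $\DCrit(g\circ p)\to\DCrit(g)$'' also does not work as stated: that morphism is not smooth. When $p$ has positive relative dimension, $\bL_{\DCrit(g\circ p)/\DCrit(g)}$ has a nonzero piece in degree $-1$ coming from the fibre directions of $\Omega_{W/V}$, so the hypothesis of \eqref{eq:dcritst} is not met. This is exactly why the paper instead takes $\bs{T}=\DCrit(f)\times_{\fU}U$, which \emph{is} smooth over $\DCrit(f)$, produces the comparison map $\bs\iota\colon\DCrit(f)\times_{\fU}U\to\DCrit(f\circ q)$ (the identity on truncations), and verifies the equivalence $\bs{q}_0^\star\omega_{\DCrit(f)}\sim\bs\iota^\star\omega_{\DCrit(f\circ q)}$ of shifted closed $2$-forms. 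That equivalence is the substantive step; once it is established, \eqref{eq:dcritst} yields the lemma directly, with no smooth-descent reduction needed.
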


\begin{proof}
We let $\omega_{\DCrit(f)}$ (resp. $\omega_{\DCrit(f \circ q)}$) denote the natural $(-1)$-shifted symplectic structure on $\DCrit(f)$ (resp. $\DCrit(f \circ q)$).
Consider the following diagram of derived Artin stacks:
\[
\xymatrix{
\DCrit(f) \times_{\fU} U \ar[r]^-{\bs{\iota}} \ar[d]^-{\bs{q}_0}  
&  \DCrit(f \circ q) \\
\DCrit(f),
& 
{}
}
\]
where the map $\bs{q}_0$ is induced by $q$ and $\bs{\iota}$ is the natural map which is identity on the truncation.
We claim that there is an equivalence of $(-1)$-shifted symplectic structures: 
\begin{equation}\label{eq:critstack}
\bs{q}_0^{\star} \omega_{\DCrit(f)} \sim \bs{\iota}^{\star} \omega_{\DCrit(f \circ q)}.
\end{equation}

We have a natural homotopy 
\[
\alpha \colon \ddr(f)|_{\DCrit(f)} \sim 0
\]
in $\mathcal{A}^1(\DCrit(f), 0)$.
By definition, the symplectic form $\omega_{\DCrit(f)}$ corresponds to the loop 
\[
0 \sim \ddrcl \circ  \ddr(f)|_{\DCrit(f)} \sim 0
\]
in $\mathcal{A}^{2, \cl}(\DCrit(f), 0)$, 
where the first homotopy is defined by the equivalence $\ddrcl \circ \ddr \sim 0$ and the latter homotopy is defined by $\ddrcl \alpha$.
Therefore the closed $(-1)$-shifted $2$-form $\bs{q}_0^{\star} \omega_{\DCrit(f)}$ corresponds to the loop 
\[
0 \sim \ddrcl\circ  \ddr(f \circ q)|_{\DCrit(f) \times_{\fU} U} \sim 0
\]
in $\mathcal{A}^{2, \cl}(\DCrit(f) \times_{\fU} U, 0)$. 
A similar argument shows that $\bs{\iota}^{\star} \omega_{\DCrit(f \circ q)}$ has the same description, hence we obtain the equivalence \eqref{eq:critstack}.

Combining this equivalence and the equality \eqref{eq:dcritst}, we obtain the desired equality.
\end{proof}

\begin{lem} \label{lem:natori}
There exists a natural orientation of $(\fX, s)$
\[
o \colon K_{\fU}^{\otimes {2}} |_{\fX^{\red}} \cong K_{\fX, s}^{\vir}.
\]
\end{lem}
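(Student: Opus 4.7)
The plan is to combine the canonical isomorphism \eqref{eq:cancot} with a direct computation of $\det(\bL_{\bs{\fX}})$ using the derived fibre product description of $\DCrit(f)$. By \eqref{eq:cancot}, there is a canonical isomorphism
\[
K_{\fX, s}^{\vir} \cong \det(\bL_{\bs{\fX}})|_{\fX^{\red}},
\]
where $\bs{\fX} \coloneqq \DCrit(f)$, so it suffices to produce a canonical isomorphism
\[
\det(\bL_{\bs{\fX}}) \cong \det(\bL_{\fU})^{\otimes 2}|_{\bs{\fX}}.
\]

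Recall from Example \ref{ex:crit2} that $\DCrit(f) = \fU \times_{0, \bfT^*\fU, \overline{\ddr f}} \fU$. The associated Cartesian square induces the exact triangle of cotangent complexes on $\bs{\fX}$
\[
\bL_{\bfT^*\fU}|_{\bs{\fX}} \to \bL_{\fU}|_{\bs{\fX}} \oplus \bL_{\fU}|_{\bs{\fX}} \to \bL_{\bs{\fX}},
\]
from which taking determinants yields a canonical isomorphism
\[
\det(\bL_{\bs{\fX}}) \cong \det(\bL_{\fU})^{\otimes 2}|_{\bs{\fX}} \otimes \det(\bL_{\bfT^*\fU})^{-1}|_{\bs{\fX}}.
\]

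The remaining task is to produce a canonical trivialization of $\det(\bL_{\bfT^*\fU})$. Writing $\pi \colon \bfT^*\fU \to \fU$ for the projection, the identification $\bfT^*\fU = \DSpec_{\fU}(\Sym(\bL_{\fU}^\vee))$ gives $\bL_{\bfT^*\fU/\fU} \cong \pi^* \bL_{\fU}^\vee$, so the relative cotangent triangle
\[
\pi^*\bL_{\fU} \to \bL_{\bfT^*\fU} \to \pi^* \bL_{\fU}^\vee
\]
yields, upon taking determinants and invoking the tautological pairing $\det \bL_{\fU} \otimes \det \bL_{\fU}^\vee \cong \cO_{\fU}$, a canonical trivialization $\det(\bL_{\bfT^*\fU}) \cong \cO_{\bfT^*\fU}$. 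Substituting this into the previous display and restricting to $\fX^{\red}$ produces the orientation $o$. Naturality will be automatic since every step of the construction is intrinsic to the pair $(\fU, f)$; the only point that requires care, and which I view as the mild main obstacle, is the canonicity of the trivialization of $\det \bL_{\bfT^*\fU}$, which is resolved by the tautological evaluation pairing between $\bL_{\fU}$ and its dual.
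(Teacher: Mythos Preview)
Your proof is correct and takes a genuinely different route from the paper's.

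The paper constructs the orientation by descent: it chooses a smooth atlas $q \colon U \to \fU$, uses the global d-critical chart $(\Crit(f\circ q), U, f\circ q, i)$ and Joyce's isomorphism $\iota_{\sR}$ to identify $K_U^{\otimes 2}|_{\Crit(f\circ q)^{\red}}$ with $K_{\Crit(f\circ q), s_X}^{\vir}$, twists by $\det(\Omega_{U/\fU})^{\otimes(-2)}$, applies $\Upsilon$ to get a candidate $o_q$ for $(q|_{\fX})^{\red,*}K_{\fX,s}^{\vir}$, and then checks the cocycle condition on $U\times_{\fU}U$ using the commutativity of diagram \eqref{eq:diagUp}. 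Your argument instead stays at the derived level: you read off $\det(\bL_{\DCrit(f)})$ from the Mayer--Vietoris triangle for the derived intersection $\fU\times_{\bfT^*\fU}\fU$, trivialize $\det(\bL_{\bfT^*\fU})$ via the relative cotangent sequence of $\pi$ and the evaluation pairing, and then invoke \eqref{eq:cancot}. Your approach is shorter and globally intrinsic, so no descent check is needed. The paper's approach has the advantage that the compatibility of $o$ with the maps $\Upsilon_t$ and the chart isomorphisms $\iota_{\sR}$ is manifest by construction; this is exactly what is used in the very next proof (Proposition \ref{prop:joyce=van}, diagram \eqref{eq:commori}). With your construction that compatibility is still true, but it rests on knowing that the isomorphism \eqref{eq:cancot} from \cite{BBBBJ15} matches $\iota_{\sR}$ on smooth charts, which is part of the content of that reference rather than something visible from your argument alone.
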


\begin{proof}
Take a smooth surjective morphism $q \colon U \to \fU$ and write $X = \Crit(f \circ q)$.
Consider the following composition
\begin{align*}
o_q \colon q^* K_{\fU}^{\otimes {2}}|_{X^{\red}}
&\cong (K_{U}^{\otimes {2}} \otimes \det(\Omega_{U/\fU})^{\otimes {-2}})|_{X^{\red}} \\
& \cong K_{X, s_X} \otimes \det(\Omega_{X/\fX})^{\otimes {-2}}|_{X^{\red}} \\
&\cong (q|_{X^{\red}})^* K_{\fX, s}, 
\end{align*}
where we set $s_X = (q|_X)^{\star}s$.
We claim that the isomorphism $o_q$ descends to an orientation for $(\fX, s)$.
To do this, take an \'etale surjective morphism $\eta \colon V \to U \times_{\fU} U$ from a scheme $V$. We let $\pr_i \colon U \times_{\fU} U$ be the $i$-th projection for $i= 1, 2$.
Write $Y = \Crit(f \circ q \circ \pr_1 \circ \eta)$ and define
\[
o_{q \circ \pr_i \circ \eta} \colon ((q \circ \pr_i \circ \eta)^* K_{\fU}^{\otimes ^{2}}) |_{Y^{\red}} \to (q \circ \pr_i \circ \eta |_{Y^{\red}})^* K_{\fX, s}
\]
in the same manner as $o_q$.
It is enough to prove the commutativity of the following diagram for each $i= 1, 2$:
\[
\xymatrix@C=50pt{
((q \circ \pr_i \circ \eta)^* K_{\fU}^{\otimes ^{2}}) |_{Y^{\red}} \ar[r]^-{o_{q \circ \pr_i \circ \eta}} \ar[d]
& (q \circ \pr_i \circ \eta |_{Y^{\red}})^* K_{\fX, s} \ar[d]   \\
(\pr_i \circ \eta)^{\red, *}(q^* K_{\fU}^{\otimes {2}}|_{X^{\red}}) \ar[r]^-{(\pr_i \circ \eta)^{\red, *} o_q}
& (\pr_i \circ \eta)^{\red, *} (q|_{X^{\red}})^* K_{\fX, s}. 
}
\]
This follows from the commutativity of the diagram \eqref{eq:diagUp}.

\end{proof}

\begin{proof}[Proof of Proposition \ref{prop:joyce=van}]
We keep the notation as in the proof of the previous lemma.
Let $o_X$ and $o_Y$ be natural orientations on $(X, q^{\star}s)$ and $(Y, \eta^{\star }\pr_{1}^{\star} q^{\star}s)$ coming from the descriptions as global critical loci.
The construction of the orientation $o$ in the previous lemma implies that we have the following natural commutative diagram of orientations
\begin{equation}\label{eq:commori}
\xymatrix@C=15pt@R=10pt{
 o_Y \ar[rr] \ar[rd] & & \eta^{\star}\pr_{i}^{\star} o_X \ar[ld] \\
& \eta^{\star} \pr_{i}^{\star} q^{\star} o &
}
\end{equation}
for each $i = 1, 2$.

Now define an isomorphism 
\[
\theta_q \colon q^* \varphi_{\fX, s, o} \cong q^*\varphi_f(\bQ_{\fU}[\dim \fU])
\]
by the following composition
\begin{align*}
        q^* \varphi_{\fX, s, o} 
 &\cong \varphi_{X, q^{\star}s, q^{\star} o}[- \dim q] \\
 &\cong \varphi_{X, s_X, o_X}[-\dim q] \\
 &\cong \varphi_{f \circ q}(\bQ_{U}[\dim U - \dim q]) \\
 &\cong q^* \varphi_{f }(\bQ_{\fU}[\dim \fU])
\end{align*}
for $i = 1, 2$.
Here the third isomorphism follows from Lemma \ref{lem:equivsymp}
 and Example \ref{ex:vanloc}. Similarly we can define an isomorphism 
\[
\theta_{q \circ \pr_i \circ \eta} \colon (q \circ \pr_i \circ \eta)^*\varphi_{\fX, s, o} \cong (q \circ \pr_i \circ \eta)^* \varphi_{f }(\bQ_{X}[\dim \fX]).
\]
The commutativity of the diagrams \eqref{eq:commori},
\eqref{eq:commvan}, and \eqref{eq:commvanpull} implies an equality
\[
({\pr_i \circ \eta})^* \theta_{q} = \theta_{q \circ \pr_i \circ \eta}
\]
hence $\theta_q$ descends to the desired isomorphism.
\end{proof}

\section{Proof of the support lemma}\label{app:supplem}

We will give the proof of the support lemma (= Proposition \ref{prop:supplem}) here.

Fix positive integers $r_1, r_2$ and integers $m_1, m_2$ such that $\mu = m_1/r_1 = m_2/r_2$ holds.
Write $r = r_1 + r_2$ and $m= m_1 + m_2$.
Define $\bs{\fW} \subset \bs{\fM}_{X}(r_1, m_1) \times \bs{\fM}_{X}(r_2, m_2)$ to be the substack 
consisting of pairs $([E], [F])$ such that  $p(\Supp E) \cap p(\Supp F) = \emptyset$ where $p \colon X = S \times \bA^1 \to \bA^1$ is the projection.
Define a map $\bs{w} \colon \bs{\fW} \to \bs{\fM}_{X}(r, m)$ by taking direct sum.
The map $\bs{w}$ is an \'etale map.

\begin{lem}
Let $\omega_1$ (resp. $\omega_2$, $\omega$) be the $(-1)$-shifted symplectic structure on $\bs{\fM}_{X}(r_1, m_1)$ (resp. $\bs{\fM}_{X}(r_2, m_2)$, $\bs{\fM}_{X}(r, m)$).
Then there exists an equivalence
\[
(\omega_1 \boxplus \omega_2)|_{\bs{\fW}} \simeq \bs{w}^{\star} \omega.
\]
\end{lem}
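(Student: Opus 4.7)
The plan is to exploit the AKSZ/trace description of the $(-1)$-shifted symplectic form on $\bs{\fM}_X$ and use the disjoint-support condition defining $\bs{\fW}$ to decompose the trace pairing as a direct sum. Recall that for a universal family $\cE$ on $\bs{\fM}_X(r,m) \times X$ with projection $\pi$ to $\bs{\fM}_X(r,m)$, one has $\bL_{\bs{\fM}_X(r,m)} \simeq \dR\pi_* \dR\sHom(\cE,\cE)[1]$, and the $(-1)$-shifted symplectic form is induced by the composition of the Yoneda product with the trace map, combined with Serre duality using a trivialization of $\omega_X$ to obtain a self-duality $\dR\pi_* \dR\sHom(\cE,\cE) \simeq (\dR\pi_* \dR\sHom(\cE,\cE))^{\vee}[-3]$; see \cite{PTVV13, BDII}.

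The key observation is that on $\bs{\fW}$ the pullback of the universal sheaf via $\bs{w}$ is equivalent to $\cE_1 \oplus \cE_2$, where $\cE_i$ are the pullbacks of the universal sheaves from the two factors of $\bs{\fM}_X(r_1,m_1) \times \bs{\fM}_X(r_2,m_2)$. The defining condition of $\bs{\fW}$ ensures that at every geometric point the supports of $\cE_1$ and $\cE_2$ in $X = S \times \bA^1$ are disjoint, since already their images under $p$ are disjoint in $\bA^1$. It follows that $\Supp \cE_1 \cap \Supp \cE_2 = \emptyset$ in $\bs{\fW} \times X$, hence $\dR\sHom(\cE_1,\cE_2) = 0 = \dR\sHom(\cE_2,\cE_1)$, yielding a natural decomposition
$$\dR\pi_*\dR\sHom(\cE_1 \oplus \cE_2,\, \cE_1 \oplus \cE_2) \simeq \dR\pi_*\dR\sHom(\cE_1,\cE_1) \oplus \dR\pi_*\dR\sHom(\cE_2,\cE_2)$$
that is compatible with both the Yoneda product and the trace map. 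Consequently, the underlying pairing on the tangent complexes of $\bs{w}^{\star}\omega$ agrees with that of $(\omega_1 \boxplus \omega_2)|_{\bs{\fW}}$.

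The main obstacle is to upgrade this agreement of underlying $2$-forms to an equivalence of \emph{closed} $2$-forms, i.e., to match the homotopies witnessing closedness in $\cA^{2,\cl}(\bs{\fW},-1)$. For this I would invoke the AKSZ formalism of \cite{PTVV13}: realize the relevant open substack of $\bs{\fM}_X$ inside $\Map(X,\Perf)$ and use functoriality of the integration functor along $X$ together with the fact that, on the open locus where two $\Perf$-valued families have disjoint supports, the direct-sum morphism $\Perf \times \Perf \to \Perf$ intertwines the canonical closed $(-1)$-shifted $2$-forms. The disjoint-support hypothesis reduces this compatibility to the local statement that on $\End(\cE_1 \oplus \cE_2)$ the trace splits orthogonally as the sum of the traces on $\End(\cE_i)$, which is manifest from the block-diagonal description of endomorphisms once the off-diagonal Hom-sheaves vanish.
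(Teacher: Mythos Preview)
Your approach is sound but takes a different route from the paper. The paper's proof is essentially a one-line citation: it invokes \cite[Corollary 6.5]{BDII}, which says that for the moduli of objects in a Calabi--Yau category the map
\[
(\bs{\iota}, \bs{w}) \colon \bs{\fW} \to \bs{\fM}_{X}(r_1, m_1) \times \bs{\fM}_{X}(r_2, m_2) \times \bs{\fM}_{X}(r, m)
\]
carries a Lagrangian structure relative to $\omega_1 \boxplus \omega_2 \boxplus (-\omega)$. A Lagrangian (in particular isotropic) structure is by definition a homotopy $0 \sim (\omega_1 \boxplus \omega_2)|_{\bs{\fW}} - \bs{w}^{\star}\omega$ in $\cA^{2,\cl}(\bs{\fW}, -1)$, which is exactly the desired equivalence of \emph{closed} forms.

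Your argument instead unpacks the trace/Serre-duality description of $\omega$ and uses the disjoint-support condition to kill the off-diagonal $\dR\sHom(\cE_i,\cE_j)$, obtaining the equality of underlying $2$-forms directly; you then correctly flag the promotion to closed forms as the remaining work and point at the AKSZ formalism. This is a legitimate strategy, and indeed the Brav--Dyckerhoff result is proved by a version of this argument, so the two routes ultimately converge. The paper's citation buys you the closedness compatibility for free in one stroke; your route buys transparency about \emph{why} the disjoint-support hypothesis is precisely what is needed (it is what makes the cross-terms vanish so that $\bs{w}$ is \'etale and the pairing is block-diagonal). If you want to make your final step rigorous without re-deriving AKSZ functoriality, the cleanest fix is simply to cite the Lagrangian-correspondence result as the paper does.
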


\begin{proof}
It follows from \cite[Corollary 6.5]{BDII} that there exists a Lagrangian structure on the morphism
\[
(\bs{\iota}, \bs{w}) \colon \bs{\fW} \to \bs{\fM}_{X}(r_1, m_1) \times \bs{\fM}_{X}(r_2, m_2) \times \bs{\fM}_{X}(r, m)
\]
where $\bs{\iota}$ is the natural inclusion and we equip $\bs{\fM}_{X}(r_1, m_1) \times \bs{\fM}_{X}(r_2, m_2) \times \bs{\fM}_{X}(r, m)$ with the $(-1)$-shifted symplectic structure $\omega_1 \boxplus \omega_2 \boxplus (- \omega)$. The Lagrangian structure induces the desired equivalence.

\end{proof}

 For an open subset $U \subset \bC$ in the analytic topology, 
 we define  $\fM_{X}^{\semist}(r, m)^{U} \subset \fM_{X}^{\semist}(r, m)$ to be the complex analytic open substack consisting of points corresponding to sheaves whose supports are contained in $S \times U \subset X$.
 The following statement is a straightforward consequence of the above lemma.

 \begin{cor}\label{cor:compdcrit}
 Let $U_1, U_2 \subset \bC$ be disjoint open subsets in the analytic topology. 
 Consider the following open immersion
 \[
 w_{U_1, U_2} \colon \fM_{X}^{\semist}(r_1, m_1)^{U_1} \times \fM_{X}^{\semist}(r_2, m_2)^{U_2} \to \fM_{X}^{\semist}(r, m)^{U_1 \coprod U_2}
 \]
 induced from $\bs{w}$.
 We let $s_i$ denote the d-critical structure on $\fM_{X}^{\semist}(r_i, m_i)^{U_i}$ and $s$ denote the d-critical structure on the right-hand side. Then we have an equality $w_{U_1, U_2}^{\star}(s) = s_1 \boxplus s_2$.
 \end{cor}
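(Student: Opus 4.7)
The plan is to deduce this from the previous lemma together with the general compatibility between $(-1)$-shifted symplectic structures and the induced d-critical structures recalled in \S\ref{sec:shifted}, specifically equation \eqref{eq:dcritst}.

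First, I would observe that the open immersion $w_{U_1, U_2}$ factors through $\bs{w}$: any pair $([E], [F])$ with $E$ supported in $S \times U_1$ and $F$ supported in $S \times U_2$ automatically satisfies $p(\Supp E) \cap p(\Supp F) = \emptyset$ since $U_1 \cap U_2 = \emptyset$. Thus $\fM_{X}^{\semist}(r_1, m_1)^{U_1} \times \fM_{X}^{\semist}(r_2, m_2)^{U_2}$ sits inside the classical truncation $\fW = t_0(\bs{\fW})$ as an open substack, and $w_{U_1, U_2}$ is the restriction of $w = t_0(\bs{w})$ to this open.

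Next, the preceding lemma provides an equivalence $\bs{w}^{\star}\omega \simeq (\omega_1 \boxplus \omega_2)|_{\bs{\fW}}$ of $(-1)$-shifted symplectic structures, and $\bs{w}$ is \'etale hence smooth. Applying the compatibility \eqref{eq:dcritst} to the smooth morphism $\bs{w}$ (with $\widehat{\bs{T}} = \bs{\fM}_{X}(r_1, m_1) \times \bs{\fM}_{X}(r_2, m_2)$ and $\bs{\tau}$ the natural open inclusion of $\bs{\fW}$), we obtain $w^{\star}s = (s_1 \boxplus s_2)|_{\fW}$, where $s_1 \boxplus s_2$ denotes the d-critical structure on the product induced by the box-sum $(-1)$-shifted symplectic structure $\omega_1 \boxplus \omega_2$. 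Restricting this equality of sections of $\cS^0_{\fW}$ along the open embedding of $\fM_{X}^{\semist}(r_1, m_1)^{U_1} \times \fM_{X}^{\semist}(r_2, m_2)^{U_2}$ into $\fW$ yields the claimed identity $w_{U_1, U_2}^{\star}(s) = s_1 \boxplus s_2$.

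There is no substantive obstacle here; the only point of care is checking that the ``box-sum'' d-critical structure $s_1 \boxplus s_2$ on the product stack coincides with the d-critical structure coming from the product $(-1)$-shifted symplectic structure $\omega_1 \boxplus \omega_2$, which is immediate from the local nature of Joyce's construction and the sheaf property of $\cS^0$. Everything else is a direct invocation of the earlier lemma and \cite[Theorem 4.6]{Kin21}.
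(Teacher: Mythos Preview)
Your argument is correct and is exactly what the paper has in mind when it says the corollary is ``a straightforward consequence of the above lemma'': the equivalence $\bs{w}^{\star}\omega \simeq (\omega_1 \boxplus \omega_2)|_{\bs{\fW}}$ from the lemma forces the induced d-critical structures to agree, and restricting to the open substack gives the claim. One minor point of care: as stated in the paper, \eqref{eq:dcritst} takes $\bs{T}$ and $\widehat{\bs{T}}$ to be derived \emph{schemes}, whereas you apply it with $\bs{T} = \bs{\fW}$ and $\widehat{\bs{T}}$ the product stack; this is harmless since the stacky version follows immediately by pulling back to a smooth scheme atlas of $\fW$, but it is worth saying so explicitly.
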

 
 We now want to prove that the map $w_{U_1, U_2}$ preserves the canonical orientation following the idea of the proof of Proposition \ref{prop:compori}.
 Let $W$ be the image of $t_0(\bs{\fW})$ along the map 
 \[
 \fM_{X}^{\semist}(r_1, m_1) \times \fM_{X}^{\semist}(r_2, m_2)  \to M_{X}^{\semist}(r_1, m_1) \times M_{X}^{\semist}(r_2, m_2).
 \]
 Write $(r_1, m_1) = (k_1 r_0 , k_1 m_0)$ and $(r_2, m_2) = (k_2 r_0 , k_2 m_0)$ where $(r_0, m_0)$ is coprime. Define an open subspace
 \[
 \bA_{W} \subset \Sym^{k_1}(\bA^1) \times \Sym^{k_2}(\bA^1)
 \]
 consisting of configurations $(P, Q) $ such that $P \cap Q = \emptyset$.
 There exists a natural map
 \[
 W^{\red} \hookrightarrow M_{X}^{\semist}(r_1, m_1)^{\red} \times M_{X}^{\semist}(r_2, m_2)^{\red} \to \Sym^{r_1}(\bA^1) \times \Sym^{r_2}(\bA^1)
 \]
 where the latter map is  induced by the projection $X = S \times \bA^1 \to \bA^1$.
 Note that the above map factors through the inclusion
 \[
 \bA_{W} \hookrightarrow \Sym^{k_1}(\bA^1) \times \Sym^{k_2}(\bA^1) \hookrightarrow \Sym^{r_1}(\bA^1) \times \Sym^{r_2}(\bA^1)
 \]
 where the latter map is the diagonal embedding.
 Therefore we obtain a surjective map
 \[
 \eta_W \colon W^{\red} \to \bA_{W}.
 \]

\begin{lem}\label{lem:funloc}
Let $f$ be an invertible regular function on $W^{\red}$. Then there exists a regular function $g$ on $\bA_W$ such that $f = g \circ \eta_W$ holds.
\end{lem}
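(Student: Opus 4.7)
The plan is to adapt the stratification argument from the proof of Proposition~\ref{prop:compori}. Write $(r_1, m_1) = (k_1 r_0, k_1 m_0)$ and $(r_2, m_2) = (k_2 r_0, k_2 m_0)$ with $\gcd(r_0, m_0) = 1$. For each pair of partitions $\lambda_\bullet = (\lambda_1, \ldots, \lambda_t)$ of $k_1$ and $\mu_\bullet = (\mu_1, \ldots, \mu_{t'})$ of $k_2$, I would define $W^{\red}_{\lambda_\bullet, \mu_\bullet} \subset W^{\red}$ as the locally closed subset consisting of polystable pairs $([\bigoplus_i E_i], [\bigoplus_j F_j])$ with $\rank(\pi_{X*}E_i) = \lambda_i r_0$ and $\rank(\pi_{X*}F_j) = \mu_j r_0$. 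Then $W^{\red}$ is the union of the closures $\overline{W^{\red}_{\lambda_\bullet, \mu_\bullet}}$, and by the direct-sum construction used in the proof of Proposition~\ref{prop:compori}, each closure is the image of a finite surjective morphism from
\[
\tilde W_{\lambda_\bullet, \mu_\bullet} \coloneqq \prod_{i=1}^{t} M_S^{\semist}(\lambda_i r_0, \lambda_i m_0) \times \prod_{j=1}^{t'} M_S^{\semist}(\mu_j r_0, \mu_j m_0) \times U_{\lambda_\bullet, \mu_\bullet},
\]
where $U_{\lambda_\bullet, \mu_\bullet} \subset (\bA^1)^{t+t'}$ is the open subset of tuples $(t_1, \ldots, t_t; s_1, \ldots, s_{t'})$ with the sets $\{t_1, \ldots, t_t\}$ and $\{s_1, \ldots, s_{t'}\}$ disjoint.

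Next, I would pull $f$ back to $\tilde W_{\lambda_\bullet, \mu_\bullet}$; this is an invertible regular function on a product of irreducible varieties, since each $M_S^{\semist}(\lambda r_0, \lambda m_0)$ is irreducible (Example~\ref{ex:ICvsBPS}) and $U_{\lambda_\bullet, \mu_\bullet}$ is an open subset of an affine space. Since property (P) was established for each $M_S^{\semist}(\lambda r_0, \lambda m_0)$ in the proof of Proposition~\ref{prop:compori} via the Hitchin fibration, the pulled-back function is constant along each $M_S^{\semist}$-factor and descends to an invertible regular function $\tilde g_{\lambda_\bullet, \mu_\bullet}$ on $U_{\lambda_\bullet, \mu_\bullet}$. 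The natural action of $\mathfrak{S}_{\lambda_\bullet} \times \mathfrak{S}_{\mu_\bullet}$ permuting factors with equal multiplicities commutes with the map $\tilde W_{\lambda_\bullet, \mu_\bullet} \to \overline{W^{\red}_{\lambda_\bullet, \mu_\bullet}}$, so $\tilde g_{\lambda_\bullet, \mu_\bullet}$ descends further to a regular function $g_{\lambda_\bullet, \mu_\bullet}$ on the stratum $\bA_W^{\lambda_\bullet, \mu_\bullet} \subset \bA_W$ parametrizing configurations $(P, Q)$ of multiplicity types $(\lambda_\bullet, \mu_\bullet)$.

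Finally, since all these local descents arise from the single function $f$ on $W^{\red}$, they glue into a set-theoretic function $g$ on $\bA_W$ characterized by $g(P, Q) = f(y)$ for any $y \in \eta_W^{-1}(P, Q)$. This $g$ is regular on the open dense top stratum (all multiplicities equal to $1$) and on each codimension-$1$ stratum (a single multiplicity equal to $2$), hence regular on the complement of a closed subvariety of codimension at least $2$ in the smooth variety $\bA_W \subset \Sym^{k_1}(\bA^1) \times \Sym^{k_2}(\bA^1)$. By Hartogs' theorem, $g$ extends to a regular function on all of $\bA_W$, and by construction $f = g \circ \eta_W$. The hard part will be verifying compatibility of the stratum-wise descents and the regular extension across the discriminant loci, but both are handled by the smoothness of $\bA_W$ together with property (P) for each $M_S^{\semist}(\lambda r_0, \lambda m_0)$.
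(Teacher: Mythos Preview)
Your fiber-wise constancy argument is essentially the same as the paper's: both reduce to the fact that each fiber of $\eta_W$ is a product of moduli spaces $M_S^{\semist}(l r_0, l m_0)$, on which every invertible regular function is constant (property~(P) from the proof of Proposition~\ref{prop:compori}). Where you diverge is in how to get descent.

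The paper avoids all gluing and Hartogs arguments by producing an explicit \emph{section} of $\eta_W$: fix any stable $E \in M_S^{\semist}(r_0, m_0)$ and send a configuration $(P, Q) \in \bA_W$ to the pair of polystable sheaves $\bigl(\bigoplus_p (i_{p,*}E)^{\oplus \mult(p)},\, \bigoplus_q (i_{q,*}E)^{\oplus \mult(q)}\bigr)$. Call this section $s_W$; then $g \coloneqq f \circ s_W$ is automatically regular on $\bA_W$, and the fiber-wise constancy gives $g \circ \eta_W = f$. This is a one-line endgame once fiber-wise constancy is known.

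Your Hartogs route, by contrast, has a genuine gap at exactly the point you flag as ``the hard part.'' Knowing that $g$ is regular on the open top stratum and separately regular on each codimension-one stratum does \emph{not} give regularity on their union: the codimension-one strata are only locally closed, and your construction produces $g_{\lambda_\bullet,\mu_\bullet}$ as a regular function on that locally closed subscheme, not on an open neighborhood of it in $\bA_W$. So you have not exhibited $g$ as a regular function on the complement of a codimension $\geq 2$ closed set, and Hartogs does not apply. Filling this gap would require a local computation showing that the top-stratum $g$ extends regularly across the generic points of the discriminant --- which is possible but needs an argument you have not supplied. The section trick bypasses all of this.
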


\begin{proof}
We first claim that the function $f$ is constant along the reduced parts of fibers of $\eta_W$.
Take a configuration $(P, Q) \in \bA_W$.
Write $P \cup Q = \{p_1, \ldots p_t\}$ and we let $l_i$ be the multiplicity of $P \cup Q$ at $p_i$.
Then the fiber $\eta_W^{-1}((P, Q))$ is isomorphic to the scheme
\[
\prod_{i= 1}^{t} M_{S}^{\semist}(l_i r_0, l_i m_0).
\]
Therefore it follows from the proof of Proposition \ref{prop:compori} that $f$ is constant along $\eta_W^{-1}((P, Q))^{\red}$.
Therefore it is enough to prove that the map $\eta_W$ admits a section.

Take an arbitrary stable sheaf $E \in M_{S}^{\semist}(r_0, m_0)$.
Consider the map $s_W \colon \bA_{W} \to W^{\red}$ defined by
\[
 (P = \{p_1, \ldots p_t \}, Q  = \{q_1, \ldots q_s \}) \mapsto  (\bigoplus_{j}(i_{p_j, *} E)^{\oplus \mathrm{mult}(p_j)},  (\bigoplus_{j}(i_{q_j, *} E)^{\oplus \mathrm{mult}(q_j)}) 
\]
where $i_{p_j}$ denotes the embedding $S \times \{ p_j \} \hookrightarrow X$ and
$\mult(p_j)$ denotes the multiplicity of $p_j$ in $P$, and similarly for $i_{q_j}$ and $\mult(q_j)$.
We can see that $s_W$ is a section of $\eta_W$. Thus we obtain the claim.

\end{proof}

Let $s_{\fW}$ be the natural d-critical structure on $\fW = t_0(\bs{\fW})$.
Let $o_{\fW}\colon  M_1^{\otimes {2}} \cong K_{\fW, s_{\fW}}^{\vir}$ be the orientation on $\fW$ induced from the canonical orientation on $\fM_{X}^{\semist}(r_1, m_1) \times \fM_{X}^{\semist}(r_2, m_2)$ and 
$o_{\fW}'\colon  M_2^{\otimes {2}} \cong  K_{\fW, s_{\fW}}^{\vir}$ be the orientation on $\fW$ induced from the canonical orientation on
$\fM_{X}^{\semist}(r, m)$.
We have seen in the proof of Proposition \ref{prop:compori} that there exist trivializations $M_1 \cong \cO_{\fW^{\red}}$ and $M_2 \cong \cO_{\fW^{\red}}$.
Therefore the composition $(o_{\fW}')^{-1} \circ o_{\fW}$
defines an element
\[
\alpha \in \Gamma(\fW^{\red}, \cO_{\fW^{\red}}) \cong \Gamma(W^{\red}, \cO_{W^{\red}}).
\]

\begin{cor}\label{cor:compori2}
We use the notations as in Corollary \ref{cor:compdcrit}.
Assume that each connected component of $U_1$ and $U_2$ are homeomorphic to the disk.
Then there exists an isomorphism of orientations 
\[o_{\fW} |_{\fM_{X}^{\semist}(r_1, m_1)^{U_1} \times \fM_{X}^{\semist}(r_2, m_2)^{U_2}} \cong o_{\fW}' |_{\fM_{X}^{\semist}(r_1, m_1)^{U_1} \times \fM_{X}^{\semist}(r_2, m_2)^{U_2}}.
\]
\end{cor}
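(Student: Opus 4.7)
The plan is to reduce the statement to producing a holomorphic square root of a single invertible regular function on an analytic open subset of $\bA_W$, which will then follow from simple connectivity.

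First, I will use the trivializations $M_1 \cong \cO_{\fW^\red}$ and $M_2 \cong \cO_{\fW^\red}$ that appear in the proof of Proposition \ref{prop:compori}: under them, the composition $(o_\fW')^{-1} \circ o_\fW$ becomes the invertible function $\alpha \in \Gamma(\fW^\red, \cO^{\times})$ defined just before the statement, and producing an isomorphism of orientations $o_\fW \cong o_\fW'$ on an open substack is the same as producing a holomorphic square root of $\alpha$ there. Applying Lemma \ref{lem:funloc} to $\alpha$, viewed on $W^\red$ via the identification $\Gamma(\fW^\red, \cO) \cong \Gamma(W^\red, \cO)$, yields an invertible regular function $g$ on $\bA_W$ with $\alpha = g \circ \eta_W$; invertibility of $g$ is forced by the surjectivity of $\eta_W$ together with the invertibility of $\alpha$.

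Under $\eta_W$ composed with the good moduli space map, the analytic open substack $\fM_X^{\semist}(r_1, m_1)^{U_1} \times \fM_X^{\semist}(r_2, m_2)^{U_2}$ has image exactly $\Sym^{k_1}(U_1) \times \Sym^{k_2}(U_2) \subset \bA_W$, so it will be enough to produce a holomorphic square root of $g$ on this open subset. Writing each $U_i = \bigsqcup_j D_{i, j}$ as a disjoint union of open disks, I decompose
\[
\Sym^{k_1}(U_1) \times \Sym^{k_2}(U_2) = \bigsqcup_{\mathbf{a}} \prod_{i, j} \Sym^{a_{i, j}}(D_{i, j}),
\]
where $\mathbf{a} = (a_{i, j})$ ranges over tuples of nonnegative integers with $\sum_j a_{i, j} = k_i$ for each $i$. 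Each factor $\Sym^a(D)$ is contractible, since a radial retraction of $D$ to its centre lifts to an $S_a$-equivariant retraction of $D^a$ which descends to $\Sym^a(D)$, so every connected component of $\Sym^{k_1}(U_1) \times \Sym^{k_2}(U_2)$ is simply connected. A nowhere-vanishing holomorphic function on a simply connected complex manifold admits a holomorphic square root, so such a square root of $g$ exists on this open subset; pulling it back through $\eta_W$ produces the required holomorphic square root of $\alpha$, and hence the desired isomorphism of orientations.

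The main technical obstacle is the initial bookkeeping: verifying that the two trivializations of $K^{\vir}_{\fW}$ coming from Proposition \ref{prop:compori} genuinely combine to a single invertible function $\alpha$ in the way I have claimed (rather than being well-defined only up to a square), and checking that a holomorphic square root of the factor $g$ on $\bA_W$ truly descends to an isomorphism of orientations in the sense of \S \ref{ssec:dcrit}, as opposed to only an isomorphism of the underlying $\bZ/2$-graded line bundles.
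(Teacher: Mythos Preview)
Your proposal is correct and follows essentially the same route as the paper's proof: reduce to finding a square root of $\alpha$, factor $\alpha = g \circ \eta_W$ via Lemma~\ref{lem:funloc}, and then observe that the relevant analytic open set $\bA_{U_1,U_2} = \Sym^{k_1}(U_1) \times \Sym^{k_2}(U_2) \subset \bA_W$ is simply connected. The paper simply asserts the simple connectedness, whereas you spell it out via the decomposition into products of $\Sym^a(D)$'s and their contractibility; your worries in the final paragraph about the bookkeeping of $\alpha$ are already absorbed by the paper's definition of $\alpha$ just before the statement, so they do not constitute a gap.
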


\begin{proof}
Let $M_{X}^{\semist}(r_1, m_1)^{U_1} \subset M_{X}^{\semist}(r_1, m_1)$ (resp. $M_{X}^{\semist}(r_2, m_2)^{U_2} \subset M_{X}^{\semist}(r_2, m_2)$) be the image of $\fM_{X}^{\semist}(r_1, m_1)^{U_1}$ (resp. $\fM_{X}^{\semist}(r_2, m_2)^{U_2}$) along the map $p_X$.
We need to show that $\alpha |_{M_{X}^{\semist}(r_1, m_1)^{U_1} \times M_{X}^{\semist}(r_2, m_2)^{U_2}}$ admits a square root.
Lemma \ref{lem:funloc} implies that there exists a regular function $g$ on $\bA_{W}$ such that $\alpha = g \circ \eta_W $ holds.
Let $\bA_{U_1, U_2} \subset \bA_{W}$ be an open subset consisting of configurations $(P, Q)$ such that $P \subset U_1$ and $Q \subset U_2$.
Since the image of $M_{X}^{\semist}(r_1, m_1)^{U_1} \times M_{X}^{\semist}(r_2, m_2)^{U_2}$ under the map $\eta_W$ is contained in $\bA_{U_1, U_2}$, we need to show that $g|_{\bA_{U_1, U_2}}$ admits a square root. But this follows from the simply connectedness of $\bA_{U_1, U_2}$.
\end{proof}

\begin{proof}[Proof of Proposition \ref{prop:supplem}]
The proof is almost identical to the proof of the support lemma for preprojective algebras \cite{Dav16}.
Take disjoint open subsets $U_1, U_2 \subset \bA^1$ whose connected components are homeomorphic to the disk.
It follows from Proposition \ref{prop:intHiggs} that there exists an isomorphism
\begin{align*}
\cH\left(({p_X}_* \varphi_{\fM_{X}^{\semist}(\mu)})|_{{M}_{X}^{\semist}(\mu)^{U_i}}\right) 
\cong 
\Sym_{\boxtimes_\oplus}\left( 
\mH^*(\mathrm{B}\bC^*)_{\vir} \otimes \varphi_{M_{X}^{\semist}(\mu)|_{{M}_{X}^{\semist}(\mu)^{U_i}}}
\right)
\end{align*}
for each $i=1, 2$. Then Thom--Sebastiani theorem \cite[Remark 5.23]{AB17}, Corollary \ref{cor:compdcrit} and Corollary \ref{cor:compori2} imply the following isomorphism

\begin{align*}
    & \cH\left(({p_X}_* \varphi_{\fM_{X}^{\semist}(\mu)})|_{{M}_{X}^{\semist}(\mu)^{U_1 \coprod U_2}}\right) \\
\cong & \cH\left(({p_X}_* \varphi_{\fM_{X}^{\semist}(\mu)})|_{{M}_{X}^{\semist}(\mu)^{U_1}}\right)  \boxtimes_{\oplus} \cH\left(({p_X}_* \varphi_{\fM_{X}^{\semist}(\mu)})|_{{M}_{X}^{\semist}(\mu)^{U_2}}\right) \\
\cong& \Sym_{\boxtimes_\oplus}\left( 
\mH^*(\mathrm{B}\bC^*)_{\vir} \otimes \varphi_{M_{X}^{\semist}(\mu)|_{{M}_{X}^{\semist}(\mu)^{U_1}}}
\right) \\
&\ \ \ \ \ \ \ \ \ \ \ \ \ \ \ \ \ \ \  \boxtimes_{\oplus} \Sym_{\boxtimes_\oplus}\left( 
\mH^*(\mathrm{B}\bC^*)_{\vir} \otimes \varphi_{M_{X}^{\semist}(\mu)|_{{M}_{X}^{\semist}(\mu)^{U_2}}}
\right) \\
\cong& \Sym_{\boxtimes_\oplus}\left( 
\mH^*(\mathrm{B}\bC^*)_{\vir} \otimes (\varphi_{M_{X}^{\semist}(\mu)|_{{M}_{X}^{\semist}(\mu)^{U_1}}} \oplus \varphi_{M_{X}^{\semist}(\mu)|_{{M}_{X}^{\semist}(\mu)^{U_2}}}) \right)
\end{align*}

On the other hand, Proposition \ref{prop:intHiggs} implies an isomorphism
\begin{align*}
\cH\left(({p_X}_* \varphi_{\fM_{X}^{\semist}(\mu)})|_{{M}_{X}^{\semist}(\mu)^{U_1 \coprod U_2}})\right)
\cong 
\Sym_{\boxtimes_\oplus}\left( 
\mH^*(\mathrm{B}\bC^*)_{\vir} \otimes \varphi_{M_{X}^{\semist}(\mu)|_{{M}_{X}^{\semist}(\mu)^{U_1 \coprod U_2}}}
\right).
\end{align*}
Therefore we obtain an isomorphism
\begin{equation}\label{eq:suppisom}
\varphi_{M_{X}^{\semist}(\mu)|_{{M}_{X}^{\semist}(\mu)^{U_1}}} \oplus \varphi_{M_{X}^{\semist}(\mu)|_{{M}_{X}^{\semist}(\mu)^{U_2}}} 
\cong \varphi_{M_{X}^{\semist}(\mu)|_{{M}_{X}^{\semist}(\mu)^{U_1 \coprod U_2}}}.
\end{equation}

Now take a point $[E] \in \Supp(\varphi_{M_{X}^{\semist}(\mu)})$ where $E$ is a polystable sheaf on $X$.
Assume that the support of $E$ is contained in $S \times (U_1 \coprod U_2)$ for some disjoint open subsets $U_1, U_2 \subset \bC$ (or equivalently, $[E] \in M_{X}^{\semist}(\mu)^{U_1 \coprod U_2}$).
Then the isomorphism \eqref{eq:suppisom} implies that the support of $E$ is contained in either of $U_1$ or $U_2$. Therefore there exists some $t \in \bC$ such that $\Supp(E) \subset S \times \{t\}$, which implies the proposition.

\end{proof}

\bibliographystyle{amsalpha}
\bibliography{biblio}

\end{document}